\documentclass[twoside,11pt]{article}

\usepackage{blindtext}

\usepackage{jmlr2e}

\usepackage{lastpage}
\ShortHeadings{Optimal Confidence Band for Kernel Gradient
Flow Estimator}{Yuqian Cheng, Zhuo Chen, and Qian Lin}
\firstpageno{1}

\usepackage{amsmath}
\usepackage{amsfonts}
\usepackage{mathrsfs}
\usepackage{enumerate}
\usepackage{amssymb}
\usepackage{graphicx}
\usepackage{subfigure}
\usepackage{booktabs}

\newtheorem{assumption}{Assumption}

\newcommand\R{\mathbb{R}}
\newcommand\C{\mathbb{C}}
\newcommand\tr{\mathrm{tr}}
\newcommand\E{\mathrm{E}}       %mathematical expectation
\newcommand\error{\mathcal{E}}  %error
\renewcommand\O{\mathcal{O}}    %big O notation
\renewcommand\H{\mathcal{H}}    %RKHS
\renewcommand\P{\mathbb{P}}     %probability
\newcommand\X{\mathcal{X}}      %input space
\newcommand\Y{\mathcal{Y}}      %output space
\newcommand\D{\mathbb{D}}       %data set
\newcommand\K{\mathbb{K}}       %matrix notation
\newcommand\F{\mathcal{F}}      %Fourier transform

\newcommand\qed{\hfill $\blacksquare$}

\graphicspath{{pic/}}

\begin{document}

\title{Optimal Confidence Band for Kernel Gradient Flow Estimator}

\author{\name Yuqian Cheng \email yuqian.cheng.1999@gmail.com 
       \AND
       \name Zhuo Chen \email chenzhuo@tsinghua.edu.cn \\
       \addr Department of Mathematical Sciences\\
       Tsinghua University\\
       Beijing, 100084, China
       \AND
       \name Qian Lin \email qianlin@tsinghua.edu.cn \\
       \addr Department of Statistics and Data Science\\
       Tsinghua University\\
       Beijing, 100084, China}

\editor{TBD}

\thispagestyle{empty}

\maketitle

\begin{abstract}%
    In this paper, we investigate the supremum-norm generalization error and the uniform inference for a specific class of kernel regression methods, namely the kernel gradient flows. Under the widely adopted capacity-source condition framework in the kernel regression literature, we first establish convergence rates for the supremum norm generalization error of both continuous and discrete kernel gradient flows under the source condition $s>\alpha_0$, where $\alpha_0\in(0,1)$ denotes the embedding index of the kernel function. Moreover, we show that these rates match the minimax optimal rates. Building on this result, we then construct simultaneous confidence bands for both continuous and discrete kernel gradient flows. Notably, the widths of the proposed confidence bands are also optimal, in the sense that their shrinkage rates are greater than, while can be arbitrarily close to, the minimax optimal rates.
\end{abstract}

\begin{keywords}
  kernel methods, kernel gradient flow, reproducing kernel Hilbert space, minimax optimality, simultaneous confidence band
\end{keywords}

\section{Introduction} 

In recent years, a family of non-parametric regression methods, known as the kernel regression methods, has attracted considerable attention. The central idea of these methods is to estimate the target function within the reproducing kernel Hilbert space (RKHS) associated with a preselected kernel function. In order to accommodate the diverse characteristics of real-world data, various forms of kernel regression have been developed, including kernel ridge regression \citep{mathematical_foundations}, kernel gradient flow \citep{stopping_time_gd}, and kernel spectral cut-off methods \citep{optimal_rates_spectral_group}. These approaches can be further tailored through different choices of kernels, such as Gaussian kernels \citep{gaussian_process_book}, Matérn kernels \citep{matern}, neural tangent kernels \citep{ntk}, and even data-dependent kernels \citep{data_dependent_kernel}. This flexibility highlights the potential of kernel regression as a powerful and versatile framework for modeling complex input–output relationships.

In this paper, we focus on a particular class of kernel regression methods known as kernel gradient flow, which have attracted considerable attention in recent years due to their strong theoretical properties and wide applicability. This method can be viewed as a natural extension of gradient flow methods in convex optimization \citep{convex}. Moreover, in the context of artificial intelligence and deep learning, the neural tangent kernel theory \citep{ntk} provides an interpretation of kernel gradient flow as the training dynamics of sufficiently wide neural networks.

The existing literature on kernel gradient flow has primarily focused on the mean squared generalization error. In contrast, the supremum-norm generalization error and the statistical inference theory for kernel gradient flow remains relatively limited. Among the most closely related works, \citet{sgd_inference} derived an estimate of the supremum-norm generalization error for the stochastic kernel gradient descent estimator, a common variant of kernel gradient flow, and employed an online multiplier bootstrap procedure to construct asymptotically exact confidence bands for the proposed estimator. However, several important questions still remaining to be solved. First, \citet{sgd_inference} only established a loose upper bound for the supremum-norm generalization error while did not prove the minimax optimality, hence they did not prove that the proposed confidence band attains the optimal band width as well; Second, they only considers the source condition $s=1$ (that is, the target function lies in the RKHS); Moreover, they only focused on RKHS with uniformly bounded and Lipschitz eigenfunctions, which limits the range of kernel functions to which their theory applies.

We list our contributions as follows: 

\textbf{(I) Convergence rate of the supremum-norm generalization error of kernel gradient flow.}

Under the capacity-source condition framework (see Assumptions \ref{edr} and \ref{source condition}) and other additional mild conditions, we derive a convergence rate of the supremum-norm generalization error for both continuous and discrete kernel gradient flow estimators (defined in Definitions \ref{continuous kgf definition} and \ref{discrete kgd definition}) under general source conditions $s>\alpha_0$, where $\alpha_0<1$ is the embedding index of the kernel function (see Assumption \ref{embedding index assumption}). 

Moreover, for the optimal selection of the training time of the kernel gradient flow estimator, the best convergence rate in fact (nearly) matches the minimax optimal rate (Corollary \ref{best upper bound} and Theorem \ref{lower bound}), which demonstrates the minimax optimality of kernel gradient flow estimators. 

\textbf{(II) Simultaneous confidence band for kernel gradient flow.} 

Based on the results of the supremum-norm generalization error convergence rate above, we construct simultaneous confidence bands for both continuous and discrete kernel gradient flow estimators, and establish Theorem \ref{main result} which guarantees the asymptotic exactness of the proposed confidence bands.

It is worth noting that the confidence bands we construct are optimal, in the sense that their widths shrinks at nearly minimax optimal rate (see Remark \ref{optimal shrink rate}). 

The construction of the confidence bands is technically built on the theory of Gaussian approximation (see Section \ref{proof sketch section} and Appendix \ref{gaussian approximation section} for details).

\subsection{Related works} 

Prior works such as \citet{stopping_time_gd, optimality_gd} have studied the generalization ability of kernel gradient flow or its discretization with respect to mean square error, and showed that the kernel gradient flow is minimax optimal in some cases. The generalization abiliby of the kernel interpolation, which is the limit of kernel gradient flow as the training time $t$ goes to infinity, has also been studied in \citet{interpolation2, interpolation}. The kernel gradient flow is a special case of spectral algorithm \citet{spectral1, spectral2}. The techniques of spectral algorithms are helpful for the computation involving the complicated exponential form of kernel gradient flow estimator. The generalization ability of general spectral algorithms has been studied in \citet{optimality_spectral1, optimality_spectral2, optimal_rates_spectral_group}. \citet{optimal_rates_spectral} has shown the optimality of spectral algorithm on high-dimensional Hilbert spaces. \citet{analytic} has studied the analytic version of spectral algorithm. 

Researches on the supremum-norm generalization ability of kernel gradient flow has been limited. \citet{sgd_inference} has provided a non-tight convergence rate of supremum-norm generalization error for stochastic kernel gradient flow under source condition $s=1$. For other kinds of kernel regression methods, \citet{krr_sup_norm_well_specified} has focused on the supremum-norm of generalization ability of kernel ridge regression under source condition $s=1$; \citet{krr_sup_norm_high_dim, krr_sup_norm_high_dim2, krr_sup_norm_high_dim3} have studied the supremum-norm generalization ability of kernel ridge regression in high-dimensional cases; \citet{krr_inference} has proved a non-tight convergence rate of supremum-norm generalization error for kernel ridge regression. 

For several canonical function classes, there has been extensive work on establishing minimax optimal rates with respect to supremum distance along with other distances. For example, for density estimation, the supremum-norm minimax optimal rate for $1$-dimensional Sobolev spaces was obtained in \citep{sobolev_1d_minimax} and \citep{sobolev_1d_minimax2}; the supremum-norm minimax optimal rate for H\"older spaces was established in \citep{holder_minimax}; the Wasserstein distance minimax optimal rate for Besov spaces was established in \citep{besov_minimax}. On the other hand, although Sobolev spaces are typical representatives of interpolation spaces of RKHS \citep{sobolev_rkhs}, studies on the minimax optimal rate for an arbitrary RKHS and its interpolation spaces have been limited. Recently, \citet{duality} derived a minimax lower bound for general RKHS, while this work did not prove that the proposed minimax lower bound rate matches the minimax optimal rate, and was based on a strong condition on the embedding index of the kernel function (see Remark \ref{duality limit discussion} for further discussion). 

Our inference theory for kernel gradient flow is based on the tools of Gaussian approximation and multiplier bootstrap algorithm, which are developed by a series of prior works, including \citet{anti-concentration_a, anti-concentration_b, vc-type, anti-concentration_d, anti-concentration_c}. Based on the tools of Gaussian estimation, \citet{krr_inference} has constructed a confidence band for kernel ridge regression estimator, and \citet{sgd_inference} has constructed a confidence band for stochastic kernel gradient descent estimator. Similarly with \citet{krr_inference}, \citet{sgd_inference} only focused on kernels with bounded Lipschitz eigenfunctions, and did not obtain the confidence band of optimal width. On the other hand, \citet{krr_inference2} has established a bootstrap approximation for kernel ridge regression based on the Gaussian approximation of RKHS norm for empirical process, which cannot be applied to the mis-specified cases. The Gaussian estimation and bootstrap approximation in high-dimensional cases have been studied by a series of prior works, including \citet{gaussian_approximation_high_dim1, anti-concentration_b, gaussian_approximation_high_dim3, gaussian_approximation_high_dim4, gaussian_approximation_high_dim5}.

\section{Background}

\subsection{The regression problem} 

Assume that the input space $\X$ is a compact subspace of $\R^d$, and the output space is $\Y\subset\R$. Let $\rho$ be an unknown probability distribution on $\X\times\Y$, and suppose that we have $n$ i.i.d. samples $\D_n=\{(x_i,y_i):\,i=1,\dots,n\}$ drawn from $\rho$. Our goal is to find a function $\hat{f}$ such that 
\begin{equation}
    \error(\hat{f})=\frac{1}{2}\int_{\X\times\Y}(\hat{f}(x)-y)^2d\rho
\end{equation}
is sufficiently small. By elementary computation, it is equivalent to searching for a function $\hat{f}$ such that the generalization error 
\begin{equation}\label{generalization error definition}
    \frac{1}{2}\int_\X(\hat{f}(x)-f^*(x))^2d\mu(x)
\end{equation} 
is sufficiently small, where $\mu$ is the marginal distribution of $\rho$ on $\X$, and the function $f^*$ is defined by 
\begin{equation}\label{regression function definition} 
    f^*(x)=\E_\rho(y|x)=\int_\Y yd\rho(y|x)
\end{equation} 
It is easy to see that $f^*$ is the minimizer of (\ref{generalization error definition}). The function $f^*$ is called the regression function or the true function, and $\hat{f}$ can be viewed as an estimator of $f^*$. 

Basically, we assume the following assumption on the noise $\varepsilon=y-f^*(x)$ of the model: 
\begin{assumption}\label{noise moment} 
    (noise moment bound) Assume that the noise $\varepsilon$ satisfies: for a.s. $x\in\X$, we have $\E[\varepsilon|x]=0$, $\E[\varepsilon^2|x]\leq\sigma^2<\infty$, and 
    \begin{equation}\E[|\varepsilon|^m|x]\leq\frac{1}{2}m!\sigma^2 L^{m-2}\end{equation} 
    for any integer $m>2$. 
\end{assumption} 
This is a standard assumption on noise. Gaussian noise and sub-Gaussian noise are common examples that satisfy this assumption.

\subsection{Reproducing kernel Hilbert space}

In the kernel gradient flow algorithm, we search for the estimator of the regression function $f^*$ in the reproducing kernel Hilbert space (RKHS) of a positive kernel function $k:\,\X\times\X\to\R$. In this subsection, we recall some critical details about kernel functions and their RKHS. 

Suppose that the kernel function $k$ is bounded in the following sense: 
\begin{equation}\label{bound of kernel}
    \sup_{x\in\X}k(x,x)\leq\kappa^2
\end{equation} 
for some universal constant $\kappa>0$. The positiveness of the kernel $k$ means that the integral operator 
\begin{equation}\label{integral operator definition}
    T:\,L^2(\X)\to L^2(\X),\quad Tf(x)=\int_\X k(x,\xi)f(\xi)d\xi
\end{equation} 
has eigenvalues $\lambda_1\geq\lambda_2\geq\dots>0$. Let $e_i$ be the corresponding eigenfunction of $\lambda_i$ such that $e_1,e_2,\dots$ form an orthonormal basis of $L^2(\X)$. Then, by Mercer decomposition \citep{mercer}, we have 
\begin{equation}k(x,x')=\sum_{i=1}^\infty\lambda_ie_i(x)e_i(x'),\end{equation} 
and this expansion converges absolutely and uniformly. 

Denote $k_x(\cdot)=k(x,\cdot)$, and define an inner product on the linear space $\mathrm{span}\{k_x:\,x\in\X\}$ by $\langle k_x,k_{x'}\rangle_\H=k(x,x')$. The closure of $\mathrm{span}\{k_x:\,x\in\X\}$ under this inner product is called the reproducing kernel Hilbert space (RKHS) of $k$, denoted as $\H$. Accordingly, $k$ is called the reproducing kernel of $\H$. The names ``RKHS'' and ``reproducing kernel'' come from the following property, known as the reproducing property: 
\begin{equation}\label{reproducing property}
    \langle k_x,f\rangle_\H=f(x),\quad\forall x\in\X,\,f\in\H.
\end{equation} 

For any function $f\in\H$, it has a unique RKHS expansion: 
\begin{equation}\label{RKHS expansion definition} 
    f(x)=\sum_{i=1}^\infty f_i\sqrt{\lambda_i}e_i(x),\quad \{f_i\}_{i=1}^\infty\in l^2,
\end{equation}
and the inner product $\langle\cdot,\cdot\rangle_\H$ can be computed as $\langle f,g\rangle_\H=\sum_{i=1}^\infty f_ig_i$ for $f=\sum_{i=1}^\infty f_i\sqrt{\lambda_i}e_i\in\H$ and $g=\sum_{i=1}^\infty g_i\sqrt{\lambda_i}e_i\in\H$ (see \citep{mathematical_foundations} for example). Thus, $\{\sqrt{\lambda_i}e_i\}_{i=1}^\infty$ forms an orthonormal basis of $\H$. 

We introduce the following basic assumptions on the kernel function: 
\begin{assumption}\label{edr} 
    (Eigenvalue decay rate) Assume that 
    \begin{equation}ci^{-\beta}\leq\lambda_i\leq Ci^{-\beta}\end{equation} 
    for some positive constants $c$ and $C$. 
\end{assumption} 

This assumption is also called the capacity condition or the effective dimension condition \citep{optimal_rates_krr}. It is satisfied by various kinds of kernels such as Sobolev kernels \citep{sobolev_rkhs}, Mat\'ern kernels on spheres \citep{discrepancies}, neural tangent kernels \citep{ntk_spectrum}, etc. 

\begin{assumption}\label{Holder assumption} 
    (H\"older continuity) Assume that the kernel function $k$ is $2h$-H\"older for some $h\in(0,\frac{1}{2}]$. In other words, there exists a constant $L_k>0$ such that 
    \begin{equation}|k(x_1,x_2)-k(x_1',x_2')|\leq L_k|(x_1,x_2)-(x_1',x_2')|^{2h},\quad\forall x_1,x_2,x_1',x_2'\in\X.\end{equation} 
\end{assumption}

The H\"older continuity of kernel function is technically required in the proofs of our main theorems, especially in those parts where we extend pointwise estimations to uniform estimations.

\subsection{Interpolation spaces} 

In order to characterize the relative smoothness of the regression function $f^*$ with respect to $\H$, many prior works \citep{optimal_rates_krr, optimal_rates_spectral,NTK_generalization_R,optimal_rates_krr_group} usually assume that $f^*$ lies in $[\H]^s$, the interpolation space of $\H$ of order $s$, which is defined by 
\begin{equation}\label{interpolation space definition} 
    [\H]^s=\mathrm{Ran}T^{\frac{s}{2}}=\left\{\sum_{i=1}^\infty f_i\lambda_i^{\frac{s}{2}}e_i:\,\{f_i\}_{i=1}^\infty\in l^2\right\},
\end{equation}
where $T^{\frac{s}{2}}$ is the $s$-power of the integral operator $T$ defined by 
\begin{equation} 
    T^{\frac{s}{2}}:\,L^2(\X)\to L^2(\X),\quad T^{\frac{s}{2}} f=\sum_{i=1}^\infty\lambda_i^\frac{s}{2}\langle f,e_i\rangle_{L^2}e_i.
\end{equation} 
$[\H]^s$ is naturally equipped with an inner product: $\langle f,g\rangle_{[\H]^s}=\sum_{i=1}^\infty f_ig_i$ for $f=\sum_{i=1}^\infty f_i\lambda_i^{\frac{s}{2}}e_i\in[\H]^s$ and $g=\sum_{i=1}^\infty g_i\lambda_i^{\frac{s}{2}}e_i\in[\H]^s$. Note that $[\H]^0$ coincides with $L^2(\X)$, and $[\H]^1$ coincides with $\H$. For $0<s_1<s_2$, the embeddings $[\H]^{s_2}\hookrightarrow[\H]^{s_1}\hookrightarrow[\H]^0$ exist and are compact. 

\begin{assumption}\label{source condition} 
    (Source condition) Assume that for some $s>\alpha_0$, there exists a constant $R>0$ such that $f^*\in[\H]^s$ and 
    \begin{equation}\|f^*\|_{[\H]^s}\leq R.\end{equation} 
\end{assumption} 
Here, $s$ is called the source condition of $f^*$. Assumptions \ref{edr} and \ref{source condition} are jointly referred to as the capacity–source condition framework, which constitutes a standard setting in the study of kernel regression methods \citep{optimal_rates_krr,optimal_rates_spectral,optimality_spectral1,optimal_rates_krr_group,optimal_rates_spectral_group}. The source condition $s$ is commonly interpreted as the relative smoothness of the true function $f^*$ with respect to the RKHS $\H$. 

One of the important characterizations of RKHS is the embedding property of its interpolation spaces. For any $\alpha>0$, define the embedding coefficient $M_\alpha$ of order $\alpha$ by 
\begin{equation}\label{embedding coefficient def}
    M_\alpha^2=\sup_{x\in\X}\sum_{i=1}^\infty\lambda_i^\alpha e_i(x)^2\in[0,\infty].
\end{equation} 
It is clear that $M_\alpha$ is nonincreasing in $\alpha$. If $M_\alpha < \infty$, then by Cauchy’s inequality, it follows that for any $f \in [\H]^\alpha$, $f\in C^0(\X)$ and 
\begin{equation}\|f\|_\infty=\sup_{x\in\X}|f(x)|\leq M_\alpha\|f\|_{[\H]^\alpha},\end{equation} 
which implies that $[\H]^\alpha$ is naturally embedded into $C^0(\X)$. Therfore, we say that the RKHS $\mathcal{H}$ satisfies the embedding property of order $\alpha$ if $M_\alpha < \infty$. Finally, we define the \textit{embedding index} of $\H$ by 
\begin{equation}\label{embedding index def} 
    \alpha_0=\inf\left\{\alpha\in[\frac{1}{\beta},1]:\,M_\alpha<\infty\right\}.
\end{equation} 

We make the following assumption on the embedding index of the kernel function: 
\begin{assumption}\label{embedding index assumption} 
    (Embedding index) Assume that the embedding index of $\H$ is $\alpha_0=\frac{1}{\beta}$, where $\beta$ is the eigenvalue decay rate in assumption \ref{edr}, and the embedding index $\alpha_0$ is defined in (\ref{embedding index def}). 
\end{assumption} 
This assumption is satisfied by most commonly used kernels, including Sobolev kernels, inner-product kernels on spheres, and periodic translation-invariant kernels \citep{optimal_rates_spectral_group}. 

For more details about RKHS and its interpolation spaces, we refer to \citet{mercer} and \citet{mathematical_foundations} for example.

\subsection{Kernel gradient flow estimators} 

Recall that the mean square generalization error $\error(\hat{f})$ is defined by (\ref{generalization error definition}). The corresponding empirical mean square error function is 
\begin{equation}\hat{\error}(\hat{f})=\frac{1}{2}\sum_{j=1}^n(\hat{f}(x_j)-y_j)^2,\end{equation}

In the kernel gradient flow algorithm, the estimator $\hat{f}$ is set to be in the following form: $\hat{f}=\hat{f}_t$ (with parameter $t\in[0,\infty)$) lies in the RKHS $\H$ with coefficients $\{a_i(t)\}_{i=1}^\infty$; in other words, 
\begin{equation}
    \hat{f}(x)=\hat{f}_t(x)=\sum_{i=1}^\infty a_i(t)\sqrt{\lambda_i}e_i(x).
\end{equation} 
The parameter $t\geq0$ is called the training time or stopping time of the kernel gradient flow estimator. 

In this paper, we consider two kinds of kernel gradient flow estimator: the continuous kernel gradient flow estimator $\hat{f}_t=\hat{f}_t^{con}$ and the discrete kernel gradient flow estimator $\hat{f}_t=\hat{f}_t^{dis}$. 

\subsubsection{The continuous kernel gradient flow} In this case, each parameter $a_i(t)$ is initialized as $a_i(0)=0$, and is then set to evolve along the following gradient flow: 
\begin{equation}\label{gradient flow of parameters} 
    \frac{d}{dt}a_i(t)=-\frac{\partial}{\partial a_i(t)}\hat{\error}(\hat{f}_t)=-\sum_{j=1}^n\frac{\partial\hat{f}_t(x_i)}{\partial a_i(t)}\cdot(\hat{f}_t(x_j)-y_j).
\end{equation}
Equivalently, the evolution equation of the estimator $\hat{f}_t$ is given by 
\begin{equation}\label{gradient flow of estimator} 
    \begin{aligned}
        \frac{d}{dt}\hat{f}_t(x)&=\sum_{i=1}^\infty\frac{\partial\hat{f}_t(x)}{\partial a_i(t)}\cdot\frac{d a_i(t)}{dt}=-\sum_{j=1}^n\sum_{i=1}^\infty\frac{\partial\hat{f}_t(x)}{\partial a_i(t)}\cdot\frac{\partial\hat{f}_t(x_j)}{\partial a_i(t)}\cdot(\hat{f}_t(x_j)-y_j)\\
        &=-\sum_{j=1}^n k(x,x_j)(\hat{f}_t(x_j)-y_j).
    \end{aligned}
\end{equation} 

\begin{definition}\label{continuous kgf definition}
    \textnormal{\textbf{(Continuous kernel gradient flow estimator)}} Let $\hat{f}_t^{con}(x)$ be the solution to the equation (\ref{gradient flow of estimator}) with initial condition $\hat{f}_0^{con}(x)\equiv 0$. $\hat{f}_t^{con}(x)$ is called the continuous kernel gradient flow estimator. 
\end{definition}

Note that the continuous kernel gradient estimator $\hat{f}_t^{con}$ admits an explicit expression: 
\begin{equation}\label{solution to gradient flow of estimator}
    \hat{f}_t^{con}(x)=\K(x,X)\K(X,X)^{-1}\left(I_n-\exp(-\frac{t}{n}\K(X,X))\right)Y,
\end{equation} 
where $\K(x,X)=(k(x,x_1),\dots,k(x,x_n))$, $\K(X,X)=(k(x_i,x_j))_{n\times n}$ and $X=(x_1,\dots,x_n)^T$, $Y=(y_1,\dots,y_n)^T$, providing that $\K(X,X)$ is invertible. 

\subsubsection{The discrete kernel gradient flow} 

We also consider the discretized counterpart of the evolution equation (\ref{gradient flow of estimator}): 
\begin{equation}\label{discrete gradient flow of estimator}
    \hat{f}_{t_{m+1}}(x)=\hat{f}_{t_m}(x)-\eta\cdot\frac{1}{n}\sum_{j=1}^nk(x,x_j)(\hat{f}_{t_m}(x_j)-y_j),
\end{equation} 
where $t_0=0$, $t_{m}=t_{m-1}+\eta$ for $m\geq 0$, and $\eta>0$ is a preselected parameter. 

\begin{definition}\label{discrete kgd definition}
    \textnormal{\textbf{(Discrete gradient flow estimator)}} Let $\hat{f}_{t}^{dis}(x)$ ($t=t_m=m\eta$, $m=0,1,\dots$) be the solution to (\ref{discrete gradient flow of estimator}) with initial condition $\hat{f}_0^{dis}(x)\equiv 0$. $\hat{f}_{t}^{dis}(x)$ is called the discrete kernel gradient flow estimator or the kernel gradient descent estimator, and the parameter $\eta$ is called the learning rate of $\hat{f}_t^{dis}(x)$. 
\end{definition}  

~

Both the continuous kernel gradient flow estimator $\hat{f}_t^{con}$ and the discrete kernel gradient flow estimator $\hat{f}_t^{dis}$ can be expressed in the form of spectral algorithm \citep{spectral1, spectral2}. Consider the following empirical version of the integral operator $T$ defined in (\ref{integral operator definition}): 
\begin{equation}T_X:\,\H\to\H,\quad T_X f(\cdot)=\frac{1}{n}\sum_{j=1}^n f(x_j)k(x_j,\cdot),\end{equation} 
and define the sample basis function as 
\begin{equation}\hat{g}(\cdot)=\frac{1}{n}\sum_{j=1}^n y_j k(x_j,\cdot).\end{equation} 

Then, $\hat{f}_t^{con}$ and $\hat{f}_t^{dis}$ can be represented by 
\begin{equation}
    \hat{f}_t^{con}=\varphi_t^{con}(T_X)\hat{g},\quad\hat{f}_{t}^{dis}=\varphi_{t}^{dis}(T_X)\hat{g},
\end{equation} 
(we refer to \citealt{optimal_rates_spectral_group, analytic} for the details), where the functions $\varphi_t^{con}$ and $\varphi_t^{dis}$ are the filter functions of continuous and discrete kernel gradient flows, respectively, and are defined as follows: 
\begin{definition}\label{filter function def}
    We define the filter function $\varphi_t(z)$ and the remainder function $\psi_t(z)=1-z\varphi_t(z)$ of the kernel gradient flow as follows: 
    
    (I) For the continuous kernel gradient flow, we define 
    \begin{equation}\varphi_t(z)=\varphi_t^{con}(z):=\frac{1-e^{-tz}}{z},
    \end{equation} 
    \begin{equation}\psi_t(z)=\psi_t^{con}(z)=1-z\varphi_t^{con}(z):=e^{-tz};\end{equation} 
    (II) For the discrete kernel gradient flow, we define 
    \begin{equation}\label{discrete gd filter function}
        \varphi_t(z)=\varphi_t^{dis}(z):=\frac{1-(1-\eta z)^{t/\eta}}{z},
    \end{equation} 
    \begin{equation}
        \psi_t(z)=\psi_t^{dis}(z)=1-z\varphi_t^{dis}(z):=(1-\eta z)^{t/\eta}.
    \end{equation} 
\end{definition} 

For the discrete kernel gradient flow, we need an additional assumption on its learning rate $\eta$: 
\begin{assumption}\label{learning rate assumption} 
    The learning rate $\eta$ satisfies $0<\eta<\frac{1}{2\kappa^2}$, where $\kappa^2$ is the bound of the kernel function described in (\ref{bound of kernel}). 
\end{assumption} 
It is a technical assumption required in the proofs. In particular, it guarantees that the filter function $\varphi_t^{dis}$ of the discrete kernel gradient flow admits an analytic extension to a larger domain in the complex plane (see Appendix \ref{analytic section} for detailed discussions)

\section{Exact Convergence Rate of the Supremum-Norm Generalization Error}

In this section, we present our results on the convergence rate of the supremum-norm generalization error of kernel gradient flow. We further show that the proposed convergence rate is in fact minimax optimal.

\begin{theorem}\label{upper bound} 
    \textnormal{\textbf{(Upper bound of the supremum-norm generalization error)}} Suppose that Assumptions \ref{noise moment}, \ref{edr}, \ref{Holder assumption}, \ref{source condition} and \ref{embedding index assumption} are satisfied. Let $t=n^\theta$ for $\theta\in(0,\beta)$. Then for any $\varepsilon>0$ sufficiently small such that $0<\varepsilon<\min\{s-\frac{1}{\beta},\frac{1}{\theta}-\frac{1}{\beta}\}$ and for any $p>1$, the following estimations hold: 

    (I) For the continuous kernel gradient flow estimator $\hat{f}_t=\hat{f}_t^{con}$, when $n$ is sufficiently great, we have 
    \begin{equation}\label{upper bound estimation for con} 
        \|\hat{f}_t^{con}-f^*\|_\infty\leq Ct^{-\frac{s-\alpha}{2}}+C\sqrt{\frac{p\log n}{n}}t^{\frac{\alpha}{2}}
    \end{equation} 
    with probability $1-\O(n^{-p})$, where $\alpha=\frac{1}{\beta}+\varepsilon$, and the constant $C>0$ depends only on $\varepsilon$, $d$, $\beta$, $L_k$, $h$, $s$, $R$, $\sigma$ and $L$; 

    (II) For the discrete kernel gradient flow estimator $\hat{f}_t=\hat{f}_t^{dis}$, if we additionally assume that Assumption \ref{learning rate assumption} holds as well , then 
    \begin{equation}\label{upper bound estimation for dis} 
        \|\hat{f}_t^{dis}-f^*\|_\infty\leq Ct^{-\frac{s-\alpha}{2}}+C\sqrt{\frac{p\log n}{n}}t^{\frac{\alpha}{2}}
    \end{equation}
    with probability $1-\O(n^{-p})$ when $n$ is sufficiently great, where the constant $C>0$ depends only on $\varepsilon$, $d$, $\beta$, $\kappa$, $L_k$, $h$, $s$, $R$, $\sigma$, $L$ and $\eta$. 
\end{theorem}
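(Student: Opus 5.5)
We follow the standard bias–variance route for spectral algorithms, using the embedding norm $\|\cdot\|_{[\H]^\alpha}$ as the bridge to the supremum norm: since $\alpha=\frac1\beta+\varepsilon>\alpha_0$, Assumption \ref{embedding index assumption} gives $M_\alpha<\infty$ and hence $\|g\|_\infty\le M_\alpha\|g\|_{[\H]^\alpha}$. Write $\lambda=1/t$, $T_\lambda=T+\lambda$, $T_{X\lambda}=T_X+\lambda$, and $g=Tf^*=\E\hat g$. We decompose
\[
\hat f_t-f^*=\underbrace{\bigl(\varphi_t(T_X)T_X f^*-f^*\bigr)}_{\text{bias}}+\underbrace{\varphi_t(T_X)\bigl(\hat g-T_Xf^*\bigr)}_{\text{variance}},
\]
and the bias term equals $-\psi_t(T_X)f^*$. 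Both pieces are bounded in $[\H]^\alpha$ norm via $\|T^{\frac{1-\alpha}{2}}h\|_\H$, which is equivalent to the $[\H]^\alpha$ norm. The filter facts we use are $\sup_z z^{r}\psi_t(z)\lesssim t^{-r}$ for $r\in[0,1]$ and $\sup_z (z+\lambda)\varphi_t(z)\lesssim 1$. These are elementary for $e^{-tz}$, and for the discrete case they follow from Assumption \ref{learning rate assumption} because $0\le 1-\eta z\le 1$ on $[0,\kappa^2]$.

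\textbf{Variance.} We bound
\[
\|T^{\frac{1-\alpha}2}\varphi_t(T_X)(\hat g-T_Xf^*)\|_\H\le \|T^{\frac{1-\alpha}2}T_\lambda^{-\frac{1-\alpha}{2}}\|\,\|T_\lambda^{\frac{1-\alpha}2}T_{X\lambda}^{-\frac{1-\alpha}2}\|\,\|T_{X\lambda}^{\frac{1-\alpha}2}\varphi_t(T_X)T_{X\lambda}^{\frac12}\|\,\|T_{X\lambda}^{-\frac12}T_\lambda^{\frac12}\|\,\|T_\lambda^{-\frac12}(\hat g-T_Xf^*)\|_\H .
\]
We handle the factors as follows:
\begin{itemize}
\item The middle filter factor is $\lesssim \lambda^{-\alpha/2}=t^{\alpha/2}$.
\item The operator ratio factors are $O(1)$ once the concentration $\|T_\lambda^{-1/2}(T-T_X)T_\lambda^{-1/2}\|\le\frac12$ holds. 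This follows from a Bernstein inequality for operators with $\sup_x\|T_\lambda^{-1/2}k_x\|^2_\H\le M_\alpha^2\lambda^{-\alpha}$ and requires $n\gtrsim t^{\alpha}\log n$, which is true for $t=n^\theta$ because $\theta\alpha<1$ by the constraint $\varepsilon<\frac1\theta-\frac1\beta$.
\item The last factor is a sum of i.i.d.\ $\H$-valued variables $\xi_i=T_\lambda^{-1/2}k_{x_i}(y_i-f^*(x_i))$. By Assumption \ref{noise moment}, Bernstein in Hilbert space gives a bound $\sqrt{p\log n\,\mathcal N(\lambda)/n}$ plus a lower-order term, where $\mathcal N(\lambda)\asymp\lambda^{-1/\beta}$ by Assumption \ref{edr}.
\end{itemize}
Multiplying these out produces an extra factor $t^{1/(2\beta)}$, which is too large. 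The fix is to avoid the crude chain for the variance and bound the supremum directly. For fixed $x$, the quantity $\varphi_t(T_X)(\hat g-T_Xf^*)(x)=\langle \varphi_t(T_X)k_x,\hat g-T_Xf^*\rangle_\H$ is, conditionally on $X$, a weighted sum of noises. Its conditional variance is $\frac1n\sum_j(\varphi_t(T_X)k_x(x_j))^2\sigma^2/n\lesssim \frac1n\|T_{X\lambda}^{1/2}\varphi_t(T_X)k_x\|_\H^2$, and after replacing $T_X$ by $T$ using the concentration above this is $\lesssim \frac1n\|T_\lambda^{-1/2}k_x\|^2_\H\le \frac{M_\alpha^2}{n}t^{\alpha}$. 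A conditional Bernstein bound then gives, pointwise, a deviation $\sqrt{p\log n/n}\,t^{\alpha/2}$ with probability $1-O(n^{-p'})$.

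\textbf{Uniformity.} We take a union bound over an $n^{-K}$-net of $\X\subset\R^d$, which has polynomially many points and costs only a constant change in $p$. Between net points we control increments by Assumption \ref{Holder assumption}: $\|k_x-k_{x'}\|_\H^2\le 2L_k|x-x'|^{2h}$, and all operators have norm at most polynomial in $t$ and $n$, so off-net errors are $O(n^{-1})$. Noise values are controlled by $\max_j|\varepsilon_j|\lesssim\log n$ with high probability.

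\textbf{Bias.} We write $\psi_t(T_X)f^*=\psi_t(T)f^*+(\psi_t(T_X)-\psi_t(T))f^*$. The first term is deterministic and satisfies $\|\psi_t(T)f^*\|_{[\H]^\alpha}\le\sup_i\lambda_i^{\frac{s-\alpha}2}\psi_t(\lambda_i)R\lesssim t^{-\frac{s-\alpha}2}$, using $s>\alpha$. For the difference we use the analytic-extension machinery referenced in the paper: represent $\psi_t(T_X)-\psi_t(T)=\frac1{2\pi i}\oint_\Gamma\psi_t(z)\bigl[(z-T_X)^{-1}-(z-T)^{-1}\bigr]dz$ on a contour around $[0,\kappa^2]$ staying at distance $\asymp\lambda$ from the origin, which is where Assumption \ref{learning rate assumption} is needed in the discrete case. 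The resolvent identity together with the concentration bound then gives a contribution of order $t^{-\frac{s-\alpha}{2}}$ times a $o(1)$ factor, plus a term of order $\sqrt{\log n/n}\,t^{\alpha/2}$. When $s>1$, $f^*\in\H$ and this is direct; when $s<1$ we need $\|f^*\|_\infty\le M_\alpha R$ to apply Bernstein to $(T-T_X)f^*$.

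\textbf{Main obstacle.} We expect the hard part to be the misspecified bias piece when $\alpha_0<s<1$, together with obtaining the sharp $t^{\alpha/2}$ rather than $t^{(\alpha+1/\beta)/2}$ in the variance. Our first attempt for both is the pointwise-then-net argument with contour-integral control of the filters.
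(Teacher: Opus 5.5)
\textbf{Verdict: genuine gap in the misspecified bias case.}

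Your variance argument is the paper's own (Lemma~\ref{1st term of variance}). You condition on $X$, apply Bernstein at a fixed $x$ using $\frac1n\sum_j(\varphi_t(T_X)k_x(x_j))^2\lesssim t^{\alpha}$, obtained through $T_{X\lambda}^{\pm1/2}$ exactly as in Lemma~\ref{operated basis function empirical norm bound}, and finish with a polynomial net via H\"older continuity. You should also state the range bound $|\varphi_t(T_X)k_x(x_j)|\lesssim t^{\alpha}$: Bernstein needs it, and the same factorization gives it. The deterministic bias $\psi_t(T)f^*$ is handled as in the paper. For $s\ge 1$ your contour argument also works, because $T_\lambda^{1/2}(T-z)^{-1}f^*\in\H$ and you can use the $z$-free concentration.

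\textbf{The gap.} It is the random bias piece $(\psi_t(T_X)-\psi_t(T))f^*$ for $\alpha_0<s<1$, which you flag yourself. The mechanism you name does not deliver the claimed $o(1)\cdot t^{-\frac{s-\alpha}{2}}$.
\begin{enumerate}
\item In the resolvent identity the random operator acts on $h_z=(T-z)^{-1}f^*$, not on $f^*$. Since $h_z\notin\H$ when $s<1$, the $z$-free bound on $\|T_\lambda^{-1/2}(T-T_X)T_\lambda^{-1/2}\|_{\H}$ cannot be used.
\item Suppose you apply Hilbert-space Bernstein to $T_\lambda^{-1/2}(T-T_X)h_z$ controlling $h_z$ only in sup-norm, the analogue of $\|f^*\|_\infty\le M_\alpha R$. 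The variance proxy is then $\mathcal N_1(\lambda)\|h_z\|_\infty^2\asymp t^{1/\beta}\|h_z\|_\infty^2$. After the factor $\|T^{\frac{1-\alpha}{2}}(T_X-z)^{-1}T_\lambda^{1/2}\|_{\H}\lesssim t^{\alpha/2}$ and $\oint_{\Gamma_t}|\psi_t(z)||dz|\lesssim 1/t$, you get $t^{-\frac{s-\alpha}{2}}\sqrt{t^{\alpha+1/\beta}\log n/n}$. This is the same $t^{\frac{1}{2\beta}}$ loss you removed from the variance.
\item When $s-\alpha<\frac1\beta$ and $\theta(\alpha+\frac1\beta)>1$, both allowed by the theorem, this exceeds both terms of the target bound.
\item Applying Bernstein literally to $(T-T_X)f^*$, i.e.\ bounding $\varphi_t(T_X)(T-T_X)f^*$ through your $\H$-chain, is worse: it contributes at best $\sqrt{t^{\alpha+1/\beta}\log n/n}$ outright.
\end{enumerate}

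\textbf{How to close it along your lines.} Keep the smoothing supplied by the resolvent.
\begin{enumerate}
\item In Bernstein, use the variance proxy $\sup_x\|T_\lambda^{-1/2}k_x\|_{\H}^2\,\|h_z\|_{L^2}^2\lesssim t^{\alpha}\cdot t^{2-s}$.
\item Use the range bound $M_\alpha t^{\alpha/2}\|h_z\|_\infty\lesssim t^{\alpha/2}t^{1-\frac{s-\alpha}{2}}$.
\item Take a union bound over a polynomial-size net of $\Gamma_t$; this is needed because the random factor now depends on $z$, with a polynomial Lipschitz constant.
\end{enumerate}
This yields $t^{-\frac{s-\alpha}{2}}\bigl(\sqrt{t^{\alpha}\log n/n}+t^{\alpha}\log n/n\bigr)$.

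\textbf{A cleaner alternative without the contour.} Center at $f_t=\varphi_t(T)Tf^*\in\H$ and write
$\psi_t(T_X)f^*=\psi_t(T_X)f_t+(f^*-f_t)-\varphi_t(T_X)T_X(f^*-f_t)$.
\begin{itemize}
\item The term $\psi_t(T_X)f_t$ is handled in $\H$, using $\|f_t\|_{\H}\lesssim t^{\frac{1-s}{2}}$.
\item The last term needs the refined Bernstein only for the single function $f^*-f_t$, with $\|f^*-f_t\|_{L^2}\lesssim t^{-s/2}$ and $\|f^*-f_t\|_\infty\lesssim t^{-\frac{s-\alpha}{2}}$.
\end{itemize}

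\textbf{What the paper does instead.} In Lemma~\ref{1st term of bias} it works with operators on $[\H]^\alpha$, where $f^*=T^{\frac{s-\alpha}{2}}u^*$ lives for every $s>\alpha$. For $s-\alpha\le 2$ it bounds $\psi_t(T_X)f^*$ in one chain, $\|\psi_t(T_X)T_{X\lambda}\|\,\|T_{X\lambda}^{-1}T_\lambda\|\,\|T_\lambda^{-1}T^{\frac{s-\alpha}{2}}\|\,R$. For $s-\alpha>2$ it uses $\|T_X^r-T^r\|_{[\H]^\alpha}$. The contour appears only in the second-order expansion (Lemma~\ref{1st term of bias 2}).

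That chain applies scalar spectral bounds to functions of $T_X$ on $[\H]^\alpha$, where $T_X$ is not self-adjoint, so it needs extra justification. Your $\H$-based framework, completed as above, avoids this issue.
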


The proof of this theorem can be found in Appendix \ref{upper bound proof section}. 

\begin{remark} 
    For brevity, here and throughout the paper, ``depending on $\varepsilon$'' refers to dependence on both $\varepsilon$ and $M_{\alpha_0+\varepsilon}$. 
\end{remark} 

~ 

Based on Theorem \ref{upper bound}, by balancing the terms on the right-hand sides of (\ref{upper bound estimation for con}) and (\ref{upper bound estimation for dis}), we immediately obtain the following result: 

\begin{corollary}\label{best upper bound} 
    \textnormal{\textbf{(Best convergence rate)}} Under the same settings of Theorem \ref{upper bound}, if we select 
    \begin{equation}t_{opt}\asymp n^{\frac{1}{s}},\end{equation} 
    then for any $\varepsilon>0$ sufficiently small, when $n$ is sufficiently great, we have 
    \begin{equation}\label{best upper bound estimation}
        \E\|\hat{f}_{t_{opt}}^{con}-f^*\|_\infty\leq C\cdot n^{-\frac{s\beta-1}{2s\beta}+\varepsilon},\quad \E\|\hat{f}_{t_{opt}}^{dis}-f^*\|_\infty\leq C\cdot n^{-\frac{s\beta-1}{2s\beta}+\varepsilon},
    \end{equation} 
    where the constant $C>0$ depends only on $\varepsilon$, $d$, $\kappa$, $\beta$, $L_k$, $h$, $s$, $R$, $\sigma$ and $L$ for continuous kernel gradient flow, and on $\eta$ additionally for discrete kernel gradient flow. 
\end{corollary}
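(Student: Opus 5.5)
The corollary follows from Theorem \ref{upper bound} by choosing $t$ to balance the two terms and then integrating the resulting tail bound.

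\textbf{Balancing.} With $t=n^\theta$ and $\alpha=\frac{1}{\beta}+\varepsilon'$ (a small auxiliary $\varepsilon'$), the two terms are $t^{-\frac{s-\alpha}{2}}=n^{-\theta\frac{s-\alpha}{2}}$ and $\sqrt{p\log n/n}\,t^{\alpha/2}\asymp \sqrt{\log n}\,n^{-\frac12+\frac{\theta\alpha}{2}}$. They are balanced at $\theta s=1$, i.e.\ $t_{opt}\asymp n^{1/s}$. The conditions of Theorem \ref{upper bound} then need checking. Since $s>\alpha_0=\frac1\beta$, we have $\theta=\frac1s\in(0,\beta)$. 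The requirement $\varepsilon'<\min\{s-\frac1\beta,\frac1\theta-\frac1\beta\}=s-\frac1\beta$ holds for small $\varepsilon'$. The theorem is stated for $t=n^\theta$; if $t_{opt}$ is only comparable to $n^{1/s}$, the constants simply get absorbed. At this $t$, both terms are at most $C\sqrt{\log n}\,n^{-\frac{s-\alpha}{2s}}$, and
\[
\frac{s-\alpha}{2s}=\frac{s\beta-1}{2s\beta}-\frac{\varepsilon'}{2s}.
\]
Choosing $\varepsilon'$ small relative to $\varepsilon$, with the factor $\sqrt{\log n}$ absorbed into $n^{\varepsilon/2}$, the high-probability bound becomes $\|\hat f_{t_{opt}}-f^*\|_\infty\le C n^{-\frac{s\beta-1}{2s\beta}+\varepsilon/2}$ with probability $1-\O(n^{-p})$, for both the continuous and discrete flows. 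For the discrete flow, Assumption \ref{learning rate assumption} is also used.

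\textbf{From tail bound to expectation.} Let $E_n$ be the good event. On $E_n$ the bound above holds. On $E_n^c$ we need a crude deterministic or polynomial bound on $\|\hat f_t-f^*\|_\infty$, which I get from the spectral representation. The filter satisfies $|z\varphi_t(z)|\le 1$ and $\varphi_t(z)\le t$ on $[0,\kappa^2]$; in the discrete case this uses $\eta<1/(2\kappa^2)$ so that $|1-\eta z|\le1$. Hence
\[
\|\hat f_t\|_\infty\le\kappa\|\hat f_t\|_\H\le\kappa\|\varphi_t(T_X)\hat g\|_\H\le \kappa^2 t\cdot\frac1n\sum_j|y_j|,
\]
which gives $\E\bigl[\|\hat f_t\|_\infty^2\bigr]\le C t^2$ from the noise moments in Assumption \ref{noise moment} and the bound $\|f^*\|_\infty\le M_\alpha R$. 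By Cauchy--Schwarz,
\[
\E\bigl[\|\hat f_t-f^*\|_\infty\mathbf 1_{E_n^c}\bigr]\le C(t+1)\,\P(E_n^c)^{1/2}\le Cn^{1/s}n^{-p/2}.
\]
Taking $p$ large, for example $p>2/s+1$, makes this negligible compared with the main rate.

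\textbf{Main obstacle.} The delicate step is the crude bound on $E_n^c$, because the constant in Theorem \ref{upper bound} must be independent of $p$ apart from the explicit $\sqrt p$ factor. Since we fix one $p$, this is harmless. The other point to watch is the tracking of $\varepsilon'$ against $\varepsilon$ when absorbing the logarithmic factor.
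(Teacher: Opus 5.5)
Your proposal is correct and follows the paper's route: apply Theorem \ref{upper bound} at $t_{opt}\asymp n^{1/s}$ to get the high-probability rate, then handle the exceptional event with a crude $\O(t)$ bound on $\|\hat f_t-f^*\|_\infty$ (the paper's Lemma \ref{general bound for KGF estimator}). The one difference is in your favor: the paper bounds the exceptional-event contribution by $\O(n^{-10})\cdot Ct$, which implicitly replaces $\E[\|\hat f_t-f^*\|_\infty\mathbf{1}_{E_n^c}]$ by $\P(E_n^c)\,\E\|\hat f_t-f^*\|_\infty$, whereas your second-moment bound plus Cauchy--Schwarz makes this step rigorous, at the price of taking $p>2/s+1$ instead of a fixed $p=10$.
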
 

The proof of this corollary is deferred to Appendix \ref{best upper bound proof}.

It is natural to ask whether the convergence rate we have established in Corollary \ref{best upper bound} is optimal. Accordingly, in the case of Gaussian noises, we prove the following lower bound result, which shows that the convergence rate (\ref{best upper bound estimation}) (nearly) matches the minimax lower bound rate. 

\begin{theorem}\label{lower bound} 
    \textnormal{\textbf{(Minimax lower bound)}} Suppose that Assumption \ref{edr}, \ref{Holder assumption} and \ref{embedding index assumption} hold. We further assume that the marginal distribution $\mu$ is a Radon measure, and $\varepsilon_j=y_j-f^*(x_j)$ are independent Gaussian noise: $\varepsilon_j|x_j\sim N(0,\sigma^2 I_d)$ for some $\sigma>0$ (in this case, Assumption \ref{noise moment} is satisfied). Define 
    \begin{equation} 
        \mathcal{B}(R)=\{f\in[\H]^s:\,\|f\|_{[\H]^s}\leq R\}
    \end{equation} 
    (Note that $f^*\in\mathcal{B}(R)$ if and only if Assumption \ref{source condition} is satisfied). Then we have 
    \begin{equation} 
        \inf_{\hat{f}}\sup_{f^*\in\mathcal{B}(R)}\E\left\|\hat{f}-f^*\right\|_{\infty}\geq Cn^{-\frac{s\beta-1}{2s\beta}}
    \end{equation} 
    for some constant $C>0$ depending only on $\sigma$, $d$, $\beta$, $L_k$, $h$, $s$, $R$ and $L$, where the infimum is taken over all possible learning methods $\hat{f}$. 
\end{theorem}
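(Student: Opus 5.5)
We will prove the lower bound by a standard Fano-type (or Assouad) argument with localized bump functions. The idea is to construct a large packing of $\mathcal{B}(R)$ whose elements are pairwise far apart in $\|\cdot\|_\infty$ but close in Kullback--Leibler divergence of the induced data distributions. With Gaussian noise, for $f,g$ the KL divergence between the laws of $\D_n$ is $\frac{n}{2\sigma^2}\|f-g\|_{L^2(\mu)}^2$. Fano's inequality then says the following. If $f_0,\dots,f_N\in\mathcal{B}(R)$ satisfy $\|f_j-f_k\|_\infty\ge 2\delta$ and $\frac{n}{2\sigma^2}\max_j\|f_j-f_0\|_{L^2(\mu)}^2\le \frac{1}{8}\log N$, then $\inf_{\hat f}\sup_j \E\|\hat f-f_j\|_\infty\ge c\delta$. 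The target is therefore $\delta\asymp n^{-\frac{s\beta-1}{2s\beta}}$, obtained at the frequency scale $m\asymp n^{1/(s\beta)}$ (equivalently $\lambda_m\asymp n^{-1/s}$, matching $t_{opt}$).

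\textbf{Construction.} Fix $m$ and consider the spectral band $\{m,\dots,2m\}$. For a point $x\in\X$ define the normalized band-limited ``peak'' function
\[
h_x=\frac{\sum_{i=m}^{2m}\lambda_i^{s}e_i(x)e_i}{\big(\sum_{i=m}^{2m}\lambda_i^{s}e_i(x)^2\big)^{1/2}} .
\]
By construction $\|h_x\|_{[\H]^s}^2=\sum_{i=m}^{2m}\lambda_i^{s}e_i(x)^2/\sum_{i}\lambda_i^{s}e_i(x)^2\cdot(\cdots)$, which with this normalization equals $1$. Moreover $h_x(x)=\big(\sum_{i=m}^{2m}\lambda_i^{s}e_i(x)^2\big)^{1/2}$ and $\|h_x\|_{L^2}^2=\sum_{i=m}^{2m}\lambda_i^{2s}e_i(x)^2/\sum_{i=m}^{2m}\lambda_i^s e_i(x)^2\lesssim \lambda_m^{s}$.

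Integrating $\sum_{i=m}^{2m}e_i(x)^2$ against $\mu$ gives $m$, so there are points where $\sum_{i=m}^{2m}\lambda_i^s e_i(x)^2\gtrsim m\lambda_m^s= m^{1-s\beta}$. This yields a peak height $\gtrsim m^{-\frac{s\beta-1}{2}}\asymp n^{-\frac{s\beta-1}{2s\beta}}$, exactly the desired $\delta$, while the $L^2$ norm is $\lambda_m^{s/2}\asymp m^{-s\beta/2}$. Rescaling by $R$ keeps $Rh_x\in\mathcal{B}(R)$.

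\textbf{The reduction.} A simple two-point (Le Cam) test does not suffice: $n\|h_x\|_{L^2}^2\asymp n m^{-s\beta}\asymp1$ only balances at height $m^{-s\beta/2}$ times a factor, and the sup-norm gap requires a multi-hypothesis gain. Instead we use $N\asymp m$ hypotheses $f_j=R\,h_{x_j}$, with $x_1,\dots,x_N$ points at which $h_{x_j}$ peaks and for which $|h_{x_k}(x_j)|$ is small relative to the peak when $j\neq k$. This gives $\|f_j-f_k\|_\infty\ge \delta$, while $\mathrm{KL}\lesssim n m^{-s\beta}\lesssim \log N$ after choosing $m\asymp (n/\log n)^{1/(s\beta)}$.

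That choice costs a $\log$ factor. To avoid it and match the stated rate exactly, I would instead use hypotheses $f_\omega=\sum_j\omega_j\gamma\, g_j$ with $\omega\in\{0,1\}^N$ and bumps $g_j$ that have disjoint-ish supports, together with Fano over a Varshamov--Gilbert subset. Alternatively, one can accept the $\log$ with care: the sup-norm gain from $\log N$ appears through $\sqrt{\log N}$ in the separation, which cancels it.

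\textbf{Main obstacle.} The hardest step is the \emph{localization}: producing $\asymp m$ functions in $\mathcal{B}(R)$, each with $L^2$ norm $\lesssim m^{-s\beta/2}\sqrt{\cdot}$ and sup-norm $\gtrsim m^{-(s\beta-1)/2}$, that are mutually well separated in sup norm. For a general kernel there are no explicit bumps, so I would exploit Hölder continuity (Assumption \ref{Holder assumption}) and the embedding index (Assumption \ref{embedding index assumption}). The plan has three parts:

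1. The function $x\mapsto\sum_{i\le 2m}\lambda_i^{s}e_i(x)^2$ is controlled via $M_\alpha$ with $\alpha=\frac1\beta+\varepsilon$. This gives an upper bound on the peaks of order $m^{(1-s\beta)/2+\varepsilon'}$, so the level sets where the peak is near maximal carry $\mu$-mass bounded below.
2. Hölder continuity of the kernel makes $h_x(\cdot)$ equicontinuous at scale $r\asymp m^{-c}$, so the near-maximal peaks persist on balls of radius $r$.
3. A greedy $r$-separated selection of points inside the high-peak set, using that $\mu$ is Radon, gives polynomially many hypotheses. Their cross-correlations $\langle h_{x_j},h_{x_k}\rangle_{L^2}$ are controlled by approximate orthogonality of the band kernel.

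This yields $\log N\asymp\log m$, which is all Fano needs. If this localization turns out too delicate, the fallback is Assouad's lemma on a cube, which tolerates non-disjoint supports as long as the pairwise KL of neighboring vertices stays $O(1)$.
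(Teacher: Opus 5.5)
\paragraph{Genuine gap: the hypothesis construction.}
The step you made central is not established. Fano needs $N\to\infty$ hypotheses $f_j=R\,h_{x_j}$ with $|h_{x_k}(x_j)|\le\frac12 h_{x_j}(x_j)$ for $j\neq k$. Writing $K_m(x,y)=\sum_{i=m}^{2m}\lambda_i^s e_i(x)e_i(y)$ and $N_x=K_m(x,x)^{1/2}$, this means $|K_m(x_j,x_k)|\le\frac12 N_{x_j}N_{x_k}$, and none of your three steps produces it. Hölder continuity of $k$ controls how regular $h_x$ is, not how it decays away from $x$. The assumptions also impose no spatial structure on the eigenfunctions, so a greedy $r$-separated choice of points says nothing about $K_m(x_j,x_k)$. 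Moreover, the quantity you propose to control, $\langle h_{x_j},h_{x_k}\rangle_{L^2}$, is not the relevant one; what matters is the $[\H]^s$ inner product $K_m(x_j,x_k)/(N_{x_j}N_{x_k})$. The Assouad fallback has the same problem, and in addition $\|\cdot\|_\infty$ does not split over coordinates. (Drawing the $x_j$ at random from $\mu$ restricted to $\{x: K_m(x,x)\ge\frac12\int K_m(z,z)\,d\mu(z)\}$ would work. This uses $\iint K_m(x,y)^2\,d\mu\,d\mu=\sum_{i=m}^{2m}\lambda_i^{2s}$ together with the embedding-index bound on $\sup_x K_m(x,x)$, and it would even give the stronger $(n/\log n)$ rate. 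But that is not your plan, and it is not needed.) You also have the logarithm backwards. A lower bound of order $(n/\log n)^{-\frac{s\beta-1}{2s\beta}}$ is \emph{larger} than the target and would imply the theorem, so there is nothing to avoid, and the Varshamov--Gilbert detour is unnecessary.

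\paragraph{The fix: Le Cam's two-point method.}
The missing idea is that the method you dismissed already finishes the proof with the objects you built. Your reason for dismissing it (that it only reaches height $m^{-s\beta/2}$) confuses the $L^2$ norm of $h_x$ with its peak value. Take $m\asymp n^{1/(s\beta)}$ with a large enough constant, and pick $x^*$ with $N_{x^*}^2\ge\int K_m(x,x)\,d\mu(x)=\sum_{i=m}^{2m}\lambda_i^s\gtrsim m^{1-s\beta}$. Use the hypotheses $f_0=0$ and $f_1=R\,h_{x^*}\in\mathcal{B}(R)$. Then
\[
\mathrm{KL}\bigl(P_{f_1}^{\otimes n}\,\|\,P_{f_0}^{\otimes n}\bigr)=\frac{nR^2}{2\sigma^2}\|h_{x^*}\|_{L^2(\mu)}^2\le\frac{nR^2\lambda_m^s}{2\sigma^2}\le\frac18 .
\]
Le Cam's lemma with Pinsker's inequality then gives
\[
\inf_{\hat f}\max_{j=0,1}\E\|\hat f-f_j\|_\infty\ge\frac{3}{16}\|f_1\|_\infty\ge\frac{3}{16}R\,N_{x^*}\gtrsim m^{-\frac{s\beta-1}{2}}\asymp n^{-\frac{s\beta-1}{2s\beta}} .
\]
The gain you attributed to many hypotheses is just the ratio $h_{x^*}(x^*)/\|h_{x^*}\|_{L^2}\gtrsim\sqrt m$ of a single peaked function.

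\paragraph{Relation to the paper.}
This is the paper's proof written in primal form. The paper invokes the I-complexity bound of \citet{duality} (Theorem \ref{IBC for minimax estimation}), which is exactly this two-point argument applied to any $f$ with $\|f\|_{[\H]^s}\le1$ and $\|f\|_{L^2}\le\sigma/\sqrt n$. It then evaluates the I-complexity through the duality formula with $\phi(x,i)=e_i(x)$ and $\pi(i)\propto\lambda_i^s$, testing against Dirac measures $\delta_z$. The resulting tail $\sum_{j>m}\lambda_j^s e_j(z)^2$ has $\mu$-average $\sum_{j>m}\lambda_j^s$, which is your peak-height computation. The corresponding primal function is your $h_z$ with the band $[m,2m]$ replaced by the tail $j>m$.
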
 
The proof of this theorem is defered to Appendix \ref{lower bound proof section}

\begin{remark}\label{duality limit discussion} 
    By setting $s=1$, Theorem \ref{lower bound} recovers the minimax lower bound established in \citet{duality} (see Theorem 7.1 of \citealt{duality} and its remarks). In \citet{duality}, the embedding index of the kernel is implicitly assumed to be no greater than $1/2$, as their proof relies on the continuity of the feature map $\phi(x,x')=\sum_{i=1}^\infty\sqrt{\lambda_i}e_i(x)e_i(x')$ (see Lemma 4.2 therein). 
\end{remark} 

~

Before concluding this section, we present an important corollary of Theorem \ref{upper bound}, namely the second-order estimation for the kernel gradient flow estimator:  

\begin{theorem}\label{Bahadur representation} 
    Suppose that Assumptions \ref{noise moment}, \ref{edr}, \ref{Holder assumption}, \ref{embedding index assumption}, \ref{source condition} and \ref{learning rate assumption} hold. By choosing $t=n^\theta$ for $\theta\in(0,\beta)$, for any $\varepsilon>0$ sufficiently small such that $0<\varepsilon<\min\{s-\frac{1}{\beta},\frac{1}{\theta}-\frac{1}{\beta}\}$, when $n$ is sufficiently great, the following estimation holds for both the continuous kernel gradient flow $\hat{f}_t=\hat{f}_t^{con}$ and the discrete kernel gradient flow estimator $\hat{f}_t=\hat{f}_t^{dis}$: 
    \begin{equation}
        \left\|\hat{f}_t-f_t-\frac{1}{n}\sum_{i=1}^n\varphi_t(T)k_{x_i}(\cdot)\varepsilon_i\right\|_\infty\leq C\sqrt{\frac{t^\alpha\log n}{n}}\cdot t^{-\frac{s-\alpha}{2}}+C\frac{t^\alpha\log n}{n},
    \end{equation} 
    with probability $1-\O(n^{-10})$, where $\alpha=1/\beta+\varepsilon$, $\varphi_t$ is the filter function defined in Definition \ref{filter function def}, and the constant $C>0$ depends only on $\varepsilon$, $\kappa$, $d$, $\beta$, $s$, $R$, $\sigma$, $L$, $h$ and $L_k$ for continuous kernel gradient flow, and on $\eta$ additionally for discrete kernel gradient flow. 
\end{theorem}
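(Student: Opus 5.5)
We write $f_t=\varphi_t(T)Tf^*$ for the population flow; here $T$ acts on $\H$ through $Tf=\int k_x f(x)\,d\mu(x)$, and $g=Tf^*=\E[k_x y]$. The plan is to split $\hat f_t-f_t$ into a linear noise term plus remainders, using the spectral representation $\hat f_t=\varphi_t(T_X)\hat g$. Write $\hat g=\frac1n\sum_i k_{x_i}f^*(x_i)+\frac1n\sum_i k_{x_i}\varepsilon_i=T_Xf^*+\hat\xi$, so that $\hat f_t=\varphi_t(T_X)T_Xf^*+\varphi_t(T_X)\hat\xi$. Then
\begin{equation*}
\hat f_t-f_t-\varphi_t(T)\hat\xi=\bigl[\varphi_t(T_X)T_Xf^*-\varphi_t(T)Tf^*\bigr]+\bigl[\varphi_t(T_X)-\varphi_t(T)\bigr]\hat\xi .
\end{equation*}
Since $z\varphi_t(z)=1-\psi_t(z)$, the first bracket equals $\psi_t(T)f^*-\psi_t(T_X)f^*=(\psi_t(T)-\psi_t(T_X))f^*$. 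Both brackets are differences of an operator function evaluated at $T_X$ and at $T$. I will treat them with the resolvent/contour-integral representation of Appendix \ref{analytic section}: $\varphi_t(A)=\frac{1}{2\pi i}\oint_\Gamma\varphi_t(z)(z-A)^{-1}dz$, and likewise for $\psi_t$. This gives
\begin{equation*}
\varphi_t(T_X)-\varphi_t(T)=\frac{1}{2\pi i}\oint_\Gamma\varphi_t(z)(z-T_X)^{-1}(T_X-T)(z-T)^{-1}dz .
\end{equation*}
Assumption \ref{learning rate assumption} guarantees that $\varphi_t^{dis}$ is analytic on a neighbourhood of the contour, so the discrete case follows verbatim.

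To control the sup-norm, I use $\|f\|_\infty\le M_\alpha\|T^{\frac{1-\alpha}{2}}f\|_\H$, i.e.\ the embedding of $[\H]^\alpha$. Along the contour (taken at distance $\asymp t^{-1}$ from the origin, as in the analytic-spectral literature), I insert $T_\lambda^{1/2}=(T+\lambda)^{1/2}$ with $\lambda=t^{-1}$. The standard concentration bound, which is the ingredient already used in proving Theorem \ref{upper bound}, gives
\begin{equation*}
\|T_\lambda^{-1/2}(T-T_X)T_\lambda^{-1/2}\|\lesssim\sqrt{\frac{t^\alpha\log n}{n}}
\end{equation*}
with probability $1-\O(n^{-10})$, since $\mathcal N_\infty(\lambda)\le M_\alpha^2\lambda^{-\alpha}$. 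When $t^\alpha\log n/n\to0$ (this is where $\varepsilon<1/\theta-1/\beta$ enters), this also yields $\|T_\lambda^{1/2}(z-T_X)^{-1}T_\lambda^{1/2}\|\lesssim$ the corresponding bound for $T$, uniformly on $\Gamma$. For the first bracket, the $\psi$-difference applied to $f^*\in[\H]^s$ gives $\|T_\lambda^{-1/2}(T-T_X)T_\lambda^{-1/2}\|$ times the bias-type factor $\|T_\lambda^{(1-\alpha)/2}\cdots f^*\|\lesssim t^{-(s-\alpha)/2}$. Here I expect to need the sharper bound on $(T-T_X)f^*$ directly: Bernstein on $\|T_\lambda^{-1/2}(T_X-T)(f^*-f_t)\|$ together with $\|f^*\|_\infty$-type control. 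This produces the first term $\sqrt{t^\alpha\log n/n}\,t^{-(s-\alpha)/2}$. For the second bracket, I combine the same operator-difference bound with $\|T_\lambda^{-1/2}\hat\xi\|_\H\lesssim\sqrt{t^\alpha\log n/n}$, which follows from Bernstein for Hilbert-valued variables using Assumption \ref{noise moment}. This gives the product term $t^\alpha\log n/n$. The contour integral of $|\varphi_t(z)|\,|z|^{\cdot}$ contributes only constants after the $t$-scaling, since $|\varphi_t|\lesssim t$ near the origin and $\lesssim|z|^{-1}$ away from it.

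The main obstacle is passing from $\H$-norm/$[\H]^\alpha$ bounds to the pointwise sup-norm while keeping exactly the factor $t^{\alpha/2}$. Concretely, I must bound $\sup_x|\langle k_x,\Delta\rangle_\H|$, where $\Delta$ is the remainder, by $\sup_x\|T_\lambda^{-1/2}k_x\|_\H\cdot\|T_\lambda^{1/2}\Delta\|$, and $\sup_x\|T_\lambda^{-1/2}k_x\|^2_\H\le M_\alpha^2\lambda^{-\alpha}$. The delicate step is to arrange the operator factors inside the contour integral so that only one such $\lambda^{-\alpha/2}$ appears per term and the other factor gains the needed decay. If a direct operator-norm argument loses a factor, I will instead fix $x$, obtain pointwise Bernstein bounds on a $n^{-c}$-net of $\X$, and extend uniformly using the Hölder continuity of Assumption \ref{Holder assumption} and a union bound, which costs only the $\log n$ factor.
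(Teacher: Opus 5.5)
Your decomposition $\hat f_t-f_t-\varphi_t(T)\hat\xi=(\psi_t(T)-\psi_t(T_X))f^*+(\varphi_t(T_X)-\varphi_t(T))\hat\xi$ and the resolvent representation are exactly the paper's; the second piece is its $\frac1n\sum_i\zeta_i\varepsilon_i$. The step that turns the noise piece into a sup-norm bound, however, fails as written.

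The product $\|T_\lambda^{-1/2}(T_X-T)T_\lambda^{-1/2}\|\cdot\|T_\lambda^{-1/2}\hat\xi\|_\H$ controls $\|T_\lambda^{1/2}\Delta_2\|_\H$, not $\|\Delta_2\|_\infty$. Passing to the sup norm costs the factor $\sup_x\|T_\lambda^{-1/2}k_x\|_\H\le M_\alpha t^{\alpha/2}$. Moreover, $\E\|T_\lambda^{-1/2}\hat\xi\|_\H^2=\frac1n\E[\varepsilon^2\|T_\lambda^{-1/2}k_x\|_\H^2]$ is of order $\sigma^2\mathcal N_1(\lambda)/n\asymp\sigma^2t^{1/\beta}/n$. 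So the best this route can certify is of order $t^{\alpha+\frac{1}{2\beta}}\log n/n$ (with your bound on $\hat\xi$, $t^{3\alpha/2}\log n/n$), not $t^\alpha\log n/n$. No rearrangement of operator factors repairs this: Cauchy--Schwarz against $T_\lambda^{-1/2}\hat\xi$ pays for the noise in all $\asymp t^{1/\beta}$ effective directions, whereas at a fixed $x$ only one direction is seen.

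Your fallback is therefore the actual proof, and its key ingredient is missing.
At fixed $x$, write $\Delta_2(x)=\langle g_x,\hat\xi\rangle_\H=\frac1n\sum_i g_x(x_i)\varepsilon_i$ with $g_x=(\varphi_t(T_X)-\varphi_t(T))k_x$, so that $g_x(x_i)=\zeta_i(x)$ by Lemma \ref{exchange of x and x_i}.
Conditionally on $X$ the $\varepsilon_i$ are independent, so scalar Bernstein has variance proxy $\frac{\sigma^2}{n}\|g_x\|_{L^2,n}^2$.
What must be shown is $\|g_x\|_{L^2,n}=\|T_X^{1/2}g_x\|_\H\lesssim\sqrt{t^\alpha\log n/n}\,t^{\alpha/2}$. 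This comes from running the contour factorization with $k_x$, rather than $\hat\xi$, as the last factor (Lemma \ref{empirical norm bound of zeta}), together with a crude bound on $\max_i|g_x(x_i)|$ for the range term.
Only then do the net, the H\"older estimate and the union bound give $t^\alpha\log n/n$.

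Your remark that the contour integral contributes only constants is also not automatic. With the split $\|T_\lambda^{1/2}(z-T)^{-1}T_\lambda^{1/2}\|\,\|T_\lambda^{-1/2}k_x\|_\H$ one gets $\oint_{\Gamma_t}|\varphi_t(z)||dz|\asymp\log t$, as in Lemma \ref{integral estimation along contour}. To avoid the logarithm, keep $(z-T)^{-1}k_x$ together and use $\|T_\lambda^{1/2}(z-T)^{-1}k_x\|_\H\lesssim M_\alpha(|z|+\lambda)^{-\alpha/2}$ on $\Gamma_t$.

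For the bias-difference piece, your weighted-$\H$ factorization $[T_\lambda^{1/2}(z-T_X)^{-1}T_\lambda^{1/2}][T_\lambda^{-1/2}(T_X-T)T_\lambda^{-1/2}][T_\lambda^{1/2}(z-T)^{-1}f^*]$ works in part. Combined with $\oint_{\Gamma_t}|\psi_t(z)||dz|\lesssim t^{-1}$ and the factor $t^{\alpha/2}$ from the sup-norm conversion, it gives exactly $\sqrt{t^\alpha\log n/n}\,t^{-(s-\alpha)/2}$ when $1\le s\le 2$. It needs only the symmetric concentration bound in $\H$, where $T_X$ is self-adjoint. That is sturdier than the paper's route through $\|T_\lambda^{-1}(T_X-T)\|_{[\H]^\alpha}$, whose proof applies $\E\xi^2\preceq\|\xi\|\E\xi$ to the non-self-adjoint $\xi_i=T_\lambda^{-1}T_{x_i}$.

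Outside $1\le s\le2$ the sketch does not deliver what you claim.
For $\alpha<s<1$, the last factor need not lie in $\H$. The Bernstein bound on $T_\lambda^{-1/2}(T_X-T)(f^*-f_t)$ you hint at only handles $\varphi_t(T_X)(T_X-T)(f^*-f_t)$. The remainder $[\varphi_t(T_X)-\varphi_t(T)]Tf^*+\varphi_t(T_X)(T_X-T)f_t$ is still first order for gradient flow; it cancels only for ridge regression, where $\varphi(T_X)-\varphi(T)=\varphi(T_X)(T-T_X)\varphi(T)$. So you are back to a resolvent bound, e.g.\ with $T_\lambda^{-1/2}(T_X-T)T_\lambda^{-\alpha/2}$ concentrated as an operator from $[\H]^\alpha$ to $\H$.
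For $s>2$, the factor $\|T_\lambda^{1/2}(z-T)^{-1}f^*\|_\H$ stops decaying and you saturate at $t^{-(2-\alpha)/2}$.

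Neither your argument nor the paper's yields the unrestricted exponent. Lemma \ref{1st term of bias 2} gives only $t^{-\min\{s-\alpha,2\}/2}$, which is the form actually proved in Theorem \ref{second-order estimation theorem}. Heuristically the saturation is genuine: for $f^*=e_1$, the perturbation $T_X-T$ leaks $e_1$ into empirical eigendirections with eigenvalue below $1/t$, where $\psi_t\approx 1$. This happens at an amplitude of order $n^{-1/2}t^{-1+1/(2\beta)}$ regardless of $s$.
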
 

This estimation is required and directly used in the construction of the simultaneous confidence bands for kernel gradient flow estimators in the next section. The proof of this estimation is delayed to Appendix \ref{second-order estimation proof section}. Results like Theorem \ref{Bahadur representation} are often referred to as the functional Bahadur representation \citep{Bahadur1,krr_inference2}.

\section{Inference for Kernel Gradient Flow} 

In this section, we aim to construct simultaneous confidence bands for kernel gradient flow estimators in the following form: 
\begin{definition} 
    A simultaneous confidence band of the (continuous or discrete) kernel gradient flow estimator $\hat{f}_t(x)$ is a subset of $\X\times\R$ in the following form: 
    \begin{equation} 
        \mathrm{CB}=\{(x,y):\,x\in\X,\,y\in[\hat{f}_t(x)-\hat{\lambda}(x),\hat{f}_t(x)+\hat{\upsilon}(x)]\},
    \end{equation} 
    where $\hat{\lambda}$ and $\hat{\upsilon}$ are non-negative bounded measurable functions on $\X$ depending on the training time $t$ and the data set $\D_n=\{(x_i,y_i)\}_{i=1}^n$. Given a constant $\delta\in(0,1]$, $\mathrm{CB}$ is asymptotically exact with coverage level $1-\delta$, if 
    \begin{equation} 
        \P(f^*\in\mathrm{CB})\to 1-\delta
    \end{equation} 
    in probability as $n\to\infty$. 
\end{definition} 

\subsection{Crucial quantities} 

We first introduce the key quantities involved in the construction of the confidence bands. For any $x,x'\in\X$, define  
\begin{equation}\label{kernel of GP} 
    \begin{aligned}
        &C_{t}(x,x')=\sigma^2\cdot\E_{z\sim\mu}(\varphi_t(T)k_x(z)\cdot\varphi_t(T)k_{x'}(z))\\
        =&\left\{
        \begin{aligned} 
            &\sigma^2\cdot\E_{z\sim\mu}(\varphi_t^{con}(T)k_x(z)\cdot\varphi_t^{con}(T)k_{x'}(z))\quad&&\mbox{for continuous kernel gradient flow};\\
            &\sigma^2\cdot\E_{z\sim\mu}(\varphi_t^{dis}(T)k_x(z)\cdot\varphi_t^{dis}(T)k_{x'}(z))\quad&&\mbox{for discrete kernel gradient flow},
        \end{aligned} 
        \right.
    \end{aligned}
\end{equation} 
where $T:\H\to\H$ is the integral operator defined in (\ref{integral operator definition}), and $\varphi_t$ is the filter function defined in Definition \ref{filter function def}. Define the Gaussian process $W_t(x)$, $x\in\X$ as 
\begin{equation}\label{W definition} 
    W_n(x)\sim\mathrm{GP}\left(0,\frac{C_t(x,x')}{C_t(x,x)^{\frac{1}{2}}C_t(x',x')^{\frac{1}{2}}}\right),
\end{equation} 
and let $Z_t=\|W_t(x)\|_\infty$. 

The function $C_{t}(x,x')$ is intractable in practice. When $x=x'$, we introduce the following empirical estimator of $C_{t}(x,x)$: 
\begin{equation}\label{empirical kernel of GP} 
    \begin{aligned}
        \widehat{C}_{n,t}(x,x)&=\frac{1}{n}\sum_{i=1}^n|\varphi_t(T_X)k_x(x_i)\hat{\varepsilon}_i|^2,\\
        &=\left\{
        \begin{aligned} 
            &\frac{1}{n}\sum_{i=1}^n|\varphi_t^{con}(T_X)k_x(x_i)\hat{\varepsilon}_i|^2\quad&&\mbox{for continuous kernel gradient flow};\\
            &\frac{1}{n}\sum_{i=1}^n|\varphi_t^{dis}(T_X)k_x(x_i)\hat{\varepsilon}_i|^2\quad&&\mbox{for discrete kernel gradient flow},
        \end{aligned} 
        \right.
    \end{aligned}
\end{equation} 
where $\hat{\varepsilon}_i=y_i-\hat{f}_t(x_i)$, $i=1,\dots,n$.  

For both the continuous kernel gradient flow and the discrete kernel gradient flow, the function $\widehat{C}_{n,t}(x,x)$ can be explicit computed: 

(I) For the continuous kernel gradient flow, we have 
\begin{equation}\label{empirical kernel of con GP} 
    \begin{aligned} 
        \widehat{C}_{n,t}(x,x)&=\widehat{C}_{n,t}^{con}(x,x)=\frac{1}{n}\sum_{i=1}^n|\varphi^{con}_t(T_X)k_x(x_i)\hat{\varepsilon}_i|^2\\
        &=n\left|\K(x,X)\K(X,X)^{-1}\left(I_n-\exp(-\frac{t}{n}\K(X,X))\right)\cdot\mathrm{diag}(\hat\varepsilon)\right|^2,
    \end{aligned} 
\end{equation} 
where $\hat\varepsilon=(\hat\varepsilon_1,\dots,\hat\varepsilon_n)$, $\hat\varepsilon_i=y_i-\hat{f}_t(x_i)$; 

(II) For the discrete kernel gradient flow, the function $\widehat{C}_{n,t}(x,x')=\widehat{C}_{n,t}^{dis}(x,x')$ can be computed via the following iterative procedure: For any $t_m=m\eta$, $m=0,1,2\dots$, let $\widehat{F}_{t_m}(x)=(\widehat{F}^1_{t_m}(x),\dots,\widehat{F}^n_{t_m}(x))$ be a mapping from $\X$ to $\R^n$ defined by 
\begin{equation} 
    \begin{aligned} 
        &\widehat{F}_0(x)=0;\\
        &\widehat{F}_{t_m}(x)=\widehat{F}_{t_{m-1}}(x)-\eta\cdot\frac{1}{n}\mathbb{K}(x,X)\cdot(\widehat{\mathbb{F}}_{t_{m-1}}(X)-\mathrm{diag}(\hat{\varepsilon})),\quad m=1,2,\dots
    \end{aligned} 
\end{equation} 
where 
\begin{equation} 
    \widehat{\mathbb{F}}_{t_m}(X)=(\widehat{F}_{t_m}^1(X),\dots,\widehat{F}_{t_m}^n(X)),\quad\widehat{F}_{t_m}^i(X)=(\widehat{F}_{t_m}^i(x_1),\dots,\widehat{F}_{t_m}^i(x_n))^T,
\end{equation} 
then for the discrete kernel gradient flow and $t=t_m=m\eta$, $m=1,2,\dots$, the function $\widehat{C}_{n,t}^{dis}(x,x)$ is computed as 
\begin{equation}\label{empirical kernel of dis GP}
    \widehat{C}_{n,t_m}^{dis}(x,x)=n|\widehat{F}_{t_m}(x)|^2.
\end{equation} 

Next, define 
\begin{equation}\label{W tilde definition}
    \widetilde{W}_{n,t}(x)=\frac{1}{\sqrt{C_t(x,x)}}\cdot\sqrt{n}(\hat{f}_t(x)-f_t(x)),\quad\widetilde{Z}_{n,t}=\|\widetilde{W}_{n,t}\|_\infty, 
\end{equation} 

and 
\begin{equation}\label{W hat definition} 
    \widehat{W}_{n,t}(x)=\frac{1}{\sqrt{\widehat{C}_{n,t}(x,x)}}\cdot\frac{1}{\sqrt{n}}\sum_{j=1}^n\varphi_t(T_X)k_x(x_j)\hat{\varepsilon}_j g_j,\quad \widehat{Z}_{n,t}=\|\widehat{W}_{n,t}\|_\infty
\end{equation} 
where $g_1,\dots,g_n$ are i.i.d. one-dimensional standard Gaussian random variables. $\widehat{Z}_{n,t}$ is called the multiplier bootstrap variable \citep{anti-concentration_c}. 

Similarly with (\ref{empirical kernel of con GP}) and (\ref{empirical kernel of dis GP}), for both continuous and discrete kernel gradient flows, we can compute $\widehat{W}_{n,t}(x)$ and $\widehat{Z}_{n,t}$ explicitly: 

(I) For the continuous kernel gradient flow, we have 
\begin{equation}\label{con W hat definition} 
    \begin{aligned} 
        \widehat{W}_{n,t}(x)&=\widehat{W}_{n,t}^{con}(x)=\frac{1}{\sqrt{\widehat{C}_{n,t}^{con}(x,x)}}\cdot\frac{1}{\sqrt{n}}\sum_{j=1}^n\varphi^{con}_t(T_X)k_x(x_j)\hat{\varepsilon_j} g_j\\
        &=\frac{1}{\sqrt{\widehat{C}^{con}_{n,t}(x,x)}}\cdot\frac{1}{\sqrt{n}}\K(x,X)\K(X,X)^{-1}(1-e^{-\frac{1}{n}\K(X,X)t})\cdot\mathrm{diag}(\hat\varepsilon)\cdot g,
    \end{aligned} 
\end{equation} 
where $g=(g_1,\dots,g_n)^T\sim N(0,I_n)$ is a standard $n$-dimensional Gaussian random variable; 

(II) For the discrete kernel gradient flow and $t=t_m=m\eta$, $m=1,2,\dots$, we compute $\widehat{W}_{n,t_m}(x)=\widehat{W}_{n,t_m}^{dis}(x)$ via the following iterative procedure: Let $\widehat{G}_{t_m}(x)$ be a mapping from $\X$ to $\R$ defined by
\begin{equation} 
    \begin{aligned} 
        &\widehat{G}_0(x)=0;\\
        &\widehat{G}_{t_m}(x)=\widehat{G}_{t_{m-1}}(x)-\eta\cdot\frac{1}{n}\mathbb{K}(x,X)\cdot(\widehat{G}_{t_{m-1}}(X)-\mathrm{diag}(\hat{\varepsilon})\cdot g),\quad m=1,2,\dots
    \end{aligned} 
\end{equation} 
where 
\begin{equation} 
    \quad\widehat{G}_{t_m}(X)=(\widehat{G}_{t_m}(x_1),\dots,\widehat{G}_{t_m}(x_n))^T,
\end{equation} 
then for the discrete kernel gradient flow and $t=t_m=m\eta$, $m=1,2,\dots$, the function $\widehat{W}_{n,t}(x)$ is computed as 
\begin{equation}\label{dis W hat definition}
    \widehat{W}_{n,t_m}=\widehat{W}^{dis}_{n,t_m}(x)=\frac{1}{\sqrt{\widehat{C}_{n,t_m}^{dis}(x,x)}}\cdot\frac{1}{\sqrt{n}}\widehat{G}_{t_m}(x).
\end{equation}

\subsection{Simultaneous confidence band}

In this section, we make the following additional assumption: 
\begin{assumption}\label{lower bound of C} 
    There exists a universal constant $c>0$ such that 
    \begin{equation}C_t(x,x)\geq c\sigma^2 t^{\frac{1}{\beta}}.\end{equation} 
\end{assumption} 

This assumption is introduced to control the covariance of the Gaussian process $W_t$ defined in (\ref{W definition}). It is reasonable in view of Lemma \ref{effect dimension bound} and \ref{operated basis function norm bound}, which imply that (i) $\E_xC_t(x,x)\gtrsim\sigma^2 t^{\frac{1}{\beta}}$, and (ii) $\sup_x C_t(x,x)\lesssim\sigma^2 t^{\alpha}<<\sigma^2 t^{1/\beta}$, where $\alpha=\alpha_0+\varepsilon>1/\beta$. This assumption is satisfied by a broad class of kernel functions. For instance, since $\sin^2\langle \mu,x\rangle+\cos^2\langle\mu,x\rangle=1$ for any $m,x\in\R^d$, we have $C_t(x,x)\asymp\E_xC_t(x,x)\gtrsim\sigma^2 t^{1/\beta}$ for kernel functions that satisfy Assumption \ref{edr} and has Fourier basis. In particular, the shift-invariant periodic kernels (see Section 4.3 of \citet{optimal_rates_spectral_group}) satisfy this assumption. Furthermore, we will show in Lemma \ref{lower bound verification} that the inner-product kernels satisfy this assumption as well. 

We now state our main result of this section:

\begin{theorem}\label{main result} 
    For both the continuous kernel gradient flow estimator $\hat{f}_t=\hat{f}_t^{con}$ and the discrete kernel gradient flow estimator $\hat{f}_t=\hat{f}_t^{dis}$, the following statement holds: Suppose that Assumption \ref{noise moment}, \ref{edr}, \ref{Holder assumption}, \ref{source condition}, \ref{embedding index assumption}, \ref{learning rate assumption} and \ref{lower bound of C} are all satisfied, and we additionally assume that $\E[\varepsilon^2|x]=\sigma^2$ for a.s. $x\in\X$. If we set $t\asymp n^{\theta}$ for $\theta\in(\frac{1}{s},\beta)$, then with probability at least $1-p_n$, we have 
    \begin{equation}\sup_{a\in\R}\left|\P\left(\sqrt{n}\left\|\frac{\hat{f}_t(x)-f^*(x)}{\sqrt{\widehat{C}_{n,t}(x,x)}}\right\|_\infty\leq a\right)-\P\left(\left.\widehat{Z}_n\leq a\right|\D_n\right)\right|\leq q_n,\end{equation} 
    where $p_n=c_1n^{-c_2}$, $q_n=c_3n^{-c_4}$ for some $c_1,c_2,c_3,c_4>0$ depending only on $\theta$, $d$, $\kappa$, $\beta$, $L_k$, $h$, $s$, $R$, $\sigma$, $L$ (and $\eta$ additionally for discrete kernel gradient flow). 
\end{theorem}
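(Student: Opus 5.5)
The argument chains four approximations: (a) the studentized statistic $\sqrt{n}\|(\hat f_t-f^*)/\sqrt{\widehat C_{n,t}}\|_\infty$ is close to the sup of the normalized linear process $\frac{1}{\sqrt n}\sum_i \varphi_t(T)k_x(x_i)\varepsilon_i/\sqrt{C_t(x,x)}$; (b) the sup of that linear process is close in Kolmogorov distance to $Z_t=\|W_t\|_\infty$; (c) conditionally on $\D_n$, the bootstrap statistic $\widehat Z_{n,t}$ is close in Kolmogorov distance to $Z_t$; (d) all perturbations of size $r_n$ are absorbed via anti-concentration of $Z_t$. This is the framework of \citet{anti-concentration_c} and \citet{vc-type}.

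\textbf{Step 1: reduction to the linear term.} Decompose
\[
\hat f_t-f^*=(f_t-f^*)+\frac1n\sum_i\varphi_t(T)k_{x_i}\varepsilon_i+\mathrm{Rem},
\]
where $f_t=\varphi_t(T)Tf^*$. The approximation bias satisfies $\|f_t-f^*\|_\infty\lesssim t^{-(s-\alpha)/2}$. Theorem \ref{Bahadur representation} bounds $\|\mathrm{Rem}\|_\infty$ by $\sqrt{t^\alpha\log n/n}\,t^{-(s-\alpha)/2}+t^\alpha\log n/n$.

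Multiply by $\sqrt n/\sqrt{C_t(x,x)}\lesssim \sqrt n\, t^{-1/(2\beta)}$, using Assumption \ref{lower bound of C}. The bias contributes $\sqrt n\,t^{-(s-\alpha)/2-1/(2\beta)}$. Since $\theta>1/s$, choosing $\varepsilon$ small makes this $n^{-c}$: the exponent is $\frac12-\theta\frac{s}{2}+O(\varepsilon)<0$. The remainder contributes $\sqrt{\log n}\,t^{(\alpha-1/\beta)/2}t^{-(s-\alpha)/2}+t^{\alpha-1/(2\beta)}\log n/\sqrt n$. Both are $n^{-c}$ for $\theta<\beta$ and $\varepsilon$ small, since $t^{1/(2\beta)}\le n^{1/2-c}$.

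The variance also has to be replaced. I would show $\sup_x|\widehat C_{n,t}(x,x)/C_t(x,x)-1|\le n^{-c}$ with high probability. The ingredients are: $\hat\varepsilon_i=\varepsilon_i+(f^*-\hat f_t)(x_i)$ together with the sup-norm bound of Theorem \ref{upper bound}; the operator concentration $\|T^{-1/2}_\lambda(T_X-T)T^{-1/2}_\lambda\|$ with $\lambda=1/t$, combined with analytic filter calculus, to pass from $\varphi_t(T_X)$ to $\varphi_t(T)$; Bernstein's inequality for $\frac1n\sum (\varphi_t(T)k_x)(x_i)^2\varepsilon_i^2$ using $\E\varepsilon^2=\sigma^2$; and finally a H\"older-continuity net argument (Assumption \ref{Holder assumption}) to make everything uniform in $x$.

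\textbf{Step 2: Gaussian approximation.} Consider the function class
\[
\mathcal F=\{(z,e)\mapsto e\,\varphi_t(T)k_x(z)/\sqrt{C_t(x,x)} : x\in\X\}.
\]
It is VC-type, with covering numbers polynomial in $n$ from H\"older continuity in $x$. Its envelope is $\lesssim t^{(\alpha-1/\beta)/2}|e|$, and the functions have unit variance. The noise has Bernstein moments, so I would truncate at $\log n$. Corollary 2.2 / Proposition 2.1 of \citet{vc-type} then give a coupling of $\|\mathbb G_n\|_{\mathcal F}$ with $Z_t$ at error $n^{-c}$, because $t^{\alpha-1/\beta}$ grows only like $n^{O(\varepsilon)}$.

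\textbf{Step 3: bootstrap and conclusion.} Conditionally on $\D_n$, $\widehat W_{n,t}$ is Gaussian with covariance $\widehat\Sigma(x,x')$. The Gaussian comparison inequality requires $\sup_{x,x'}|\widehat\Sigma-\Sigma|\le n^{-c}$, which follows from the same empirical-operator and residual estimates as in Step 1, now applied to off-diagonal entries. Comparison then yields closeness of $\widehat Z_{n,t}$ to $Z_t$. The anti-concentration inequality of \citet{anti-concentration_b} gives $\sup_a\P(|Z_t-a|\le r)\lesssim r\,\E Z_t\lesssim r\sqrt{\log n}$, which lets us absorb all the $n^{-c}$ perturbations. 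Combining Steps 1–3 and taking the union of the bad events gives the bounds $p_n$ and $q_n$.

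The main obstacle is Step 1's uniform ratio consistency of $\widehat C_{n,t}$ and its analogue for the bootstrap covariance. Two facts are needed there: relative errors of $\varphi_t(T_X)-\varphi_t(T)$ in the sup norm, evaluated against the small scale $t^{1/(2\beta)}$, and control of the residual-induced error. For the first I would use the complex contour representation of $\varphi_t$, which is exactly where Assumption \ref{learning rate assumption} enters in the discrete case.
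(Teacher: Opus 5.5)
Your overall scheme is sound and is essentially the paper's. It uses Theorem \ref{Bahadur representation} plus the bias bound with $\theta>1/s$, a VC-type Gaussian coupling, a bootstrap step, and anti-concentration with $\E Z_t\lesssim\sqrt{\log n}$. Your explicit ratio-consistency step for $\widehat{C}_{n,t}/C_t$ uses the same ingredients as Lemma \ref{C hat and C}. It is in fact more careful than the paper's final argument, which replaces $\widehat{C}_{n,t}$ by $C_t$ in the studentized statistic without comment.

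One quantitative claim in your Step 2 is false. The envelope of $\mathcal F$ is not of order $t^{(\alpha-1/\beta)/2}|e|$. That quantity bounds the $L^2(\mu)$ norm of $\varphi_t(T)k_x/\sqrt{C_t(x,x)}$, not its sup norm. Since $0\le z\varphi_t(z)\le 1$ on $[0,\kappa^2]$ (using Assumption \ref{learning rate assumption} in the discrete case), you get
$\varphi_t(T)k_x(x)=\sum_i\lambda_i\varphi_t(\lambda_i)e_i(x)^2\ge\sum_i(\lambda_i\varphi_t(\lambda_i))^2e_i(x)^2=C_t(x,x)/\sigma^2$.
Evaluating at $z=x$ and using Assumption \ref{lower bound of C}, the normalized envelope is at least $\sqrt{C_t(x,x)}/\sigma^2\gtrsim t^{1/(2\beta)}$. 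Lemma \ref{operated basis function norm bound} gives the matching upper bound $t^{\alpha-1/(2\beta)}$. So the envelope is essentially the square root of the effective dimension, and your justification ``$t^{\alpha-1/\beta}$ grows only like $n^{O(\varepsilon)}$'' does not apply.

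The coupling nevertheless holds, for a different reason. With unit variances and $b\asymp t^{\alpha-1/(2\beta)}$, every error term in the Chernozhukov--Chetverikov--Kato coupling is a positive power of $b^2/n\asymp n^{\theta(1/\beta+2\varepsilon)-1}$, times negative powers of $\gamma$, $n^{1/q}$ and logarithmic factors. This is polynomially small exactly because $\theta<\beta$ and $\varepsilon$ is small; compare the paper's choice $\gamma=(nt^{1/\beta-2\alpha})^{-1/8}$. This is where the upper constraint on $\theta$ is really used.

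Your bootstrap step is genuinely different from the paper's.
\begin{itemize}
\item The paper (Lemma \ref{approximation 2}) takes the infeasible multiplier statistic $\widehat{Z}^0_{n,t}$, built from $\varphi_t(T)$, the true $\varepsilon_j$ and $C_t$. It couples this statistic to $Z_t$ by the multiplier-bootstrap theorem of \citet{anti-concentration_c} and converts the result into a conditional statement via Markov's inequality. It then bounds $|\widehat{Z}_{n,t}-\widehat{Z}^0_{n,t}|$ pathwise, using the multiplier Bahadur bound (Theorem \ref{second-order estimation theorem 2}, Lemma \ref{epsilon hat and epsilon}) and only the diagonal consistency of $\widehat{C}_{n,t}$.
\item You instead compare two Gaussian processes conditionally on $\D_n$ through their covariances. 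This is cleaner: there is no second empirical-process coupling and no Markov step, and $\widehat{W}_{n,t}$ has conditional variance exactly one by construction.
\end{itemize}
Your route costs two things you should make explicit.
\begin{itemize}
\item You need uniform consistency of the off-diagonal entries $\widehat\Sigma(x,x')$. These are bilinear analogues of the diagonal estimates, obtained via Cauchy--Schwarz and a net over pairs $(x,x')$.
\item You need a discretization of $\X$, because the Gaussian comparison inequality concerns finite maxima. A net of polynomial size only costs a $\log n$ factor. You then need modulus-of-continuity bounds for both $W_t$ (from Lemma \ref{Holder coefficient of C}) and $\widehat{W}_{n,t}$. The latter come from H\"older continuity of $x\mapsto\varphi_t(T_X)k_{x_j}(x)$ and of $\widehat{C}_{n,t}(x,x)$, together with the lower bound on $\widehat{C}_{n,t}$.
\end{itemize}
The paper's route needs only the diagonal of $\widehat{C}_{n,t}$, at the price of the Bahadur bound for the multiplier process.
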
 

The probability $1-p_n$ arises from the randomness of the samples $\{(x_i,y_i)\}$. As implied by Theorem \ref{upper bound} and its proof, the condition $\theta\in(\frac{1}{s},\beta)$ ensures that the variance term is greater than the bias term, which is a standard assumption in constructing asymptotically exact confidence bands. 

Theorem \ref{main result} implies that the quantity $\sqrt{n}\|(\hat{f}_t(x)-f^*(x))/\widehat{C}_{n,t}(x,x)^{1/2}\|_\infty$ is approximately distributed as the multiplier Gaussian bootstrap variable $\widehat{Z}_n|\D_n$, as long as the sample size $n$ is sufficiently great. This observation motivates the construction of the following confidence bands for both continuous and discrete kernel gradient flows: 

\begin{definition}\label{confidence band construction for con kgd}
    \textnormal{\textbf{(Simultaneous confidence band for continuous kernel gradient flow)}} For preselected sample size $n$, training time $t$ and coverage level $\delta\in(0,1)$, we construct the simultaneous confidence band $\mathrm{CB}^{con}(\delta)$ for the continuous kernel gradient flow by the following steps: 
    \begin{enumerate}[(1)] 
        \item Compute the continuous kernel gradient flow estimator $\hat{f}^{con}_t(x)$ by formula (\ref{solution to gradient flow of estimator}); 
        \item Compute the function $\widehat{C}_{n,t}^{con}(x,x)$ by formula (\ref{empirical kernel of con GP}); 
        \item For each bootstrap iteration, draw a vector $g=(g_1,\dots,g_n)$ from the standard $n$-dimensional normal distribution $N(0,I_n)$, and use it to compute the multiplier bootstrap variable $\widehat{Z}_{n,t}^{con}=\|\widehat{W}_{n,t}^{con}(x)\|_{\infty}$ by formula (\ref{con W hat definition}); 
        \item Across bootstrap iterations, use the samples of $\widehat{Z}_{n,t_k}^{con}$ obtained in step (3) to compute the $\delta$-quantile of $\widehat{Z}_{n,t}^{con}$ conditioning on $\D_n$, denoted as $r(\delta)$; 
        \item The confidence band is computed by 
        \begin{equation}\mathrm{CB}^{con}(\delta)=\left\{(x,y):\,x\in\X,\,y\in\left[\hat{f}^{con}_t(x)-\Delta^{con}(t,x,\delta),\hat{f}_t^{con}(x)+\Delta^{con}(t,x,\delta)\right]\right\},\end{equation} 
        where 
        \begin{equation} 
            \Delta^{con}(t,x,\delta)=r(\delta)\cdot n^{-\frac{1}{2}}\widehat{C}^{con}_{n,t}(x,x)^{\frac{1}{2}}.
        \end{equation}
    \end{enumerate} 
\end{definition} 

\begin{definition}\label{confidence band construction for dis kgd}
    \textnormal{\textbf{(Simultaneous confidence band for continuous kernel gradient flow)}} For preselected sample size $n$, sample data $\D_n=\{(x_1,y_1),\dots,(x_n,y_n)\}$, training time $t_m=m\eta$, $m\in\mathbb{Z}_+$ and coverage level $\delta\in(0,1)$, we construct the simultaneous confidence band $\mathrm{CB}^{dis}(\delta)$ for the continuous kernel gradient flow by the following steps: 
    \begin{enumerate}[(1)] 
        \item Compute the discrete kernel gradient flow estimator $\hat{f}^{dis}_{t_m}(x)$ by Definition \ref{discrete kgd definition}; 
        \item Compute the function $\widehat{C}_{n,t_m}^{dis}(x,x)$ by formula (\ref{empirical kernel of dis GP}); 
        \item For each bootstrap iteration, draw a vector $g=(g_1,\dots,g_n)$ from the standard $n$-dimensional normal distribution $N(0,I_n)$, and use it to compute the multiplier bootstrap variable $\widehat{Z}_{n,t_m}^{dis}=\|\widehat{W}_{n,t_m}^{dis}(x)\|_{\infty}$ by formula (\ref{dis W hat definition}); 
        \item Across bootstrap iterations, use the samples of $\widehat{Z}_{n,t_k}^{dis}$ obtained in step (3) to compute the $\delta$-quantile of $\widehat{Z}_{n,t_m}^{dis}$ conditioning on $\D_n$, denoted as $r(\delta)$; 
        \item The simultaneous confidence band is computed by 
        \begin{equation}\mathrm{CB}^{dis}(\delta)=\left\{(x,y):\,x\in\X,\,y\in\left[\hat{f}^{dis}_{t_m}(x)-\Delta^{dis}(t_m,x,\delta),\hat{f}_{t_k}^{dis}(x)+\Delta^{dis}(t_m,x,\Delta)\right]\right\},\end{equation} 
        where 
        \begin{equation} 
            \Delta^{dis}(t_m,x,\delta)=r(\delta)\cdot n^{-\frac{1}{2}}\widehat{C}^{dis}_{n,t_m}(x,x)^{\frac{1}{2}}
        \end{equation} 
    \end{enumerate} 
\end{definition} 

\begin{remark} 
    Theorem \ref{main result} actually guarantees the asymptotic exactness of the confidence bands $\mathrm{CB}^{con}(\delta)$ and $\mathrm{CB}^{dis}(\delta)$. 
\end{remark} 

\begin{remark}\label{optimal shrink rate}
    We will prove in Lemma \ref{operated basis function norm bound} (by setting $\gamma=0$) that $C_t(x,x)=\O(\sigma^2 t^\alpha)$ for any $x\in\Omega$ and $\alpha>\frac{1}{\beta}$. Thus, the widths of the confidence bands $\mathrm{CB}^{con}(\delta)$ and $\mathrm{CB}^{dis}(\delta)$ are both $\O\left(\sqrt{\frac{t^\alpha\log n}{n}}\right)=\O\left(n^{-\frac{1-\theta\alpha}{2}}\log^{\frac{1}{2}}n\right)$. Recall that in Theorem \ref{main result}, the stopping time is selected as $t=n^\theta$ for $\theta\in(\frac{1}{s},\beta)$. Therefore, if $\theta$ is sufficiently close to $\frac{1}{s}$, i.e. $t$ is sufficienty close to the optimal time $t_{opt}=n^{\frac{1}{s}}$, then the width of the confidence band will be sufficiently close to $\O\left(n^{-\frac{s\beta-1}{2s\beta}}\right)$, which is the minimax optimal rate given by Corollary \ref{best upper bound} and Theorem \ref{lower bound}. In other words, the widths of our confidence bands $\mathrm{CB}^{con}(\delta)$ and $\mathrm{CB}^{dis}(\delta)$ is greater than, yet can be arbitrarily close to the minimax optimal rate. 
\end{remark}

\subsection{Sketch of proof}\label{proof sketch section}

The proof of Theorem \ref{main result} is based on the technique of Gaussian approximation, and is organized into the following three steps:
\begin{equation}\sqrt{n}\left\|\frac{\hat{f}_t(x)-f^*(x)}{\sqrt{\widehat{C}_{n,t}(x,x)}}\right\|_\infty\,\mathop{\approx}\limits^1\,\widetilde{Z}_{n,t}\,\mathop{\approx}\limits^2\,Z_t\,\mathop{\approx}\limits^3\,\widehat{Z}_{n,t}|\D_n.\end{equation}

Estimations 2 and 3 are summarized in the two theorems below, while estimation 1 follows as a corollary of their proofs.

\begin{theorem}\label{gaussian approximation for Z tilde}
    \textnormal{\textbf{(Gaussian approximation for $\widetilde{Z}_{n,t}$)}} Under the same assumptions of Theorem \ref{main result}, for $t=n^\theta$, $\theta\in(\frac{1}{s},\beta)$, we have 
    \begin{equation}\sup_{a\in\R}\left|\P(\widetilde{Z}_{n,t}\leq a)-\P(Z_t\leq a)\right|\leq c_1 n^{-c_2}\end{equation} 
    for some constants $c_1,c_2>0$ depending only on $\theta$, $d$, $\kappa$, $\beta$, $s$, $\sigma$, $L$, $h$, $L_k$, $R$ (and $\eta$ additionally for discrete kernel gradient flow). 
\end{theorem}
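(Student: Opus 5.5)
We follow the standard Chernozhukov–Chetverikov–Kato route: reduce to the linear empirical process given by the Bahadur representation, Gaussian-approximate its supremum, and then move back to $\widetilde{Z}_{n,t}$ using anti-concentration. Define the normalized linear process
\begin{equation*}
    V_{n,t}(x)=\frac{1}{\sqrt{n\,C_t(x,x)}}\sum_{i=1}^n\varphi_t(T)k_x(x_i)\varepsilon_i ,
\end{equation*}
which is a normalized sum of i.i.d. centered terms indexed by the class $\mathcal{F}_t=\{(z,e)\mapsto e\,\varphi_t(T)k_x(z)/\sqrt{C_t(x,x)}:\,x\in\X\}$. By construction each member has variance exactly $1$, since $\E[\varepsilon^2|x]=\sigma^2$. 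The covariance of $V_{n,t}$ coincides with that of $W_t$.

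\textbf{Step 1: reduction to the linear process.} Theorem \ref{Bahadur representation} gives, with probability $1-\O(n^{-10})$,
\begin{equation*}
    \sup_x|\widetilde{W}_{n,t}(x)-V_{n,t}(x)|\lesssim \frac{\sqrt n}{\sqrt{C_t(x,x)}}\left(\sqrt{\frac{t^\alpha\log n}{n}}\,t^{-\frac{s-\alpha}{2}}+\frac{t^\alpha\log n}{n}\right).
\end{equation*}
Here $\widetilde{W}_{n,t}$ is centered at $f_t$, so the bias does not enter. Using Assumption \ref{lower bound of C}, $C_t(x,x)\gtrsim t^{1/\beta}$, and $\alpha=1/\beta+\varepsilon$, this is bounded by $\sqrt{\log n}\,t^{\varepsilon/2}t^{-(s-\alpha)/2}+t^{\alpha-1/(2\beta)}n^{-1/2}\log n$. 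With $t=n^\theta$, $\theta>1/s>0$ and $\theta<\beta$, both terms are $\O(n^{-c})$ once $\varepsilon$ is small. The first exponent is $-\theta(s-1/\beta-2\varepsilon)/2$, and the second is $\theta(1/(2\beta)+\varepsilon)-1/2<0$ since $\theta<\beta$. Call this bound $r_n$.

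\textbf{Step 2: Gaussian approximation of $\sup|V_{n,t}|$.} We would apply the Gaussian approximation for suprema of empirical processes over VC-type classes (Appendix \ref{gaussian approximation section}, following \citet{vc-type}). This requires three inputs.

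First, an envelope bound. Lemma \ref{operated basis function norm bound} gives $\sup_{x,z}|\varphi_t(T)k_x(z)|\lesssim t^{\alpha}$. Hence the envelope satisfies $F\lesssim t^{\alpha-1/(2\beta)}|\varepsilon|$, which has polynomially small growth, and Assumption \ref{noise moment} handles the unbounded noise via truncation at level $\log n$.

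Second, a covering-number bound. Using the Hölder Assumption \ref{Holder assumption}, the map $x\mapsto\varphi_t(T)k_x$ is Hölder in sup norm with constant polynomial in $t$. The same holds for $x\mapsto C_t(x,x)^{-1/2}$, because of the lower bound on $C_t$. Since $\X\subset\R^d$ is compact, this gives $N(\mathcal{F}_t,\epsilon)\lesssim (n^{A}/\epsilon)^{d/h}$, a VC-type class with polynomial-in-$n$ characteristics.

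Third, the approximation itself. The coupling inequality yields a version $\tilde Z$ of $Z_t$ with $\P(|\sup|V_{n,t}|-\tilde Z|>n^{-c})\le n^{-c}$. The error terms are governed by $b_n/n^{1/2}$ and $\log n$ powers, where $b_n$ is the envelope; these are polynomially small for the ranges above.

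\textbf{Step 3: anti-concentration and conclusion.} We combine Steps 1–2 to get $|\widetilde{Z}_{n,t}-\tilde Z|\le r_n+n^{-c}$ off an event of probability $n^{-c}$. Because $W_t$ has unit variance, Nazarov / Chernozhukov–Chetverikov–Kato anti-concentration gives
\begin{equation*}
    \sup_a\P(|Z_t-a|\le\delta)\lesssim\delta\,\E Z_t .
\end{equation*}
Dudley's bound with the covering numbers above gives $\E Z_t\lesssim\sqrt{\log n}$. Therefore
\begin{equation*}
    \sup_a|\P(\widetilde Z_{n,t}\le a)-\P(Z_t\le a)|\lesssim (r_n+n^{-c})\sqrt{\log n}+n^{-c}\le c_1n^{-c_2}.
\end{equation*}
The discrete flow is handled identically, since all the cited lemmas hold for $\varphi_t^{dis}$ under Assumption \ref{learning rate assumption}.

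The main obstacle is Step 2. It requires controlling the entropy and envelope of the normalized class $\mathcal{F}_t$ uniformly in $x$, with only polynomial growth in $t$, so that the Gaussian-approximation error still decays polynomially for every $\theta<\beta$. This depends on deriving a Hölder modulus for $x\mapsto\varphi_t(T)k_x$ in sup norm, which I would obtain from the spectral (contour-integral / analytic filter) representation together with $M_\alpha<\infty$.
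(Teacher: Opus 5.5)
Your proposal is correct and follows the paper's proof (Lemma~\ref{approximation 1}) in all essentials. It reduces to the linear process (your $V_{n,t}$, the paper's $\widetilde{W}^0_{n,t}$) via the Bahadur representation and Assumption~\ref{lower bound of C}, applies the Chernozhukov--Chetverikov--Kato coupling to a VC-type class with envelope $\asymp t^{\alpha-\frac{1}{2\beta}}|\varepsilon|$, and finishes with anti-concentration plus Dudley's bound $\E Z_t\lesssim\sqrt{\log n}$; you only perform the two approximations in the opposite order. The step you flag as the main obstacle needs no contour integral: write $\varphi_t(T)k_z(x)=\langle k_z,\varphi_t(T)k_x\rangle_\H$, then Lemma~\ref{difference estimation} and the bound $\|\varphi_t(T)k_x\|_\H\le M_\alpha t^{\frac{1+\alpha}{2}}$ (Lemma~\ref{operated basis function norm bound}) give an $h$-H\"older modulus $\lesssim t^{\frac{1+\alpha}{2}}$ in the index variable, which is exactly what the paper uses in Lemma~\ref{vc lemma}.
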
 

\proof 
See Lemma \ref{approximation 1} in Section \ref{kolmogorov distance section}. 
\qed 

\begin{theorem}\label{gaussian approximation of Z hat} 
    \textnormal{\textbf{(Gaussian approximation for $\widehat{Z}_{n,t}|\mathbb{D}_n$)}} Under the same assumptions of Theorem \ref{main result}, for $t=n^\theta$, $\theta\in(\frac{1}{s},\beta)$, with probability at least $1-p_n$, we have 
    \begin{equation}\sup_{a\in\R}\left|\P\left(\left.\widehat{Z}_{n,t}\leq a\right|\D_n\right)-\P(Z_t\leq a)\right|\leq q_n,\end{equation} 
    where $p_n=c_1n^{-c_2}$, $q_n=c_3n^{-c_4}$ for some $c_i>0$, $i=1,2,3,4$ depending only on $\theta$, $d$, $\kappa$, $\beta$, $s$, $R$, $h$, $L_k$, $\sigma$, $L$ (and $\eta$ additionally for discrete kernel gradient flow). 
\end{theorem}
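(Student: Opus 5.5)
Conditionally on $\D_n$, $\widehat{W}_{n,t}$ is a centered Gaussian process with covariance
\[
\widehat{\Sigma}(x,x')=\frac{\frac{1}{n}\sum_{j}\varphi_t(T_X)k_x(x_j)\varphi_t(T_X)k_{x'}(x_j)\hat\varepsilon_j^2}{\widehat{C}_{n,t}(x,x)^{1/2}\widehat{C}_{n,t}(x',x')^{1/2}},
\]
which has unit diagonal. The target $W_t$ is also a centered Gaussian process with unit variance and covariance $\Sigma(x,x')=C_t(x,x')/(C_t(x,x)C_t(x',x'))^{1/2}$. So the plan is a Gaussian-to-Gaussian comparison, in the spirit of \citet{anti-concentration_c, vc-type}. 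First discretize $\X$ by a $\delta_n$-net $\{z_1,\dots,z_N\}$ with $N\lesssim\delta_n^{-d}$ and $\delta_n=n^{-K}$ for a large $K$. Then apply the Gaussian comparison inequality for maxima of Gaussian vectors,
\[
\sup_a\bigl|\P(\max_k|\widehat W(z_k)|\le a\mid\D_n)-\P(\max_k|W_t(z_k)|\le a)\bigr|\lesssim \Delta_n^{1/3}\log^{2/3}N,
\]
where $\Delta_n=\max_{k,l}|\widehat\Sigma(z_k,z_l)-\Sigma(z_k,z_l)|$. Finally, pass from the net to the full supremum using anti-concentration (Nazarov's inequality / the Chernozhukov--Chetverikov--Kato bound $\sup_a\P(|Z_t-a|\le\epsilon)\lesssim\epsilon(\E Z_t+1)$ with $\E Z_t\lesssim\sqrt{\log n}$). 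This requires showing that the discretization errors $\sup_{|x-x'|\le\delta_n}|W_t(x)-W_t(x')|$ and their $\widehat W$ analogues are $O(n^{-c})$ with high probability. Their intrinsic standard deviation metrics are Hölder in $|x-x'|$ with polynomial-in-$n$ constants, thanks to Assumption \ref{Holder assumption} and the bound $\|\varphi_t\|\le t$. Dudley's entropy bound plus Borell--TIS then handle them once $K$ is large.

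The core step is the covariance bound $\Delta_n\le n^{-c}$ with probability $1-\O(n^{-c'})$. Here Assumption \ref{lower bound of C} is essential: it gives $C_t(x,x)\ge c\sigma^2t^{1/\beta}$, so it suffices to prove
\[
\sup_{x,x'}\bigl|\widehat C_{n,t}(x,x')-C_t(x,x')\bigr|\le t^{1/\beta}n^{-c}.
\]
Since $\widehat C_{n,t}(x,x)\ge C_t(x,x)/2$ then, normalizing is harmless. I would split the difference into three pieces. (a) Replace $\hat\varepsilon_j$ by $\varepsilon_j$. Since $\hat\varepsilon_j-\varepsilon_j=f^*(x_j)-\hat f_t(x_j)$, Theorem \ref{upper bound} bounds this by $t^{-(s-\alpha)/2}+\sqrt{t^\alpha\log n/n}=n^{-c}$ for $\theta\in(1/s,\beta)$ and small $\varepsilon$. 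Combined with $\sup_x\frac1n\sum_j|\varphi_t(T_X)k_x(x_j)|^2\lesssim t^\alpha$, Cauchy--Schwarz gives an error $t^\alpha n^{-c}$. (b) Replace $\varphi_t(T_X)$ by $\varphi_t(T)$. I would use the analytic-filter / operator perturbation machinery (the concentration $\|T_\lambda^{-1/2}(T-T_X)T_\lambda^{-1/2}\|\lesssim\sqrt{t^\alpha\log n/n}$ with $\lambda=1/t$, as in the proof of Theorem \ref{Bahadur representation}), which gives $\sup_x\|\varphi_t(T_X)k_x-\varphi_t(T)k_x\|_{L^\infty}$-type bounds of relative order $n^{-c}$. (c) Replace $\frac1n\sum_j\varphi_t(T)k_x(x_j)\varphi_t(T)k_{x'}(x_j)\varepsilon_j^2$ by its mean $C_t(x,x')$, using $\E[\varepsilon^2|x]=\sigma^2$. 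This is an empirical process over the class $\{z\mapsto\varphi_t(T)k_x(z)\varphi_t(T)k_{x'}(z)\}$, whose envelope is $\lesssim t^\alpha$ and whose variance is $\lesssim t^{2\alpha}$. Bernstein's inequality (Assumption \ref{noise moment} with truncation of $\varepsilon^2$ at $\log n$ level) together with a union bound over the net and Hölder continuity gives $t^\alpha\sqrt{\log n/n}$.

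The main obstacle is that every piece is only controlled at scale $t^\alpha$ while the normalization is only $t^{1/\beta}$. This costs a factor $t^{\alpha-1/\beta}=n^{\theta\varepsilon}$, so one must choose $\varepsilon$ small relative to the polynomial gains ($n^{-(1-\theta\alpha)/2}$ from concentration and $n^{-(\theta s-1)/2}$-type from the bias). Step (b) is the most delicate, since uniform-in-$x$ control of $\varphi_t(T_X)k_x$ in sup-norm needs the embedding-index argument of Lemma \ref{operated basis function norm bound} together with the resolvent/contour representation of $\varphi_t$. For the discrete flow this uses Assumption \ref{learning rate assumption}. Collecting the three bounds yields $\Delta_n\le n^{-c}$ on an event of probability $1-c_1n^{-c_2}$, and plugging into the comparison step gives $q_n=c_3n^{-c_4}$.
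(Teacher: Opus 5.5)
Your route is sound but differs from the paper's. The paper never compares covariances. It introduces the oracle bootstrap $\widehat W^0_{n,t}$, built from $\varphi_t(T)$ and the true $\varepsilon_j$. It couples $\widehat Z^0_{n,t}$ to a copy of $Z_t$ via the multiplier-bootstrap coupling theorem of \citet{anti-concentration_c} for the VC-type class of Lemma \ref{vc lemma}, controlling the extra centering term $\mathcal{Z}_{n,t}$ separately. It turns this unconditional coupling into a conditional statement by Markov's inequality. Finally it bounds $|\widehat Z_{n,t}-\widehat Z^0_{n,t}|$ pathwise, using only diagonal consistency $\sup_x|\widehat C_{n,t}(x,x)-C_t(x,x)|=o(t^{1/\beta})$ (Lemma \ref{C hat and C}) and the multiplier Bahadur-type bound (Lemma \ref{epsilon hat and epsilon}). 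You instead use the fact that $\widehat W_{n,t}$ is exactly Gaussian given $\D_n$, so a Gaussian-to-Gaussian comparison on a polynomial net plus anti-concentration of $Z_t$ suffices. This gives the conditional statement directly on a data event, with no coupling theorem, no Markov loss, and no multiplier Bahadur representation. The price is uniform control of the full covariance $\widehat C_{n,t}(x,x')$ off the diagonal, plus a chaining argument for the data-dependent process $\widehat W_{n,t}$. Both are routine: the four-step decomposition in Lemma \ref{C hat and C} extends to pairs $(x,x')$ by Cauchy--Schwarz.

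Two quantitative points in your core step need fixing.

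\emph{Step (b).} A sup-norm bound on $(\varphi_t(T_X)-\varphi_t(T))k_x$ is of size $\sqrt{t^\alpha\log n/n}\,t^{\alpha}\log t$ (Lemma \ref{norm bound of zeta}). Paired with $\|\varphi_t(T_X)k_{x'}\|_{L^2,n}\lesssim t^{\alpha/2}$, it gives a covariance error of order $t^{2\alpha}/\sqrt{n}$ up to logs. That is $o(t^{1/\beta})$ only for roughly $\theta<\beta/2$. You must instead use the empirical-norm bound
$\|T_X^{1/2}(\varphi_t(T_X)-\varphi_t(T))k_x\|_\H\lesssim\sqrt{t^\alpha\log n/n}\,t^{\alpha/2}\log t$ (Lemma \ref{empirical norm bound of zeta}). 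This is exactly what the $T_\lambda^{-1/2}(T-T_X)T_\lambda^{-1/2}$ concentration you cite delivers. It gives error $t^{\alpha}\sqrt{t^\alpha\log n/n}\,\log t$, which is $t^{1/\beta}n^{-c}$ for all $\theta<\beta$ once $\varepsilon$ is small.

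\emph{Step (c).} The product class $z\mapsto\varphi_t(T)k_x(z)\varphi_t(T)k_{x'}(z)$ has envelope of order $t^{2\alpha}$ and second moment of order $t^{3\alpha}$, not $t^\alpha$ and $t^{2\alpha}$. Bernstein then gives $\sqrt{t^{3\alpha}\log n/n}+t^{2\alpha}\log n/n$, up to truncation logs, rather than $t^\alpha\sqrt{\log n/n}$. This is still $t^{1/\beta}n^{-c}$, since $\theta(1/\beta+3\varepsilon)<1$ for small $\varepsilon$, so your conclusion $\Delta_n\le n^{-c}$ survives.
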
 

\proof 
See Lemma \ref{approximation 2} in Section \ref{kolmogorov distance section}. 
\qed 

These results ultimately lead to Theorem \ref{main result}. The detailed proof of Theorem \ref{main result}, including the remaining estimation 1, can be found in Section \ref{kolmogorov distance section}.

\section{Experiments}\label{experiment section}

In this section, we present numerical experiments to illustrate our results on supremum-norm convergence rates and simultaneous confidence bands for kernel gradient flow estimators.

\subsection{Supremum-norm convergence rate}

In the following experiment, we set the input space to be the $1$-dimensional interval $\X=[0,1]$ equipped with uniform distribution $\mu=U([0,1])$. The kernel function is set to be the Min kernel 
\begin{equation}k_{min}(x,x')=\min(x,x'),\quad x,x'\in[0,1].\end{equation} 
The RKHS of $k_{min}$ on $[0,1]$ is characterized by \citep{high_dim_stat}: 
\begin{equation}\H_{min}=\left\{f:[0,1]\to\R:\,f\mbox{ is absolutely continuous, }f(0)=0,\,\int_0^1(f'(x))^2dx<\infty\right\}.\end{equation} 
Moreover, the eigenvalues and eigenfunctions of $k_{min}$ is computed by 
\begin{equation}\label{spectrum of min kernel} 
    \lambda_j=\left(\frac{2j-1}{2}\pi\right)^{-2},\quad e_j(x)=\sqrt{2}\sin\left(\frac{2j-1}{2}\pi x\right),\quad j=1,2,\dots
\end{equation} 
Thus, it is easy to verify that the eigenvalue decay rate of $k_{min}$ is $\beta=2$.  

Consider the function $f_1(x)=\sqrt{2}\sin(2\pi x)$, whose source condition with respect to $k_{\min}$ is $s=1.5$. Taking $f_1(x)$ as the ground-truth function, we generate $n$ i.i.d. samples ${(x_i,y_i)}{i=1}^n$ from the model $y=f_1(x)+\varepsilon$, where $\varepsilon\sim N(0,\sigma^2)$ with $\sigma=0.2$. We then apply kernel gradient flow regression with kernel $k{\min}$. For continuous kernel gradient flow, we set the training time to be $t=cn^{1/s}$ with $s=1.5$; for discrete kernel gradient flow, we set the learning rate to be $\eta=0.01$ and the number of gradient descent iterations to be $\lfloor\frac{t}{\eta}\rfloor=\lfloor\frac{c}{\eta}n^{1/s}\rfloor$. The constant $c$ is chosen from $0.5$, $2.5$, $10$, $40$, $200$, and the sample size $n$ varies from $1000$ to $4000$ in step of $100$. Finally, for each $c$ and $n$, we repeat the experiments $100$ times and report the relationship between $n$ and the averaged logarithmic generalization error over all $100$ runs. In order to demonstrate the convergence rate clearly, the plot is set to be in logarithmic coordinates $\log \mbox{error} = r\log n+b$, hence the convergence rate is estimated by the slope $r$. The results are presented in Figure \ref{sin_upper_bound_plot}. 

As can be seen from the figure, the supremum-norm convergence rate is approximately $n^{-\frac{s\beta-1}{2s\beta}} = n^{-\frac{1}{3}}$, which is consistent with the result in Theorem \ref{upper bound}.

\begin{figure}[tbp]
    \centering 
    \subfigure[Continuous kernel gradient flow]{
        \includegraphics[width=7cm]{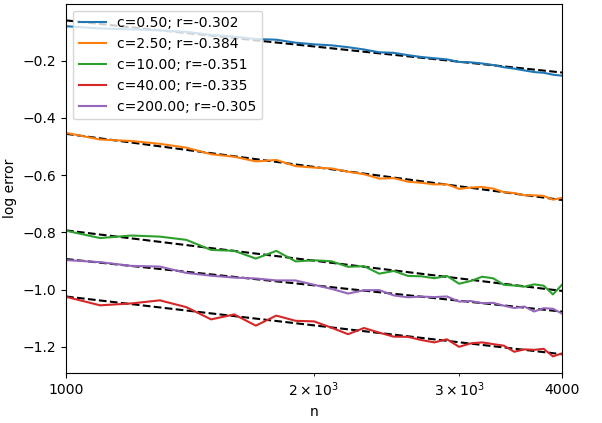}
    }
    \subfigure[Discrete kernel gradient flow]{
        \includegraphics[width=7cm]{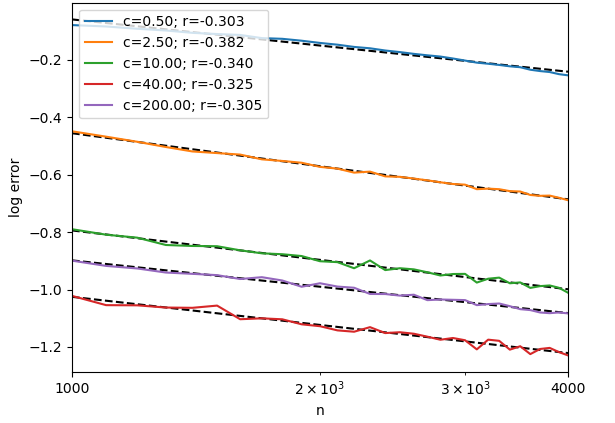}
    }
    \caption{For each selection of $c$, the slope $r$ estimates the convergence rate of the supreme norm generalization error of continuous and discrete kernel gradient flows for the true function $f^*=f_1$. }
    \label{sin_upper_bound_plot}
\end{figure} 

\subsubsection{Comparison with kernel ridge regression} 

One direct corollary of Theorem \ref{upper bound} is that kernel gradient flow regression does not suffer from the saturation effect. The saturation effect refers to the phenomenon that the convergence rate of a regression algorithm fails to attain the information-theoretic lower bound when the smoothness of the regression function (i.e., the source condition $s$) exceeds a certain level. As a typical example, the saturation effect of kernel ridge regression has been observed in practice \citep{spectral1} and has also been theoretically proved in \citep{saturation}. In the following experiment, we compare kernel gradient flow regression with classical kernel ridge regression through numerical experiments. 

We set the target function $f^*$ to be $f_2(x)=\sqrt{2}\sin(\frac{3}{2}\pi x)$. By (\ref{spectrum of min kernel}), $f_2$ is an eigenfunction of $k_{min}$, hence its source condition is $s=\infty$. We draw $n$ samples $\{(x_i,y_i)\}_{i=1}^n$ from the model $y=f^*(x)+\varepsilon$, $\varepsilon\sim N(0,\sigma^2)$, where $\sigma=0.2$. The sample size $n$ varies from $1000$ to $4000$ with step size $100$. The kernel function is still set to be $k_{min}$. For continuous kernel gradient flow, we set the training time to be $t=cn^{\frac{1}{1/\beta+\varepsilon}}$, where $c=100$ is fixed, and $\varepsilon=1,2,3,4,5,6$; for discrete kernel gradient flow, we set the learning rate to be $\eta=0.01$, and the number of gradient descent iterations to be $\lfloor\frac{t}{\eta}\rfloor=\lfloor\frac{c}{\eta}n^{1/s}\rfloor$, $c=100$, $\varepsilon=1,2,3,4,5,6$; in the conduction of kernel ridge regression, we take the ridge parameter to be $\lambda=\frac{1}{t}=\frac{1}{c}n^{-\frac{1}{1/\beta+\varepsilon}}$, $c=100$, $\varepsilon=1,2,3,4,5,6$. For each $\varepsilon$, we repeat the experiments $100$ times and plot the relationship between $n$ and the average of the log generalization error over all $100$ runs. The plot is presented in logarithmic scale $\log\,error=r\log n+b$, hence the convergence rate is estimated by the slope $r$. The results are shown in Figure \ref{comparison_kgf_krr_plot}. 

\begin{figure}[tbp]
    \centering 
    \subfigure[Continuous kernel gradient flow]{
        \includegraphics[width=7cm]{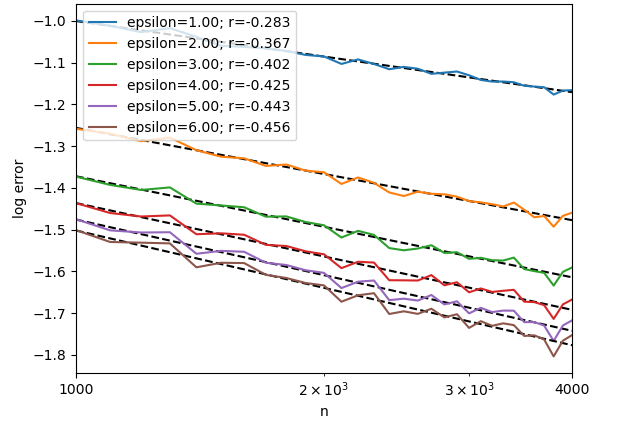}
    }
    \subfigure[Discrete kernel gradient flow]{
        \includegraphics[width=7cm]{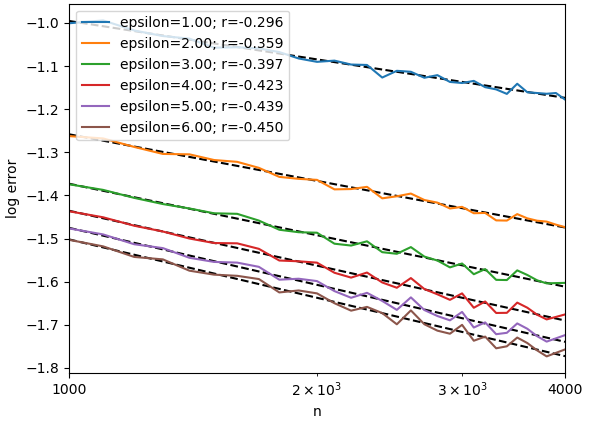}
    }
    \subfigure[KRR]{
        \includegraphics[width=7cm]{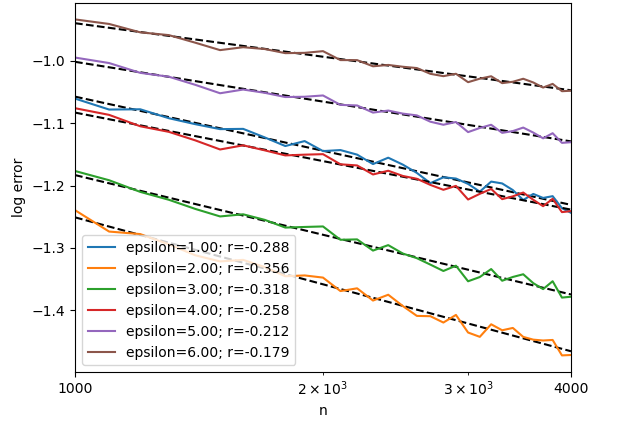}
    }
    \caption{For different selections of $t$ and $\lambda=1/t$, the supreme norm generalization errors of kernel gradient flow and kernel ridge regression for the true function $f^*=f_2$ are reported in the above two figures, respectively. }
    \label{comparison_kgf_krr_plot}
\end{figure} 

As shown in Figure \ref{comparison_kgf_krr_plot}, for the target function $f^*=f_2$ which has a high relative smoothness with respect to $\H_{min}$ (the source condition is $s=\infty$), the supremum-norm generalization error of kernel gradient flow is smaller than that of kernel ridge regression. Moreover, unlike kernel ridge regression, kernel gradient flow does not exhibit the saturation effect in this regime.

\subsubsection{Experiment on Mat\'ern kernel}\label{matern exp section}

The Mat\'ern kernel (see Section 2.1 of \citealt{matern}) is given by : 
\begin{equation}
    \mathcal{M}_{\alpha,h}(r)=\frac{1}{2^{\alpha-1}\Gamma(\alpha)}\left(\frac{\sqrt{2\alpha}r}{h}\right)^\alpha K_\alpha\left(\frac{\sqrt{2\alpha}r}{h}\right),\quad r>0,
\end{equation} 
with parameters $\alpha,h\in(0,\infty)$, where $K_\alpha$ is the modified Bessel function of the second kind of order $\alpha$. For $x,x'\in\X$, it is known that the RKHS of $k_{\alpha,h}(x,x')=\mathcal{M}_{\alpha,h}(|x-x'|)$, the Mat\'ern kernel on $X$, is equivalent with the Sobolev space $H^r(\X)$, where $r=\alpha+d/2$ (see Section 2.3 of \citealt{matern}). Note that when $\alpha=3/2$, the function $\mathcal{M}_{\alpha,h}(r)$ has a closed form: 
\begin{equation} 
    \mathcal{M}_{3/2,h}(r)=\left(1+\frac{\sqrt{3}r}{h}\right)\exp\left(-\frac{\sqrt{3}r}{h}\right).
\end{equation} 

Consider the periodic form of the Mat\'ern kernel:
\begin{equation}\label{matern example}
    k_h(x,x')=\mathcal{M}_{3/2,h}(\sqrt{2-2\cos 2\pi|x-x'|}),\quad x,x'\in[0,1].
\end{equation} 
It can be viewed as an inner-product kernel on the one-dimensional ring $\mathbb{S}^1$: 
\begin{equation} 
    k_h(x,x')=\tilde{k}_h(v,v')=\mathcal{M}_{3/2,h}(|v-v'|),\quad v=e^{2\pi ix}\in\mathbb{S}^1,\,v'=e^{2\pi ix'}\in\mathbb{S}^1.
\end{equation} 
\citet{discrepancies} proves that its RKHS is equivalent with the Sobolev space $H^2(\mathbb{S}^1)$ , hence its eigenvalue decay rate is $\beta=\frac{2r}{d}=\frac{2\alpha+d}{d}=4$ \citep{sphere_sobolev}. 

In the following experiment, we select the kernel function to be $k_h$ with $h=\sqrt{3}/{4}$, and select the ground-truth function $f^*(x)$ to be $f_3(x)=k_{h}(x,0.5)$ with $h=\sqrt{3}/{2}$, which satisfies the source condition $s=1$. We draw $n$ samples $\{(x_i,y_i)\}_{i=1}^n$ from the model $y=f_3(x)+\varepsilon$, $\varepsilon\sim N(0,\sigma^2)$, where $\sigma=0.2$. In this case, the minimax optimal rate is $n^{-3/8}$. 

We conduct the kernel gradient flow regression with kernel function ${k}$. For continuous kernel gradient flow, we select the training time to be $t=cn$, where $s=1$; for the discrete kernel gradient flow, we select the learning rate to be $\eta=0.01$ and the number of gradient descent iterations to be $\lfloor\frac{t}{\eta}\rfloor=\lfloor\frac{c}{\eta}n\rfloor$. Here, the constant $c$ is set to be $0.05$, $0.1$, $0.5$, $2$, $10$, and the sample size $n$ varies from $1500$ to $3000$ in step of $100$. Finally, for each $c$ and $n$, we repeat the experiments $100$ times and plot the relationship between $n$ and the average of the log generalization error over all $100$ runs. In order to demonstrate the convergence rate clearly, the plot is drawn in logarithmic coordinates: $\log \mbox{error} = r\log n+b$, hence the slope $r$ estimates the convergence rate. The results are presented in Figure \ref{sin_upper_bound_plot}. 

\begin{figure}[tbp]
    \centering 
    \subfigure[Continuous kernel gradient flow]{
        \includegraphics[width=7cm]{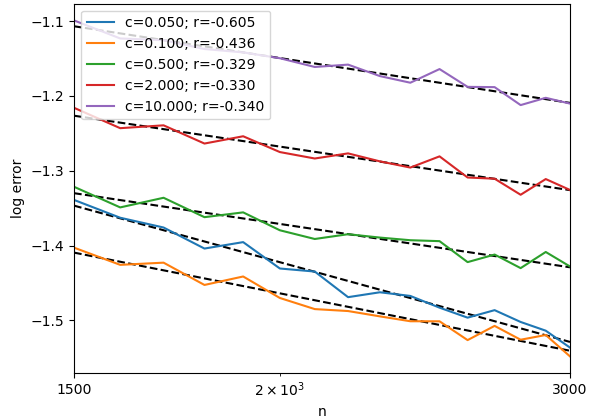}
    }
    \subfigure[Discrete kernel gradient flow]{
        \includegraphics[width=7cm]{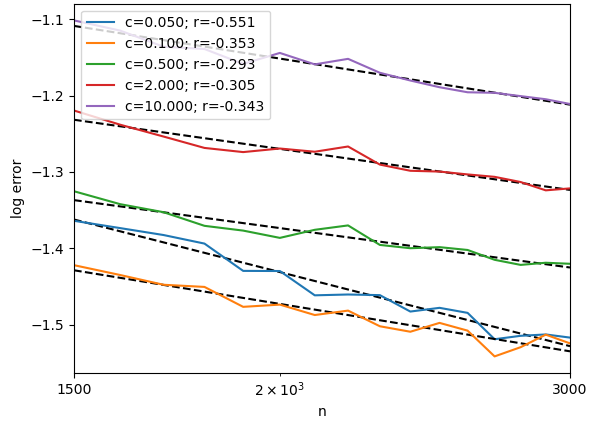}
    }
    \caption{Experimental results on Mat\'ern kernel.}
    \label{matern_plot}
\end{figure}

\subsection{Simultaneous confidence band}\label{confidence band exp section}

In this subsection, we use numerical simulation to evaluate the coverage level of our confidence bands for both continuous and discrete kernel gradient flows. We still use the settings in Section \ref{matern exp section}: $\X=[0,1]$, the kernel function is the periodic Mat\'ern kernel ${k}_h(x,x')$ with $h=\sqrt{3}/3$ and eigenvalue decay rate $\beta=4$, and the regression function is $f^*(x)=f_3(x)$ which satisfies the source condition $s=1$. 

We aim to verify the following two expected phenomena, which are predicted by Theorem \ref{main result}. First, the width of our confidence band is expected to increase as $t$ increases when $n$ is fixed, and to decrease as $n$ increases when $t$ is fixed. Second, the coverage probability of our confidence band is expected to achieve the nominal level when the training time $t$ is smaller than the optimal training time, in which regime the variance term dominates the bias term.

As is shown in Figure \ref{matern_plot}, an empirical optimal selection of training time is $t_{opt}=0.1n$, at which the bias term is comparable with the variance term. We draw $n$ samples $\{(x_i,y_i)\}_{i=1}^n$ from the model $y=f^*(x)+\varepsilon$, $\varepsilon\sim N(0,\sigma^2)$, $\sigma=0.2$. and select the sample size to be $n=500, 1000, 2000, 4000$. For continuous kernel gradient flow, the training time is selected to be $t=0.5t_{opt},\,t_{opt},\,2t_{opt},\,4t_{opt}$; for discrete kernel gradient flow, we select the learning rate to be $\eta=0.01$ and the training time to be $t=0.5t_{opt},\,t_{opt},\,2t_{opt},\,4t_{opt}$, such that the number of gradient descent iterations is $\lfloor t/\eta\rfloor$. For each sample size $n$ and training time $t$, we repeat the experiments $1000$ times; In each run, the bootstrap procedure is performed with $100$ resamples, and we compute the $\delta$-confidence band for $\delta=95\%$. Table \ref{width_plot} summarizes the mean width of the confidence bands for continuous and discrete kernel gradient flows; Table \ref{coverage_plot} summarizes the empirical coverage probabilities of the confidence bands for continuous and discrete kernel gradient flows.

As shown in Table \ref{width_plot} that the widths of confidence bands increase as $t$ increases when $n$ is fixed, and decrease as $n$ increases when $t$ is fixed. Moreover, as shown in Table \ref{coverage_plot}, the coverage probabilities that the true function $f^*$ falls into the confidence bands approximately reach $95\%$ when $t$ is greater than $t_{opt}$ and $n$ is sufficiently great, while the coverage probabilities keeps away from $95\%$ when $t$ is smaller than $t_{opt}$. 

We provide examples of the visualizations of the confidence bands in Appendix \ref{visualization section}. 

\begin{table}[tbp]
    \centering 
    \subtable[Continuous kernel gradient flow]{
    \begin{tabular}{ccccc} 
        \toprule
        sample size & $t=0.5t_{opt}$ & $t=t_{opt}$ & $t=2t_{opt}$ & $t=4t_{opt}$ \\
        \midrule
        $n=500$ & $0.1201$ & $0.1400$ & $0.1596$ & $0.1795$ \\
        $n=1000$ & $0.0999$ & $0.1136$ & $0.1289$ & $0.1440$ \\ 
        $n=2000$ & $0.0809$ & $0.0914$ & $0.1022$ & $0.1142$ \\ 
        $n=3000$ & $0.0711$ & $0.0799$ & $0.0894$ & $0.0995$ \\
        \bottomrule
    \end{tabular}
    }
    \subtable[Discrete kernel gradient flow]{
    \begin{tabular}{ccccc} 
        \toprule
        sample size & $t=0.5t_{opt}$ & $t=t_{opt}$ & $t=2t_{opt}$ & $t=4t_{opt}$ \\
        \midrule
        $n=500$ & $0.1208$ & $0.1397$ & $0.1594$ & $0.1798$ \\
        $n=1000$ & $0.0996$ & $0.1138$ & $0.1283$ & $0.1439$ \\ 
        $n=2000$ & $0.0810$ & $0.0914$ & $0.1024$ & $0.1144$ \\ 
        $n=3000$ & $0.0712$ & $0.0799$ & $0.0895$ & $0.0995$ \\
        \bottomrule
    \end{tabular}
    }
    \caption{The average widths of confidence bands for continuous kernel gradient flow for different sellections of $n$ and $t$. }\label{width_plot} 
\end{table}

\begin{table}[tbp]
    \centering 
    \subtable[Continuous kernel gradient flow]{
    \begin{tabular}{ccccc} 
        \toprule
        sample size & $t=0.5t_{opt}$ & $t=t_{opt}$ & $t=2t_{opt}$ & $t=4t_{opt}$ \\
        \midrule
        $n=500$ & $0.022$ & $0.601$ & $0.870$ & $0.893$ \\
        $n=1000$ & $0.258$ & $0.827$ & $0.925$ & $0.940$ \\ 
        $n=2000$ & $0.618$ & $0.895$ & $0.929$ & $0.935$ \\ 
        $n=3000$ & $0.741$ & $0.910$ & $0.937$ & $0.939$ \\
        \bottomrule
    \end{tabular}
    }
    \subtable[Discrete kernel gradient flow]{
    \begin{tabular}{ccccc} 
        \toprule
        sample size & $t=0.5t_{opt}$ & $t=t_{opt}$ & $t=2t_{opt}$ & $t=4t_{opt}$ \\
        \midrule
        $n=500$ & $0.021$ & $0.588$ & $0.874$ & $0.915$ \\
        $n=1000$ & $0.231$ & $0.888$ & $0.912$ & $0.920$ \\ 
        $n=2000$ & $0.597$ & $0.879$ & $0.921$ & $0.935$ \\ 
        $n=3000$ & $0.745$ & $0.919$ & $0.947$ & $0.947$ \\
        \bottomrule
    \end{tabular}
    }
    \caption{The empirical coverage probabilities of confidence bands for continuous kernel gradient flow for different selections of $n$ and $t$. }\label{coverage_plot} 
\end{table}

\section{Discussions and Conclusion} 

In this paper, we construct simultaneous confidence band for kernel gradient flow based on the estimation of its supremum-norm generalization error convergence rate. In contrast to prior work, the convergence rate established in this paper nearly attains the minimax optimal rate. Moreover, the proposed confidence band is also optimal, in the sense that its width shrinks at nearly optimal rate. 

We conclude by outlining several open problems that remain unresolved:

\begin{enumerate}[(1)]
    \item From a technical perspective, this paper relies on the embedding index condition $\alpha_0 = \frac{1}{\beta}$ (Assumption \ref{embedding index assumption}). To the best of our knowledge, no example of an RKHS violating Assumption \ref{embedding index assumption} has been identified so far. We conjecture that this assumption holds for a broad class of RKHS with eigenvalue decay rate $\beta > 1$.
    \item This paper focuses on kernel gradient flow, while other closely related methods, such as kernel ridge regression and neural networks, are beyond the scope of the present paper. Extending the inference framework developed here to more general statistical and machine learning methods remains an important direction for future research.
    \item To achieve the optimal convergence rate of the generalization error and the optimal shrinkage rate of the confidence band width, the selection of the training time in kernel gradient flow relies heavily on prior information about the target function, such as the source condition $s$. Developing data-driven methods for selecting the optimal early stopping time would significantly improve the practical utility of the proposed methods.
\end{enumerate} 

We hope that our contributions will inspire further research on regression algorithms, inference theory, and other related areas in statistics and machine learning.

% Acknowledgements and Disclosure of Funding should go at the end, before appendices and references

\acks{Zhuo Chen is supported in part by National Natural Science Foundation of China (Grant 12071241). Qian Lin is supported in part by National Natural Science Foundation of China (Grant 92370122, Grant 11971257) and the Beijing Natural Science Foundation (Grant Z190001).}

% Manual newpage inserted to improve layout of sample file - not
% needed in general before appendices/bibliography.

\newpage

\appendix

\section{Proof of Theorem \ref{upper bound}}\label{upper bound proof section} 

Before we start the proof of Theorem \ref{upper bound}, we first introduce or recall some important definitions and notations. Define the sampling covariance operator by 
\begin{equation}T_X:\,C^0(\X)\to\H,\quad (T_X f)(x)=\frac{1}{n}\K(x,X)f(X)=\frac{1}{n}\sum_{j=1}^n k(x,x_j)f(x_j).\end{equation} 
By Assumption \ref{embedding index assumption}, $[\H]^\gamma$ is embedded in $C^0(\X)$ if $\gamma>\alpha_0$, and $\H$ is compactly embedded in $[\H]^\gamma$ if $\gamma\leq 1$, hence $T_X$ can be naturally treated as a continuous operator from $[\H]^\gamma$ onto $[\H]^\gamma$ for any $\gamma\in(\alpha_0,1]$. 

The population version of $T_X$ is 
\begin{equation}T:\,C^0(\X)\to\H,\quad (Tf)(x)=\int_\X k(x,\xi)f(\xi)d\xi.\end{equation} 
Likewise, $T$ can also be treated as an operator from $[\H]^\gamma$ onto $[\H]^\gamma$ for $\gamma\in(\alpha_0,1]$, and the eigenvalues of $T$ as operator from $[\H]^\gamma$ onto $[\H]^\gamma$ are still $\lambda_i$, $i=1,2,\dots$ for any $\gamma\in(\alpha_0,1]$. 

Define the sample basis function by 
\begin{equation}\hat{g}=\frac{1}{n}\sum_{j=1}^n y_jk(x_j,\cdot).\end{equation} 
By taking expectation over the noise $\varepsilon$, we obtain 
\begin{equation}\tilde{g}=\frac{1}{n}\sum_{j=1}^n f^*(x_j)k(x_j,\cdot)=T_X f^*\in\H.\end{equation} 
The population version of $\tilde{g}$ is 
\begin{equation}g=\E_{\{x_j\}}\tilde{g}=\int_\X k(\cdot,\xi)f^*(\xi)d\xi=Tf^*\in\H.\end{equation} 

The kernel gradient flow estimator $\hat{f}_t$ can be expressed as the following form of spectral algorithm: 
\begin{equation}\label{f hat} 
    \hat{f}_t=\varphi_t(T_X)\hat{g},
\end{equation} 
where $\varphi_t(r)$ is defined in Definition \ref{filter function def}. Likewise, we define 
\begin{equation}\label{f tilde} 
    \tilde{f}_t=\varphi_t(T_X)\tilde{g},\quad f_t=\varphi_t(T)g. 
\end{equation} 

For any $x\in\X$, we make the following bias-variance decomposition: 
\begin{equation}\label{bias-variance decomposition} 
    |\hat{f}_t(x)-f^*(x)|\leq\mathrm{Bias}(x,t,X)+\mathrm{Var}(x,t,X,Y),
\end{equation} 
where 
\begin{equation}\label{bias term} 
    \mathrm{Bias}(x,t,X)=|\tilde{f}_t(x)-f^*(x)|
\end{equation} 
and 
\begin{equation}\label{variance term} 
    \mathrm{Var}(x,t,X,Y)=|\hat{f}_t(x)-\tilde{f}_t(x)|. 
\end{equation} 

We will give estimations for the two terms (\ref{bias term}) and (\ref{variance term}) in the next two subsections. Theorem \ref{upper bound} is a direct corollary of Theorem \ref{bias estimation} and Theorem \ref{variance estimation} which will be proved in section \ref{bias estimation subsection} and \ref{variance estimation subsection}, respectively.

\subsection{Bias term}\label{bias estimation subsection}

\begin{theorem}\label{bias estimation} 
    Suppose that Assumptions \ref{noise moment}, \ref{edr}, \ref{Holder assumption}, \ref{source condition}, and \ref{embedding index assumption} hold. Choose $\alpha=\alpha_0+\varepsilon$, $t=n^\theta$ for $\theta\in(0,\beta)$, then for any small $\varepsilon>0$ satisfying $0<\varepsilon<s-\frac{1}{\beta}$ and for any $p>1$, when $n$ is sufficiently great, we have 
    \begin{equation}\label{bias upper bound} 
        \sup_x\mathrm{Bias(x,t,X)}\leq C\left(t^{-\frac{s-\alpha}{2}}+\frac{p\log n}{\sqrt{n}}\right)
    \end{equation} 
    with probability $1-\O(n^{-p})$, for some constant $C>0$ (depending only on $\varepsilon$, $\beta$, $s$ and $R$ for continuous kernel gradient flow, and on $\kappa$, $\eta$ additionally for discrete kernel gradient flow). 
\end{theorem}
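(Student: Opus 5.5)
We split the bias further through the population flow $f_t=\varphi_t(T)g$:
\begin{equation*}
\sup_x\mathrm{Bias}(x,t,X)\le \|f_t-f^*\|_\infty+\|\tilde f_t-f_t\|_\infty .
\end{equation*}
We bound each piece in the $[\H]^\alpha$ norm and then pass to the sup norm via $\|f\|_\infty\le M_\alpha\|f\|_{[\H]^\alpha}$. This embedding is available because $\alpha=\alpha_0+\varepsilon>\alpha_0$ by Assumption \ref{embedding index assumption}.

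\textbf{Deterministic part.} We have $f_t-f^*=-\psi_t(T)f^*$. Writing $f^*=\sum_i f_i\lambda_i^{s/2}e_i$ gives
\begin{equation*}
\|\psi_t(T)f^*\|_{[\H]^\alpha}^2=\sum_i \psi_t(\lambda_i)^2\lambda_i^{s-\alpha}f_i^2\le \sup_{z\in(0,\kappa^2]}\big(\psi_t(z)z^{\frac{s-\alpha}{2}}\big)^2R^2 .
\end{equation*}
For the continuous flow, $\sup_z e^{-tz}z^{a}\le (a/e)^a t^{-a}$. For the discrete flow, Assumption \ref{learning rate assumption} gives $0\le 1-\eta z\le e^{-\eta z}$, so $(1-\eta z)^{t/\eta}\le e^{-tz}$ and the same bound holds. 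This yields $Ct^{-\frac{s-\alpha}{2}}$. Note that gradient flow has no qualification limit, so every $s$ is allowed.

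\textbf{Random part.} Note that $\tilde f_t-f_t=\varphi_t(T_X)T_Xf^*-\varphi_t(T)Tf^*=\psi_t(T)f^*-\psi_t(T_X)f^*$. We write $T_{\nu}=T+\nu$ with $\nu=1/t$ and decompose
\begin{equation*}
[\psi_t(T)-\psi_t(T_X)]f^*=T^{\frac{\alpha}{2}}\ \text{acting on}\ \ \cdots,
\end{equation*}
that is, we insert $T_\nu^{1/2}T_{X\nu}^{-1/2}$ factors in the standard way of spectral algorithms. The tools are the concentration bound $\|T_\nu^{-1/2}(T-T_X)T_\nu^{-1/2}\|\le 1/2$, which holds with probability $1-\O(n^{-p})$ once $\mathcal N_\infty(\nu)=\sup_x\|T_\nu^{-1/2}k_x\|_\H^2\lesssim M_\alpha^2 t^{\alpha}$ satisfies $t^\alpha p\log n/n\to0$. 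This is exactly where $\varepsilon<\frac1\theta-\frac1\beta$ is needed in the later use. It gives $\|T_\nu^{1/2}T_{X\nu}^{-1/2}\|\le\sqrt2$. To handle the non-polynomial filter we use the analytic (contour integral) representation
\begin{equation*}
\psi_t(T)-\psi_t(T_X)=\frac{1}{2\pi i}\oint_\Gamma \psi_t(z)\,(z-T)^{-1}(T-T_X)(z-T_X)^{-1}\,dz ,
\end{equation*}
taken over a contour around $[0,\kappa^2]$ at distance $\asymp\nu$ from the origin. This is the analytic extension granted by Assumption \ref{learning rate assumption} in the discrete case. The representation reduces everything to resolvent bounds of the form $\|T_\nu^{1/2}(z-T)^{-1}T_\nu^{1/2}\|\lesssim1$ on $\Gamma$, and the same with $T_X$ in place of $T$.

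\textbf{Main obstacle.} The hard step is the resulting term involving $\|T_\nu^{-1/2}(T-T_X)f^*\|_\H$, together with a split of $f^*$ into its high-frequency part (below $\nu$) and its low-frequency part. With $s>\alpha$ the $T^{\alpha/2}$ weight combines with $f^*$ to give either $t^{-\frac{s-\alpha}{2}}$ again or an empirical-process term. We would control the latter by Bernstein's inequality in $\H$, using $\|f^*\|_\infty\le M_\alpha R$. That gives a term of order $\sqrt{\mathcal N_\infty(\nu)}\,p\log n/n+\sqrt{t^{\alpha}\mathcal N(\nu)\,p\log n/n}\,t^{-\frac{s}{2}}$, which we expect to be bounded by $C(t^{-\frac{s-\alpha}{2}}+p\log n/\sqrt n)$ for large $n$ under $t=n^\theta$, $\theta<\beta$. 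The hardest bookkeeping is making the pointwise bound uniform in $x$. Here we go through $\|\cdot\|_{[\H]^\alpha}$ rather than through pointwise evaluation, which avoids needing Assumption \ref{Holder assumption} at this stage. If that route fails, the fallback is to bound on an $n^{-c}$-net of $\X$ and use the Hölder continuity to extend to all $x$, at the cost of only a $\log n$ factor.
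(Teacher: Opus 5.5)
Your deterministic part is correct and matches the paper's Lemma~\ref{2nd term of bias}; the reduction $(1-\eta z)^{t/\eta}\le e^{-tz}$ is a clean way to treat the discrete filter. The gap is in the random part, which carries the whole content of the theorem. There your decomposition ends in ``$\cdots$'' and the decisive bound is only ``expected.'' More importantly, the route you set up cannot deliver it. With the resolvent identity in your order, $(z-T_X)^{-1}$ acts on $f^*$. Expanding $(z-T_X)^{-1}=z^{-1}+z^{-1}T_X(z-T_X)^{-1}$, which is how a term $(T-T_X)f^*$ arises, and taking the residue at $z=0$ gives
\[
\frac{1}{2\pi i}\oint_{\Gamma} \psi_t(z)\,z^{-1}(z-T)^{-1}\,dz\;(T-T_X)f^*=-\varphi_t(T)(T-T_X)f^*=\varphi_t(T)(\tilde g-g).
\]
This is the gradient-flow smoother applied to the centred values $f^*(x_i)$. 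It is a first-order, variance-type term whose pointwise size is of order $\sqrt{t^{1/\beta}/n}$ for generic $f^*$. Bernstein with $\|f^*\|_\infty\le M_\alpha R$ only gives $\|T_\lambda^{-1/2}(T-T_X)f^*\|_\H\lesssim\sqrt{\mathcal N_1(\lambda)\,p\log n/n}$, which grows like $t^{\frac{1}{2\beta}}$ and costs a further $t^{\alpha/2}$ when passed to $[\H]^\alpha$. For $\theta>1/(s-\varepsilon)$, the regime used later for the bands, this is polynomially larger than $t^{-\frac{s-\alpha}{2}}+p\log n/\sqrt n$. It is cancelled only by the companion piece $z^{-1}(z-T)^{-1}(T-T_X)T_X(z-T_X)^{-1}f^*$, and nothing in your plan tracks that cancellation.

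The frequency split helps only for the high-frequency part of $f^*$, whose $L^\infty$ and $L^2$ norms are of bias size. For the order-one low-frequency part the same first-order term reappears, and your list of expected terms omits its $\sqrt{\mathcal N_1(\lambda)\,p\log n/n}$ contribution. There is also a space problem. For $\alpha_0<s<1$ one has in general $f^*\notin\H$, so $(z-T_X)^{-1}f^*$ equals $z^{-1}f^*$ plus an element of $\H$. The $\H$-sandwich bounds you list therefore do not apply to it; one must work with operators on $[\H]^\alpha$, as the paper does.

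The missing idea is that no cancellation is needed: $\psi_t(T)f^*$ and $\psi_t(T_X)f^*$ are \emph{each} of bias size, and the paper bounds them separately in $[\H]^\alpha$. Write $f^*=T^{\frac{s-\alpha}{2}}u^*$.
For $s-\alpha\le 2$, the paper factors $\psi_t(T_X)T_{X\lambda}\cdot T_{X\lambda}^{-1}T_\lambda\cdot T_\lambda^{-1}T^{\frac{s-\alpha}{2}}$ and uses $\|T_{X\lambda}^{-1}T_\lambda\|_{[\H]^\alpha}\le 3$ (Lemma~\ref{operator quotient norm bound}). This gives $\lesssim t^{-1}\cdot 3\cdot t^{1-\frac{s-\alpha}{2}}R$.
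For $s-\alpha>2$, it splits off $\psi_t(T_X)\bigl(T^{\frac{s-\alpha}{2}}-T_X^{\frac{s-\alpha}{2}}\bigr)u^*$. It is this term (Lemma~\ref{operator poly diff norm bound}), not a $\sqrt{\mathcal N_\infty}\,p\log n/n$ term, that produces the $p\log n/\sqrt n$ in the statement.

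If you want to keep a contour argument, reverse the resolvent identity so that $(z-T)^{-1}$ hits $f^*$:
\[
\psi_t(T)f^*-\psi_t(T_X)f^*=\frac{1}{2\pi i}\oint_{\Gamma_t}\psi_t(z)\,(z-T_X)^{-1}(T-T_X)(z-T)^{-1}f^*\,dz .
\]
Then combine $\|T_\lambda^{-1}f^*\|_{[\H]^\alpha}\lesssim t^{1-\min\{\frac{s-\alpha}{2},1\}}$, $\|T_\lambda^{-1}(T-T_X)\|_{[\H]^\alpha}\lesssim\sqrt{t^\alpha\log n/n}$ and $\oint_{\Gamma_t}|\psi_t(z)|\,|dz|\lesssim 1/t$. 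This is the paper's Lemma~\ref{1st term of bias 2}, and the resulting bound $\sqrt{t^\alpha\log n/n}\;t^{-\min\{\frac{s-\alpha}{2},1\}}$ lies within the claimed bound.
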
 

\proof 
Recall that the bias term is defined in (\ref{bias term}). Then we have the following decomposition: 
\begin{equation}\label{decomposition of bias}
    \begin{aligned}
        \sup_{x}\mathrm{Bias}(x,t,X)&=\|\tilde{f}_t-f^*\|_\infty\leq\|\tilde{f}_t-f_t\|_\infty+\|f_t-f\|_\infty\\
        &\leq M_\alpha\|\tilde{f}_t-f_t\|_{[\H]^\alpha}+M_\alpha\|f_t-f^*\|_{[\H]^\alpha},
    \end{aligned}
\end{equation}
where $M_\alpha$ is the embedding coefficient of order $\alpha$ defined in (\ref{embedding coefficient def}). 

For the second term in (\ref{decomposition of bias}), we prove in Lemma \ref{2nd term of bias} that 
\begin{equation}\label{2nd term of bias estimation} 
    \|f_t-f^*\|_{[\H]^\alpha}\leq C t^{-\frac{s-\alpha}{2}}
\end{equation}
for some constant $C>0$ (depending only on $\beta$, $s$ and $R$ for continuous kernel gradient flow, and on $\kappa$, $\eta$ additionally for discrete kernel gradient flow). 

For the first term in (\ref{decomposition of bias}), we prove in Lemma \ref{1st term of bias} that when $n$ is sufficiently great, 
\begin{equation}\label{1st term of bias estimation} 
    \|\tilde{f}_t-f_t\|_{[\H]^\alpha}\leq C\left(t^{-\frac{s-\alpha}{2}}+\frac{p\log n}{\sqrt{n}}\right)
\end{equation} 
for some constant $C>0$ (depending only on $\varepsilon$, $\beta$, $s$ and $R$ for continuous kernel gradient flow, and on $\kappa$, $\eta$ additionally for discrete kernel gradient flow). 

Combining (\ref{2nd term of bias estimation}) and (\ref{1st term of bias estimation}) together, we yield the conclusion. 
\qed

\begin{lemma}\label{2nd term of bias} 
    For the second term of (\ref{decomposition of bias}), we have 
    \begin{equation}\|f_t-f^*\|_{[\H]^\alpha}\leq C t^{-\frac{s-\alpha}{2}}\end{equation}
    for some constant $C>0$ depending only on $\beta$, $s$, $R$ (and $\kappa$, $\eta$ additionally for discrete kernel gradient flow). 
\end{lemma}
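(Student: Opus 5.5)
Since $f_t=\varphi_t(T)g=\varphi_t(T)Tf^*$, we have $f_t-f^*=-(I-T\varphi_t(T))f^*=-\psi_t(T)f^*$, where $\psi_t$ is the remainder function of Definition \ref{filter function def}. The plan is to diagonalize in the eigenbasis $\{e_i\}$ and reduce the lemma to a scalar inequality for $\psi_t$. Write $f^*=\sum_i f_i\lambda_i^{s/2}e_i$ with $\sum_i f_i^2=\|f^*\|_{[\H]^s}^2\leq R^2$ (Assumption \ref{source condition}). Then
\begin{equation*}
\psi_t(T)f^*=\sum_i \psi_t(\lambda_i)f_i\lambda_i^{s/2}e_i=\sum_i \bigl(\psi_t(\lambda_i)\lambda_i^{\frac{s-\alpha}{2}}\bigr)f_i\,\lambda_i^{\alpha/2}e_i ,
\end{equation*}
so by the definition of the $[\H]^\alpha$ norm,
\begin{equation*}
\|f_t-f^*\|_{[\H]^\alpha}^2=\sum_i \psi_t(\lambda_i)^2\lambda_i^{s-\alpha}f_i^2\leq R^2\sup_{i}\bigl(|\psi_t(\lambda_i)|\lambda_i^{\frac{s-\alpha}{2}}\bigr)^2 .
\end{equation*}
Note that $s-\alpha>0$ because $\varepsilon<s-\frac1\beta$. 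It therefore suffices to show
\begin{equation*}
\sup_{z\in(0,\kappa^2]}|\psi_t(z)|z^{\gamma}\leq C_\gamma t^{-\gamma},\qquad \gamma=\tfrac{s-\alpha}{2}>0,
\end{equation*}
using that all $\lambda_i\leq\kappa^2$.

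\textbf{Continuous case.} Here $\psi_t(z)=e^{-tz}$. Substituting $u=tz$ gives $e^{-tz}z^\gamma=t^{-\gamma}u^\gamma e^{-u}\leq t^{-\gamma}(\gamma/e)^\gamma$. The constant depends only on $\gamma$, hence only on $s,\beta$ (and $\varepsilon$), and on $R$ through the prefactor.

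\textbf{Discrete case.} Here $\psi_t(z)=(1-\eta z)^{t/\eta}$. By Assumption \ref{learning rate assumption}, $0<\eta z<\frac12$ for $z\in(0,\kappa^2]$, so $0<1-\eta z<1$. Using $\log(1-x)\leq -x$, we get $0<(1-\eta z)^{t/\eta}\leq e^{-tz}$, and the continuous bound applies verbatim. The only additional dependence is on $\eta,\kappa$, which guarantee $1-\eta z\in(0,1)$ and fix the range of $z$.

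There is no real obstacle. The only points needing care are:
\begin{itemize}
\item the positivity of $s-\alpha$, which is needed for the supremum bound;
\item in the discrete case, that $t/\eta$ is an integer $m$, or at least that the base $1-\eta z$ is positive so the real power is well defined.
\end{itemize}
Both are immediate from the stated assumptions.
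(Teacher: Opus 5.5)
Your proof is correct and follows the paper's argument: write $f_t-f^*=-\psi_t(T)f^*$ in the eigenbasis and bound $\sup_i\psi_t(\lambda_i)\lambda_i^{(s-\alpha)/2}\le Ct^{-(s-\alpha)/2}$, which the paper does via Lemma \ref{exponential estimation}. Your reduction of the discrete case to the continuous one through $(1-\eta z)^{t/\eta}\le e^{-tz}$ is a tidy shortcut that avoids the misprinted $t^{s}$ in part (2) of that lemma, and your bound correctly carries the factor $R$, whereas the paper's final display writes $\sum_i a_i^2$ and $R^2$ where a square root is needed.
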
 

\proof 
Let $f^*(x)=\sum_{i=1}^\infty a_i\lambda_i^{\frac{s}{2}}e_i(x)$ be the expansion of $f^*$ in $[\H]^\alpha$. By assumption \ref{source condition}, we have 
\begin{equation}\sum_{i=1}^\infty a_i^2\leq R^2<\infty.\end{equation} 
Then we have 
\begin{equation}f_t(x)-f^*(x)=\varphi_t(T)g(x)-f^*(x)=(\varphi_t(T)T-1)f^*(x).\end{equation}
Thus, 
\begin{equation}f_t(x)-f^*(x)=\sum_{i=1}^\infty(\varphi_t(\lambda_i)\lambda_i-1)a_i\lambda_i^{\frac{s}{2}}e_i(x)=-\sum_{i=1}^\infty \psi_t(\lambda_i)\lambda_i^{\frac{s-\alpha}{2}}\cdot a_i\lambda_i^{\frac{\alpha}{2}}e_i,\end{equation} 
and by Lemma \ref{exponential estimation}, we obtain 
\begin{equation}\|f_t-f^*\|_{[\H]^\alpha}\leq\sup_{i}\psi_i(\lambda_i)\lambda_i^{\frac{s-\alpha}{2}}\cdot\sum_{i=1}^\infty a_i^2\leq CR^2t^{-\frac{s-\alpha}{2}}\end{equation} 
for some constant $C>0$ depending only on $\beta$, $s$ (and $\kappa$, $\eta$ additionally for discrete kernel gradient flow). 
\qed

\begin{lemma}\label{1st term of bias} 
    For the first term of (\ref{decomposition of bias}), for $p>1$, when $n$ is sufficiently great, with probability $1-\O(n^{-p})$, we have 
    \begin{equation}\|\tilde{f}_t-f_t\|_{[\H]^\alpha}\leq C\left(t^{-\frac{s-\alpha}{2}}+\frac{p\log n}{\sqrt{n}}\right)\end{equation} 
    for some constant $C>0$ depending only on $\varepsilon$, $\beta$, $s$, $R$ for continuous kernel gradient flow, and $\kappa$, $\eta$ additionally for discrete kernel gradient flow. 
\end{lemma}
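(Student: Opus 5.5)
We need to bound $\|\tilde f_t-f_t\|_{[\H]^\alpha}$ where $\tilde f_t=\varphi_t(T_X)T_Xf^*$ and $f_t=\varphi_t(T)Tf^*$. The plan is to write the difference in terms of the remainder functions,
\begin{equation*}
\tilde f_t-f_t=\bigl(I-\psi_t(T_X)\bigr)f^*-\bigl(I-\psi_t(T)\bigr)f^*=\psi_t(T)f^*-\psi_t(T_X)f^*,
\end{equation*}
and then insert the regularized operators $T_\lambda=T+\lambda$, $T_{X\lambda}=T_X+\lambda$ with $\lambda=t^{-1}$. The standard route in the spectral-algorithm literature is to split
\begin{equation*}
\psi_t(T_X)f^*-\psi_t(T)f^* = \psi_t(T_X)\bigl(f^*-f_t\bigr) + \bigl(\psi_t(T_X)-\psi_t(T)\bigr)f_t\ \text{-type pieces},
\end{equation*}
but I would instead use a cleaner decomposition. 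I would write $\tilde f_t-f_t=\varphi_t(T_X)(T_Xf^*-T_Xf_t)+ (\varphi_t(T_X)T_X-I)f_t+(f_t-\varphi_t(T)Tf^*)$, i.e. $\tilde f_t-f_t=\varphi_t(T_X)T_X(f^*-f_t)-\psi_t(T_X)f_t$, and treat the two pieces separately.

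For the first piece, I bound $\|T^{\alpha/2\cdot(-1)}\cdots\|$ by the usual sandwich: $\|T^{-\alpha/2}\varphi_t(T_X)T_X(f^*-f_t)\|_{L^2}\le \|T^{-\alpha/2}T_\lambda^{\alpha/2}\|\,\|T_\lambda^{-\alpha/2}T_{X\lambda}^{\alpha/2}\|\,\|T_{X\lambda}^{-\alpha/2}\varphi_t(T_X)T_X T_{X\lambda}^{\alpha/2}\|\cdots$; the key probabilistic input is the concentration $\|T_\lambda^{-1/2}(T-T_X)T_\lambda^{-1/2}\|\le 1/2$ with probability $1-\O(n^{-p})$, which holds when $n\gtrsim M_\alpha^2\lambda^{-\alpha}p\log n$, i.e. $t^\alpha\ll n/\log n$; this is exactly where $\theta<1/\alpha$, i.e. $\varepsilon<\frac1\theta-\frac1\beta$, is used (Cordes inequality then gives $\|T_\lambda^{a}T_{X\lambda}^{-a}\|\le C$ for $a\in[0,1]$). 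Since $\|f^*-f_t\|_{[\H]^\alpha}\lesssim t^{-(s-\alpha)/2}$ by Lemma \ref{2nd term of bias} and filter bounds $\sup_z z\varphi_t(z)\le1$ hold, this piece is $\O(t^{-(s-\alpha)/2})$. For the discrete flow, the same filter bounds hold because Assumption \ref{learning rate assumption} gives $0\le 1-\eta z\le 1$ on the spectrum.

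For the second piece, $\psi_t(T_X)f_t$, I would write $f_t=Th$ form when $s\ge 2$ is not available, so instead compare with $\psi_t(T)f_t$, which is $\O(t^{-(s-\alpha)/2})$ in $[\H]^\alpha$ by Lemma \ref{exponential estimation}. The difference $(\psi_t(T_X)-\psi_t(T))f_t$ is the main obstacle: $\psi_t$ is not operator-monotone and the difference of operator functions must be controlled. I would use the analytic extension mentioned after Assumption \ref{learning rate assumption}: represent $\psi_t(A)=\frac{1}{2\pi i}\oint_\Gamma\psi_t(z)(z-A)^{-1}dz$ and use $(z-T_X)^{-1}-(z-T)^{-1}=(z-T_X)^{-1}(T_X-T)(z-T)^{-1}$. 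Then the term reduces to $\|T_\lambda^{-1/2}(T_X-T)f_t\|$-type quantities. Since $f_t$ is uniformly bounded ($\|f_t\|_\infty\le M_\alpha\|f_t\|_{[\H]^\alpha}\le C R$ using $s>\alpha$), Bernstein's inequality for the $\H$-valued variables $\xi(x)=T_\lambda^{-1/2}k_x f_t(x)$ gives, with probability $1-\O(n^{-p})$, a bound $\lesssim p\log n\,(\sqrt{\mathcal N(\lambda)/n}+M_\alpha\lambda^{-\alpha/2}/n)$; combined with the $\lambda^{(1-\alpha)/2}$ factor from the contour and careful bookkeeping, I expect the contribution to collapse to $C\,p\log n/\sqrt n$ plus terms absorbed in $t^{-(s-\alpha)/2}$ (splitting by whether the deviation lands below or above the bias level). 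The hardest bookkeeping is making the contour integral's $|\psi_t(z)|\,|dz|$ weights integrable uniformly in $t$, for which I would use a contour of width $\asymp\lambda$ around $[0,\kappa^2]$.
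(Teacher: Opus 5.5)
Your identity $\tilde f_t-f_t=\varphi_t(T_X)T_X(f^*-f_t)-\psi_t(T_X)f_t$ is correct, and so is $\|\psi_t(T)f_t\|_{[\H]^\alpha}\lesssim t^{-(s-\alpha)/2}$. The first piece can also be salvaged, although the sandwich you display contains the unbounded factor $T^{-\alpha/2}T_\lambda^{\alpha/2}$. A correct version goes through $\|T_\lambda^{-1/2}\frac{1}{n}\sum_i(f^*-f_t)(x_i)k_{x_i}\|_\H$ and needs its own concentration step.

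The genuine gap is the step you leave at ``I expect'': the bound on $(\psi_t(T_X)-\psi_t(T))f_t$.

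\begin{itemize}
\item \textbf{Wrong vector.} The resolvent identity produces the integrand $\psi_t(z)(z-T_X)^{-1}(T_X-T)(z-T)^{-1}f_t$. So the vector hit by $T_X-T$ is $h_z=(z-T)^{-1}f_t$, not $f_t$. This vector varies with $z$, and for $|z|\sim\lambda$ its $[\H]^\alpha$-norm can be of order $\lambda^{\min\{(s-\alpha)/2,1\}-1}\gg\|f_t\|_{[\H]^\alpha}$. A single vector-Bernstein bound for $T_\lambda^{-1/2}(T_X-T)f_t$ does not control it.
\item \textbf{Lossy concentration.} The concentration you propose (sup norm of the function times $\mathcal{N}_1(\lambda)$) is too weak even if made uniform in $z$. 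Combine $\|T_\lambda^{-1/2}(T_X-T)h_z\|_\H\lesssim\|h_z\|_\infty\sqrt{\mathcal{N}_1(\lambda)\log n/n}$ with the $\H\to[\H]^\alpha$ bound $\lambda^{-\alpha/2}$ for $(z-T_X)^{-1}T_\lambda^{1/2}$ on $\Gamma_t$ and with $\oint_{\Gamma_t}|\psi_t(z)|\,|dz|\lesssim\lambda$. The result is $t^{-\min\{s-\alpha,2\}/2}\sqrt{t^{\alpha+1/\beta}\log n/n}$.
\item \textbf{Failure range.} For $s-\alpha\le 2$ this bound exceeds $t^{-(s-\alpha)/2}$ whenever $t^{\alpha+1/\beta}\gg n$. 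That happens for $\theta$ roughly between $\beta/2$ and $1/\alpha$, which the lemma allows. The factor $\lambda^{(1-\alpha)/2}$ you attribute to the contour is also not what the contour produces.
\end{itemize}

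What is missing is to pair the leverage bound $\sup_x\|T_\lambda^{-1/2}k_x\|_\H^2\le M_\alpha^2t^{\alpha}$ (instead of $\mathcal{N}_1(\lambda)$) with the smoothness of $f_t$. Concretely, use three facts:
\begin{itemize}
\item the operator bound $\|T_\lambda^{-1/2}(T_X-T)T_\lambda^{-1/2}\|_\H\lesssim\sqrt{t^\alpha\log n/n}$ from Lemma~\ref{operator quotient diff norm bound ver2};
\item $\|T_\lambda^{1/2}(z-T)^{-1}T_\lambda^{1/2}\|\le 2\sqrt{2}$ on $\Gamma_t$;
\item $\|T_\lambda^{-1/2}f_t\|_\H\lesssim t^{1-\min\{s,2\}/2}$.
\end{itemize}
The contour then gives $\sqrt{t^\alpha\log n/n}\,t^{-(\min\{s,2\}-\alpha)/2}$, which is below the lemma's right-hand side (compare the paper's Lemma~\ref{1st term of bias 2}).

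The paper's own proof of this lemma is more elementary and needs no perturbation of $\psi_t$. It writes $f^*=T^{(s-\alpha)/2}u^*$ and bounds $\psi_t(T_X)f^*$ directly.
\begin{itemize}
\item For $s-\alpha\le 2$ it uses $\|\psi_t(T_X)T_{X\lambda}\|\lesssim t^{-1}$, $\|T_{X\lambda}^{-1}T_\lambda\|\le 3$ and $\|T_\lambda^{-1}T^{(s-\alpha)/2}\|\le t^{1-(s-\alpha)/2}$.
\item For $s-\alpha>2$ it swaps $T^{(s-\alpha)/2}$ for $T_X^{(s-\alpha)/2}$ at cost $\|T^{r}-T_X^{r}\|\lesssim r\|T-T_X\|\lesssim p\log n/\sqrt{n}$. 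This is exactly where the $p\log n/\sqrt{n}$ term in the statement comes from.
\end{itemize}
The case split you were trying to avoid is the simplest way to close the argument.
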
 

\proof 
Note that 
\begin{equation}\label{decomposition of the 1st term of bias}
    \tilde{f}_t-f_t=\varphi_t(T_X)T_Xf^*-\varphi_t(T)Tf^*=\psi_t(T)f^*-\psi_t(T_X)f^*.
\end{equation}
Since $f^*\in[\H]^s$ and $s>\alpha$, then there exists some $u^*\in[\H]^\alpha$ such that $f^*=T^{\frac{s-\alpha}{2}}u^*$, and $\|u^*\|_{[\H]^s}=\|f^*\|_{[\H]^\alpha}\leq R$. Then the first term of (\ref{decomposition of the 1st term of bias}) is estimated by 
\begin{equation}\label{bias 1.1}
    \|\psi_t(T)f^*\|_{[\H]^\alpha}=\|\psi_t(T)T^{\frac{s-\alpha}{2}}u^*\|_{[\H]^\alpha}\leq\|\psi_t(T)T^{\frac{s-\alpha}{2}}\|_{[\H]^\alpha}\cdot\|u^*\|_{[\H]^\alpha}\leq Ct^{-\frac{s-\alpha}{2}}\cdot R, 
\end{equation}
for some constant $C>0$ depending only on $\beta$, $s$ (and $\kappa$, $\eta$ additionally for discrete kernel gradient flow), where we use Lemma \ref{exponential estimation} in the second inequality. 

For the second term of (\ref{decomposition of the 1st term of bias}), we discuss the following two cases: 

(1) If $s-\alpha\leq 2$, then 
\begin{equation}\|\psi_t(T_X)f^*\|_{[\H]^\alpha}=\|\psi_t(T_X)T^{\frac{s-\alpha}{2}}u^*\|_{[\H]^\alpha}\leq\|\psi_t(T_X)T_{X\lambda}\|_{[\H]^\alpha}\|T_{X\lambda}^{-1}T_\lambda\|_{[\H]^\alpha}\|T_\lambda^{-1}T^{\frac{s-\alpha}{2}}\|_{[\H]^\alpha}\|u^*\|_{[\H]^\alpha},\end{equation}
where $T_{X\lambda}=T_X+\lambda$, $T_\lambda=T+\lambda$, $\lambda=\frac{1}{t}$. By Lemma \ref{exponential estimation} and Lemma \ref{fractal estimation}, we have 
\begin{equation}\|\psi_t(T_X)T_{X\lambda}\|_{[\H]^\alpha}\leq C\frac{1}{t},\quad \|T_\lambda^{-1}T^{\frac{s-\alpha}{2}}\|_{[\H]^\alpha}\leq C t^{-\frac{s-\alpha}{2}+1}\end{equation} 
for some constant $C>0$ (universal for continuous kernel gradient flow, and depending on $\kappa$, $\eta$ additionally for discrete kernel gradient flow). By Lemma \ref{operator quotient norm bound}, for $n$ sufficiently great, with probability at least $1-O(n^{-p})$, we have 
\begin{equation}\|T_{X\lambda}^{-1}T_\lambda\|_{[\H]^\alpha}\leq 3.\end{equation} 
Thus, in this case, we have 
\begin{equation}\|\psi_t(T_X)f^*\|_{[\H]^\alpha}\leq CR t^{-\frac{s-\alpha}{
2}}\end{equation} 
for some constant $C>0$ (universal for continuous kernel gradient flow, and depending on $\kappa$, $\eta$ for discrete kernel gradient flow). 

(2) If $s-\alpha>2$, then 
\begin{equation}\psi_t(T_X)f^*=\psi_t(T_X)T^{\frac{s-\alpha}{2}}u^*=\psi_t(T_X)T_X^{\frac{s-\alpha}{2}}u^*+\psi_t(T_X)(T^{\frac{s-\alpha}{2}}-T_X^{\frac{s-\alpha}{2}})u^*.\end{equation}
By Lemma \ref{exponential estimation}, we have 
\begin{equation}\|\psi_t(T_X)T_X^{\frac{s-\alpha}{2}}u^*\|_{[\H]^\alpha}\leq C t^{-\frac{s-\alpha}{2}}\cdot R\end{equation} 
for some constant $C>0$ depending only on $s$, $\beta$, $\varepsilon$ (and $\kappa$, $\eta$ additionally for discrete kernel gradient flow); By Lemma \ref{operator poly diff norm bound}, with probability at least $1-\O(n^{-p})$, we have 
\begin{equation}\|\psi_t(T_X)(T^{\frac{s-\alpha}{2}}-T^{\frac{s-\alpha}{2}})u^*\|_{[\H]^\alpha}\lesssim\|\psi_t(T_X)\|_{[\H]^\alpha}\|T^{\frac{s-\alpha}{2}}-T_X^{\frac{s-\alpha}{2}}\|_{[\H]^\alpha}\|u^*\|_{[\H]^\alpha}\leq 1\cdot CRM_\alpha^2 \frac{p\log n}{\sqrt{n}}\cdot R\end{equation}
for some universal constant $C>0$. 

In conclusion, with probability $1-\O(n^{-10})$, 
\begin{equation}\label{bias 1.2}
    \|\psi_t(T_X)f^*\|_{[\H]^\alpha}\leq C t^{-\frac{s-\alpha}{2}}+C\frac{p\log n}{\sqrt{n}}.
\end{equation} 
for some constant $C>0$ depending only on $\alpha=1/\beta+\varepsilon$, $R$ (and $\kappa$, $\eta$ additionally for discrete kernel gradient flow). 

Combining (\ref{bias 1.1}) and (\ref{bias 1.2}) together, we complete the proof of this lemma. 
\qed

\subsection{Variance term}\label{variance estimation subsection} 

\begin{theorem}\label{variance estimation} 
    Suppose that Assumptions \ref{noise moment}, \ref{edr}, \ref{Holder assumption}, \ref{source condition}, and \ref{embedding index assumption} hold. By choosing $\alpha=\alpha_0+\varepsilon$, $t=n^\theta$ for $\theta\in(0,\beta)$, for any sufficiently small $\varepsilon>0$ satisfying $0<\varepsilon<\frac{1}{\theta}-\frac{1}{\beta}$ and for $p>1$, when $n$ is sufficiently great, with probability $1-\O(n^{-p})$, we have 
    \begin{equation}\label{variance upper bound} 
        \sup_x\mathrm{Var}(x,t,X,Y)\leq C\sqrt{\frac{p\log n}{n}}t^{\frac{\alpha}{2}}
    \end{equation} 
    for some constant $C>0$ (depending only on $\varepsilon$, $d$, $\beta$, $L_k$, $h$, $s$, $R$, $\sigma$ and $L$ for continuous kernel gradient flow, and on $\kappa$, $\eta$ additionally for discrete kernel gradient flow). 
\end{theorem}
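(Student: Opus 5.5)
We need to bound $\sup_x|\hat f_t(x)-\tilde f_t(x)|$, where $\hat f_t-\tilde f_t=\varphi_t(T_X)\frac1n\sum_j\varepsilon_jk_{x_j}$. The plan has two parts. First, a pointwise high-probability bound at each fixed $x$, conditional on the design. Second, an extension to the supremum over $\X$ using a finite net and the H\"older continuity of Assumption~\ref{Holder assumption}.

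\textbf{Operator reduction.} Write $\hat f_t(x)-\tilde f_t(x)=\frac1n\sum_{j}\varepsilon_j\,\varphi_t(T_X)k_x(x_j)$; this uses the self-adjointness of $\varphi_t(T_X)$ in $\H$. Given $X$, this is a weighted sum of independent centered noises. Its conditional variance proxy is $\frac{\sigma^2}{n}\cdot\frac1n\sum_j|\varphi_t(T_X)k_x(x_j)|^2=\frac{\sigma^2}{n}\|T_X^{1/2}\varphi_t(T_X)k_x\|_\H^2$. I would bound this norm by inserting $T_{X\lambda}=T_X+\lambda$ and $T_\lambda=T+\lambda$ with $\lambda=1/t$:
\[
\|T_X^{1/2}\varphi_t(T_X)k_x\|_\H\le \|T_X^{1/2}\varphi_t(T_X)T_{X\lambda}^{1/2}\|\,\|T_{X\lambda}^{-1/2}T_\lambda^{1/2}\|\,\|T_\lambda^{-1/2}k_x\|_\H .
\]
The first factor is $\lesssim t^{1/2}$, since $z\varphi_t(z)\le1$ and $\varphi_t(z)\le t$; for the discrete flow this uses Assumption~\ref{learning rate assumption} and Lemma~\ref{exponential estimation}. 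The second factor is $\le 3$ with probability $1-\O(n^{-p})$ by Lemma~\ref{operator quotient norm bound}. For the third factor,
\[
\|T_\lambda^{-1/2}k_x\|_\H^2=\sum_i\frac{\lambda_i}{\lambda_i+\lambda}e_i(x)^2\le \lambda^{\alpha-1}\sum_i\lambda_i^\alpha e_i(x)^2\le M_\alpha^2t^{1-\alpha}.
\]
Altogether the proxy is $\lesssim \sigma^2 t^{\alpha}/n$, uniformly in $x$; equivalently this is Lemma~\ref{operated basis function norm bound}-type control. The same chain gives a uniform bound $|\varphi_t(T_X)k_x(x_j)|\le M_\alpha\|\varphi_t(T_X)k_x\|_{[\H]^\alpha}\lesssim t^{\alpha}$, which serves as the $L^\infty$ constant in Bernstein.

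\textbf{Pointwise concentration.} Apply Bernstein's inequality with the moment condition of Assumption~\ref{noise moment}, conditionally on $X$. For each fixed $x$, with probability $1-\O(n^{-p'})$,
\[
|\hat f_t(x)-\tilde f_t(x)|\lesssim\sqrt{\frac{p'\log n\,t^\alpha}{n}}+\frac{p'\log n\,t^\alpha}{n}.
\]
The second term is dominated by the first because $t^\alpha/n\to0$; this holds by the condition $\varepsilon<\frac1\theta-\frac1\beta$, i.e. $\theta\alpha<1$.

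\textbf{Uniformity (main obstacle).} Take a $\delta$-net $\{z_k\}$ of the compact $\X\subset\R^d$ with $\delta=n^{-q}$, so its cardinality is $\O(n^{qd})$, and union bound with $p'=p+qd$. The harder part is controlling the oscillation $|(\hat f_t-\tilde f_t)(x)-(\hat f_t-\tilde f_t)(z)|$ for $|x-z|\le\delta$. I would bound it by $\|\varphi_t(T_X)\|_{\H}\,\|\frac1n\sum\varepsilon_jk_{x_j}\|_\H\,\|k_x-k_z\|_\H$. Here $\|k_x-k_z\|_\H^2=k(x,x)-2k(x,z)+k(z,z)\le 2L_k\delta^{2h}$ by Assumption~\ref{Holder assumption}, $\|\varphi_t(T_X)\|\le t\le n^\beta$, and $\|\frac1n\sum\varepsilon_jk_{x_j}\|_\H\lesssim \kappa\max_j|\varepsilon_j|\lesssim\log n$ with high probability by the moment bound. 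The oscillation is therefore polynomially small once $q$ is large enough (depending on $\beta,h,d$), and is negligible against $\sqrt{\log n\,t^\alpha/n}\ge n^{-1/2}$. Combining the net bound with this oscillation bound and replacing $p'$ by $p$ up to constants yields (\ref{variance upper bound}).
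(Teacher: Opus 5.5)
Your overall strategy is sound, but the variance-proxy computation contains two exponent errors, and as written it does not give the claimed rate.

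\textbf{First factor.} It is not merely $\lesssim t^{1/2}$. The facts you cite already give $z^{1/2}(z+\lambda)^{1/2}\varphi_t(z)=\sqrt{z\varphi_t(z)}\,\sqrt{z\varphi_t(z)+\lambda\varphi_t(z)}\le\sqrt{2}$.

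\textbf{Third factor.} The exponent is flipped. Lemma~\ref{fractal estimation} with $\gamma=1-\alpha$ gives $\frac{\lambda_i}{\lambda_i+\lambda}=\frac{\lambda_i^{1-\alpha}}{\lambda_i+\lambda}\lambda_i^{\alpha}\le\lambda^{-\alpha}\lambda_i^{\alpha}$. Hence $\|T_\lambda^{-1/2}k_x\|_\H^2\le M_\alpha^2t^{\alpha}$, not $M_\alpha^2t^{1-\alpha}$. The latter bound is false in general: averaging over $x$ gives $\mathcal{N}_1(\lambda)\asymp t^{1/\beta}$, which exceeds $t^{1-\alpha}$ whenever $\beta<2$.

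\textbf{Consequence.} Your three factors multiply to $t^{1-\alpha/2}$, i.e.\ a proxy $\sigma^2t^{2-\alpha}/n$. That would only give $\sqrt{t^{2-\alpha}\log n/n}$, not (\ref{variance upper bound}).

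\textbf{Fix.} Use the corrected factors, together with $\|T_{X\lambda}^{-1/2}T_\lambda^{1/2}\|_\H\le\sqrt3$. For half powers in $\H$ the right citation is Lemma~\ref{operator quotient norm bound ver2}, not Lemma~\ref{operator quotient norm bound}. You then get $\frac1n\sum_j|\varphi_t(T_X)k_x(x_j)|^2\le 6M_\alpha^2t^{\alpha}$ on an event depending only on $X$, uniformly in $x$. This is exactly Lemma~\ref{operated basis function empirical norm bound}, and the rest of your argument goes through.

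\textbf{Comparison with the paper.} With that fix, your route differs from the paper's in how unbounded noise is handled.
\begin{itemize}
\item The paper truncates at $D_n=n^{\eta}$ and splits the sum into three pieces:
\begin{itemize}
\item a centered bounded part, handled by Bernstein with $c\propto t^\alpha D_n/n$ (which forces $\eta<(1-\theta\alpha)/2$) plus a net whose oscillation is controlled by $D_n\sup_z\|\varphi_t(T_X)k_z\|_\H|x-x_0|^h$;
\item a tail part that vanishes with high probability;
\item a mean-correction part bounded via high moments.
\end{itemize}
\item You instead apply the moment form of Bernstein directly, conditional on $X$. Assumption~\ref{noise moment} is precisely its hypothesis, with $v=\frac{\sigma^2}{n^2}\sum_j a_j^2$ and $c=\frac{L}{n}\max_j|a_j|$, where $a_j=\varphi_t(T_X)k_x(x_j)$. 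For the net you only need a crude bound $\max_j|\varepsilon_j|\lesssim\log n$ from sub-exponential tails, plus $\|\varphi_t(T_X)\|\le t$ and $\|k_x-k_z\|_\H\le\sqrt{2L_k}\,|x-z|^h$, to make the oscillation negligible over a polynomially fine net.
\end{itemize}
Your version is shorter and avoids the three-way bookkeeping. The paper's truncation buys bounded summands, which it later reuses to justify assuming bounded noise in the Gaussian-approximation arguments of Appendix~\ref{gaussian approximation section}.

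Both routes rest on the same two estimates given $X$:
\begin{itemize}
\item the sup bound $|\varphi_t(T_X)k_x(x_j)|\lesssim t^{\alpha}$ from Lemma~\ref{operated basis function norm bound} with $\gamma=\alpha$;
\item the empirical $L^2$ bound above.
\end{itemize}
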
 

\proof 
Define 
\begin{equation}\eta_i(x)=\varphi_t(T_X)k(x_i,x),\end{equation} 
then the variance term can be rewritten as 
\begin{equation}\mathrm{Var}(x,t,X,Y)=\left|\frac{1}{n}\sum_{i=1}^n\eta_i(x)\varepsilon_i\right|.\end{equation} 
We decomposite the variance term by 
\begin{equation}\label{variance decomposition}
    \mathrm{Var}(x,t,X,Y)\leq \left|\frac{1}{n}\sum_{i=1}^n\eta_i(x)\varepsilon_i^{D_n}\right|+\left|\frac{1}{n}\sum_{i=1}^n\eta_i(x)\varepsilon_{i,\mathrm{tail}}^{D_n}\right|+\left|\frac{1}{n}\sum_{i=1}^n \eta_i(x)m_i^{D_n}\right|,
\end{equation}
where 
\begin{equation}\varepsilon_i^{D_n}=\varepsilon_1I_{|\varepsilon_i|\leq D_n}-m_i^{D_n},\end{equation} 
\begin{equation}\varepsilon_{i,\mathrm{tail}}^{D_n}=\varepsilon_1I_{|\varepsilon_i|>D_n},\end{equation} 
\begin{equation}m_i^{D_n}=\E(\varepsilon_1I_{|\varepsilon_i|\leq D_n}),\end{equation}
where we choose $D_n=n^{-\eta}$ with $\eta\in(0,\frac{1-\theta\alpha}{2})$

By Lemma \ref{1st term of variance}, we have 
\begin{equation}\label{var I estimation} 
    \sup_{x}\left|\frac{1}{n}\sum_{i=1}^n\eta_i(x)\varepsilon_i^{D_n}\right|\leq C\sqrt{\frac{p\log n}{n}}t^{\frac{\alpha}{2}}, 
\end{equation} 
with probability $1-\O(n^{-p})$, where the constant $C>0$ depends only on $\varepsilon$, $d$, $\beta$, $h$, $\sigma$ and $L_k$ (and on $\kappa$, $\eta$ additionally for discrete kernel gradient flow). 

By Lemma \ref{2nd term of variance}, for $n$ sufficiently great, we have 
\begin{equation}\label{var II estimation} 
    \frac{1}{n}\sum_{i=1}^n\eta_i(x)\varepsilon_{i,\mathrm{tail}}^{D_n}=0,\quad\forall x\in\X
\end{equation} 
with probability $1-\O(n^{-p})$. 

By Lemma \ref{3rd term of variance}, for $n$ sufficiently great, we have 
\begin{equation}\label{var III estimation} 
    \sup_x\left|\frac{1}{n}\sum_{i=1}^n\eta_i(x)m_i^{D_n}\right|\leq C\sqrt{\frac{p\log n}{n}}t^{\frac{\alpha}{2}}, 
\end{equation} 
with probability $1-\O(n^{-p})$, where the constant $C>0$ depends only on $\varepsilon$, $\beta$, $\sigma$, $L$ (and $\kappa$, $\eta$ additionally for discrete kernel gradient flow).  

By plugging (\ref{var I estimation}), (\ref{var II estimation}) and (\ref{var III estimation}) together into the decomposition (\ref{variance decomposition}), we complete the proof. 
\qed 

\begin{lemma}\label{1st term of variance} 
    For $p>1$, with probability $1-\O(n^{-p})$, we have 
    \begin{equation}\sup_{x}\left|\frac{1}{n}\sum_{i=1}^n\eta_i(x)\varepsilon_i^{D_n}\right|\leq C\sqrt{\frac{p\log n}{n}}t^{\frac{\alpha}{2}},\end{equation}
    where the constant $C>0$ depends only on $\varepsilon$, $d$, $\beta$, $h$, $\sigma$, $L_k$ (and $\kappa$, $\eta$ additionally for discrete kernel gradient flow). 
\end{lemma}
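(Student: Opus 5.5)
Conditional on the design $X$, the quantity $\frac{1}{n}\sum_i\eta_i(x)\varepsilon_i^{D_n}$ is a sum of independent, centered, bounded variables: $|\varepsilon_i^{D_n}|\le 2D_n$ and $\mathrm{Var}(\varepsilon_i^{D_n}\mid x_i)\le\sigma^2$. The plan is a pointwise Bernstein bound followed by an $\epsilon$-net argument over $\X$, using the H\"older continuity of Assumption \ref{Holder assumption}.

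\textbf{Step 1: control of the weights.} Rewrite $\eta_i(x)=\varphi_t(T_X)k_x(x_i)$ via the reproducing property. On the event from Lemma \ref{operator quotient norm bound}, where $\|T_{X\lambda}^{-1}T_\lambda\|\le 3$ with $\lambda=1/t$, we get
\begin{equation*}
\frac{1}{n}\sum_i\eta_i(x)^2=\langle \varphi_t(T_X)k_x,\,T_X\varphi_t(T_X)k_x\rangle_\H\lesssim \|T_\lambda^{-1/2}k_x\|_\H^2\lesssim M_\alpha^2 t^{\alpha}.
\end{equation*}
Here we use $z\varphi_t(z)^2\lesssim (z+\lambda)^{-1}$, which follows from Lemma \ref{exponential estimation}, together with the embedding bound $\sup_x\|T_\lambda^{-1/2}k_x\|^2=\sup_x\sum_i\frac{\lambda_i}{\lambda_i+\lambda}e_i(x)^2\le M_\alpha^2\lambda^{-\alpha}$. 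Similarly, $\sup_{i,x}|\eta_i(x)|\le \|\varphi_t(T_X)\|\,\kappa^2\lesssim t$ gives a crude sup bound. For the discrete flow, Assumption \ref{learning rate assumption} gives the same filter inequalities.

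\textbf{Step 2: pointwise Bernstein.} Conditional on $X$, Bernstein's inequality gives, with probability $1-n^{-p-q}$,
\begin{equation*}
\Big|\frac1n\sum_i\eta_i(x)\varepsilon_i^{D_n}\Big|\lesssim \sqrt{\frac{t^\alpha (p+q)\log n}{n}}+\frac{t\,D_n (p+q)\log n}{n}.
\end{equation*}
The second term must be dominated by the first, which requires $tD_n\lesssim\sqrt{n t^\alpha}$. With $D_n$ a small power of $n$ (the intended reading of $D_n=n^{\eta}$) and $\theta\alpha<1$ (guaranteed by $\varepsilon<1/\theta-1/\beta$), this holds provided $\sup|\eta_i|$ is refined to $\lesssim t^{\alpha/2}M_\alpha\cdot\sup_x\|T_\lambda^{-1/2}k_{x}\|$-type bounds. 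Concretely, $|\eta_i(x)|=|\langle\varphi_t(T_X)k_x,k_{x_i}\rangle_\H|\lesssim\|T_\lambda^{1/2}\varphi_t(T_X)k_x\|\,\|T_\lambda^{-1/2}k_{x_i}\|\lesssim t^{\alpha}$, so the condition becomes $t^{\alpha/2}D_n\lesssim\sqrt n$, which the choice of exponent ensures.

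\textbf{Step 3: uniformity via nets (the main obstacle).} Take an $n^{-K}$-net $\mathcal N$ of $\X\subset\R^d$ with $|\mathcal N|\lesssim n^{Kd}$, and apply Step 2 with $q=Kd$ plus a union bound. For the discretization error, H\"older continuity gives $\|k_x-k_{x'}\|_\H^2=k(x,x)-2k(x,x')+k(x',x')\le 2L_k|x-x'|^{2h}$. Hence
\begin{equation*}
|\eta_i(x)-\eta_i(x')|\le\|\varphi_t(T_X)\|\,\kappa\,\|k_x-k_{x'}\|_\H\lesssim t\,|x-x'|^{h}.
\end{equation*}
The sum then changes by at most $t D_n n^{-Kh}$, which is negligible once $K$ is large relative to $(\theta+1)/h$. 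The delicate points are keeping all polynomial factors polynomial, so that only $\log n$ is lost, and intersecting the events from Lemma \ref{operator quotient norm bound}. Since the bound is conditional on $X$, integrating over $X$ yields probability $1-\O(n^{-p})$, and the constant depends on $d,h,L_k$ through $K$ and the net.
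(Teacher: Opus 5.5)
Your proposal is correct and follows the paper's proof essentially step for step. You use a pointwise Bernstein bound, which you apply conditionally on $X$ (more carefully than the paper, since every $\eta_i$ depends on the whole design). The variance proxy is $\sigma^2 n^{-1}\|\varphi_t(T_X)k_x\|_{L^2,n}^2\lesssim t^{\alpha}/n$ and the range is $\lesssim t^{\alpha}D_n/n$, made negligible by taking $D_n=n^{\eta}$ with $\eta<\frac{1-\theta\alpha}{2}$. You then pass to the supremum with a union bound over a polynomially fine net and the H\"older estimate $\|k_x-k_{x'}\|_\H\lesssim|x-x'|^h$.

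One citation in Step 1 needs correcting. The $T_\lambda^{\pm 1/2}$ sandwich, used both for the empirical norm and for your refined bound $|\eta_i(x)|\lesssim t^{\alpha}$, needs $\|T_{X\lambda}^{-1/2}T_\lambda^{1/2}\|_\H^2\le 3$. That is Lemma \ref{operator quotient norm bound ver2}, which only requires $t^{\alpha}\log n/n\to 0$. Lemma \ref{operator quotient norm bound} read in $\H$ (the case $\gamma=1$) instead relies on $t^{(1+\alpha)/2}\log n/n\to 0$, and this fails for large admissible $\theta$.
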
 

\proof 
By Lemma \ref{operated basis function norm bound}, for $n$ sufficiently great, with probability $1-\O(n^{-p})$, we have 
\begin{equation}\label{eta estimation} 
    |\eta_i(x)|=|\varphi_t(T_X)k_{x_i}(x)|\leq M_\alpha\|\varphi_t(T_X)k_{x_i}(\cdot)\|_{[\H]^\alpha}\leq CM_\alpha^2 t^{\alpha}.
\end{equation} 
for some constant $C>0$ (universal for continuous kernel gradient flow, and depending on $\kappa$, $\eta$ additionally for discrete kernel gradient flow). By Lemma \ref{operated basis function empirical norm bound} and \ref{exchange of x and x_i}, we have 
\begin{equation}\label{eta empirical estimation} 
    \sum_{i=1}^n\eta_i(x)^2=\sum_{i=1}^n(\varphi_t(T_X)k_{x_i}(x))^2=\sum_{i=1}^n(\varphi_t(T_X)k_x(x_i))^2=n\|\varphi_t(T_X)k_x(\cdot)\|_{L^2,n}^2\leq CM_\alpha^2\cdot n t^\alpha. 
\end{equation} 
for some constant $C>0$ (universal for continuous kernel gradient flow, and depending on $\kappa$, $\eta$ additionally for discrete kernel gradient flow). Note that 
\begin{equation}\E(\varepsilon_i^{D_n})^2=\E(\varepsilon_i I_{|\varepsilon_i|\leq D_n})^2-(m_i^{D_n})^2=\E\varepsilon_i^2-\E(\varepsilon_i I_{|\varepsilon_i|>D_n})^2-(m_i^{D_n})^2\leq\E\varepsilon_i^2\leq\sigma^2.\end{equation} 
Define $\xi_i(x)=\frac{1}{n}\eta_i(x)\varepsilon_i$. Then $\E\xi_i(x)=0$, and 
\begin{equation}\sum_{i=1}^n\E|\xi_i|^2\leq\sigma^2\cdot\frac{1}{n^2}\sum_{i=1}^n\eta_i(x)^2\leq CM_\alpha^2\sigma^2\cdot\frac{1}{n}t^\alpha,\end{equation}
and for $m\geq 2$, by (\ref{eta estimation}), we have 
\begin{equation}\sum_{i=1}^n\E|\xi_i|^m\leq\left(\sup_x|\xi_i(x)|\right)^{m-2}\cdot\sum_{i=1}^n\E|\xi_i|^2\leq\left(\frac{CM_\alpha^2}{n}t^\alpha D_n\right)^{m-2}\cdot CM_\alpha^2\cdot\sigma^2\cdot\frac{1}{n}t^\alpha.\end{equation}
Using Bernstein inequality in Lemma \ref{Bernstein ineq} with $\tau=(p+\frac{(1+\beta)d}{h})\log n$, $v=CM_\alpha^2\sigma^2n^{-1}t^\alpha$, and $c=CM_\alpha^2 n^{-1}t^\alpha D_n$, we obtain that 
\begin{equation}\P\left(\left|\frac{1}{n}\sum_{i=1}^n\eta_i(x)\varepsilon_i^{D_n}\right|>\sqrt{2v\tau}+c\tau\right)<2e^{-\tau}.\end{equation} 
By the choice of $\alpha$, $t$ and $D_n$ (recall that $t=n^\theta$ for $\theta\in(0,\beta)$, $\alpha=\frac{1}{\beta}+\varepsilon$ for $\varepsilon\in(0,\frac{1}{\theta}-\frac{1}{\beta})$ and $D_n=n^\eta$ for $\eta\in(0,\frac{1-\theta\alpha}{2})$), we have 
\begin{equation}\frac{1}{n}t^\alpha D_n\leq\sqrt{\frac{\log n}{n}}t^{\frac{\alpha}{2}}.\end{equation} 
Therefore, 
\begin{equation}\sqrt{2v\tau}+c\tau\leq C\sqrt{\frac{p\log n}{n}}t^{\frac{\alpha}{2}}\end{equation} 
for some constant $C>0$ depending only on $\varepsilon$, $\sigma$ (and $\kappa$, $\eta$ additionally for discrete kernel gradient flow), 
and with probability at least $1-n^{-(p+\frac{(1+\beta)d}{h})}$, we have 
\begin{equation}\left|\frac{1}{n}\sum_{i=1}^n\eta_i(x)\varepsilon_i^{D_n}\right|\leq C\sqrt{\frac{p\log n}{n}}t^{\frac{\alpha}{2}}\end{equation} 
for some constant $C>0$ depending only on $\varepsilon$, $\sigma$ (and $\kappa$, $\eta$ additionally for discrete kernel gradient flow). 

Suppose that $\mathcal{X}_0$ is an $\varepsilon_0$-net of $\mathcal{X}\subset\R^d$. It is well-known (see section 4.8 of \citep{vershynin} for example) that we can choose $\mathcal{X}_0$ such that 
\begin{equation}\label{net} 
    \varepsilon_0=C'n^{-\frac{1+\beta}{h}},\quad|\mathcal{X}_0|\leq n^{\frac{d(1+\beta)}{h}}
\end{equation}  
for some constant $C'>0$ depending only on $d$, $\beta$ and $h$. For any $x\in\mathcal{X}$, by the definition of $\mathcal{X}_0$, we can find $x_0\in\mathcal{X}_0$ such that $|x-x_0|\leq\varepsilon_0$. Then by Lemma \ref{difference estimation} and Lemma \ref{operated basis function empirical norm bound}, we have 
\begin{equation}\begin{aligned} 
    \left|\frac{1}{n}\sum_{i=1}^n\eta_i(x)\varepsilon_i^{D_n}-\frac{1}{n}\sum_{i=1}^n\eta_i(x_0)\varepsilon_i^{D_n}\right|&\leq D_n\cdot\sup_z\|\varphi_t(T_X)k_z(\cdot)\|_\H\cdot C|x-x_0|^h\\
    &\leq D_n\cdot CM_\alpha\cdot t^{\frac{1+\alpha}{2}}\cdot Cn^{-(1+\beta)}\\
    &\leq C M_\alpha L_k\sqrt{\frac{p\log n}{n}} t^{\frac{\alpha}{2}}
\end{aligned}\end{equation} 
for some constant $C>0$ depending only on $d$, $\beta$, $h$ and $L_k$. 

To sum up, with probability at least $1-n^{-(p+\frac{(1+\beta)d}{h})}\cdot|\X_0|=1-O(n^{-p})$, we have 
\begin{equation}\sup_x\left|\frac{1}{n}\sum_{i=1}^n\eta_i(x)\varepsilon_i^{D_n}\right|\leq C\sqrt{\frac{p\log n}{n}} t^{\frac{\alpha}{2}},\end{equation} 
where the constant $C>0$ depends only on $\varepsilon$, $d$, $\beta$, $h$, $\sigma$, $L_k$ (and $\kappa$, $\eta$ additionally for discrete kernel gradient flow). 
\qed

\begin{lemma}\label{2nd term of variance} 
    For $p>1$, with probability at least $1-O(n^{-p})$, for $n$ sufficiently great, we have 
    \begin{equation}\frac{1}{n}\sum_{i=1}^n\eta_i(x)\varepsilon_{i,\mathrm{tail}}^{D_n}=0,\quad\forall x\in\X.\end{equation} 
\end{lemma}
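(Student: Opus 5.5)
The plan is to observe that the tail term vanishes identically, for every $x\in\X$ at once, on the event
\begin{equation*}
\mathcal{A}_n=\left\{\max_{1\leq i\leq n}|\varepsilon_i|\leq D_n\right\}.
\end{equation*}
On $\mathcal{A}_n$ every indicator $I_{|\varepsilon_i|>D_n}$ equals zero. Hence $\varepsilon_{i,\mathrm{tail}}^{D_n}=0$ for all $i$, and therefore $\frac{1}{n}\sum_{i=1}^n\eta_i(x)\varepsilon_{i,\mathrm{tail}}^{D_n}=0$ for every $x$, whatever the values of $\eta_i(x)$. So no uniformity argument over $x$ (no $\varepsilon_0$-net or H\"older continuity) is needed. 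It suffices to show $\P(\mathcal{A}_n^c)=\O(n^{-p})$.

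Here I read the truncation level as $D_n=n^{\eta}$ with $\eta\in(0,\frac{1-\theta\alpha}{2})$, as in the proof of Lemma \ref{1st term of variance}. The expression $n^{-\eta}$ in the decomposition (\ref{variance decomposition}) is evidently a typo, since a level tending to zero could not make the tail vanish. The only property I need is that $D_n$ grows polynomially in $n$.

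The key step is a sub-exponential tail bound for the noise from Assumption \ref{noise moment}. Conditionally on $x$, the moment condition $\E[|\varepsilon|^m|x]\leq\frac{1}{2}m!\sigma^2L^{m-2}$ gives, for $0<\lambda<1/L$,
\begin{equation*}
\E[e^{\lambda|\varepsilon|}\mid x]\leq 1+\lambda\sigma+\frac{\sigma^2\lambda^2}{2(1-\lambda L)}.
\end{equation*}
I take $\lambda=1/(2L)$, so that $\E[e^{|\varepsilon|/(2L)}\mid x]\leq C_0$ for a constant $C_0$ depending only on $\sigma$ and $L$. Markov's inequality then yields $\P(|\varepsilon_i|>u)\leq C_0e^{-u/(2L)}$ for all $u>0$, after integrating over $x$. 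Alternatively, one can apply the Bernstein inequality of Lemma \ref{Bernstein ineq} with a single summand, which gives a bound of the form $2\exp(-u^2/(2(\sigma^2+Lu)))$.

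A union bound over the $n$ samples then gives
\begin{equation*}
\P(\mathcal{A}_n^c)\leq nC_0\exp\left(-\frac{n^{\eta}}{2L}\right).
\end{equation*}
Since $\eta>0$, the exponent is a positive power of $n$, and this quantity is $o(n^{-p})$ for any fixed $p>1$. Concretely, once $n^{\eta}\geq 2L(p+1)\log n+2L\log C_0$, it is at most $n^{-p}$, which is what ``$n$ sufficiently great'' refers to.

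The only delicate point is the clean derivation of the sub-exponential tail from the conditional moment bound, in particular the summation of the moment series for $\lambda<1/L$. This is routine. Everything else is a union bound, and the conclusion holds simultaneously for all $x\in\X$ because the event $\mathcal{A}_n$ does not depend on $x$.
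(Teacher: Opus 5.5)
Your proof is correct and follows the paper's approach: the paper also bounds the probability of the $x$-independent event that some $|\varepsilon_i|$ exceeds $D_n=n^{\eta}$, and off that event the tail sum vanishes for every $x$. The only difference is the tail estimate. The paper applies Markov's inequality with an $m$-th moment, $m>2p/\eta$, and takes a product over the independent samples; your sub-exponential bound with a union bound instead gives the stronger estimate $nC_0e^{-n^{\eta}/(2L)}$ and avoids the paper's garbled last step (it writes $1-e^{n^{-p}}$).
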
 

\proof 
Note that 
\begin{equation}\P\left(\left|\frac{1}{n}\sum_{i=1}^n\eta_i(x)\varepsilon_{i,\mathrm{tail}}^{D_n}\right|>0\right)\leq\P(\exists i\,\mathrm{s.t.}\,\varepsilon_{i,\mathrm{tail}}^{D_n}\neq 0)=1-\P(|\varepsilon_i|\leq D_n,\,\forall i)\leq 1-\prod_{i=1}^n\left(1-\frac{\E|\varepsilon_i|^m}{D_n^m}\right).\end{equation}
Recall that we choose $D_n=n^{\eta}$, $\eta\in(0,\frac{1-\theta\alpha}{2})$. By choosing $m$ such that $m>\frac{2p}{\eta}$, we have 
\begin{equation}\P\left(\left|\frac{1}{n}\sum_{i=1}^n\eta_i(x)\varepsilon_{i,\mathrm{tail}}^{D_n}\right|>0\right)\leq1-\left(1-\frac{m!\sigma^2 L^{m-2}}{2n^{2p}}\right)^{n}\leq 1-e^{n^{-p}}\leq n^{-p}\end{equation} 
for $n$ sufficiently great. 
\qed

\begin{lemma}\label{3rd term of variance} 
    For $p>1$, with probability at least $1-\O(n^{-p})$, for $n$ sufficiently great, we have 
    \begin{equation}\sup_x\left|\frac{1}{n}\sum_{i=1}^n\eta_i(x)m_i^{D_n}\right|\leq C\sqrt{\frac{p\log n}{n}}t^{\frac{\alpha}{2}},\end{equation}
    where the constant $C>0$ depends only on $\varepsilon$, $\beta$, $\sigma$, $L$ (and $\kappa$, $\eta$ additionally for discrete kernel gradient flow).  
\end{lemma}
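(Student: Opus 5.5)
The plan is to exploit that the truncation means $m_i^{D_n}$ is deterministic and tiny. Since $\E[\varepsilon_i|x_i]=0$, we have $m_i^{D_n}=\E(\varepsilon_i I_{|\varepsilon_i|\le D_n})=-\E(\varepsilon_i I_{|\varepsilon_i|>D_n})$. (If one conditions on $x_i$, this is a function of $x_i$ bounded uniformly in $x_i$, and the argument is the same.) By Markov/Hölder together with the moment bound in Assumption \ref{noise moment}, for any integer $m\ge 2$,
\begin{equation}
|m_i^{D_n}|\le \E\left(|\varepsilon_i|^{m}\right)D_n^{-(m-1)}\le \tfrac12 m!\sigma^2L^{m-2}D_n^{-(m-1)}.
\end{equation}
With $D_n=n^{\eta}$, this is $O(n^{-\eta(m-1)})$, which can be made smaller than any polynomial power of $n$ by choosing $m$ large. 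The constant depends only on $m$, $\sigma$ and $L$.

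Next I bound the deterministic-coefficient sum by Cauchy--Schwarz and the empirical bound (\ref{eta empirical estimation}). On the event of probability $1-\O(n^{-p})$ where Lemma \ref{operated basis function empirical norm bound} and Lemma \ref{exchange of x and x_i} apply,
\begin{equation}
\sup_x\left|\frac1n\sum_{i=1}^n\eta_i(x)m_i^{D_n}\right|\le \max_i|m_i^{D_n}|\cdot\sup_x\left(\frac1n\sum_{i=1}^n\eta_i(x)^2\right)^{1/2}\le C M_\alpha t^{\alpha/2}\max_i|m_i^{D_n}|.
\end{equation}
Equivalently, one could use the pointwise bound (\ref{eta estimation}) $|\eta_i(x)|\le CM_\alpha^2t^\alpha$, which costs only an extra polynomial factor $t^{\alpha/2}\le n^{\theta\alpha/2}$.

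It then suffices that $\max_i|m_i^{D_n}|\le\sqrt{p\log n/n}$. Choosing $m$ with $\eta(m-1)>\tfrac12$ gives this for $n$ sufficiently large, since $\eta>0$ is fixed. The bound holds simultaneously for all $x$ and involves no union bound over a net, so the probability loss comes only from the event used for the empirical norm estimate. This yields the claim with $C$ depending on $\varepsilon$ (through $M_\alpha$), $\sigma$, $L$, and on $\kappa,\eta$ for the discrete flow.

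The only subtle point is the sign convention for $D_n$. The proof of Theorem \ref{variance estimation} writes $D_n=n^{-\eta}$ in one place, but $n^{\eta}$ with $\eta>0$ is clearly intended, as used in Lemma \ref{2nd term of variance}. I will use $D_n=n^{\eta}$ with $\eta\in(0,\tfrac{1-\theta\alpha}{2})$, so that $D_n\to\infty$ and the tail mean decays polynomially.
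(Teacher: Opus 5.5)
Your proposal is correct and follows essentially the paper's argument: bound the truncated mean $|m_i^{D_n}|$ by a high moment of the noise divided by a power of $D_n=n^{\eta}$, multiply by a high-probability bound on $\eta_i(x)$ that holds uniformly in $x$, and take the moment order $m$ large. The only difference is the bound on $\eta_i(x)$. The paper uses the pointwise estimate (\ref{eta estimation}), $|\eta_i(x)|\le CM_\alpha^2t^\alpha$, and therefore needs $m>\frac{\alpha\beta+1}{2\eta}$; your Cauchy--Schwarz step with the empirical-norm bound saves a factor $t^{\alpha/2}$, so any $m$ with $\eta(m-1)>\tfrac12$ suffices.
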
 

\proof 
For any $m\geq 2$, we have 
\begin{equation}|m_i^{D_n}|=|\E(\varepsilon_iI_{|\varepsilon_i|>D_n})|\leq\frac{\E|\varepsilon_i|^{1+m}}{D_n^m}\leq\frac{1}{2}m!\sigma^2 L^{m-2}n^{-m\eta}.\end{equation}
Thus, by (\ref{eta estimation}), with probability at least $1-\O(n^{-p})$, we have 
\begin{equation}\sup_x\left|\frac{1}{n}\sum_{i=1}^n\eta_i(x)m_i^{D_n}\right|\leq CM_\alpha^2 m!\sigma^2 L^{m-2} n^{-m\eta}t^\alpha\end{equation}
for some constant $C>0$ (universal for continuous kernel gradient flow, and depending only on $\kappa$ and $\eta$ for discrete kernel gradient flow).  

By choosing $m>\frac{\alpha\beta+1}{2\eta}$, we have 
\begin{equation}n^{-m\eta}t^{\alpha}\leq\sqrt{\frac{\log n}{n}}t^{\frac{\alpha}{2}}\leq\sqrt{\frac{p\log n}{n}}t^{\frac{\alpha}{2}}\end{equation} 
which completes the proof. 
\qed 

\subsection{Proof of Corollary \ref{best upper bound}}\label{best upper bound proof} 

By setting $p=10$ and $t=t_{opt}\asymp n^{1/s}$ in Theorem \ref{upper bound}, we obtain that for any $\varepsilon>0$ sufficiently small, when $n$ is sufficiently great, with probality $1-\O(n^{-10})$, we have 
\begin{equation} 
    \|\hat{f}_{t_{opt}}^{con}-f^*\|_\infty\leq C\cdot n^{-\frac{s\beta-1}{2s\beta}+\varepsilon},\quad \|\hat{f}_{t_{opt}}^{dis}-f^*\|_\infty\leq C\cdot n^{-\frac{s\beta-1}{2s\beta}+\varepsilon}.
\end{equation} 
On the other hand, by Lemma \ref{general bound for KGF estimator}, we have 
\begin{equation} 
    \E\|\hat{f}_{t_{opt}}^{con}-f^*\|_\infty\leq Ct,\quad \E\|\hat{f}_{t_{opt}}^{dis}-f^*\|_\infty\leq Ct
\end{equation} 
for some constant $C>0$ (depending only on $\kappa$, $s$, $\sigma$ and $R$ for continuous kernel gradient flow, and on $\eta$ additionally for discrete kernel gradient flow). Therefore, 
\begin{equation} 
    \E\|\hat{f}_{t_{opt}}^{con}-f^*\|_\infty\leq (1-\O(n^{-10}))\cdot Cn^{-\frac{s\beta-1}{2s\beta}+\varepsilon}+\O(n^{-10})\cdot Ct\leq C\cdot n^{-\frac{s\beta-1}{2s\beta}+\varepsilon},
\end{equation} 
\begin{equation} 
    \E\|\hat{f}_{t_{opt}}^{dis}-f^*\|_\infty\leq (1-\O(n^{-10}))\cdot Cn^{-\frac{s\beta-1}{2s\beta}+\varepsilon}+\O(n^{-10})\cdot Ct\leq C\cdot n^{-\frac{s\beta-1}{2s\beta}+\varepsilon}.
\end{equation} 
\qed

\subsection{Auxiliary lemmata} 

\begin{lemma}\label{basis function norm bound} 
    For any $x$ and $\gamma\in[\alpha,1]$, we have 
    \begin{equation}\|k(x,\cdot)\|_{[\H]^\gamma}\leq M_\alpha,\end{equation}
    \begin{equation}\|T_\lambda^{-1}k(x,\cdot)\|_{[\H]^\gamma}\leq M_\alpha t^{\frac{\gamma+\alpha}{2}},\end{equation} 
    \begin{equation}\|T_\lambda^{-\frac{1}{2}}k(x,\cdot)\|_\H\leq M_\alpha t^{\frac{\alpha}{2}},\end{equation} 
    where $T_\lambda=T+\lambda$, $\lambda=\frac{1}{t}$.  
\end{lemma}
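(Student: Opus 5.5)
The plan is to compute all three norms directly in the eigenbasis via the Mercer expansion, and then reduce each to the definition (\ref{embedding coefficient def}) of $M_\alpha$ times a supremum of a scalar function of the eigenvalue. By Mercer's theorem, $k(x,\cdot)=\sum_{i}\lambda_ie_i(x)e_i$. Writing this in the $[\H]^\gamma$-basis $\{\lambda_i^{\gamma/2}e_i\}$, its coordinates are $f_i=\lambda_i^{1-\gamma/2}e_i(x)$. Since $T$ and $T_\lambda=T+\lambda$ are diagonal in this basis, $T_\lambda^{-r}k(x,\cdot)$ has coordinates $\lambda_i^{1-\gamma/2}(\lambda_i+\lambda)^{-r}e_i(x)$.

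In each case I will factor the squared norm as
\begin{equation*}
\sum_i \lambda_i^{\alpha}e_i(x)^2\cdot \frac{\lambda_i^{2-\gamma-\alpha}}{(\lambda_i+\lambda)^{2r}}\le M_\alpha^2\,\sup_{z\in(0,\lambda_1]}\frac{z^{2-\gamma-\alpha}}{(z+\lambda)^{2r}},
\end{equation*}
which leaves only an elementary scalar supremum to bound.

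\textbf{First bound} ($r=0$). Here the exponent $a:=2-\gamma-\alpha$ is nonnegative, because $\gamma\le 1$ and $\alpha\le 1$. Hence $z^{a}\le \lambda_1^{a}$, which is at most $1$ after the usual normalization $\lambda_1\le\kappa^2\le 1$; otherwise it is a harmless constant absorbed into $M_\alpha$. This gives $\|k(x,\cdot)\|_{[\H]^\gamma}\le M_\alpha$.

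\textbf{Second bound} ($r=1$). Now $a\in[0,2]$. I use $z^a\le (z+\lambda)^a$, so that
\begin{equation*}
\frac{z^a}{(z+\lambda)^2}\le (z+\lambda)^{a-2}\le \lambda^{a-2}=t^{\gamma+\alpha},
\end{equation*}
where the last step uses that $a-2\le 0$. This yields $\|T_\lambda^{-1}k(x,\cdot)\|_{[\H]^\gamma}\le M_\alpha t^{\frac{\gamma+\alpha}{2}}$.

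\textbf{Third bound} ($\gamma=1$, $r=1/2$). The scalar factor is $z^{1-\alpha}/(z+\lambda)$. By the same argument it is at most $(z+\lambda)^{-\alpha}\le\lambda^{-\alpha}=t^{\alpha}$, which gives $\|T_\lambda^{-1/2}k(x,\cdot)\|_\H\le M_\alpha t^{\alpha/2}$.

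There is no real obstacle in this argument. The only point needing care is checking that the exponent $2-\gamma-\alpha$ lies in $[0,2]$ (respectively that $1-\alpha\in[0,1]$), which is exactly where the restriction $\gamma\in[\alpha,1]$ and $\alpha\le1$ are used. The other point is the normalization of $\lambda_1$ in the first bound, which affects only constants.
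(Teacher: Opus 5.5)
Your proposal is correct and follows essentially the same route as the paper. Both expand $k(x,\cdot)$, $T_\lambda^{-1}k(x,\cdot)$ and $T_\lambda^{-1/2}k(x,\cdot)$ in the eigenbasis, factor out $\sum_i\lambda_i^\alpha e_i(x)^2\le M_\alpha^2$, and bound the remaining scalar factor. Where the paper cites Lemma~\ref{fractal estimation} ($\sup_{r\ge 0}r^{\gamma}/(r+\lambda)\le\lambda^{\gamma-1}$), you use the equivalent step $z^a\le(z+\lambda)^a$.

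Your caveat that the first bound needs $\lambda_1\le 1$ is apt. The paper uses the same normalization implicitly when it passes from $\lambda_1^{2-\alpha-\gamma}M_\alpha^2$ to $M_\alpha^2$ using only $\lambda_1\le\kappa^2$.
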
 

\proof 
Note that 
\begin{equation}k(x,\cdot)=\sum_{i=1}^\infty\lambda_ie_i(x)e_i(\cdot)=\sum_{i=1}^\infty\lambda_i^{\frac{2-\alpha-\gamma}{2}}\cdot\lambda_i^{\frac{\alpha_i}{2}}e_i(x)\cdot\lambda_i^{\frac{\gamma}{2}}e_i(\cdot),\end{equation} 
hence 
\begin{equation}\|k(x,\cdot)\|_{[\H]^\gamma}^2=\sum_{i=1}^\infty\lambda_i^{2-\alpha-\gamma}\cdot\lambda_i^\alpha e_i(x)^2\leq \lambda_1^{2-\alpha-\gamma}M_\alpha^2\leq M_\alpha^2,\end{equation}
where the second inequality comes from 
\begin{equation}\lambda_1=\int_\X\lambda_1e_1(x)^2dx\leq\int_\X k(x,x)dx\leq \kappa^2.\end{equation} 
Also note that 
\begin{equation}T_\lambda^{-1}k(x,\cdot)=\sum_{i=1}^\infty\frac{\lambda_i^{1-\frac{\gamma+\alpha}{2}}}{\lambda_i+\lambda}\lambda_i^{\frac{\alpha}{2}}e_i(x)\cdot\lambda_i^{\frac{\gamma}{2}}e_i(\cdot),\end{equation} 
\begin{equation}T_\lambda^{-\frac{1}{2}}k(x,\cdot)=\sum_{i=1}^\infty\left(\frac{\lambda_i^{1-\alpha}}{\lambda_i+\lambda}\right)^{\frac{1}{2}}\lambda_i^{\frac{\alpha}{2}}e_i(x)\cdot\lambda_i^{\frac{1}{2}}e_i(\cdot),\end{equation} 
hence by Lemma \ref{fractal estimation}, we have 
\begin{equation}\|T_\lambda^{-1}k(x,\cdot)\|_{[\H]^\alpha}\leq \lambda^{-\frac{\gamma+\alpha}{2}}\cdot M_\alpha=t^{\frac{\gamma+\alpha}{2}}\cdot M_\alpha,\end{equation} 
\begin{equation}\|T_\lambda^{-\frac{1}{2}}k(x,\cdot)\|_\H\leq\lambda^{-\frac{\alpha}{2}}\cdot M_\alpha=t^{\frac{\alpha}{2}}\cdot M_\alpha.\end{equation} 
\qed

\begin{lemma}\label{operator norm bound} 
    For $\gamma\in[\alpha,1]$, we have 
    \begin{equation}\|T_X\|_{[\H]^\gamma}\leq M_\alpha^2.\end{equation} 
\end{lemma}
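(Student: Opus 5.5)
We need to bound $\|T_X\|_{[\H]^\gamma}$, the operator norm of $T_X$ acting on $[\H]^\gamma$, for $\gamma\in[\alpha,1]$. The plan is to write $T_X f=\frac{1}{n}\sum_{j=1}^n f(x_j)k(x_j,\cdot)$ and bound each summand in $[\H]^\gamma$-norm uniformly. By the triangle inequality,
\begin{equation}
\|T_Xf\|_{[\H]^\gamma}\leq\frac{1}{n}\sum_{j=1}^n|f(x_j)|\cdot\|k(x_j,\cdot)\|_{[\H]^\gamma}\leq \|f\|_\infty\cdot\sup_{x\in\X}\|k(x,\cdot)\|_{[\H]^\gamma}.
\end{equation}
The second factor is at most $M_\alpha$ by the first estimate of Lemma \ref{basis function norm bound}, valid for $\gamma\in[\alpha,1]$.

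For the first factor, we use the embedding property. Since $\gamma\geq\alpha$, every $f\in[\H]^\gamma$ lies in $[\H]^\alpha$, and $\|f\|_\infty\leq M_\alpha\|f\|_{[\H]^\alpha}$ by Cauchy--Schwarz as recalled after (\ref{embedding coefficient def}). Then we compare $\|f\|_{[\H]^\alpha}$ with $\|f\|_{[\H]^\gamma}$. Writing $f=\sum_i f_i\lambda_i^{\gamma/2}e_i$, we get
\begin{equation}
\|f\|_{[\H]^\alpha}^2=\sum_i f_i^2\lambda_i^{\gamma-\alpha}\leq\lambda_1^{\gamma-\alpha}\|f\|_{[\H]^\gamma}^2\leq\|f\|_{[\H]^\gamma}^2.
\end{equation}
The last inequality needs $\lambda_1\leq 1$, i.e. the normalization used implicitly in the proof of Lemma \ref{basis function norm bound}, where $\lambda_1^{2-\alpha-\gamma}\leq1$ was used with $\lambda_1\leq\kappa^2$. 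We adopt the same convention $\kappa\leq1$. Otherwise the bound carries a harmless factor $\max(1,\kappa^{2(\gamma-\alpha)})$, which we would absorb exactly as that lemma does.

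Combining the two factors gives $\|T_Xf\|_{[\H]^\gamma}\leq M_\alpha\cdot M_\alpha\|f\|_{[\H]^\gamma}$, hence $\|T_X\|_{[\H]^\gamma}\leq M_\alpha^2$. The bound is deterministic, holding for any sample $X$.

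The only delicate point is the constant bookkeeping for $\lambda_1\leq1$. Everything else is a direct application of Lemma \ref{basis function norm bound} and the embedding inequality. An alternative route, if one wants to avoid the sup-norm step, is to write $T_X=\frac1n\sum_j k_{x_j}\otimes k_{x_j}$ with pairing taken via evaluation and bound the rank-one operator norms. This reduces to the same two estimates, so we would use the direct argument above.
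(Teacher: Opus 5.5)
Your proof is correct and follows the paper's argument: bound each summand $f(x_j)k(x_j,\cdot)$ using $\|k(x_j,\cdot)\|_{[\H]^\gamma}\le M_\alpha$ from Lemma~\ref{basis function norm bound} and $|f(x_j)|\le M_\alpha\|f\|_{[\H]^\gamma}$, then apply the triangle inequality. You also spell out the comparison $\|f\|_{[\H]^\alpha}\le\lambda_1^{(\gamma-\alpha)/2}\|f\|_{[\H]^\gamma}$ and the normalization $\lambda_1\le 1$, both of which the paper uses without stating them.
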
 

\proof 
For any $f\in[\H]^\gamma$, by Lemma \ref{basis function norm bound}, we have 
\begin{equation}\label{norm bound of Txi}
    \|T_{x_i}f\|_{[\H]^\gamma}=\|f(x_i)k(x_i,\cdot)\|_{[\H]^\gamma}\leq\|k(x_i,\cdot)\|_{[\H]^\gamma}\cdot|f(x_i)|\leq M_\alpha^2\|f\|_{[\H]^\gamma},
\end{equation} 
hence 
\begin{equation}\|T_X f\|_{[\H]^\gamma}=\|\frac{1}{n}\sum_{i=1}^n T_{x_i}f\|_{[\H]^\alpha}\leq\frac{1}{n}\sum_{i=1}^n\|T_{x_i}f\|_{[\H]^\alpha}\leq M_\alpha^2\|f\|_{[\H]^\gamma}.\end{equation}
\qed 

\begin{lemma}\label{general bound for KGF estimator} 
    For both $\hat{f}_t=\hat{f}_t^{con}$ and $\hat{f}_t=\hat{f}_t^{dis}$, the following statement holds: Given $n$ fixed, we have 
    \begin{equation} 
        \E\|\hat{f}_t-f^*\|_\infty\leq Ct,
    \end{equation} 
    where $C>0$ is a constant depending only on $\kappa$, $s$, $\sigma$ and $R$ for continuous kernel gradient flow, and on $\eta$ additionally for discrete kernel gradient flow. 
\end{lemma}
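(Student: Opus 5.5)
We bound the two pieces of the triangle inequality $\|\hat f_t-f^*\|_\infty\le\|\hat f_t\|_\infty+\|f^*\|_\infty$ separately; no fine concentration argument is needed, only crude deterministic operator bounds followed by an expectation over the noise. For the second piece, Assumption \ref{source condition} together with $s>\alpha_0$ gives $f^*\in[\H]^{\min(s,1)}\subset C^0(\X)$, and $\|f^*\|_\infty$ is bounded by a constant depending on $s,R,\kappa$. For instance, when $s\ge1$ we have $\|f^*\|_\infty\le\kappa\|f^*\|_\H\le\kappa\lambda_1^{(s-1)/2}R$. For $s<1$ the constant also involves $M_{\alpha}$ for some $\alpha\in(\alpha_0,s)$, as allowed by the convention in the Remark after Theorem \ref{upper bound}.

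For $\|\hat f_t\|_\infty$ we use the spectral representation $\hat f_t=\varphi_t(T_X)\hat g$ and view $T_X$ as a positive self-adjoint operator on $\H$ with $\|T_X\|_{\H\to\H}\le\kappa^2$.
\begin{enumerate}[(1)]
\item For the continuous flow, $\varphi_t(z)=(1-e^{-tz})/z\le t$ for all $z\ge0$.
\item For the discrete flow, Assumption \ref{learning rate assumption} gives $\eta z\le\eta\kappa^2<1/2$ on the spectrum, so $0\le 1-\eta z\le1$. Then $1-(1-\eta z)^{t/\eta}\le (t/\eta)\eta z=tz$ by Bernoulli's inequality, hence again $0\le\varphi_t(z)\le t$.
\end{enumerate}
In both cases $\|\varphi_t(T_X)\|_{\H\to\H}\le t$. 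By the reproducing property, $\|\hat f_t\|_\infty\le\kappa\|\hat f_t\|_\H\le\kappa t\|\hat g\|_\H$.

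It then remains to bound $\E\|\hat g\|_\H$. Since $\|k(x_j,\cdot)\|_\H\le\kappa$, we get $\|\hat g\|_\H\le\frac{\kappa}{n}\sum_j|y_j|\le\frac{\kappa}{n}\sum_j(|f^*(x_j)|+|\varepsilon_j|)$. Taking expectations and using $\E|\varepsilon_j|\le\sigma$ (Assumption \ref{noise moment} and Jensen) yields $\E\|\hat g\|_\H\le\kappa(\|f^*\|_\infty+\sigma)$. Combining the pieces,
\[
\E\|\hat f_t-f^*\|_\infty\le\kappa^2 t(\|f^*\|_\infty+\sigma)+\|f^*\|_\infty .
\]
The additive constant $\|f^*\|_\infty$ is absorbed into $Ct$ whenever $t\ge t_0>0$, which is the regime used in Corollary \ref{best upper bound}, where $t\asymp n^{1/s}\ge1$. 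For general small $t$ we would instead state the bound as $C(1+t)$, and note that this suffices for the application.

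The only delicate step is the uniform filter bound $\varphi_t\le t$ on the spectrum of $T_X$ for the discrete flow, which requires $|1-\eta z|\le1$. We obtain this from $\eta\kappa^2<1/2$ together with $\sigma(T_X)\subset[0,\kappa^2]$. The latter holds because $\langle T_Xf,f\rangle_\H=\frac1n\sum f(x_j)^2\le\kappa^2\|f\|_\H^2$. When $t/\eta$ is an integer, the positivity of $1-\eta z$ is not even needed.
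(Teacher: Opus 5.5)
Your argument is correct and is essentially the paper's proof: write $\hat f_t=\varphi_t(T_X)\hat g$ and bound $\|\varphi_t(T_X)\|_{\H}\lesssim t$ (you do this directly via $1-e^{-x}\le x$ and Bernoulli's inequality, the paper via Lemma~\ref{estimation of filter function 2}), then pass to the sup norm using $\|k_{x_j}\|_\H\le\kappa$ and use $\E|y_j|\le\|f^*\|_\infty+\sigma$. You are in fact more careful than the paper, whose first line $\E\|\hat f_t-f^*\|_\infty=\E\|\frac1n\sum_j\varphi_t(T_X)k_{x_j}y_j\|_\infty$ silently drops $f^*$; your extra $\|f^*\|_\infty$ term, and the resulting restriction to $t\ge t_0$ (or the form $C(1+t)$), are genuinely necessary, since at $t=0$ one has $\hat f_0\equiv0$ and the left-hand side equals $\|f^*\|_\infty$ while $Ct=0$.
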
 

\proof 
Recall that by (\ref{f hat}), 
\begin{equation} 
    \hat{f}_t=\varphi_t(T_x)\hat{g}=\frac{1}{n}\sum_{j=1}^n\varphi_t(T_X)k_{x_j}y_j.
\end{equation} 
By Lemma \ref{operator norm bound}, $\|T_X\|_\H\leq M_1^2\leq\kappa^2$. Then, combining with Lemma \ref{estimation of filter function 2}, we have 
\begin{equation} 
    \|\varphi_t(T_X)\|_\H\leq\varphi_t(\kappa^2)\leq \frac{E}{\lambda+\kappa^2}\leq Et,
\end{equation} 
where $E$ is a constant (universal for continuous kernel gradient flow, or depending on $\kappa$ and $\eta$ for discrete kernel gradient flow), and $\lambda=1/t$. Moreover, since $\|k_{x_j}\|_\H^2=k(x_j,x_j)\leq\kappa^2$, we have 
\begin{equation} 
    \|\varphi_t(T_X)k_{x_j}\|_\infty\leq\kappa\|\varphi_t(T_X)k_{x_j}\|_\H\leq\kappa\|\varphi_t(T_X)\|_\H\cdot\|k_{x_j}\|_\H\leq Ct,
\end{equation} 
where $C$ is a constant depending only on $\kappa$ for continuous kernel gradient flow, and on $\eta$ additionally for discrete kernel gradient flow. 

Thus, we have 
\begin{equation} 
    \begin{aligned} 
        \E\|\hat{f}_t-f^*\|_\infty&=\E\left\|\frac{1}{n}\sum_{j=1}^n\varphi_t(T_X)k_{x_j}y_j\right\|_\infty\\
        &\leq\frac{1}{n}\sum_{j=1}^n\E\left[\|\varphi_t(T_X)k_{x_j}\|_\infty\cdot|y_j|\right]\\
        &\leq\frac{1}{n}\sum_{j=1}^n Ct\cdot\E|y_j|=\frac{1}{n}\sum_{j=1}^n Ct\cdot\E|f^*(x_j)+\varepsilon_j|\\
        &\leq\frac{1}{n}\sum_{j=1}^n Ct\cdot\left(\|f^*\|_\infty+\E|\varepsilon_j|\right)\\
        &\leq\frac{1}{n}\sum_{j=1}^n Ct\cdot\left(M_s\|f^*\|_{[\H]^s}+\sqrt{\E|\varepsilon_j|^2}\right)\\
        &\leq\frac{1}{n}\sum_{j=1}^n Ct\cdot\left(M_sR+\sigma\right)=C't,
    \end{aligned}
\end{equation} 
where the last inequality comes from Assumption \ref{source condition} and \ref{noise moment}.
\qed 

\begin{lemma}\label{operator diff norm bound}
    For $\gamma\in[\alpha,1]$ and $p>1$, with probability at least $1-\O(n^{-p})$, we have 
    \begin{equation}\|T_X-T\|_{[\H]^\gamma}\leq CM_\alpha^2 p\frac{\log n}{\sqrt{n}}\end{equation}
    for some $C>0$ is a universal constant.  
\end{lemma}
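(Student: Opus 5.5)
We will prove the bound by writing $T_X-T$ as an average of i.i.d. centered random operators and applying a Bernstein-type concentration inequality for random operators on a separable Hilbert space, here $[\H]^\gamma$. For each sample point define the rank-one operator $T_{x_i}f=f(x_i)k(x_i,\cdot)$ on $[\H]^\gamma$. Then $T_X=\frac{1}{n}\sum_{i=1}^n T_{x_i}$. Since the $x_i$ are drawn i.i.d. from $\mu$, we have $\E T_{x_i}=T$, where $T$ is viewed as an operator on $[\H]^\gamma$ (legitimate for $\gamma\in[\alpha,1]$, as discussed at the start of this appendix). Hence
\begin{equation*}
T_X-T=\frac{1}{n}\sum_{i=1}^n\xi_i,\qquad \xi_i:=T_{x_i}-T,
\end{equation*}
and the $\xi_i$ are i.i.d. mean-zero random operators.

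The first step is a uniform bound on the summands. By (\ref{norm bound of Txi}) in the proof of Lemma \ref{operator norm bound}, $\|T_{x_i}\|_{[\H]^\gamma}\le M_\alpha^2$ almost surely. Indeed, $|f(x_i)|\le M_\alpha\|f\|_{[\H]^\alpha}\le M_\alpha\|f\|_{[\H]^\gamma}$, using $\lambda_1\le\kappa^2$ together with the normalization already used in Lemma \ref{basis function norm bound}, and $\|k(x_i,\cdot)\|_{[\H]^\gamma}\le M_\alpha$ by Lemma \ref{basis function norm bound}. Since $T=\E T_{x_i}$, Jensen's inequality gives $\|T\|_{[\H]^\gamma}\le M_\alpha^2$. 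Therefore $\|\xi_i\|\le 2M_\alpha^2$ almost surely.

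The second step controls the second moment. I would use the Hilbert--Schmidt norm, so that the problem becomes a sum of i.i.d. vectors in the separable Hilbert space of Hilbert--Schmidt operators on $[\H]^\gamma$. Because $T_{x_i}=k(x_i,\cdot)\otimes k(x_i,\cdot)$ acting through evaluation, it is rank one. Its Hilbert--Schmidt norm is $\|k(x_i,\cdot)\|_{[\H]^\gamma}$ times the norm of the evaluation functional $f\mapsto f(x_i)$ on $[\H]^\gamma$, and both factors are at most $M_\alpha$. This gives $\|\xi_i\|_{HS}\le 2M_\alpha^2$ and $\E\|\xi_i\|_{HS}^2\le 4M_\alpha^4$. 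With these two bounds, the vector-valued Bernstein inequality (Pinelis; the Hilbert-space version of Lemma \ref{Bernstein ineq}) yields, with probability at least $1-2e^{-\tau}$,
\begin{equation*}
\|T_X-T\|_{[\H]^\gamma}\le\|T_X-T\|_{HS}\le \frac{4M_\alpha^2\tau}{n}+\sqrt{\frac{8M_\alpha^4\tau}{n}}.
\end{equation*}

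The final step is to choose $\tau=p\log n$. The failure probability is then $2n^{-p}$, and for $n\ge 3$ and $p>1$ the right-hand side is bounded by $CM_\alpha^2\,p\log n/\sqrt n$. (The bound is in fact $\sqrt{p\log n/n}$, which is stronger, so the stated form follows immediately.) The constant $C$ is universal.

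The main obstacle is justifying that the Hilbert-space Bernstein inequality applies in $[\H]^\gamma$ for $\gamma<1$. This requires two facts: the evaluation functional is bounded on $[\H]^\gamma$, and $T_{x_i}$ is a measurable Hilbert--Schmidt-valued random variable. The first holds precisely because $\gamma\ge\alpha>\alpha_0$ gives $M_\gamma\le M_\alpha<\infty$. The second then follows from continuity of $x\mapsto k(x,\cdot)$ in $[\H]^\gamma$, which can be obtained from the Hölder continuity in Assumption \ref{Holder assumption} (or simply from the uniform convergence of the Mercer series).
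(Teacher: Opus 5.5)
Your proof is correct and follows the paper's route. The paper likewise writes $T_X-T$ as an average of the i.i.d. centered operators $T_{x_i}-T$, uses $\|T_{x_i}\|_{[\H]^\gamma}\le M_\alpha^2$ from (\ref{norm bound of Txi}), and applies the Hilbert-space Bernstein inequality (Lemma \ref{Bernstein ineq vec ver}) with $\sigma=L=M_\alpha^2$ and $\delta\asymp n^{-p}$. Your Pinelis version with an explicit Hilbert--Schmidt computation is harmless, since $T_{x_i}$ is rank one and its HS and operator norms coincide, and it yields the slightly sharper $\sqrt{p\log n/n}$ leading term, whereas the paper's form gives $\log(2/\delta)/\sqrt{n}$ directly.
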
 

\proof 
Recall that (\ref{norm bound of Txi}) implies 
\begin{equation}\|T_{x_i}\|_{[\H]^\gamma}\leq M_\alpha^2<\infty\end{equation}
Then, we apply Lemma \ref{Bernstein ineq vec ver} to $T_{x_i}$ by setting $\sigma=L=M_\alpha^2$, and obtain that 
\begin{equation}\|T_X-T\|_{[\H]^\gamma}=\left\|\frac{1}{n}\sum_{i=1}^n T_{x_i}-\E T_{x_1}\right\|_{[\H]^\gamma}\leq\frac{4\sqrt{2}M_\alpha^2}{\sqrt{n}}\log\frac{2}{\delta}\end{equation} 
with probability at least $1-\delta$. Finally, by setting $\delta=O(n^{-p})$, we finish the proof of this result. 
\qed

\begin{lemma}\label{operator poly diff norm bound} 
    For $r>1$ and $p>1$, with probability at least $1-O(n^{-p})$, we have 
    \begin{equation}\|T_X^r-T^r\|_{[\H]^\alpha}\leq CrM_\alpha^2 p\frac{\log n}{\sqrt{n}},\end{equation} 
    where $C>0$ is a universal constant. 
\end{lemma}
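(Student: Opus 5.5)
We need to bound $\|T_X^r-T^r\|_{[\H]^\alpha}$ for $r>1$ with high probability. The approach is to reduce it to Lemma \ref{operator diff norm bound}, which already gives $\|T_X-T\|_{[\H]^\alpha}\leq CM_\alpha^2p\log n/\sqrt{n}$ with probability $1-\O(n^{-p})$. The transfer to a power uses an operator Lipschitz estimate for $z\mapsto z^r$. All spectra involved lie in a bounded interval: by Lemma \ref{operator norm bound}, $\|T_X\|_{[\H]^\alpha}\leq M_\alpha^2$, and likewise $\|T\|_{[\H]^\alpha}\leq\lambda_1\leq\kappa^2$. Both $T_X$ and $T$ are positive and self-adjoint on $[\H]^\alpha$; for $T$ this is clear from the eigen-expansion, and for $T_X$ it holds at least in $\H$. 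One should check self-adjointness of $T_X$ in $[\H]^\alpha$. If it fails, I would work in $\H$ and transfer via the embedding, or avoid self-adjointness altogether in the route below.

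\textbf{Case 1: integer $r$.} Use the telescoping identity
\begin{equation*}
T_X^r-T^r=\sum_{k=0}^{r-1}T_X^{k}(T_X-T)T^{r-1-k}.
\end{equation*}
Taking norms gives
\begin{equation*}
\|T_X^r-T^r\|_{[\H]^\alpha}\leq r\max(M_\alpha^2,\kappa^2)^{r-1}\|T_X-T\|_{[\H]^\alpha}.
\end{equation*}
Since $\kappa^2\leq M_\alpha^2$ up to constants, this is at most $r(M_\alpha^2)^{r-1}\|T_X-T\|$. The factor $(M_\alpha^2)^{r-1}$ is absorbed into the constant; the lemma's "universal $C$" really depends on $M_\alpha$ and $r$ through $\max(1,M_\alpha^2)^{r-1}$. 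Lemma \ref{operator diff norm bound} then finishes this case.

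\textbf{Case 2: non-integer $r>1$.} Write $r=m+\gamma$ with $m\geq1$ an integer and $\gamma\in(0,1)$, and split
\begin{equation*}
T_X^r-T^r=T_X^m(T_X^\gamma-T^\gamma)+(T_X^m-T^m)T^\gamma.
\end{equation*}
The second term is handled by Case 1, using $\|T^\gamma\|\leq\kappa^{2\gamma}$. The first term needs $\|T_X^\gamma-T^\gamma\|$ for $\gamma\in(0,1)$. Operator monotonicity only yields the Hölder bound $\|A^\gamma-B^\gamma\|\leq\|A-B\|^\gamma$, which would degrade the rate to $(\log n/\sqrt n)^\gamma$.

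To avoid that loss, I would not separate the fractional power. Instead I would represent the whole power $z^r$ with $r>1$ as a contour integral. The function $z\mapsto z^r$ is $C^1$ on $[0,b]$ and extends analytically to a neighborhood of $(0,b]$, but not of $0$ itself. An alternative is the integral representation
\begin{equation*}
A^r=c_\gamma\int_0^\infty s^{\gamma-1}A^{m+1}(A+s)^{-1}\,ds.
\end{equation*}
One then differences the resolvents via $(A+s)^{-1}-(B+s)^{-1}=(A+s)^{-1}(B-A)(B+s)^{-1}$. The extra factor $A^{m}$ with $m\geq1$ compensates the $s^{-1}$ singularity near $s=0$, and a factor $s^{-2}$ controls large $s$. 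This gives
\begin{equation*}
\|T_X^r-T^r\|\leq C_r\,b^{r-1}\|T_X-T\|,
\end{equation*}
where $C_r$ grows at most linearly in $r$ up to the constant.

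The main obstacle is this fractional case: making the integral-representation bound genuinely linear in $\|T_X-T\|$ without losing a power. This is exactly where $r>1$ is used, since for $r<1$ the estimate is false in general. I would first try the resolvent representation above, bounding $\|A^{m}(A+s)^{-1}\|\leq\min(b^m/s,\,b^{m-1})$ and integrating the two regimes $s<b$ and $s>b$ separately. Combining this with Lemma \ref{operator diff norm bound}, on its event of probability $1-\O(n^{-p})$, yields the claimed bound $CrM_\alpha^2p\log n/\sqrt n$.
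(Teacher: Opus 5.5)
Your reduction is the paper's: bound $\|T_X^r-T^r\|_{[\H]^\alpha}$ by a multiple of $r\,c^{r-1}\|T_X-T\|_{[\H]^\alpha}$, then apply Lemma~\ref{operator diff norm bound}. The only difference is the operator-Lipschitz step. The paper takes it in one line from Lemma~35 of \citet{optimal_rates_spectral_group}. You prove it yourself: by telescoping for integer $r$, and by $A^r=\frac{\sin\pi\gamma}{\pi}\int_0^\infty s^{\gamma-1}A^{m+1}(A+s)^{-1}\,ds$ for $r=m+\gamma$.

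For positive self-adjoint $A,B$ with spectra in $[0,b]$, your sketch closes once the difference is arranged as
\[
A^{m+1}(A+s)^{-1}-B^{m+1}(B+s)^{-1}=sA^m(A+s)^{-1}(A-B)(B+s)^{-1}+(A^m-B^m)B(B+s)^{-1}.
\]
With your bound $\|A^m(A+s)^{-1}\|\le\min(b^m/s,b^{m-1})$, the integrand is at most $(m+1)b^{m-1}\min(1,b/s)\|A-B\|$. Integrating gives $\|A^r-B^r\|\le\frac{8}{\pi}\,r\,b^{r-1}\|A-B\|$. The arrangement matters: the split $(A^{m+1}-B^{m+1})(A+s)^{-1}+\cdots$ would leave a non-integrable $s^{\gamma-2}$ at $s=0$.

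Your remark on the constant is also correct. The paper's assertion $c\le 1$ is not implied by Lemma~\ref{operator norm bound}, which only gives $M_\alpha^2$. So the constant genuinely carries $\max(1,M_\alpha^2)^{r-1}$.

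Your self-adjointness worry is justified, and your fractional route does not sidestep it. For $f,g\in[\H]^\alpha$ one computes $\langle T_Xf,g\rangle_{[\H]^\alpha}=\frac1n\sum_{i}f(x_i)\,(T^{1-\alpha}g)(x_i)$, which is not symmetric when $\alpha<1$. So bounds like $\|T_X^m(T_X+s)^{-1}\|_{[\H]^\alpha}\le\min(b^m/s,b^{m-1})$ do not come from the spectral theorem. The paper's appeal to Lemma~35 relies on the same self-adjoint functional calculus. For non-integer $r$, neither argument is complete as written.

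The displayed identity repairs this with $A=T_X$, $B=T$:
\begin{itemize}
\item The second term needs only telescoping in $[\H]^\alpha$ plus self-adjointness of $T$ there.
\item In the first term, $(A-B)(B+s)^{-1}$ maps $[\H]^\alpha$ into $\H$, where $T_X$ is positive self-adjoint. You then return to $[\H]^\alpha$ via $\|\cdot\|_{[\H]^\alpha}\le\lambda_1^{(1-\alpha)/2}\|\cdot\|_\H$.
\end{itemize}
The only extra input is a bound on $\|T_X-T\|_{[\H]^\alpha\to\H}$. It follows from the same vector Bernstein inequality used for Lemma~\ref{operator diff norm bound}, because $u\mapsto u(x)k_x$ has Hilbert--Schmidt norm at most $\kappa M_\alpha$ as a map $[\H]^\alpha\to\H$.
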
 

\proof 
By Lemma 35 of \citet{optimal_rates_spectral_group}, we have 
\begin{equation}\|T_X^r-T^r\|_{[H]^\alpha}\leq rc^{r-1}\|T_X-T\|_{[\H]^\alpha},\end{equation}
where $c=\max\{\|T_X\|_{[\H]^\alpha},\|T\|_{[\H]^\alpha}\}\leq 1$. Thus, combining the above estimation with Lemma \ref{operator norm bound} and Lemma \ref{operator diff norm bound}, we obtain the desired result. 
\qed

\begin{lemma}\label{operator quotient diff norm bound} 
    For any $\gamma\in[\alpha,1]$ and $p>1$, with probability $1-\O(n^{-p})$, we have 
    \begin{equation}\|T_\lambda^{-1}(T_X-T)\|_{[\H]^\gamma}\leq C\sqrt{p}M_\alpha\sqrt{\frac{t^{\frac{\gamma+\alpha}{2}}\log n}{n}},\end{equation} 
    where $C>0$ is a constant depending only on $\beta$. 
\end{lemma}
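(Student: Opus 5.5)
We write $T_X-T=\frac{1}{n}\sum_{i=1}^n(T_{x_i}-T)$, where $T_{x_i}f=f(x_i)k(x_i,\cdot)$ and $\E T_{x_i}=T$. The plan is to apply the vector-valued (Hilbert--Schmidt) Bernstein inequality, Lemma \ref{Bernstein ineq vec ver}, to the i.i.d. centered random operators $\xi_i=T_\lambda^{-1}(T_{x_i}-T)$, viewed as elements of the Hilbert space of Hilbert--Schmidt operators on $[\H]^\gamma$. The operator norm is dominated by the Hilbert--Schmidt norm, so the tail bound transfers directly to $\|T_\lambda^{-1}(T_X-T)\|_{[\H]^\gamma}$. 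This mirrors the proof of Lemma \ref{operator diff norm bound}, except that the multiplication by $T_\lambda^{-1}$ now improves the variance proxy.

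The first step is a uniform bound. Since $T_{x_i}$ has rank one, $T_\lambda^{-1}T_{x_i}f=f(x_i)\,T_\lambda^{-1}k(x_i,\cdot)$. Using $|f(x_i)|\le M_\alpha\|f\|_{[\H]^\alpha}\lesssim M_\alpha\|f\|_{[\H]^\gamma}$ and the second estimate in Lemma \ref{basis function norm bound}, we get
\begin{equation*}
\|T_\lambda^{-1}T_{x_i}\|_{HS([\H]^\gamma)}\le M_\alpha\|T_\lambda^{-1}k(x_i,\cdot)\|_{[\H]^\gamma}\le M_\alpha^2t^{\frac{\gamma+\alpha}{2}}.
\end{equation*}
The term $T_\lambda^{-1}T$ obeys the same bound after taking expectations, so $\|\xi_i\|\le 2M_\alpha^2 t^{\frac{\gamma+\alpha}{2}}=:L$.

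The second step is the variance bound, which is where the gain comes from. We have $\E\|\xi_i\|^2\le\E\|T_\lambda^{-1}T_{x_i}\|_{HS}^2$. We will not square the uniform bound, since that would give $t^{\gamma+\alpha}$, which is too large. Instead we use $\|T_\lambda^{-1}T_{x}\|_{HS}^2\le M_\alpha^2\|T_\lambda^{-1}k_x\|_{[\H]^\gamma}^2$ and average over $x\sim\mu$. Expanding in eigenfunctions and using $\int e_i^2\,d\mu=1$ gives
\begin{equation*}
\E_x\|T_\lambda^{-1}k_x\|^2_{[\H]^\gamma}=\sum_i\frac{\lambda_i^{2-\gamma}}{(\lambda_i+\lambda)^2}\le t^{\gamma}\sum_i\frac{\lambda_i}{\lambda_i+\lambda}\lesssim t^{\gamma}\,t^{1/\beta}\le t^{\frac{\gamma+\alpha}{2}}\cdot t^{\frac{\gamma+\alpha}{2}}\cdots
\end{equation*}
Here the inequality $\frac{\lambda_i^{1-\gamma}}{\lambda_i+\lambda}\le\lambda^{-\gamma}$ comes from Lemma \ref{fractal estimation}, and the effective-dimension bound $\mathcal N(\lambda)\asymp t^{1/\beta}$ uses Assumption \ref{edr}; the constant depending on $\beta$ enters at this point. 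Taking these factors as they come would give $\E\|\xi_i\|^2\lesssim M_\alpha^2 t^{\gamma+1/\beta}$, which is larger than the target $t^{\frac{\gamma+\alpha}{2}}$.

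This mismatch is the main obstacle. I expect to resolve it by splitting the powers differently: write $\|T_\lambda^{-1}T_x\|^2\le\|T_\lambda^{-1}T_x\|\cdot\|T_\lambda^{-1}T_x\|$ and bound one factor through $T_\lambda^{-1/2}$ rather than $T_\lambda^{-1}$. The third estimate of Lemma \ref{basis function norm bound}, $\|T_\lambda^{-1/2}k_x\|_\H\le M_\alpha t^{\alpha/2}$, gives a factor $t^{\alpha/2}$, and the remaining factor $T_\lambda^{-1/2}$ contributes $t^{1/2}\le$ ... The aim is to obtain a variance proxy $v\lesssim M_\alpha^2 t^{\frac{\gamma+\alpha}{2}}$. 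Since $\gamma\le1$ and $\alpha>1/\beta$, there is slack, and one could even use $\mathcal N(\lambda)\le M_\alpha^2t^{\alpha}$ in place of the $\beta$-dependent bound.

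The last step is to apply Bernstein with $\delta=n^{-p}$. This gives
\begin{equation*}
\|T_\lambda^{-1}(T_X-T)\|\lesssim\sqrt{\frac{v\,p\log n}{n}}+\frac{L\,p\log n}{n}.
\end{equation*}
Since $t=n^\theta$ and $\varepsilon<\frac1\theta-\frac1\beta$, we have $t^{\frac{\gamma+\alpha}{2}}\log n/n\to0$. The linear term is therefore dominated by the square-root term for large $n$, which yields $C\sqrt p\,M_\alpha\sqrt{t^{\frac{\gamma+\alpha}{2}}\log n/n}$ as stated.
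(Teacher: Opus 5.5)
The gap is in your variance step, and the repair you sketch cannot succeed because your computation is sharp, not lossy. Your reduction and the uniform bound $\|T_\lambda^{-1}T_x\|\le M_\alpha^2t^{\frac{\gamma+\alpha}{2}}$ are fine.

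Indeed $\sum_i\frac{\lambda_i^{2-\gamma}}{(\lambda_i+\lambda)^2}\ge\frac14\sum_{\lambda_i\ge\lambda}\lambda_i^{-\gamma}\asymp t^{\gamma+\frac1\beta}$. This is not an artifact of the Hilbert--Schmidt norm: for fixed $f$, $\E\|T_\lambda^{-1}(T_x-T)f\|^2_{[\H]^\gamma}=\E[f(x)^2\|T_\lambda^{-1}k_x\|^2_{[\H]^\gamma}]-\|T_\lambda^{-1}Tf\|^2_{[\H]^\gamma}$.

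Take a shift-invariant periodic kernel on $[0,1]$ with Fourier eigenfunctions, an example the paper itself uses, label the constant eigenfunction $e_1\equiv1$, and set $f=e_1$. Then $n\,T_\lambda^{-1}(T_X-T)f=\sum_{i=1}^n v(x_i)$, with $v(x_i)=T_\lambda^{-1}(k_{x_i}-\lambda_1)$ i.i.d. and centred. Each has deterministic norm $\|v\|^2_{[\H]^\gamma}=\sum_{i\ge2}\frac{\lambda_i^{2-\gamma}}{(\lambda_i+\lambda)^2}\asymp t^{\gamma+\frac1\beta}$. The cross terms $\sum_{i\ne j}\langle v(x_i),v(x_j)\rangle$ have standard deviation $O(n\|v\|t^{\gamma/2})=o(n\|v\|^2)$. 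Hence, with probability tending to one, $\|T_\lambda^{-1}(T_X-T)\|_{[\H]^\gamma}\gtrsim\sqrt{t^{\gamma+1/\beta}/n}$.

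The ``slack'' you invoke goes the wrong way: $\gamma+\frac1\beta>\gamma\ge\frac{\gamma+\alpha}{2}$ for every $\gamma\ge\alpha$. So no reallocation of powers of $T_\lambda^{-1/2}$ can bring the variance proxy down to $t^{\frac{\gamma+\alpha}{2}}$, and bounding $\mathcal{N}_1(\lambda)$ by $M_\alpha^2t^\alpha$ only makes it larger. What your argument actually proves is a rate of order $\sqrt{t^{\gamma+1/\beta}/n}$ up to logarithms, plus the linear term.

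The paper reaches $t^{\frac{\gamma+\alpha}{2}}$ with a device you do not use. It applies the operator Bernstein inequality (Lemma \ref{Bernstein ineq operater ver}) with $\E\xi_i^2\preceq\|\xi_i\|\,\E\xi_i\preceq M_\alpha^2t^{\frac{\gamma+\alpha}{2}}T_\lambda^{-1}T$, so $\|V\|\le M_\alpha^2t^{\frac{\gamma+\alpha}{2}}$ and the effective dimension enters only through the logarithm. That step requires $\xi_i=T_\lambda^{-1}T_{x_i}$ to be self-adjoint and positive on $[\H]^\gamma$, which it is not. $T_\lambda^{-1}$ and $T_{x_i}$ do not commute, and for $\gamma<1$ the rank-one $T_{x_i}$ is not even self-adjoint on $[\H]^\gamma$.

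The device is legitimate for the symmetrized operator $T_\lambda^{-1/2}T_{x_i}T_\lambda^{-1/2}$ on $\H$ (Lemma \ref{operator quotient diff norm bound ver2}). As the example shows, though, the one-sided bound cannot hold at the stated rate. So neither your proposal nor the paper's argument establishes this lemma as written.

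A smaller point: discarding the linear term needs $t^{\frac{\gamma+\alpha}{2}}\log n/n\to0$. The condition $\varepsilon<\frac1\theta-\frac1\beta$ only gives $t^{\alpha}/n\to0$, and for $\gamma$ near $1$ the former fails once $\theta\ge\frac{2}{1+\alpha}$.
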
 

\proof 
Define $\xi_i=T_\lambda^{-1}T_{x_i}$. For any $f\in[\H]^\gamma$, we have 
\begin{equation}\xi_i f=T_\lambda^{-1}T_{x_i}f=T_\lambda^{-1}k(x_i,\cdot)f(x_i),\end{equation}
hence by Lemma \ref{basis function norm bound}, we have 
\begin{equation}\|\xi_i f\|_{[\H]^\gamma}\leq\|T_\lambda^{-1}k(x_i,\cdot)\|_{[\H]^\gamma}\cdot|f(x_i)|\leq M_\alpha t^{\frac{\gamma+\alpha}{2}}\cdot M_\alpha\|f\|_{[\H]^\alpha},\end{equation} 
which implies that $\|\xi_i\|_{[\H]^\gamma}\leq M_\alpha^2 t^{\frac{\gamma+\alpha}{2}}$. Using the fact that for a self-adjoint operator $L$, $\E(L-\E L)^2\preceq \E L^2$ and $L^2\preceq \|L\|L$, we obtain 
\begin{equation}\E(T_\lambda^{-1}(T_{x_i}-T))^2=\E(\xi_i-\E\xi_i)^2\preceq\E\xi_i^2\preceq\|\xi_i\|\E\xi_i\preceq M_\alpha^2 t^{\frac{\gamma+\alpha}{2}}T_\lambda^{-1}T.\end{equation} 
Define $V=M_\alpha^2 t^{\frac{\gamma+\alpha}{2}} T_\lambda^{-1}T$. Note that 
\begin{equation}\|V\|_{[\H]^\gamma}=M_\alpha^2 t^{\frac{\gamma+\alpha}{2}}\frac{\lambda_1}{\lambda_1+\lambda},\end{equation} 
\begin{equation}\tr_{[\H]^\gamma}V=M_\alpha^2 t^{\frac{\gamma+\alpha}{2}}\sum_{i=1}^\infty\frac{\lambda_i}{\lambda_i+\lambda}=M_\alpha^2 t^{\frac{\gamma+\alpha}{2}}\mathcal{N}_1(\lambda).\end{equation} 
Applying Lemma \ref{Bernstein ineq operater ver} to $A_i=\xi_i-\E\xi_1$, then for $\delta\in(0,1)$, with probability at least $1-\delta$, we have 
\begin{equation}\|T_\lambda^{-1}(T_X-T)\|_{[\H]^\gamma}\leq\frac{4M_\alpha^2 t^{\frac{\gamma+\alpha}{2}}}{3n}B+\sqrt{\frac{2M_\alpha^2 t^{\frac{\gamma+\alpha}{2}}}{n}B},\end{equation} 
where
\begin{equation}B=\log\frac{4\mathcal{N}_1(\lambda)(\lambda_1+\lambda)}{\delta\lambda_1}.\end{equation} 
By Lemma \ref{effect dimension bound}, $\mathcal{N}_1(\lambda)\asymp\lambda^{-\frac{1}{\beta}}=t^{\frac{1}{\beta}}$. Set $\delta=\O(n^{-p})$ and $t=n^\theta$ for some $\theta\in(0,\beta)$ to be selected, then with probability $1-\O(n^{-p})$, we have 
\begin{equation}\|T_\lambda^{-1}(T_X-T)\|_{[\H]^\gamma}\leq C\sqrt{p}M_\alpha\sqrt{\frac{t^{\frac{\gamma+\alpha}{2}}\log n}{n}},\end{equation}
where $C>0$ is a constant depending only on $\beta$.  
\qed 

\begin{lemma}\label{operator quotient diff norm bound ver2} 
    For any $p>1$, with probability $1-\O(n^{-p})$, we have 
    \begin{equation}\|T_\lambda^{-\frac{1}{2}}(T_X-T)T_\lambda^{-\frac{1}{2}}\|_\H\leq C\sqrt{p}M_\alpha\sqrt{\frac{t^\alpha\log n}{n}},\end{equation} 
    where $C>0$ is a constant depending only on $\beta$. 
\end{lemma}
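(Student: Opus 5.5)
We follow the scheme of Lemma \ref{operator quotient diff norm bound}, but now symmetrize with $T_\lambda^{-1/2}$ on both sides and work in $\H$. Set $\lambda=1/t$ and
\begin{equation*}
\xi_i=T_\lambda^{-\frac{1}{2}}T_{x_i}T_\lambda^{-\frac{1}{2}},
\end{equation*}
where $T_{x_i}f=f(x_i)k(x_i,\cdot)$. On $\H$ the operator $T_{x_i}$ is the rank-one map $k_{x_i}\otimes k_{x_i}$, so $\xi_i=(T_\lambda^{-1/2}k_{x_i})\otimes(T_\lambda^{-1/2}k_{x_i})$. This is a positive self-adjoint rank-one operator with $\E\xi_i=T_\lambda^{-1/2}TT_\lambda^{-1/2}=T_\lambda^{-1}T$, and
\begin{equation*}
T_\lambda^{-\frac{1}{2}}(T_X-T)T_\lambda^{-\frac{1}{2}}=\frac{1}{n}\sum_{i=1}^n(\xi_i-\E\xi_i).
\end{equation*}

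\textbf{Uniform bound.} The third estimate of Lemma \ref{basis function norm bound} gives
\begin{equation*}
\|\xi_i\|_\H=\|T_\lambda^{-\frac{1}{2}}k_{x_i}\|_\H^2\leq M_\alpha^2t^\alpha ,
\end{equation*}
and therefore $\|\xi_i-\E\xi_i\|_\H\leq 2M_\alpha^2t^\alpha$.

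\textbf{Variance proxy.} Using $\E(L-\E L)^2\preceq\E L^2$ and $L^2\preceq\|L\|L$ for positive $L$, we get
\begin{equation*}
\E(\xi_i-\E\xi_i)^2\preceq M_\alpha^2t^\alpha T_\lambda^{-1}T=:V.
\end{equation*}
Its norm is $\|V\|_\H=M_\alpha^2t^\alpha\lambda_1/(\lambda_1+\lambda)$ and its trace is $\tr V=M_\alpha^2t^\alpha\mathcal N_1(\lambda)$.

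\textbf{Concentration.} We apply the operator Bernstein inequality, Lemma \ref{Bernstein ineq operater ver}, to $A_i=\xi_i-\E\xi_i$. With probability at least $1-\delta$,
\begin{equation*}
\|T_\lambda^{-\frac{1}{2}}(T_X-T)T_\lambda^{-\frac{1}{2}}\|_\H\leq\frac{4M_\alpha^2t^\alpha}{3n}B+\sqrt{\frac{2M_\alpha^2t^\alpha}{n}B},\qquad B=\log\frac{4\mathcal N_1(\lambda)(\lambda_1+\lambda)}{\delta\lambda_1}.
\end{equation*}

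\textbf{Choice of parameters.} By Lemma \ref{effect dimension bound}, $\mathcal N_1(\lambda)\asymp t^{1/\beta}$. Taking $t=n^\theta$ and $\delta=n^{-p}$ gives $B\lesssim p\log n$, with a constant depending only on $\beta$ (and on $\theta$ through $t^{1/\beta}\le n^{\theta/\beta}$).

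\textbf{Main obstacle.} The remaining task is to absorb the linear term into the square-root term. The linear term is $(t^\alpha p\log n/n)$ and the square-root term is $\sqrt{t^\alpha p\log n/n}$, so this works whenever $t^\alpha p\log n/n\le1$. That condition holds for $n$ large because $\theta\alpha<1$, which follows from $\varepsilon<\frac1\theta-\frac1\beta$. The result is
\begin{equation*}
\|T_\lambda^{-\frac{1}{2}}(T_X-T)T_\lambda^{-\frac{1}{2}}\|_\H\leq C\sqrt p\,M_\alpha\sqrt{\frac{t^\alpha\log n}{n}},
\end{equation*}
which is the stated bound. The factor $M_\alpha$ appears squared inside the root, exactly as in Lemma \ref{operator quotient diff norm bound}. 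If $\theta\alpha<1$ were not available, we would have to keep the additional linear term in the bound.
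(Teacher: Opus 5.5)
Your proposal is correct and follows the paper's proof essentially step for step. It uses the same rank-one operators $\xi_i=T_\lambda^{-1/2}T_{x_i}T_\lambda^{-1/2}$, the same bound $\|\xi_i\|_\H\le M_\alpha^2t^\alpha$ from Lemma \ref{basis function norm bound}, the same variance proxy $V=M_\alpha^2t^\alpha T_\lambda^{-1}T$, the same operator Bernstein inequality, and the same choices $\mathcal N_1(\lambda)\asymp t^{1/\beta}$ and $\delta=n^{-p}$.

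Your explicit absorption of the linear term via $\theta\alpha<1$ makes precise a step the paper passes over silently; it needs $n$ large relative to $p$ and the standing restriction $\varepsilon<\frac1\theta-\frac1\beta$. Also, no real $\theta$-dependence enters the constant, since $\theta<\beta$ gives $t^{1/\beta}\le n$.
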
 

\proof 
Denote $\xi_i=T_\lambda^{-\frac{1}{2}}T_{x_i}T_\lambda^{-\frac{1}{2}}$, then 
\begin{equation}T_\lambda^{-\frac{1}{2}}(T_X-T)T_\lambda^{-\frac{1}{2}}=\frac{1}{n}\sum_{i=1}^n(\xi_i-\E\xi_i).\end{equation} 
Note that for any $f\in\H$, we have 
\begin{equation}\begin{aligned}
    \xi_i f&=T_\lambda^{-\frac{1}{2}}T_{x_i}T_\lambda^{-\frac{1}{2}}f=T_\lambda^{-\frac{1}{2}}(k(x_i,\cdot)\cdot T_\lambda^{-\frac{1}{2}}f(x_i))\\
    &=\langle k_{x_i},T_\lambda^{-\frac{1}{2}}f\rangle_\H\cdot T_\lambda^{-\frac{1}{2}}k_{x_i}=\langle T_\lambda^{-\frac{1}{2}}k_{x_i},f\rangle_\H\cdot T_\lambda^{-\frac{1}{2}}k_{x_i},
\end{aligned}\end{equation}
hence by Lemma \ref{basis function norm bound}, 
\begin{equation}\|\xi_i\|_\H=\|T_\lambda^{-\frac{1}{2}}k_{x_i}\|_\H^2\leq M_\alpha^2 t^\alpha.\end{equation} 
Then we have 
\begin{equation}\E(T_\lambda^{-\frac{1}{2}}(T_{x_i}-T)T_\lambda^{-\frac{1}{2}})^2=\E(\xi_i-\E\xi_i)^2\preceq\E\xi_i^2\preceq\|\xi_i\|\E\xi_i\preceq M_\alpha^2 t^\alpha T_\lambda^{-1}T.\end{equation} 
Define $V=M_\alpha^2 t^\alpha T_\lambda^{-1}T$, which satisfies 
\begin{equation}\|V\|_\H=M_\alpha^2 t^\alpha\frac{\lambda_1}{\lambda_1+\lambda},\end{equation}
\begin{equation}\tr_\H V=M_\alpha^2t^\alpha\sum_{i=1}^\infty\frac{\lambda_i}{\lambda_i+\lambda}=M_\alpha^2t^\alpha\mathcal{N}_1(\lambda).\end{equation} 
Applying Lemma \ref{Bernstein ineq operater ver}, we obtain that for any $\delta\in(0,1)$, with probability at least $1-\delta$, 
\begin{equation}\|T_\lambda^{-\frac{1}{2}}(T_X-T)T_\lambda^{-\frac{1}{2}}\|_\H\leq\frac{4M_\alpha^2t^\alpha}{3n}B+\sqrt{\frac{2M_\alpha^2t^\alpha}{n}B},\end{equation} 
where 
\begin{equation}B=\log\frac{4\mathcal{N}_1(\lambda)(\lambda_1+\lambda)}{\delta\lambda_1}.\end{equation} 
By Lemma \ref{effect dimension bound}, $\mathcal{N}_1(\lambda)\asymp\lambda^{-\frac{1}{\beta}}=t^{\frac{1}{\beta}}$. Recall that $t=n^\theta$ for $\theta\in(0,\beta)$. Finally, we set $\delta=\O(n^{-p})$ and obtain that 
\begin{equation}\|T_\lambda^{-\frac{1}{2}}(T_X-T)T_\lambda^{-\frac{1}{2}}\|_\H\leq C\sqrt{p}M_\alpha\sqrt{\frac{t^\alpha\log n}{n}},\end{equation} 
where $C>0$ is a constant depending only on $\beta$. 
\qed 

\begin{lemma}\label{operator quotient norm bound}
    For any $\gamma\in[\alpha,1]$ and $p>1$, with probability at least $1-\O(n^{-p})$, for $n/\sqrt{p}$ sufficiently great (depending on $\varepsilon$ and $\beta$), we have 
    \begin{equation}\|T_{X\lambda}^{-1}T_\lambda\|_{[\H]^\gamma}\leq 3.\end{equation}
\end{lemma}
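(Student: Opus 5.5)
The plan is the standard Neumann-series argument. Write
\begin{equation*}
T_{X\lambda}=T_\lambda-(T-T_X)=T_\lambda\bigl(I-T_\lambda^{-1}(T-T_X)\bigr).
\end{equation*}
If $\|T_\lambda^{-1}(T_X-T)\|_{[\H]^\gamma}\leq\frac{2}{3}$, then $I-T_\lambda^{-1}(T-T_X)$ is invertible on $[\H]^\gamma$, and
\begin{equation*}
T_{X\lambda}^{-1}T_\lambda=\bigl(I-T_\lambda^{-1}(T-T_X)\bigr)^{-1}T_\lambda^{-1}T_\lambda=\bigl(I-T_\lambda^{-1}(T-T_X)\bigr)^{-1}.
\end{equation*}
The Neumann series then gives the norm bound
\begin{equation*}
\|T_{X\lambda}^{-1}T_\lambda\|_{[\H]^\gamma}\leq\sum_{k\geq0}\|T_\lambda^{-1}(T_X-T)\|_{[\H]^\gamma}^k\leq\frac{1}{1-2/3}=3.
\end{equation*}
The invertibility of $T_{X\lambda}$ itself is clear, since $T_X$ is positive and $\lambda>0$. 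So the whole task reduces to making the perturbation term small.

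To control that term I would invoke Lemma \ref{operator quotient diff norm bound}. With probability $1-\O(n^{-p})$ it gives
\begin{equation*}
\|T_\lambda^{-1}(T_X-T)\|_{[\H]^\gamma}\leq C\sqrt{p}M_\alpha\sqrt{\frac{t^{\frac{\gamma+\alpha}{2}}\log n}{n}}.
\end{equation*}
Since $\gamma\leq1$, this is at most $C\sqrt{p}M_\alpha\sqrt{t^{\frac{1+\alpha}{2}}\log n/n}$. With $t=n^\theta$, the right side is $\sqrt{p}$ times $n^{-(1-\theta\frac{\gamma+\alpha}{2})/2}\sqrt{\log n}$, and this tends to zero provided $\theta\frac{\gamma+\alpha}{2}<1$. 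For $n/\sqrt p$ (equivalently $n$ with $p$ fixed) large enough, depending on $\varepsilon$ and $\beta$ through $M_\alpha$ and $C$, the bound is then below $2/3$, which concludes the argument.

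The main obstacle is exactly that exponent condition. Lemma \ref{operator quotient diff norm bound} carries $t^{(\gamma+\alpha)/2}$, which for $\gamma=1$ requires $\theta(1+\alpha)/2<1$. That is not implied by $\theta<\beta$ alone; the standing constraint is only $\theta\alpha<1$.

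If the direct route fails for large $\theta$, I would first try the symmetric version, Lemma \ref{operator quotient diff norm bound ver2}. It gives $\|T_\lambda^{-1/2}(T_X-T)T_\lambda^{-1/2}\|_\H\lesssim\sqrt{p\,t^\alpha\log n/n}\to0$ using only $\theta\alpha<1$. From it one gets
\begin{equation*}
\|T_\lambda^{1/2}T_{X\lambda}^{-1}T_\lambda^{1/2}\|_\H\leq 3.
\end{equation*}
I would then try to transfer this to $[\H]^\gamma$, writing $T_{X\lambda}^{-1}T_\lambda=T_\lambda^{-1/2}\bigl(T_\lambda^{1/2}T_{X\lambda}^{-1}T_\lambda^{1/2}\bigr)T_\lambda^{1/2}$ and exploiting that $T_\lambda$ commutes with the powers of $T$ defining the $[\H]^\gamma$ norms. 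This transfer step is where I expect the real difficulty, and it may require accepting the stronger range of $\theta$ implicit in ``$n$ sufficiently great''.
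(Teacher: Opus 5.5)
Your Neumann-series reduction is exactly the paper's proof. Your suspicion about the exponent is justified: the paper simply asserts $\sqrt{t^{(\gamma+\alpha)/2}\log n/n}\to0$ from $\gamma\geq\alpha>\alpha_0$ and $\theta<\beta$, but this needs $\theta(\gamma+\alpha)<2$. For $\gamma=1$ that fails for every $\theta\geq 2\beta/(\beta+1)$.

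The fallback cannot close this, because for $\gamma=1$ the statement is actually false on part of $(0,\beta)$. Let $V=\mathrm{span}\{k_{x_1},\dots,k_{x_n}\}$. Then $T_X$ maps into $V$ and vanishes on $V^\perp=\{f\in\H:\,f(x_j)=0\ \forall j\}$. Hence $T_{X\lambda}^{-1}$ preserves $V$, equals $t\,I$ on $V^\perp$, and $P_{V^\perp}T_{X\lambda}^{-1}=t\,P_{V^\perp}$. Take a periodic translation-invariant kernel on $\mathbb{T}^d$ with uniform $\mu$ (a setting the paper itself lists as admissible; here $\H\simeq H^{d\beta/2}$). Let $e_{i_0}\equiv1$ be the constant eigenfunction and $\phi_{i_0}=\sqrt{\lambda_{i_0}}\,e_{i_0}$. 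Then
\[
\|T_{X\lambda}^{-1}T_\lambda\|_\H\geq\|P_{V^\perp}T_{X\lambda}^{-1}T_\lambda\phi_{i_0}\|_\H=t(\lambda_{i_0}+\lambda)\|P_{V^\perp}\phi_{i_0}\|_\H,\qquad \sqrt{\lambda_{i_0}}\,\|P_{V^\perp}\phi_{i_0}\|_\H=\sup_{0\neq f\in V^\perp}\frac{\int f\,d\mu}{\|f\|_\H}\gtrsim n^{-\beta/2}.
\]
The last bound holds for every design by Bakhvalov's fooling-function argument: sum $\asymp n$ bumps of width $\asymp n^{-1/d}$ placed in node-free cells. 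Hence $\|T_{X\lambda}^{-1}T_\lambda\|_\H\gtrsim n^{\theta-\beta/2}\to\infty$ for $\theta>\beta/2$.

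This is also why your transfer stalls. Lemma \ref{operator quotient diff norm bound ver2} yields only the form bounds $\tfrac13T_\lambda\preceq T_{X\lambda}\preceq3T_\lambda$. By L\"owner--Heinz these control $\|T_\lambda^{s/2}T_{X\lambda}^{-s/2}\|_\H$ for $s\in[0,1]$, whereas $\|T_{X\lambda}^{-1}T_\lambda\|_\H$ is the case $s=2$. Conjugating by $T_\lambda^{\pm1/2}$ costs a factor of order $\sqrt t$, because the middle operator does not commute with $T$.

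Note also that $\beta/2<2\beta/(\beta+1)$ when $\beta<3$, so even your restricted range $\theta(\gamma+\alpha)<2$ is too generous: Lemma \ref{operator quotient diff norm bound} itself is wrong. Its proof applies the self-adjoint Bernstein inequality, with variance proxy $\|\xi_i\|\,\E\xi_i$, to $\xi_i=T_\lambda^{-1}T_{x_i}$. That operator is not self-adjoint (its $\H$-adjoint is $T_{x_i}T_\lambda^{-1}$). In the example above, $\E\|T_\lambda^{-1}(T_X-T)\phi_{i_0}\|_\H^2\asymp\frac1n\sum_i\frac{\lambda_i}{(\lambda_i+\lambda)^2}\asymp t^{1+1/\beta}/n$, which exceeds the claimed $t^{(1+\alpha)/2}\log n/n$. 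A correct non-self-adjoint Bernstein bound gives $\sqrt{t^{\gamma+\alpha}\log n/n}$. So your argument does prove the lemma under the added hypothesis $\theta(\gamma+\alpha)<1$. For $\gamma=\alpha$ this reads $2\theta\alpha<1$, which is stronger than the standing $\theta\alpha<1$. Some such restriction on $\theta$ is unavoidable, at least for $\gamma=1$.
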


\proof 
    Recall that $\gamma\geq\alpha=\alpha_0+\varepsilon>\alpha_0$ and $t=n^\theta$ for $\theta\in(0,\beta)$, $\beta=\frac{1}{\alpha_0}$. Then 
    \begin{equation}\sqrt{\frac{t^{\frac{\gamma+\alpha}{2}}\log n}{n}}\to 0\quad\mbox{as }n\to\infty.\end{equation} 
    Thus, by Lemma \ref{operator quotient diff norm bound}, for $n/\sqrt{p}$ sufficiently great (depending on $\varepsilon$ and $\beta$), with probability at least $1-\O(n^{-p})$, we have 
    \begin{equation}\|T_\lambda^{-1}(T_X-T)\|_{[\H]^\gamma}\leq\frac{2}{3},\end{equation}
    which implies that 
    \begin{equation}\|T_{X\lambda}^{-1}T_\lambda\|_{[\H]^\gamma}=\|I-(T_\lambda^{-1}(T_X-T))^{-1}\|_{[\H]^\gamma}\leq\sum_{j=1}^\infty\|T_\lambda^{-1}(T_X-T)\|_{[\H]^\gamma}^j\leq\sum_{j=1}^\infty(\frac{2}{3})^j=3.\end{equation}
\qed 

\begin{lemma}\label{operator quotient norm bound ver2} 
    For $n$ sufficiently great and for $p>1$, with probability $1-\O(n^{-p})$, we have 
    \begin{equation}\|T_{X\lambda}^{-\frac{1}{2}}T_\lambda^{\frac{1}{2}}\|_\H^2=\|T_\lambda^{\frac{1}{2}}T_{X\lambda}^{-1}T_\lambda^{\frac{1}{2}}\|_\H=\|T_\lambda^{\frac{1}{2}}T_{X\lambda}^{-\frac{1}{2}}\|_\H^2\leq 3.\end{equation} 
    \begin{equation}\|T_{X\lambda}^{\frac{1}{2}}T_\lambda^{-\frac{1}{2}}\|_\H^2=\|T_\lambda^{-\frac{1}{2}}T_{X\lambda}T_\lambda^{\frac{1}{2}}\|_\H=\|T_\lambda^{-\frac{1}{2}}T_{X\lambda}^{\frac{1}{2}}\|_\H^2\leq 3.\end{equation} 
\end{lemma}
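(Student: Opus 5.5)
We deduce the statement from Lemma \ref{operator quotient diff norm bound ver2}, following the same Neumann-series pattern as in the proof of Lemma \ref{operator quotient norm bound}, but now in $\H$ and with the symmetric normalization.

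Put $A=T_\lambda^{-\frac{1}{2}}(T_X-T)T_\lambda^{-\frac{1}{2}}$; this is a self-adjoint operator on $\H$. Since $t=n^\theta$ with $\theta\in(0,\beta)$ and $\alpha=\frac{1}{\beta}+\varepsilon$ with $\varepsilon<\frac{1}{\theta}-\frac{1}{\beta}$, we have $\theta\alpha<1$, so $t^\alpha\log n/n\to 0$. Lemma \ref{operator quotient diff norm bound ver2} then gives, with probability $1-\O(n^{-p})$ and for $n$ large, $\|A\|_\H\leq\frac{2}{3}$. The factorization
\begin{equation}
T_{X\lambda}=T_\lambda+(T_X-T)=T_\lambda^{\frac{1}{2}}(I+A)T_\lambda^{\frac{1}{2}}
\end{equation}
is the key identity.

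For the first chain, invert the factorization to get $T_\lambda^{\frac{1}{2}}T_{X\lambda}^{-1}T_\lambda^{\frac{1}{2}}=(I+A)^{-1}$. Since $\|A\|\le 2/3<1$, a Neumann series (or the spectral bound $I+A\succeq\frac{1}{3}I$) yields $\|(I+A)^{-1}\|_\H\leq 3$. The middle equality is the identity $\|B^*B\|=\|BB^*\|=\|B\|^2$ with $B=T_{X\lambda}^{-\frac{1}{2}}T_\lambda^{\frac{1}{2}}$. Here $B^*B=T_\lambda^{\frac{1}{2}}T_{X\lambda}^{-1}T_\lambda^{\frac{1}{2}}$, and $B^*=T_\lambda^{\frac{1}{2}}T_{X\lambda}^{-\frac{1}{2}}$ has the same norm as $B$, since both $T_\lambda$ and $T_{X\lambda}$ are positive self-adjoint on $\H$.

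For the second chain, take $B=T_{X\lambda}^{\frac{1}{2}}T_\lambda^{-\frac{1}{2}}$. Then
\begin{equation}
B^*B=T_\lambda^{-\frac{1}{2}}T_{X\lambda}T_\lambda^{-\frac{1}{2}}=I+A,
\end{equation}
so $\|B\|^2=\|I+A\|\le 1+\frac{2}{3}\le 3$. The statement writes the middle operator as $T_\lambda^{-\frac{1}{2}}T_{X\lambda}T_\lambda^{\frac{1}{2}}$, which we read as the intended $T_\lambda^{-\frac{1}{2}}T_{X\lambda}T_\lambda^{-\frac{1}{2}}$; we use the symmetric form.

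The only point needing care is that $\|A\|\le 2/3$ holds uniformly once $n$ is large, depending on $\varepsilon,\theta,\beta,p$. This follows directly from the vanishing rate in Lemma \ref{operator quotient diff norm bound ver2}, so we expect no real obstacle.
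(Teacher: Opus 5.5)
Your proof is correct and follows essentially the same route as the paper's. Both arguments bound $\|T_\lambda^{-\frac{1}{2}}(T_X-T)T_\lambda^{-\frac{1}{2}}\|_\H\le\frac{2}{3}$ via Lemma \ref{operator quotient diff norm bound ver2}, invert $T_\lambda^{-\frac{1}{2}}T_{X\lambda}T_\lambda^{-\frac{1}{2}}$ by a Neumann series for the first chain, use $1+\|A\|_\H$ for the second, and pass between the three expressions in each chain by the adjoint identity. Your reading of the middle operator as the symmetric $T_\lambda^{-\frac{1}{2}}T_{X\lambda}T_\lambda^{-\frac{1}{2}}$ matches what the paper's proof actually uses, and your sign $I+A$ is correct where the paper writes $I-A$; the sign slip does not affect the norm bound.
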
 

\proof 
The second estimation is a trivial corollary of Lemma \ref{operator quotient diff norm bound ver2} since 
\begin{equation}
    \begin{aligned} 
        &\|T_{X\lambda}^{\frac{1}{2}}T_\lambda^{-\frac{1}{2}}\|_\H^2=\|T_\lambda^{-\frac{1}{2}}T_{X\lambda}^{\frac{1}{2}}\|_\H^2=\|T_\lambda^{-\frac{1}{2}}T_{X\lambda}T_\lambda^{-\frac{1}{2}}\|_\H\\
        =&\|T_\lambda^{-\frac{1}{2}}(T_X-T+T_\lambda)T_\lambda^{-\frac{1}{2}}\|_\H\leq 1+\|T_\lambda^{-\frac{1}{2}}(T_X-T)T_\lambda^{-\frac{1}{2}}\|_\H.
    \end{aligned}
\end{equation} 

For the first estimation, by the choice of $\alpha=\alpha_0+\varepsilon$ and $t=n^\theta$, it is directly computed that 
\begin{equation}\sqrt{\frac{t^\alpha\log n}{n}}\to 0\quad\mbox{as }n\to\infty,\end{equation} 
hence by Lemma \ref{operator quotient diff norm bound ver2}, for $n$ sufficiently great, we have 
\begin{equation}\|T_\lambda^{-\frac{1}{2}}(T_X-T)T_\lambda^{-\frac{1}{2}}\|_\H\leq\frac{2}{3},\end{equation} 
which implies that 
\begin{equation}\begin{aligned}
    \|T_\lambda^{\frac{1}{2}}T_{X\lambda}^{-1}T_\lambda^{\frac{1}{2}}\|_\H&=\|(T_\lambda^{-\frac{1}{2}}(T_X+\lambda)T_\lambda^{-\frac{1}{2}})^{-1}\|_\H\\
    &=\|(I-T_\lambda^{-\frac{1}{2}}(T_X-T)T_\lambda^{-\frac{1}{2}})^{-1}\|_\H\\
    &\leq\sum_{j=1}^\infty\|T_\lambda^{-\frac{1}{2}}(T_X-T)T_\lambda^{-\frac{1}{2}}\|_\H^j\\
    &\leq\sum_{j=1}^\infty\left(\frac{2}{3}\right)^j=3.
\end{aligned}\end{equation}
Since $T_{X\lambda}$ and $T_\lambda$ are both self-adjoint, the adjoint operator of $T_{X\lambda}^{-\frac{1}{2}}T_\lambda^{\frac{1}{2}}$ is $T_\lambda^{\frac{1}{2}}T_{X\lambda}^{-\frac{1}{2}}$. Therefore, 
\begin{equation}\|T_{X\lambda}^{-\frac{1}{2}}T_\lambda^{\frac{1}{2}}\|_\H^2=\|T_\lambda^{\frac{1}{2}}T_{X\lambda}^{-\frac{1}{2}}\cdot T_{X\lambda}^{-\frac{1}{2}}T_\lambda^{\frac{1}{2}}\|_\H=\|T_\lambda^{\frac{1}{2}}T_{X\lambda}^{-1}T_\lambda^{\frac{1}{2}}\|_\H\leq 3.\end{equation}
\qed

\begin{lemma}\label{operated basis function norm bound} 
    For any $\gamma\in[0,1]$, we have 
    \begin{equation}\|\varphi_t(T)k_x(\cdot)\|_{[\H]^\gamma}\leq M_\alpha t^{\frac{\gamma+\alpha}{2}},\end{equation} 
    and for any $\gamma\in[\alpha,1]$ and $p>1$, with probability at least $1-\O(n^{-p})$, for $n$ sufficiently great, we have 
    \begin{equation}\|\varphi_t(T_X)k_x(\cdot)\|_{[\H]^\gamma}\leq CM_\alpha t^{\frac{\gamma+\alpha}{2}}\end{equation} 
    for some constant $C>0$ (universal for continuous kernel gradient flow, or depending on $\kappa$ and $\eta$ for discrete kernel gradient flow). 
\end{lemma}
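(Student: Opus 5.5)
For the population bound I will expand $k_x$ in the eigenbasis and compute the norm coefficientwise. Since $k_x=\sum_i\lambda_ie_i(x)e_i$ and $\varphi_t(T)$ acts diagonally, we have
\begin{equation*}
\varphi_t(T)k_x=\sum_{i=1}^\infty\varphi_t(\lambda_i)\lambda_i^{1-\frac{\gamma+\alpha}{2}}\cdot\lambda_i^{\frac{\alpha}{2}}e_i(x)\cdot\lambda_i^{\frac{\gamma}{2}}e_i .
\end{equation*}
This gives
\begin{equation*}
\|\varphi_t(T)k_x\|_{[\H]^\gamma}^2\le\sup_i\bigl(\varphi_t(\lambda_i)\lambda_i^{1-\frac{\gamma+\alpha}{2}}\bigr)^2M_\alpha^2 .
\end{equation*}
It therefore suffices to show $\sup_{z>0}\varphi_t(z)z^{1-r}\le t^{r}$ for $r=\frac{\gamma+\alpha}{2}\in(0,1]$. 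Write $\varphi_t(z)z^{1-r}=(z\varphi_t(z))^{1-r}\varphi_t(z)^{r}$. Then use $z\varphi_t(z)=1-\psi_t(z)\le1$ and $\varphi_t(z)\le t$: for the continuous case $(1-e^{-tz})/z\le t$, and for the discrete case $(1-(1-\eta z)^{t/\eta})/z\le t$ by Bernoulli's inequality, valid since $0\le\eta z<1$ on the spectrum by Assumption \ref{learning rate assumption}. This yields the constant $1$, matching the statement. When $\gamma+\alpha>2$ cannot occur, since $\gamma\le1$ and $\alpha<1$.

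For the empirical bound ($\gamma\in[\alpha,1]$) I will insert regularized resolvents with $\lambda=1/t$:
\begin{equation*}
\varphi_t(T_X)k_x=\bigl[\varphi_t(T_X)T_{X\lambda}\bigr]\,\bigl[T_{X\lambda}^{-1}T_\lambda\bigr]\,\bigl[T_\lambda^{-1}k_x\bigr].
\end{equation*}
The three factors are bounded as follows.
\begin{enumerate}[(i)]
\item The middle factor has $[\H]^\gamma$-norm at most $3$ with probability $1-\O(n^{-p})$, by Lemma \ref{operator quotient norm bound}.
\item The last factor has $[\H]^\gamma$-norm at most $M_\alpha t^{\frac{\gamma+\alpha}{2}}$, by Lemma \ref{basis function norm bound}.
\item The first factor is a function of $T_X$. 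Its $[\H]^\gamma$ operator norm is bounded by $\sup_{z\in[0,\|T_X\|]}\varphi_t(z)(z+\lambda)$, which is at most $1+\lambda t\le 2$ using $z\varphi_t(z)\le1$ and $\varphi_t\le t$.
\end{enumerate}
Multiplying gives $CM_\alpha t^{\frac{\gamma+\alpha}{2}}$.

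The main obstacle is justifying the spectral calculus for $T_X$ on $[\H]^\gamma$ with $\gamma<1$, because $T_X$ is not obviously self-adjoint there. I would first try working in $\H$, where $T_X$ is self-adjoint and positive with $\|T_X\|\le\kappa^2$, and then transfer to $[\H]^\gamma$. One route is to note that $\varphi_t(T_X)T_{X\lambda}$ is a polynomial or entire function of $T_X$ with a power series converging in $[\H]^\gamma$-operator norm, via Lemma \ref{operator norm bound}. Failing that, I would use a Cauchy contour integral representation around the spectrum, as prepared by the analytic-extension discussion (Appendix on analyticity, and Assumption \ref{learning rate assumption} for the discrete case). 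The constant then depends on $\kappa,\eta$ in the discrete case, consistent with the statement.
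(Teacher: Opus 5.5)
Your plan is the paper's proof. It uses the same eigen-expansion of $\varphi_t(T)k_x$, and the same factorization $\varphi_t(T_X)k_x=[\varphi_t(T_X)T_{X\lambda}][T_{X\lambda}^{-1}T_\lambda][T_\lambda^{-1}k_x]$, controlled by Lemma \ref{operator quotient norm bound} and Lemma \ref{basis function norm bound}. Your scalar step $\varphi_t(z)z^{1-r}=(z\varphi_t(z))^{1-r}\varphi_t(z)^{r}\le t^{r}$ is sharper than the paper's appeal to Lemma \ref{estimation of filter function}, which only gives $Ct^{r}$. Yours actually delivers the constant $1$ claimed in the first inequality.

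The point you flag in (iii) is a genuine gap for $\gamma\in[\alpha,1)$. The paper's proof crosses it without comment: it passes directly from $\sup_{r\ge0}|\varphi_t(r)(r+\lambda)|\le C$ to the $[\H]^\gamma$ operator bound. But $T_X$ is not self-adjoint on $[\H]^\gamma$. One computes $\langle T_Xf,g\rangle_{[\H]^\gamma}=\frac{1}{n}\sum_j f(x_j)\,(T^{1-\gamma}g)(x_j)$, which is not symmetric in $f,g$ unless $\gamma=1$. So a bound on the spectrum does not bound the operator norm.

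Neither of your fallbacks closes this. The power series only shows that $\varphi_t(T_X)T_{X\lambda}$ is bounded on $[\H]^\gamma$; the bound it yields, $\sum_k|c_k|\,\|T_X\|_{[\H]^\gamma}^k$, grows exponentially in $t$. The contour integral needs a normal-type resolvent estimate $\|(T_X-z)^{-1}\|_{[\H]^\gamma}\lesssim\mathrm{dist}(z,\sigma(T_X))^{-1}$, which is the same non-normality issue in another guise. Even granting it, integrating along $\Gamma_t$ costs a factor $\log t$ (compare Lemma \ref{integral estimation along contour}), so no $t$-independent constant comes out.

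A repair that never leaves $\H$, where $T_X$ is self-adjoint with spectrum in $[0,\kappa^2]$, goes as follows. Since $\varphi_t(T_X)k_x\in\H$, H\"older on the $L^2$-coefficients gives $\|f\|_{[\H]^\gamma}\le\|f\|_{L^2}^{1-\gamma}\|f\|_{\H}^{\gamma}$, and $\|f\|_{L^2}=\|T^{1/2}f\|_{\H}$. Your scalar facts $z\varphi_t(z)\le 1$ and $\varphi_t(z)\le t$ give $(z+\lambda)\varphi_t(z)^2\le 2t$ and $(z+\lambda)\varphi_t(z)\le 2$. Combining these with Lemma \ref{operator quotient norm bound ver2} and Lemma \ref{basis function norm bound}:
\begin{align*}
\|\varphi_t(T_X)k_x\|_{\H}&\le\|\varphi_t(T_X)T_{X\lambda}^{1/2}\|_{\H}\,\|T_{X\lambda}^{-1/2}T_\lambda^{1/2}\|_{\H}\,\|T_\lambda^{-1/2}k_x\|_{\H}\le\sqrt{6}\,M_\alpha t^{\frac{1+\alpha}{2}},\\
\|T^{1/2}\varphi_t(T_X)k_x\|_{\H}&\le\|T^{1/2}T_\lambda^{-1/2}\|_{\H}\,\|T_\lambda^{1/2}T_{X\lambda}^{-1/2}\|_{\H}\,\|T_{X\lambda}^{1/2}\varphi_t(T_X)T_{X\lambda}^{1/2}\|_{\H}\\
&\qquad\cdot\|T_{X\lambda}^{-1/2}T_\lambda^{1/2}\|_{\H}\,\|T_\lambda^{-1/2}k_x\|_{\H}\le 6M_\alpha t^{\frac{\alpha}{2}}.
\end{align*}
Interpolating gives $\|\varphi_t(T_X)k_x\|_{[\H]^\gamma}\le 6M_\alpha t^{\frac{\gamma+\alpha}{2}}$ on an event of probability $1-\O(n^{-p})$. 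This in fact holds for every $\gamma\in[0,1]$. For $\gamma=1$ your original argument already works as written.
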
 

\proof 
For the first inequality, we first note that 
\begin{equation}\varphi_t(T)k_x(\cdot)=\sum_{i=1}^\infty\varphi_t(\lambda_i)\cdot\lambda_i^{1-\frac{\gamma+\alpha}{2}}\cdot\lambda_i^{\frac{\alpha}{2}}e_i(x)\cdot\lambda_i^{\frac{\gamma}{2}}e_i(\cdot),\end{equation} 
and by Lemma \ref{estimation of filter function}, 
\begin{equation}\left|\varphi_t(\lambda_i)\cdot\lambda_i^{1-\frac{\gamma+\alpha}{2}}\right|\leq t^{\frac{\gamma+\alpha}{2}},\end{equation}
hence 
\begin{equation}\|\varphi_t(T)k_x(\cdot)\|_{[\H]^\gamma}^2\leq t^{\gamma+\alpha}\cdot\sum_{i=1}^\infty\left(\lambda_i^{\frac{\alpha}{2}}e_i(x)\right)^2\leq M_\alpha^2t^{\gamma+\alpha}.\end{equation} 
For the second inequality, by Lemma \ref{basis function norm bound} and Lemma \ref{operator quotient norm bound}, for $n$ sufficiently great, with probability $1-\O(n^{-p})$, we have 
\begin{equation}\begin{aligned} 
    \|\varphi_t(T_X)k_x(\cdot)\|_{[\H]^\gamma}&\leq\|\varphi_t(T_X)T_{X\lambda}\|_{[\H]^\gamma}\cdot\|T_{X\lambda}^{-1}T_\lambda\|_{[\H]^\gamma}\cdot\|T_\lambda^{-1}k_x(\cdot)\|_{[\H]^\gamma}\\
    &\leq\|\varphi_t(T_X)T_{X\lambda}\|_{[\H]^\gamma}\cdot 3\cdot M_\alpha t^{\frac{\gamma+\alpha}{2}}.
\end{aligned}\end{equation}
Note that by Lemma \ref{estimation of filter function 2}, we have 
\begin{equation}\sup_{r\geq 0}\left|\varphi_t(r)(r+\lambda)\right|\leq C\end{equation} 
for some constant $C>0$ (universal for continuous kernel gradient flow, or depending on $\kappa$ and $\eta$ for discrete kernel gradient flow). Thus, 
\begin{equation}\|\varphi_t(T_X)k_x(\cdot)\|_{[\H]^\gamma}\leq 3CM_\alpha t^{\frac{\gamma+\alpha}{2}}.\end{equation} 
\qed

\begin{lemma}\label{operated basis function empirical norm bound} 
    For $p>1$, with probability at least $1-\O(n^{-p})$, for $n$ sufficiently great, we have 
    \begin{equation}\|\varphi_t(T_X)k_x(\cdot)\|_{L^2,n}\leq CM_\alpha t^\frac{\alpha}{2}\end{equation} 
    for some constant $C>0$ (universal for continuous kernel gradient flow, or depending on $\kappa$ and $\eta$ for discrete kernel gradient flow). 
\end{lemma}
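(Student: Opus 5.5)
The natural route is to rewrite the empirical $L^2$ norm as an $\H$-norm and then move from the empirical operator $T_X$ to the population operator $T$, mirroring the proof of the second inequality in Lemma \ref{operated basis function norm bound}. For any $f\in\H$, the reproducing property gives
\begin{equation*}
\|f\|_{L^2,n}^2=\frac{1}{n}\sum_{j=1}^n f(x_j)^2=\langle f,T_Xf\rangle_\H=\|T_X^{\frac12}f\|_\H^2.
\end{equation*}
With $f=\varphi_t(T_X)k_x$, this yields
\begin{equation*}
\|\varphi_t(T_X)k_x\|_{L^2,n}=\|T_X^{\frac12}\varphi_t(T_X)k_x\|_\H .
\end{equation*}

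Next, with $\lambda=1/t$, $T_\lambda=T+\lambda$ and $T_{X\lambda}=T_X+\lambda$, I would insert the factors $T_{X\lambda}^{\pm\frac12}$ and $T_\lambda^{\pm\frac12}$ and use submultiplicativity of the operator norm on $\H$:
\begin{equation*}
\|T_X^{\frac12}\varphi_t(T_X)k_x\|_\H\le \|T_X^{\frac12}\varphi_t(T_X)T_{X\lambda}^{\frac12}\|_\H\cdot\|T_{X\lambda}^{-\frac12}T_\lambda^{\frac12}\|_\H\cdot\|T_\lambda^{-\frac12}k_x\|_\H .
\end{equation*}
Each factor is then handled separately.

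\emph{Third factor.} It is at most $M_\alpha t^{\alpha/2}$ by Lemma \ref{basis function norm bound}.

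\emph{Second factor.} With probability $1-\O(n^{-p})$ and for $n$ large, it is at most $\sqrt3$ by Lemma \ref{operator quotient norm bound ver2}. This is where the randomness enters; the condition $\sqrt{t^\alpha\log n/n}\to0$ holds because $\theta\alpha<1$.

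\emph{First factor.} $T_X$ is self-adjoint and positive on $\H$ with $\|T_X\|_\H\le\kappa^2$. By the spectral calculus, the first factor is therefore bounded by
\begin{equation*}
\sup_{0\le r\le\kappa^2} r^{\frac12}(r+\lambda)^{\frac12}\varphi_t(r)\le \sup_{r}(r+\lambda)\varphi_t(r),
\end{equation*}
using $r^{1/2}\le (r+\lambda)^{1/2}$. The right-hand side is bounded by a constant $C$ by Lemma \ref{estimation of filter function 2}. That constant is universal for the continuous flow; for the discrete flow it depends on $\kappa$ and $\eta$, and Assumption \ref{learning rate assumption} guarantees $0\le 1-\eta r\le1$ on the spectrum.

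Multiplying the three bounds gives $\|\varphi_t(T_X)k_x\|_{L^2,n}\le \sqrt3\,C M_\alpha t^{\alpha/2}$ with probability $1-\O(n^{-p})$. The high-probability event comes only from Lemma \ref{operator quotient norm bound ver2}, which does not depend on $x$, so the bound holds uniformly in $x$ without any net argument. The only potentially delicate step is the scalar bound on $(r+\lambda)\varphi_t(r)$ for the discrete filter, and that is already supplied by Lemma \ref{estimation of filter function 2}. Everything else is bookkeeping.
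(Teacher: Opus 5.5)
Your proposal is correct and matches the paper's proof essentially step for step. It uses the same identity $\|f\|_{L^2,n}=\|T_X^{1/2}f\|_\H$, the same three-factor splitting through $T_{X\lambda}^{\pm 1/2}$ and $T_\lambda^{\pm 1/2}$, and the same lemmas for each factor: the filter bound from Lemma \ref{estimation of filter function 2}, Lemma \ref{operator quotient norm bound ver2}, and Lemma \ref{basis function norm bound}. Your added observation is correct and worth keeping: the only random event is independent of $x$, so the bound holds uniformly in $x$, which the paper leaves implicit but relies on later.
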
 

\proof 
First, we have the following observation: for any $f\in\H$, 
\begin{equation}\|f\|_{L^2,n}^2=\frac{1}{n}\sum_{i=1}^n f(x_i)^2=\frac{1}{n}\sum_{i=1}^\infty f(x_i)\langle k_{x_i},f\rangle_\H=\langle T_Xf,f\rangle_\H=\langle T_X^{\frac{1}{2}}f,T_X^{\frac{1}{2}}f\rangle_\H=\|T_X^{\frac{1}{2}}f\|_\H^2.\end{equation} 
Therefore, 
\begin{equation}\|\varphi_t(T_X)k_x(\cdot)\|_{L^2,n}=\|T_X^{\frac{1}{2}}\varphi_t(T_X)k_x(\cdot)\|_\H\leq\|T_X^{\frac{1}{2}}\varphi_t(T_X)T_{X\lambda}^{\frac{1}{2}}\|_\H\cdot\|T_{X\lambda}^{-\frac{1}{2}}T_\lambda^{\frac{1}{2}}\|_\H\cdot\|T_{\lambda}^{-\frac{1}{2}}k_x(\cdot)\|_\H.\end{equation} 
By Lemma \ref{estimation of filter function 2} and \ref{estimation of filter function}, we have 
\begin{equation}\sup_{r\geq 0}\left|r^{\frac{1}{2}}\varphi_t(r)(r+\lambda)^{\frac{1}{2}}\right|\leq C\end{equation} 
for some constant $C>0$ (universal for continuous kernel gradient flow, and depending on $\kappa$ and $\eta$ for discrete kernel gradient flow). Then we have 
\begin{equation}\|T_X^{\frac{1}{2}}\varphi_t(T_X)T_{X\lambda}^{\frac{1}{2}}\|_\H\leq C.\end{equation}
Combining Lemma \ref{basis function norm bound}, \ref{operator quotient norm bound ver2} and the estimation above, we obtain
\begin{equation}\|\varphi_t(T_X)k_x(\cdot)\|_{L^2,n}\leq CM_\alpha t^\frac{\alpha}{2}\end{equation} 
with probability at least $1-\O(n^{-p})$ when $n$ is sufficiently great. 
\qed

\section{Proof of Theorem \ref{lower bound}}\label{lower bound proof section} 

We first introduce the basic concepts and results in the duality framework introduced by \citet{duality}, which will be used in the proof of Theorem \ref{lower bound} later. 

Suppose that there are three Banach spaces: $\mathcal{F}$, $\mathcal{Q}$ and $\mathcal{M}$, such that $\mathcal{F}\subset\mathcal{Q}\cap\mathcal{M}$. The space $\mathcal{F}$ is called the model space, and the norms $\|\cdot\|_\mathcal{Q}$ and $\|\cdot\|_{\mathcal{M}}$ are used for model training and model evaluation, respectively. 

In particular, in this paper, we set 
\begin{equation} 
    \mathcal{F}=[\H]^s,\quad\mathcal{Q}=L^2(\X),\quad\mathcal{M}=C^0(\X).
\end{equation}
For a Banach space $V$, we denote $V(1)=\{w\in V:\,\|w\|_V\leq 1\}$. 

\begin{definition}
    \textnormal{\textbf{(I-complexity)}} For $\varepsilon\geq 0$, we define the I-complexity as 
    \begin{equation} 
        \mathbb{I}_{\mathcal{Q},\mathcal{M}}(\mathcal{F}(1),\varepsilon)=\sup_{f\in\mathcal{F}(1),\,\|f\|_\mathcal{Q}\leq\varepsilon}\|f\|_{\mathcal{M}}.
    \end{equation} 
\end{definition} 

The I-complexity can provide a minimax lower bound for the regression problem in $\mathcal{F}(1)$ with independent Gaussian noises: 

\begin{theorem}\label{IBC for minimax estimation}
    \textnormal{(Proposition 3.6 of \citealt{duality})} Consider the classical regression model 
    \begin{equation} 
        y_j=f^*(x_j)+\varepsilon_j,\quad f^*\in\mathcal{F}(1),\quad j=1,\dots,n
    \end{equation} 
    where $x_j$ are independent samples drawn from $(\X,\mu)$, and $\varepsilon_j$ are independent Gaussian noises, $\varepsilon_j|x_j\sim N(0,\sigma^2 I_d)$ for some $\sigma>0$ ($I_d$ is the $d$-dimensional unit matrix). Then we have 
    \begin{equation} 
        \inf_{\hat{f}}\sup_{f^*\in\mathcal{F}(1)}\left\|\hat{f}-f^*\right\|_{\mathcal{M}}\gtrsim\mathbb{I}_{L^2(\X),\mathcal{M}}(\mathcal{F}(1),\sigma/\sqrt{n}).
    \end{equation} 
\end{theorem}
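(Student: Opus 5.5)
The plan is a two-point (Le Cam) argument: the I-complexity supplies a function that the data can barely see in $L^2$ but that is large in the evaluation norm $\|\cdot\|_{\mathcal{M}}$. Fix $\delta=\sigma/\sqrt{n}$ and a small $\tau>0$. By the definition of $\mathbb{I}_{\mathcal{Q},\mathcal{M}}(\mathcal{F}(1),\delta)$ as a supremum, I will pick $f_0\in\mathcal{F}(1)$ with
\begin{equation*}
\|f_0\|_{L^2}\leq\delta,\qquad \|f_0\|_{\mathcal{M}}\geq(1-\tau)\,\mathbb{I}_{L^2(\X),\mathcal{M}}(\mathcal{F}(1),\delta).
\end{equation*}
The two hypotheses are $f^*=f_0$ and $f^*=-f_0$. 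Both lie in $\mathcal{F}(1)$ because the unit ball is symmetric. Their separation in the loss is $\|f_0-(-f_0)\|_{\mathcal{M}}=2\|f_0\|_{\mathcal{M}}$. Here I interpret $L^2(\X)$ as $L^2(\X,\mu)$, so that the $\mathcal{Q}$-norm is the one that controls the likelihood.

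First I control the statistical distance between the two data-generating laws. Let $P_{\pm}$ be the joint law of $\{(x_j,y_j)\}_{j=1}^n$ when $f^*=\pm f_0$. The marginal of each $x_j$ is $\mu$ under both hypotheses, and conditionally on $x_j$ the response is Gaussian with variance $\sigma^2$. By the chain rule for Kullback--Leibler divergence,
\begin{equation*}
\mathrm{KL}(P_+\,\|\,P_-)=n\,\E_{x\sim\mu}\frac{(2f_0(x))^2}{2\sigma^2}=\frac{2n\|f_0\|_{L^2(\mu)}^2}{\sigma^2}\leq 2.
\end{equation*}
By Pinsker's inequality, or more safely the Bretagnolle--Huber bound $1-\mathrm{TV}(P_+,P_-)\geq\frac12 e^{-\mathrm{KL}}$, the total variation distance between $P_+$ and $P_-$ is bounded away from $1$ by an absolute constant. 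This is the step where the calibration $\delta=\sigma/\sqrt{n}$ enters.

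Second, I apply the standard reduction from estimation to testing. For any estimator $\hat f$ the triangle inequality gives
\begin{equation*}
\|\hat f-f_0\|_{\mathcal{M}}+\|\hat f+f_0\|_{\mathcal{M}}\geq 2\|f_0\|_{\mathcal{M}}.
\end{equation*}
Hence, with $A=\{\|\hat f-f_0\|_{\mathcal{M}}\geq\|f_0\|_{\mathcal{M}}\}$,
\begin{equation*}
\max_{\pm}\E_{\pm}\|\hat f\mp f_0\|_{\mathcal{M}}\geq\frac{\|f_0\|_{\mathcal{M}}}{2}\bigl(P_+(A)+P_-(A^c)\bigr)\geq\frac{\|f_0\|_{\mathcal{M}}}{2}\bigl(1-\mathrm{TV}(P_+,P_-)\bigr).
\end{equation*}
Combining this with the first step yields $\inf_{\hat f}\sup_{f^*\in\mathcal{F}(1)}\E\|\hat f-f^*\|_{\mathcal{M}}\geq c(1-\tau)\,\mathbb{I}(\mathcal{F}(1),\sigma/\sqrt{n})$ for an absolute $c>0$ (for instance $c=e^{-2}/4$). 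Letting $\tau\to0$ gives the claim. I read the statement with an expectation, as in Theorem \ref{lower bound}; a probability version follows identically from the same testing bound.

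The only delicate point is measurability and the meaning of $\|\cdot\|_{\mathcal{M}}$ when $\mathcal{M}=C^0(\X)$ and $\hat f$ is an arbitrary measurable map. I would handle this by working with the outer expectation, or by restricting to estimators taking values in a separable space; the triangle-inequality argument needs no more than that. If the $\sup$ defining $\mathbb{I}$ is not attained, the approximation by $1-\tau$ above takes care of it. Otherwise the argument is routine, and the real content, which is computing $\mathbb{I}$ for $[\H]^s$, is deferred to the proof of Theorem \ref{lower bound}.
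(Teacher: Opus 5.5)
The paper does not prove this statement; it imports it as Proposition 3.6 of \citet{duality}, so there is no internal argument to compare yours against.

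Your Le Cam two-point argument with hypotheses $\pm f_0$ is a correct, self-contained proof. The individual steps all check out:
\begin{itemize}
\item the KL bound $2n\|f_0\|_{L^2(\mu)}^2/\sigma^2\leq 2$ is right;
\item preferring Bretagnolle--Huber over Pinsker is correct, since Pinsker is vacuous at $\mathrm{KL}=2$;
\item the testing reduction via the event $A$ is valid, and yields the explicit constant $e^{-2}/4$.
\end{itemize}
The argument uses nothing about $\mathcal{F}(1)$ beyond symmetry and nothing about $\mathcal{M}$ beyond the triangle inequality.

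The citation buys the paper brevity. Your argument makes explicit the hypotheses the citation leaves implicit, and your readings match how the result is used in the proof of Theorem \ref{lower bound}:
\begin{itemize}
\item the $\mathcal{Q}$-norm must be $L^2(\X,\mu)$, as confirmed by the dual pairing $\int_\X f h\,d\mu$ in Lemma \ref{duality lemma};
\item the loss is an expected sup-norm, as in Theorem \ref{lower bound};
\item the noise is scalar $N(0,\sigma^2)$, the $I_d$ in the statement being a typo.
\end{itemize}
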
 

The following theorem shows that we can compute the I-complexity by solving an approximation problem in the dual spaces: 

\begin{theorem}\label{duality equation}
    \textnormal{(Theorem 3.3 of \citealt{duality})} The following equality holds: 
    \begin{equation} 
        \mathbb{I}_{\mathcal{Q},\mathcal{M}}(\mathcal{F}(1),\varepsilon)=\sup_{\|g^*\|_{\mathcal{M}^*\leq 1}}\inf_{h^*\in\mathcal{Q}^*}\left[\|g^*-h^*\|_{\mathcal{F}^*}+\varepsilon\|h^*\|_{\mathcal{Q}^*}\right].
    \end{equation}
\end{theorem}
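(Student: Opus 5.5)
The plan is to derive the identity from Fenchel--Rockafellar duality, after writing the $\mathcal{M}$-norm as a supremum of linear functionals. Throughout I assume what holds in our setting $\mathcal{F}=[\H]^s$, $\mathcal{Q}=L^2(\X)$, $\mathcal{M}=C^0(\X)$: the inclusions $\iota_\mathcal{Q}:\mathcal{F}\to\mathcal{Q}$ and $\iota_\mathcal{M}:\mathcal{F}\to\mathcal{M}$ are bounded. The latter holds because $s>\alpha_0$, so $\|f\|_\infty\leq M_s\|f\|_{[\H]^s}$. Any $g^*\in\mathcal{M}^*$ or $h^*\in\mathcal{Q}^*$ is then viewed as an element of $\mathcal{F}^*$ by restriction, which gives $\|g^*-h^*\|_{\mathcal{F}^*}$ its meaning. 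I treat $\varepsilon>0$ first and recover $\varepsilon=0$ at the end.

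\textbf{Step 1 (swap of suprema).} By Hahn--Banach, $\|f\|_\mathcal{M}=\sup_{\|g^*\|_{\mathcal{M}^*}\leq 1}g^*(f)$. Hence
\begin{equation*}
\mathbb{I}_{\mathcal{Q},\mathcal{M}}(\mathcal{F}(1),\varepsilon)=\sup_{\|g^*\|_{\mathcal{M}^*}\leq1}\ \sup_{f\in K_\varepsilon}g^*(f),\qquad K_\varepsilon=\{f\in\mathcal{F}:\|f\|_\mathcal{F}\leq1,\ \|f\|_\mathcal{Q}\leq\varepsilon\}.
\end{equation*}
Interchanging the two suprema is free. So it suffices to show, for each fixed $g^*$, that the support function of $K_\varepsilon$ at $g^*$ equals $\inf_{h^*\in\mathcal{Q}^*}\big[\|g^*-h^*\|_{\mathcal{F}^*}+\varepsilon\|h^*\|_{\mathcal{Q}^*}\big]$.

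\textbf{Step 2 (weak duality).} For any $h^*\in\mathcal{Q}^*$ and $f\in K_\varepsilon$,
\begin{equation*}
g^*(f)=(g^*-h^*)(f)+h^*(f)\leq\|g^*-h^*\|_{\mathcal{F}^*}\|f\|_\mathcal{F}+\|h^*\|_{\mathcal{Q}^*}\|f\|_\mathcal{Q}\leq\|g^*-h^*\|_{\mathcal{F}^*}+\varepsilon\|h^*\|_{\mathcal{Q}^*}.
\end{equation*}
This gives the inequality $\leq$.

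\textbf{Step 3 (strong duality; the main obstacle).} The reverse inequality, i.e.\ the absence of a duality gap, is the delicate point. I would apply the Fenchel--Rockafellar theorem on $\mathcal{F}$ with the bounded linear map $A=\iota_\mathcal{Q}:\mathcal{F}\to\mathcal{Q}$ and the convex functions
\begin{equation*}
\phi(f)=\delta_{\mathcal{F}(1)}(f)-g^*(f),\qquad \psi(q)=\delta_{\{\|q\|_\mathcal{Q}\leq\varepsilon\}}(q),
\end{equation*}
where $\delta$ denotes the convex indicator. Then $\inf_f[\phi(f)+\psi(Af)]=-\sup_{f\in K_\varepsilon}g^*(f)$.

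The qualification condition is satisfied: $0\in\mathrm{dom}\,\phi$, and $\psi$ is continuous at $A0=0$ because the ball has radius $\varepsilon>0$. The theorem therefore gives equality with $\max_{h^*\in\mathcal{Q}^*}\big[-\phi^*(A^*h^*)-\psi^*(-h^*)\big]$, with the maximum attained. Computing the conjugates gives $\phi^*(u)=\|u+g^*\|_{\mathcal{F}^*}$ and $\psi^*(v)=\varepsilon\|v\|_{\mathcal{Q}^*}$. After the sign change $h^*\mapsto-h^*$, the dual value is exactly $\|g^*-h^*\|_{\mathcal{F}^*}+\varepsilon\|h^*\|_{\mathcal{Q}^*}$, which proves the identity for $\varepsilon>0$.

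\textbf{Step 4 (the case $\varepsilon=0$).} Here the continuity condition fails. I would handle this case separately, either directly or by a monotone limit argument as $\varepsilon\downarrow0$, and check that both sides agree: both are $0$ whenever $\iota_\mathcal{Q}$ is injective, which holds for $[\H]^s\subset L^2$. Only $\varepsilon=\sigma/\sqrt n>0$ is needed later anyway. The care required is in verifying the conjugate computations in Step 3 and that restrictions of $\mathcal{M}^*$ and $\mathcal{Q}^*$ functionals are continuous on $\mathcal{F}$, which is exactly what the embedding hypotheses provide.
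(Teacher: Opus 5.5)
Your argument is correct. The paper gives no proof of this statement: it imports the identity from Theorem~3.3 of \citet{duality} and uses it once, with $\varepsilon=\sigma/\sqrt{n}$, in Lemma~\ref{duality lemma}. Your Fenchel--Rockafellar derivation therefore replaces a citation with a self-contained argument, and it checks out.
\begin{itemize}
\item Hahn--Banach reduces the claim to the support function of $K_\varepsilon$ at each fixed $g^*$.
\item Weak duality is the elementary splitting $g^*=(g^*-h^*)+h^*$.
\item Take $A=\iota_{\mathcal{Q}}$, $\phi=\delta_{\mathcal{F}(1)}-g^*$ and $\psi=\delta_{\{\|q\|_{\mathcal{Q}}\le\varepsilon\}}$. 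The qualification condition holds: $0\in\mathrm{dom}\,\phi$, and $\psi$ is continuous at $A0=0$ because $\varepsilon>0$.
\item The conjugates $\phi^*(u)=\|u+g^*\|_{\mathcal{F}^*}$ and $\psi^*(v)=\varepsilon\|v\|_{\mathcal{Q}^*}$ are right. This gives the identity, with the infimum over $h^*$ attained.
\end{itemize}
Your route also makes explicit the hypotheses the citation hides. First, the embeddings $\iota_{\mathcal{Q}},\iota_{\mathcal{M}}$ must be bounded, so that elements of $\mathcal{Q}^*$ and $\mathcal{M}^*$ restrict into $\mathcal{F}^*$. This is exactly how the paper reads $\|b^*-c^*\|_{([\H]^s)^*}$ in Lemma~\ref{duality lemma}, as a supremum over the unit ball of $[\H]^s$. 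Second, you need $\varepsilon>0$.

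One correction in Step~4. Injectivity of $\iota_{\mathcal{Q}}$ makes the left side vanish at $\varepsilon=0$. The right side, however, vanishes only if the restrictions $\{h^*|_{\mathcal{F}}:h^*\in\mathcal{Q}^*\}$ are norm-dense in $\mathcal{F}^*$, and injectivity alone gives only weak-$*$ density.

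In the present setting this is harmless because $\mathcal{F}=[\H]^s$ is a Hilbert space. By reflexivity and Mazur's theorem, the weak-$*$ and norm closures of a subspace of $\mathcal{F}^*$ agree. Concretely, the restrictions of $L^2$ correspond to $T^sL^2(\X)=[\H]^{2s}$, which is dense in $[\H]^s$.

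In a general Banach triple, though, the identity actually fails at $\varepsilon=0$. Take $\mathcal{F}=\mathcal{M}=\ell^1$ and $\mathcal{Q}=\ell^2$. The left side is $0$, while $g^*=(1,1,\dots)\in\ell^\infty$ gives $\inf_{h\in\ell^2}\|g^*-h\|_{\ell^\infty}=1$. So neither your injectivity claim nor an $\varepsilon\downarrow 0$ limit argument works at that level of generality. Since only $\varepsilon>0$ is ever used, nothing downstream is affected.
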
 

~ 

Now we begin to prove Theorem \ref{lower bound}. 

\begin{lemma}\label{feature map representation}
    There exists a feature map $\phi:\,\X\times\mathcal{V}\to\R$, where $\mathcal{V}$ is the weight domain of the feature map equipped with a probability distribution $\pi$, such that 
    \begin{equation} 
        [\H]^s=\left\{f(x)=\int_\mathcal{V}a(v)\phi(x,v)d\pi(v):\,a\in L^2(\mathcal{V},\pi)\right\},
    \end{equation} 
    and 
    \begin{equation} 
        \|f\|_{[\H]^s}=c_s\|a\|_{L^2(\mathcal{V},\pi)}\quad\mbox{for }f(x)=\int_\mathcal{V}a(v)\phi(x,v)d\pi(v).
    \end{equation} 
    for some constant $c_s>0$. 
\end{lemma}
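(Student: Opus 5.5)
We take the weight domain to be the index set of the Mercer expansion, with a probability weighting. Let $\mathcal{V}=\mathbb{N}=\{1,2,\dots\}$ and choose a probability distribution $\pi$ on $\mathcal{V}$ with full support, say $\pi(\{i\})=\pi_i:=\frac{6}{\pi^2 i^2}$ (any summable positive sequence normalised to one works). Define the feature map
\begin{equation*}
    \phi(x,i)=\pi_i^{-\frac{1}{2}}\lambda_i^{\frac{s}{2}}e_i(x),\quad x\in\X,\ i\in\mathcal{V}.
\end{equation*}
For $a\in L^2(\mathcal{V},\pi)$ set $f_i=\sqrt{\pi_i}\,a(i)$. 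Then $\|a\|_{L^2(\mathcal{V},\pi)}^2=\sum_i\pi_i a(i)^2=\sum_i f_i^2$, and
\begin{equation*}
    \int_\mathcal{V}a(v)\phi(x,v)d\pi(v)=\sum_{i=1}^\infty \pi_i a(i)\pi_i^{-\frac12}\lambda_i^{\frac s2}e_i(x)=\sum_{i=1}^\infty f_i\lambda_i^{\frac s2}e_i(x).
\end{equation*}
The map $a\mapsto\{f_i\}$ is an isometric bijection between $L^2(\mathcal{V},\pi)$ and $l^2$, with inverse $a(i)=f_i/\sqrt{\pi_i}$. So by the definition (\ref{interpolation space definition}) of $[\H]^s$ and its norm, the set of such integrals is exactly $[\H]^s$, and $\|f\|_{[\H]^s}=\|a\|_{L^2(\mathcal{V},\pi)}$. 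This gives the claim with $c_s=1$; rescaling $\phi$ by a constant produces any other $c_s$ if a normalisation is wanted downstream.

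The main point to verify is that the integral makes sense pointwise for every $x$, and not only as an $L^2(\X)$ series. Since $s>\alpha_0$, we can pick $\alpha\in(\alpha_0,s)$ with $M_\alpha<\infty$. Cauchy--Schwarz then gives
\begin{equation*}
    \sum_i|f_i\lambda_i^{\frac s2}e_i(x)|\leq\|f\|_{l^2}\Big(\sum_i\lambda_i^{s}e_i(x)^2\Big)^{\frac12}\leq\lambda_1^{\frac{s-\alpha}{2}}M_\alpha\|f\|_{l^2}.
\end{equation*}
Hence the series converges absolutely and uniformly in $x$. This justifies exchanging the sum with the integral over $\pi$ (for a counting-type measure the integral is just the sum), and it shows $f\in C^0(\X)$, consistent with the embedding of $[\H]^s$ into $C^0(\X)$ used for $\mathcal{M}=C^0(\X)$.

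If the later argument also needs $\phi(\cdot,v)$ to be bounded or continuous for each fixed $v$, this holds because each $e_i$ with $\lambda_i>0$ equals $\lambda_i^{-1}Te_i$, which is continuous and bounded since $k$ is continuous and bounded. Continuity of $\phi$ jointly in $v$ is not required here, in contrast with the construction criticised in Remark \ref{duality limit discussion}, because $\mathcal{V}$ is discrete. That is exactly why this choice avoids the restriction $\alpha_0\le 1/2$.
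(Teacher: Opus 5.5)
Your proof is correct and is essentially the paper's construction: both take $\mathcal{V}=\mathbb{Z}_+$ to index the Mercer pairs and identify $a$ with the coefficient sequence $\{f_i\}\in l^2$. The only difference is where the weight sits. The paper puts it in the measure, $\pi(i)=c_s\lambda_i^s$ with $\phi(x,i)=e_i(x)$, which needs $\sum_i\lambda_i^s<\infty$ and this follows from $s>\alpha_0$. You put it in the feature, which gives an exact isometry and the same induced kernel $\int_{\mathcal{V}}\phi(x,v)\phi(x',v)\,d\pi(v)=\sum_i\lambda_i^s e_i(x)e_i(x')$ up to the factor $c_s$, so the later lemmas are unaffected; there $\widetilde{T}$ would have eigenvalues $\lambda_i^s$ instead of $c_s\lambda_i^s$. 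Your uniform-convergence check via $M_\alpha$ is a useful addition that the paper leaves implicit. Incidentally, the paper's own computation gives $\|f\|_{[\H]^s}=\sqrt{c_s}\,\|a\|_{L^2(\mathcal{V},\pi)}$ rather than $c_s\|a\|_{L^2(\mathcal{V},\pi)}$, which is harmless since the statement only asks for some constant.
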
 
\proof 
We take $\mathcal{V}=\mathbb{Z}_+$, and define $\pi$ as $\pi(i)=c_s\lambda_i^s$, where $c_s$ is a rescaling constant to make $\pi$ indeed a probability distribution: 
\begin{equation} 
    \pi(\mathcal{V})=c_s\sum_{i=1}^\infty\lambda_i^s=1.
\end{equation} 
Note that by Assumption \ref{edr} and \ref{embedding index assumption},  
\begin{equation} 
    \sum_{i=1}^\infty \lambda_i^s\lesssim\sum_{i=1}^\infty i^{-\beta s}<\infty,
\end{equation} 
hence $c_s$ is well-defined. Note that 
\begin{equation} 
    \|a\|_{L^2(\mathcal{V},\pi)}^2=c_s\sum_{i=1}^\infty \lambda_i^s a(i)^2,\quad\forall a\in L^2(\mathcal{V},\pi).
\end{equation} 
Next, define the feature map as 
\begin{equation} 
    \phi(x,v)=e_i(x),\quad x\in\X,\,i\in\mathcal{V}. 
\end{equation}
Now we define the space $\mathcal{W}$ to be 
\begin{equation} 
    \mathcal{W}=\left\{f(x)=\int_\mathcal{V}a(v)\phi(x,v)d\pi(v):\,a\in L^2(\mathcal{V},\pi)\right\},
\end{equation} 
which is equipped with the norm 
\begin{equation} 
    \|f\|_{\mathcal{W}}=\|a\|_{L^2(\mathcal{V},\pi)}\quad\mbox{for }f(x)=\int_\mathcal{V}a(v)\phi(x,v)d\pi(v).
\end{equation} 
On the one hand, any $f\in[\H]^s$ can be represented as 
\begin{equation} 
    f(x)=\sum_{i=1}^\infty f_i\lambda_i^{s/2}e_i(x),\quad \{f_i\}_{i=1}^\infty\in l^2,
\end{equation} 
and we notice that 
\begin{equation} 
    f(x)=c_s\sum_{i=1}^\infty \lambda_i^s a(i)e_i(x)=\int_\mathcal{V}a(i)\phi(x,i)d\pi(i),
\end{equation} 
where 
\begin{equation} 
    a(i)=\frac{f_i}{c_s\lambda_i^{s/2}},
\end{equation} 
hence $f\in\mathcal{W}$, and
\begin{equation} 
    \|f\|_{\mathcal{W}}^2=\|a(i)\|_{L^2(\mathcal{V},\pi)}^2=c_s\sum_{i=1}^\infty \lambda_i^s a(i)^2=\frac{1}{c_s}\sum_{i=1}^\infty f_i^2=\frac{1}{c_s}\|f\|_{[\H]^s}^2.
\end{equation} 
On the other hand, by a similar procedure, we can prove that for any $f\in\mathcal{W}$, we have $f\in[\H]^s$ and $\|f\|_{[\H]^s}=c_s\|f\|_{\mathcal{W}}$. 

This completes the proof of this lemma. 
\qed 

\begin{lemma}\label{RKHS on pi}
    The space 
    \begin{equation} 
        \widetilde{\H}=\left\{a(i)=\int_\X h(x)\phi(x,i)d\mu(x):\, h\in L^2(\X)\right\}
    \end{equation} 
    equipped with a norm 
    \begin{equation} 
        \|a\|_{\widetilde{\H}}=\|h\|_{L^2(\X)}\quad\mbox{for }a(i)=\int_\X h(x)\phi(x,i)d\mu(x)
    \end{equation} 
    is an RKHS. Its reproducing kernel is $\tilde{k}(i,j)=\delta_{ij}$, where $\delta_{ij}$ is the Kronecker delta. Moreover, its eigenvalues (with respect to $L^2(\mathcal{V},\pi)$) are $c_s\lambda_i^s$, $i=1,2,\dots$, whose corresponding eigenfunctions are $\tilde{e}_i(j)=\delta_{ij}/\sqrt{c_s\lambda_i^s}$. 
\end{lemma}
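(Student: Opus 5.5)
The plan is to verify the three claims directly from the explicit structure fixed in Lemma \ref{feature map representation}: $\mathcal{V}=\mathbb{Z}_+$, $\pi(i)=c_s\lambda_i^s$, and $\phi(x,i)=e_i(x)$. For $h\in L^2(\X)$ we have $a(i)=\int_\X h\,e_i\,d\mu=\langle h,e_i\rangle_{L^2}=:h_i$. The map $h\mapsto a$ is therefore the coefficient map of the orthonormal basis $\{e_i\}$. It is injective, so the norm $\|a\|_{\widetilde{\H}}=\|h\|_{L^2}$ is well defined, and by Parseval $\|a\|_{\widetilde{\H}}^2=\sum_i a(i)^2$. Thus $\widetilde{\H}$ is exactly $l^2(\mathbb{Z}_+)$ with its standard inner product $\langle a,b\rangle_{\widetilde{\H}}=\sum_i a(i)b(i)$, and it is a Hilbert space because it is isometric to $L^2(\X)$.

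Next I would check the reproducing property. Point evaluation $a\mapsto a(j)$ satisfies $|a(j)|\le\|a\|_{l^2}$, so it is bounded and $\widetilde{\H}$ is an RKHS. With $\tilde{k}(j,\cdot)=\delta_{j\cdot}$, which lies in $l^2$ and corresponds to $h=e_j$, we get $\langle \tilde{k}(j,\cdot),a\rangle_{\widetilde{\H}}=a(j)$. This identifies the kernel as $\tilde{k}(i,j)=\delta_{ij}$.

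For the spectrum, I would compute the integral operator on $L^2(\mathcal{V},\pi)$:
\begin{equation}
(\widetilde{T}a)(i)=\int_{\mathcal{V}}\tilde{k}(i,j)a(j)\,d\pi(j)=\sum_j\delta_{ij}a(j)c_s\lambda_j^s=c_s\lambda_i^s\,a(i).
\end{equation}
This operator is diagonal, so each $\delta_{i\cdot}$ is an eigenvector with eigenvalue $c_s\lambda_i^s$. Normalizing in $L^2(\mathcal{V},\pi)$ uses $\|\delta_{i\cdot}\|_{L^2(\pi)}^2=c_s\lambda_i^s$, which gives $\tilde{e}_i=\delta_{i\cdot}/\sqrt{c_s\lambda_i^s}$. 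These functions are orthonormal, and they are complete because any $a$ orthogonal to all of them vanishes at every atom, each atom having positive mass. As a consistency check, the Mercer sum $\sum_i c_s\lambda_i^s\tilde{e}_i(j)\tilde{e}_i(j')=\delta_{jj'}$ recovers $\tilde{k}$. Since $\sum_i\lambda_i^s<\infty$, the measure $\pi$ is finite and the operator is trace class.

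The main obstacle is bookkeeping rather than any deep step. One has to keep the two $L^2$ structures separate: the $\widetilde{\H}$ norm is the unweighted $l^2$ norm inherited from $L^2(\X)$, while the eigen-normalization is taken in the $\pi$-weighted space. The second subtlety is that $\{e_i\}$ is orthonormal in $L^2(\X)$ with respect to $\mu$. I will read $\int\cdot\,d\mu$ consistently with the paper's convention of $L^2(\X)$ in the definition of $T$, so that Parseval applies.
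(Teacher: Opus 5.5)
Your proof is correct. Your computation of the integral operator $\widetilde{T}$ on $L^2(\mathcal{V},\pi)$ and its normalized eigenvectors is exactly the paper's. The difference is in how you show that $\widetilde{\H}$ is the RKHS of $\tilde{k}$.

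The paper argues from the kernel to the space. It \emph{defines} $\tilde{k}(i,j)=\int_\X\phi(x,i)\phi(x,j)\,d\mu(x)=\delta_{ij}$ as the $\mu$-Gram kernel of the feature map and diagonalizes its integral operator. It then invokes the Mercer-type description of an RKHS as $\{\sum_j h_j\sqrt{\tilde\lambda_j}\,\tilde{e}_j:\{h_j\}\in l^2\}$. Since $\sqrt{\tilde\lambda_j}\,\tilde{e}_j(i)=\delta_{ij}$, this set is $l^2$, which the paper then identifies with $\widetilde{\H}$.

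You argue from the space to the kernel. Parseval and Riesz--Fischer identify $\widetilde{\H}$ isometrically with $l^2$, and you check bounded point evaluation and the reproducing identity directly.

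Your route is more elementary and self-contained. It does not need the Mercer characterization transferred to the non-compact countable space $(\mathcal{V},\pi)$. It also makes explicit two points the paper leaves implicit: that $\|a\|_{\widetilde{\H}}=\|h\|_{L^2(\X)}$ is well defined (injectivity of $h\mapsto a$), and that every $l^2$ sequence actually arises from some $h$.

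The paper's route, in turn, explains where $\tilde{k}$ comes from: it is the Gram kernel of $\phi$ under $\mu$, which is the object used in the duality argument. It also obtains the eigenpairs and the space in one computation.

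Both arguments rely on the convention you flag, that $\{e_i\}$ is an orthonormal basis of $L^2(\X,\mu)$.
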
 
\proof 
Let 
\begin{equation} 
    \tilde{k}(i,j)=\int_\X \phi(x,i)\phi(x,j)d\mu(x)=\delta_{ij}.
\end{equation} 
Then its corresponding integral operator in $L^2(\mathcal{V},\pi)$ is 
\begin{equation} 
    \tilde{T}a(i)=\int_\mathcal{V}\tilde{k}(i,j)a(j)d\pi(j)=c_s\sum_{i=1}^\infty\lambda_i^s\delta_{ij}a(j)=c_s\lambda_i^sa(i).
\end{equation} 
Thus, the eigenvalues and orthonormal eigenfunctions of $\tilde{T}$ are $\tilde{\lambda}_i=c_s\lambda_i^s$ and $\tilde{e}_i(j)=\delta_{ij}/\sqrt{c_s\lambda_i^s}$,respectively. 

Furthermore, its RKHS is 
\begin{equation} 
    \left\{a(i)=\sum_{j=1}^\infty h_j\sqrt{\tilde{\lambda}_i}\tilde{e}_j(i):\,\{h_i\}\in l^2\right\}=\left\{a(i)=h_i:\,\{h_i\}\in l^2\right\},
\end{equation} 
which is exactly the space $\widetilde\H$. 
\qed

\begin{lemma}\label{finite expansion estimation in H tilde}
    Suppose that $a\in\widetilde\H$. Then for any $m\in\mathbb{Z}_+$, we have 
    \begin{equation} 
        \inf_{r_1,\dots,r_m\in\R}\left\|a-\sum_{j=1}^m r_j\tilde{e}_j\right\|_{L^2(\mathcal{V},\pi)}\leq\sqrt{\tilde\lambda_{m+1}}\|h\|_{\widetilde\H}.
    \end{equation} 
    and the minimum can be reached at some $r_1,\dots,r_m$. 
\end{lemma}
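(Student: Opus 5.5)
Since $\widetilde{\H}$ was identified in Lemma \ref{RKHS on pi} as an RKHS whose eigenfunctions $\tilde e_j$ form an orthonormal basis of $L^2(\mathcal{V},\pi)$ with eigenvalues $\tilde\lambda_j=c_s\lambda_j^s$, the plan is a direct spectral truncation argument. Write $a=\sum_{j\ge1}\langle a,\tilde e_j\rangle_{L^2(\mathcal{V},\pi)}\tilde e_j$. The RKHS expansion of Lemma \ref{RKHS on pi} shows that $a=\sum_j h_j\sqrt{\tilde\lambda_j}\,\tilde e_j$ with $\sum_j h_j^2=\|a\|_{\widetilde\H}^2$, so the $L^2(\mathcal{V},\pi)$-coefficients are $\langle a,\tilde e_j\rangle=h_j\sqrt{\tilde\lambda_j}$. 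Here I read $\|h\|_{\widetilde\H}$ in the statement as $\|a\|_{\widetilde\H}=\|h\|_{L^2(\X)}$, consistent with the norm definition in Lemma \ref{RKHS on pi}.

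For the minimization over $r_1,\dots,r_m$, I will use orthonormality of $\{\tilde e_j\}$ in $L^2(\mathcal{V},\pi)$ (Pythagoras). This gives
\begin{equation*}
\Big\|a-\sum_{j=1}^m r_j\tilde e_j\Big\|_{L^2(\mathcal{V},\pi)}^2=\sum_{j=1}^m\big(h_j\sqrt{\tilde\lambda_j}-r_j\big)^2+\sum_{j>m}h_j^2\tilde\lambda_j .
\end{equation*}
The expression is clearly minimized at $r_j=h_j\sqrt{\tilde\lambda_j}$, i.e. the orthogonal projection onto $\mathrm{span}\{\tilde e_1,\dots,\tilde e_m\}$. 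This proves the attainment claim.

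For the bound, the minimum value equals $\sum_{j>m}\tilde\lambda_j h_j^2$. The eigenvalues $\lambda_j$ are nonincreasing, hence so are $\tilde\lambda_j=c_s\lambda_j^s$. Therefore the minimum is at most $\tilde\lambda_{m+1}\sum_{j>m}h_j^2\le\tilde\lambda_{m+1}\|a\|_{\widetilde\H}^2$, and taking square roots gives the claim.

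The only delicate point is to check that the $L^2(\mathcal{V},\pi)$-coefficients of $a$ are exactly $h_j\sqrt{\tilde\lambda_j}$, with $\sum_j h_j^2$ equal to the $\widetilde\H$-norm. I would verify this directly from the explicit formulas $\tilde e_i(j)=\delta_{ij}/\sqrt{c_s\lambda_i^s}$ and $\pi(j)=c_s\lambda_j^s$. Indeed, $\langle a,\tilde e_j\rangle_{L^2(\pi)}=a(j)\pi(j)/\sqrt{c_s\lambda_j^s}=a(j)\sqrt{\tilde\lambda_j}$. Combined with $\|a\|_{\widetilde\H}^2=\sum_j a(j)^2$ from Lemma \ref{RKHS on pi}, this gives exactly $h_j=a(j)$.
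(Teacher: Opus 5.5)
Your proof is correct and follows the same route as the paper. Both expand $a$ in the orthonormal basis $\{\tilde e_j\}$ of $L^2(\mathcal{V},\pi)$, use Pythagoras to see that the infimum is attained at the orthogonal projection onto $\mathrm{span}\{\tilde e_1,\dots,\tilde e_m\}$, and bound the tail by $\tilde\lambda_{m+1}\|a\|_{\widetilde\H}^2$ using that the $\tilde\lambda_j$ are nonincreasing. Your explicit computation $\langle a,\tilde e_j\rangle_{L^2(\mathcal{V},\pi)}=a(j)\sqrt{\tilde\lambda_j}$ is the correct bookkeeping, which the paper glosses over by writing $a=\sum_j a(j)\tilde e_j$ and using an equality in its final chain that should be $\le$.
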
 
\proof 
Note that the $L^2$ expansion of $a$ is $a=\sum_{j=1}^\infty a(j)\tilde{e}_j$, hence 
\begin{equation} 
    \left\|a-\sum_{j=1}^m r_j\tilde{e}_j\right\|_{L^2(\mathcal{V},\pi)}^2=\sum_{j=1}^m(a(j)-r_j)^2+\sum_{j={m+1}}^\infty a(j)^2.
\end{equation}
whose minimum is reached if and only if $r_j=a(j)$. Therefore, 
\begin{equation} 
    \inf_{r_1,\dots,r_m\in\R}\left\|a-\sum_{j=1}^m r_j\tilde{e}_j\right\|_{L^2(\mathcal{V},\pi)}^2=\sum_{j=m+1}^\infty a(j)^2=\tilde{\lambda}_{m+1}\sum_{j=m+1}^\infty\tilde\lambda_{j}^{-1}a(j)^2\leq\tilde{\lambda}_{m+1}\|a\|_{\widetilde\H}^2.
\end{equation} 
\qed 

\begin{lemma}\label{finite expansion estimation 2}
    For any $m\in\mathbb{Z}_+$, we have the following estimation: 
    \begin{equation} 
        \sup_{\|\gamma\|_{TV}\leq 1}\inf_{r_1,\dots,r_m\in\R}\left\|\int_\X \phi(x,\cdot)d\gamma(x)-\sum_{j=1}^m r_j\tilde{e}_j\right\|_{L^2(\mathcal{V},\pi)}\geq\sqrt{\lambda(m)}, 
    \end{equation} 
    where 
    \begin{equation} 
        \lambda(m)=\sum_{j={m+1}}^\infty\tilde{\lambda}_j=c_s\sum_{j=m+1}^\infty\lambda_j^s.
    \end{equation} 
\end{lemma}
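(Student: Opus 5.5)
Our aim is a lower bound on the worst-case $L^2(\mathcal{V},\pi)$-distance from $\int_\X\phi(x,\cdot)\,d\gamma(x)$ to $\mathrm{span}\{\tilde e_1,\dots,\tilde e_m\}$ over signed measures $\gamma$ with $\|\gamma\|_{TV}\le 1$. Instead of bounding the supremum directly, we will average over a random Dirac measure. Since a point mass $\gamma=\delta_x$ has total variation $1$, we have
\begin{equation*}
\sup_{\|\gamma\|_{TV}\leq 1}\inf_{r}\Big\|\int_\X\phi(x,\cdot)d\gamma(x)-\sum_{j\le m}r_j\tilde e_j\Big\|^2_{L^2(\pi)}\;\geq\;\sup_{x\in\X}\inf_r\Big\|\phi(x,\cdot)-\sum_{j\le m}r_j\tilde e_j\Big\|^2_{L^2(\pi)}\;\geq\;\int_\X\inf_r\Big\|\phi(x,\cdot)-\sum_{j\le m}r_j\tilde e_j\Big\|^2_{L^2(\pi)}d\mu(x),
\end{equation*}
where the last step uses that $\mu$ is a probability measure. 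Here $\phi(x,\cdot)$ is the function $i\mapsto e_i(x)$ on $\mathcal{V}=\mathbb{Z}_+$. Because $\X$ is compact and the $e_i$ are continuous, these point evaluations are meaningful, so $\delta_x$ is an admissible measure.

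For a fixed $x$ we compute the inner infimum using orthonormality. By Lemma \ref{RKHS on pi}, the functions $\tilde e_j(i)=\delta_{ij}/\sqrt{\tilde\lambda_j}$ form an orthonormal basis of $L^2(\mathcal{V},\pi)$, and the coefficient of $\phi(x,\cdot)$ along $\tilde e_j$ is
\begin{equation*}
\langle\phi(x,\cdot),\tilde e_j\rangle_{L^2(\pi)}=\tilde\lambda_j\, e_j(x)/\sqrt{\tilde\lambda_j}=\sqrt{\tilde\lambda_j}\,e_j(x).
\end{equation*}
Following the argument of Lemma \ref{finite expansion estimation in H tilde}, the infimum over $r_1,\dots,r_m$ is attained at the projection, and it equals $\sum_{j>m}\tilde\lambda_j e_j(x)^2$. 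This series is finite, since $\tilde\lambda_j=c_s\lambda_j^s$ and $\sup_x\sum_j\lambda_j^s e_j(x)^2=M_s^2<\infty$ because $s>\alpha_0$.

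Integrating against $\mu$ and using $\int_\X e_j^2\,d\mu=1$ (the $e_j$ are orthonormal in $L^2$) gives
\begin{equation*}
\int_\X\sum_{j>m}\tilde\lambda_j e_j(x)^2\,d\mu(x)=\sum_{j>m}\tilde\lambda_j=\lambda(m).
\end{equation*}
The interchange of sum and integral is justified by monotone convergence. Taking square roots then gives the claimed bound $\sqrt{\lambda(m)}$.

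The main point needing care is the normalization. The bound relies on the $e_j$ being orthonormal in $L^2(\X,\mu)$ rather than in $L^2$ with respect to Lebesgue measure. This matches the paper's convention that $T$ is the integral operator with respect to $\mu$, and we will read the definitions that way. If instead orthonormality held only with respect to Lebesgue measure, the averaging step would have to use the normalized Lebesgue measure on $\X$. That change would cost only a constant factor, which is harmless for the minimax rate.
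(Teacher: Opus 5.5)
Your proof is correct and follows the same route as the paper. Both restrict to Dirac masses $\delta_z$, bound the supremum over $z$ by an average over $z\sim\mu$, and compute the distance from $\phi(z,\cdot)$ to $\mathrm{span}\{\tilde e_1,\dots,\tilde e_m\}$ by orthonormality, which gives $\sum_{j>m}\tilde\lambda_j e_j(z)^2$.

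The one difference matters, and you should keep it. You average the \emph{squared} distance, so the integral is exactly $\lambda(m)$ and taking square roots at the end is legitimate. The paper instead averages the unsquared distance and then asserts $\E_{z\sim\mu}\bigl[c_s\sum_{j>m}\lambda_j^s e_j(z)^2\bigr]^{1/2}=\sqrt{c_s\sum_{j>m}\lambda_j^s}$. By Jensen's inequality this holds only with ``$\le$'', unless $\sum_{j>m}\lambda_j^s e_j(z)^2$ is $\mu$-a.e.\ constant in $z$. Your ordering, $\sup_z f(z)\ge\bigl(\E_{z\sim\mu} f(z)^2\bigr)^{1/2}$, is what actually delivers the stated lower bound.

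Your reading of the $e_j$ as orthonormal in $L^2(\X,\mu)$ is also the one the paper uses implicitly, for instance in $\tilde k(i,j)=\int_\X\phi(x,i)\phi(x,j)\,d\mu(x)=\delta_{ij}$.
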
 
\proof 
For any $z\in\X$, the Dirac measure $\delta_z$ is a Radon measure with total variance $1$. Therefore,  
\begin{equation} 
    \begin{aligned} 
        &\sup_{\|\gamma\|_{TV}\leq 1}\inf_{r_1,\dots,r_m\in\R}\left\|\int_\X \phi(x,\cdot)d\gamma(x)-\sum_{j=1}^m r_j\tilde{e}_j\right\|_{L^2(\mathcal{V},\pi)}\\
        \geq&\sup_{z\in\X}\inf_{r_1,\dots,r_m\in\R}\left\|\int_\X \phi(x,\cdot)d\delta_z(x)-\sum_{j=1}^m r_j\tilde{e}_j\right\|_{L^2(\mathcal{V},\pi)}\\
        \geq&\E_{z\sim\mu}\inf_{r_1,\dots,r_m\in\R}\left\|\int_\X \phi(x,\cdot)d\delta_z(x)-\sum_{j=1}^m r_j\tilde{e}_j\right\|_{L^2(\mathcal{V},\pi)}\\
        =&\E_{z\sim\mu}\inf_{r_1,\dots,r_m\in\R}\left\|\phi(z,\cdot)-\sum_{j=1}^m r_j\tilde{e}_j\right\|_{L^2(\mathcal{V},\pi)}\\
        =&\E_{z\sim\mu}\inf_{r_1,\dots,r_m\in\R}\left[c_s\sum_{j=1}^m\lambda_j^s\left(e_j(z)-r_j/\sqrt{c_s\lambda_j^s}\right)^2+c_s\sum_{j=m+1}^\infty\lambda_j^s e_j(z)^2\right]^{\frac{1}{2}}\\
        =&\E_{z\sim\mu}\left[c_s\sum_{j=m+1}^\infty\lambda_j^s e_j(z)^2\right]^{\frac{1}{2}}\\
        =&\sqrt{c_s\sum_{j=m+1}^\infty\lambda_j^s}=\sqrt{\lambda(m)}.
    \end{aligned}
\end{equation} 
\qed

\begin{lemma}\label{duality lemma}
    Let $\mathcal{R}(\X)$ be the set of all Radon measures with finite total variances on $\X$. Recall that $\mathcal{B}(1)=\{f\in[\H]^s:\,\|f\|_{[\H]^s}\leq 1\}$ (defined in Theorem \ref{lower bound}). Then we have 
    \begin{equation} 
        \mathbb{I}_{L^2(\X),C^0(\X)}(\mathcal{B}(1),\varepsilon)=\sup_{\|\gamma\|_{TV}\leq 1}\inf_{h\in L^2(\X)}\left[I(\gamma,h)+\varepsilon\|h\|_{L^2(\X)}\right],
    \end{equation} 
    where for $\gamma\in\mathcal{R}(\X)$ and $h\in L^2(\X)$, the quantity $I(\gamma,h)$ is defined by 
    \begin{equation} 
        I(\gamma,h)=\left\|\int_\X \phi(x,\cdot)d\gamma(x)-\int_\X h(x)\phi(x,\cdot)d\mu(x)\right\|_{L^2(\mathcal{V},\pi)}.
    \end{equation} 
\end{lemma}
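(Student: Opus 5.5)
We will obtain Lemma \ref{duality lemma} by specializing Theorem \ref{duality equation} to $\mathcal{F}=[\H]^s$, $\mathcal{Q}=L^2(\X)$, $\mathcal{M}=C^0(\X)$, and then identifying each dual norm concretely. Everything reduces to three identifications.

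\textbf{Unit ball of $\mathcal{M}^*$.} Since $\X$ is compact, the Riesz representation theorem gives $C^0(\X)^*=\mathcal{R}(\X)$ with the total variation norm. So the outer supremum over $\|g^*\|_{\mathcal{M}^*}\leq 1$ becomes a supremum over $\|\gamma\|_{TV}\leq 1$, where $g^*(f)=\int_\X f\,d\gamma$.

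\textbf{Dual of $\mathcal{Q}$.} Since $L^2(\X)$ is a Hilbert space, each $h^*\in\mathcal{Q}^*$ is $f\mapsto\int_\X f h\,d\mu$ with $\|h^*\|_{\mathcal{Q}^*}=\|h\|_{L^2(\X)}$. This turns the inner infimum into an infimum over $h\in L^2(\X)$ with penalty $\varepsilon\|h\|_{L^2}$.

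One technical point arises here. The paper defines $T$ and $L^2(\X)$ partly with Lebesgue measure $d\xi$, but the lemma uses $d\mu$. I will take $L^2(\X)=L^2(\X,\mu)$ throughout, consistent with $e_i$ being orthonormal in the space where the norm $\mathcal{Q}$ is measured.

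\textbf{The $\mathcal{F}^*$ norm.} This is the main step. Both $g^*$ and $h^*$ act on $\mathcal{F}$ by restriction, which is legitimate because $[\H]^s\hookrightarrow C^0(\X)$ (as $s>\alpha_0$) and $[\H]^s\hookrightarrow L^2$. We represent $f\in\mathcal{B}(1)$ via Lemma \ref{feature map representation} as $f=\int_\mathcal{V}a(v)\phi(\cdot,v)\,d\pi(v)$ with $\|f\|_{[\H]^s}=c_s\|a\|_{L^2(\pi)}$. By Fubini, which is justified since $\sum_i\lambda_i^s e_i(x)^2\le M_s^2<\infty$ bounds the series uniformly,
\[
(g^*-h^*)(f)=\int_\mathcal{V}a(v)\Big[\int_\X\phi(x,v)\,d\gamma(x)-\int_\X h(x)\phi(x,v)\,d\mu(x)\Big]d\pi(v).
\]
This is the $L^2(\mathcal{V},\pi)$ pairing of $a$ with the bracketed function $G_{\gamma,h}(v)$. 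Taking the supremum over $\|a\|_{L^2(\pi)}\le 1/c_s$ and applying Cauchy--Schwarz, with equality attained at $a\propto G_{\gamma,h}$, gives
\[
\|g^*-h^*\|_{\mathcal{F}^*}=c_s^{-1}\,\|G_{\gamma,h}\|_{L^2(\mathcal{V},\pi)}=c_s^{-1}I(\gamma,h).
\]

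Two checks are needed for this step. First, $G_{\gamma,h}\in L^2(\pi)$: the $\gamma$-part satisfies $\|\int\phi(x,\cdot)\,d\gamma\|_{L^2(\pi)}^2\le c_s M_s^2$, and the $h$-part is controlled by Lemma \ref{RKHS on pi}. Second, the normalization constant $c_s$ must be handled. I expect to absorb it either by rescaling $\pi$ (equivalently $\phi$) so the isometry in Lemma \ref{feature map representation} holds with constant $1$, or by noting the paper's convention that the statement holds up to this fixed constant. Tracking $c_s$ consistently between the two lemmas, whose definitions appear slightly inconsistent, is the step I expect to need the most care.

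Substituting the three identifications into Theorem \ref{duality equation} yields the claimed formula for $\mathbb{I}_{L^2(\X),C^0(\X)}(\mathcal{B}(1),\varepsilon)$.
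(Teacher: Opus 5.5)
Your proposal is correct and follows the paper's proof: specialize Theorem~\ref{duality equation}, identify $C^0(\X)^*$ with Radon measures and $L^2(\X)^*$ with $L^2(\X)$, and evaluate the $([\H]^s)^*$-norm of $\gamma-h\,d\mu$ by passing to the feature-map coefficients $a$ and applying Cauchy--Schwarz in $L^2(\mathcal{V},\pi)$. Your Fubini and $L^2(\pi)$-membership checks fill in details the paper omits. Your worry about $c_s$ is also well founded. The paper silently replaces $\|f\|_{[\H]^s}\le 1$ by $\|a\|_{L^2(\mathcal{V},\pi)}\le 1$, and the computation inside the proof of Lemma~\ref{feature map representation} actually gives $\|f\|_{[\H]^s}=\sqrt{c_s}\,\|a\|_{L^2(\mathcal{V},\pi)}$, contradicting that lemma's statement. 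The dual norm is therefore $c_s^{-1/2}I(\gamma,h)$ rather than $c_s^{-1}I(\gamma,h)$. The identity becomes exact only after rescaling $\phi$ by $c_s^{-1/2}$, which keeps $\pi$ a probability measure and is harmless for the rate-level use in Theorem~\ref{lower bound}.
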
 
\proof 
Applying Theorem \ref{duality equation} for $\mathcal{F}=[\H]^s$, $\mathcal{Q}=L^2(\X)$, $\mathcal{M}=C^0(\X)$, we obtain that 
\begin{equation}\label{duality application}
    \begin{aligned} 
        \mathbb{I}_{L^2(\X),C^0(\X)}(\mathcal{B}(1),\varepsilon)&=\sup_{\|f\|_{[\H]^s}\leq 1,\,\|f\|_{L^2(\X)}\leq\varepsilon}\|f\|_\infty\\
        &=\sup_{\|b^*\|_{C^0(\X)^*}\leq 1}\inf_{c^*\in L^2(\X)^*}\left[\|b^*-c^*\|_{([\H]^s)^*}+\varepsilon\|c^*\|_{L^2(\X)^*}\right]\\
        &=\sup_{\|\gamma\|_{TV}\leq 1}\inf_{h\in L^2(\X)}\left[\sup_{\|f\|_{[\H]^s}\leq 1}[\gamma(f)-h(f)]+\varepsilon\|h\|_{L^2(\X)}\right],
    \end{aligned} 
\end{equation} 
where in the third equality, we use the facts that $L^2(\X)^*=L^2(\X)$, $([\H]^s)^*=[\H]^s$ and $C^0(\X)^*=\mathcal{R}(\X)$ by Riesz representation theorem. 

Next, by Lemma \ref{feature map representation}, we have 
\begin{equation}\label{computation of pi norm}
    \begin{aligned} 
        \sup_{\|f\|_{[\H]^s}\leq 1}[\gamma(f)-h(f)]&=\sup_{\|f\|_{[\H]^s}\leq 1}\left[\int_\X f(x)d\gamma(x)-\int_\X f(x)h(x)d\mu(x)\right]\\
        &=\sup_{\|a\|_{L^2(\mathcal{V},\pi)}\leq 1}\int_\X\int_\mathcal{V} a(i)\phi(x,i)d\pi(i)\left[d\gamma(x)-h(x)d\mu(x)\right]\\
        &=\sup_{\|a\|_{L^2(\mathcal{V},\pi)}\leq 1}\int_\mathcal{V} a(i)\left(\int_\X \phi(x,i)d\gamma(x)-\int_\X h(x)\phi(x,i)d\mu(x)\right)d\pi(i)\\
        &=\left\|\int_\X \phi(x,i)d\gamma(x)-\int_\X h(x)\phi(x,i)d\mu(x)\right\|_{L^2(\mathcal{V},\pi)}\\
        &=I(\gamma,h),
    \end{aligned} 
\end{equation} 
where $g(i)=\int_\X \phi(x,i)d\gamma(x)\in\widetilde{\mathcal{B}}$. 
Combining (\ref{duality application}) and (\ref{computation of pi norm}) together, we complete the proof of this lemma. 
\qed 

~

\textbf{Final proof of Theorem \ref{lower bound}}. 

Without loss of generality, we assume $R=1$. Set $\varepsilon=\frac{\sigma}{\sqrt{n}}$. Then by Lemma \ref{IBC for minimax estimation} and \ref{duality lemma}, we have 
\begin{equation}\label{lower bound 1st estimation}
    \inf_{\hat{f}}\sup_{f^*\in\mathcal{B}(1)}\E\left\|\hat{f}-f^*\right\|_{\infty}\gtrsim \mathbb{I}_{L^2(\X),C^0(\X)}(\mathcal{B}(1),\varepsilon)=\sup_{\|\gamma\|_{TV}\leq 1}\inf_{h\in L^2(\X)}\left[I(\gamma,h)+\varepsilon\|h\|_{L^2(\X)}\right].
\end{equation} 
Note that 
\begin{equation} 
    I(\gamma,h)=\left\|\int_\X \phi(x,i)d\gamma(x)-a(i)\right\|_{L^2(\mathcal{V},\pi)},
\end{equation} 
where 
\begin{equation} 
    a(i)=\int_\X h(x)\phi(x,i)d\mu(x)\in\widetilde\H.
\end{equation} 
By Lemma \ref{RKHS on pi}, $\widetilde\H$ is an RKHS with eigenvalues $\tilde\lambda_i=c_s\lambda_i^{s}$ and eigenfunctions $\tilde{e}_i(j)=\delta_{ij}/\sqrt{c_s\lambda_i^s}$. For any $m\in\mathbb{Z}_+$, by Lemma \ref{finite expansion estimation in H tilde}, there exist $c_1^*,\dots,c_m^*\in\R$ such that 
\begin{equation} 
    \begin{aligned} 
        &\left\|\int_\X \phi(x,i)d\gamma(x)-a(i)\right\|_{L^2(\mathcal{V},\pi)}\\
        \geq&\left\|\int_\X \phi(x,i)d\gamma(x)-\sum_{j=1}^m c_j^*\tilde{e}_j\right\|_{L^2(\mathcal{V},\pi)}-\left\|a(i)-\sum_{j=1}^m c_j^*\tilde{e}_j\right\|_{L^2(\mathcal{V},\pi)}\\
        \geq&\left\|\int_\X \phi(x,i)d\gamma(x)-\sum_{j=1}^m c_j^*\tilde{e}_j\right\|_{L^2(\mathcal{V},\pi)}-\sqrt{\tilde{\lambda}_{m+1}}\|a\|_{\widetilde\H}.
    \end{aligned}
\end{equation}
Combining this estimation with (\ref{lower bound 1st estimation}), and setting $m$ sufficiently great so that $\varepsilon>\sqrt{\tilde{\lambda}_{m+1}}$, we obtain that 
\begin{equation}
    \begin{aligned} 
        &\inf_{\hat{f}}\sup_{f^*\in\mathcal{B}(1)}\E\left\|\hat{f}-f^*\right\|_{\infty}\\
        \gtrsim&\sup_{\|\gamma\|_{TV}\leq 1}\inf_{h\in L^2(\X)}\left[\left\|\int_\X \phi(x,i)d\gamma(x)-\sum_{j=1}^m c_j^*\tilde{e}_j\right\|_{L^2(\mathcal{V},\pi)}+(\varepsilon-\sqrt{\tilde{\lambda}_m})\|a\|_{\widetilde\H}\right]\\
        \geq&\sup_{\|\gamma\|_{TV}\leq 1}\left\|\int_\X \phi(x,i)d\gamma(x)-\sum_{j=1}^m c_j^*\tilde{e}_j\right\|_{L^2(\mathcal{V},\pi)}\\
        \geq&\sup_{\|\gamma\|_{TV}\leq 1}\inf_{c_1,\dots,c_m\in\R}\left\|\int_\X \phi(x,i)d\gamma(x)-\sum_{j=1}^m c_j\tilde{e}_j\right\|_{L^2(\mathcal{V},\pi)}\\
        \geq&\sqrt{\lambda(m)}
    \end{aligned} 
\end{equation} 
where we use Lemma \ref{finite expansion estimation 2} in the last inequality. 

Finally, we select $m$ such that $\varepsilon\asymp\sqrt{\tilde\lambda_{m+1}}$, then we have 
\begin{equation} 
    \frac{\sigma^2}{n}\asymp\tilde\lambda_{m+1}^s\asymp m^{-s\beta},
\end{equation} 
hence 
\begin{equation} 
    \begin{aligned} 
        \lambda(m)&=c_s\sum_{i=m+1}^\infty\lambda_i^s\gtrsim \sum_{i=m+1}^\infty i^{-s\beta }\gtrsim\int_{m}^\infty x^{-s\beta}dx\geq m^{-s\beta +1}\gtrsim n^{-\frac{s\beta-1}{s\beta}},
    \end{aligned} 
\end{equation} 
This completes the proof of this theorem. 
\qed

\section{Proof of Theorem \ref{main result}}\label{gaussian approximation section}

As is shown in the proof of Theorem \ref{variance estimation} in Section \ref{variance estimation subsection}, a suitable threshold of truncation always exists when we handle with unbounded noise satisfying the Bernstein-type boundedness condition (Assumption \ref{noise moment}). Therefore, for brevity, we assume in the following context that the noise $\varepsilon=y-f^*(x)$ is uniformly bounded, that is, 
\begin{equation} 
    \P(|\varepsilon|<D)=1.
\end{equation}
for some constant $D>0$.

\subsection{Second-order estimation}\label{second-order estimation proof section}

In this section, we first prove the following second-order estimation which serves as an essential technique in the proof of Theorem \ref{main result}. 

\begin{theorem}\label{second-order estimation theorem} 
    \textnormal{(Theorem \ref{Bahadur representation})} Suppose that Assumptions  \ref{noise moment}, \ref{edr}, \ref{Holder assumption}, \ref{source condition} and \ref{embedding index assumption} hold. By choosing $t=n^\theta$ for $\theta\in(0,\beta)$, for any $\varepsilon>0$ sufficiently small such that $0<\varepsilon<\min\{s-\frac{1}{\beta},\frac{1}{\theta}-\frac{1}{\beta}\}$ and for any $p>1$, when $n$ is sufficiently great, we have 
    \begin{equation}\label{second-order estimation} 
        \left\|\hat{f}_t-f_t-\frac{1}{n}\sum_{i=1}^n\varphi_t(T)k_{x_i}(\cdot)\varepsilon_i\right\|_\infty\leq C\sqrt{\frac{t^\alpha\log n}{n}}\cdot t^{-\frac{\min\{s-\alpha,2\}}{2}}+C\frac{t^\alpha\log n}{n},
    \end{equation} 
    with probability $1-\O(n^{-p})$, where the constant $C>0$ depends only on $\varepsilon$, $d$, $\kappa$, $\beta$, $s$, $R$, $\sigma$, $L$, $h$ and $L_k$ for continuous kernel gradient flow, and $\eta$ additionally for discrete kernel gradient flow. 
\end{theorem}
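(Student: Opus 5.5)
We start from the decomposition $\hat f_t-f_t=(\tilde f_t-f_t)+(\hat f_t-\tilde f_t)$. Here $\hat f_t-\tilde f_t=\varphi_t(T_X)\frac1n\sum_i k_{x_i}\varepsilon_i$. The quantity to be bounded is then
\begin{equation*}
\hat f_t-f_t-\frac1n\sum_{i=1}^n\varphi_t(T)k_{x_i}\varepsilon_i=(\tilde f_t-f_t)+\big(\varphi_t(T_X)-\varphi_t(T)\big)\hat g_\varepsilon,\qquad \hat g_\varepsilon:=\frac1n\sum_{i=1}^n k_{x_i}\varepsilon_i .
\end{equation*}
We treat the two pieces separately, always passing to the sup norm through $\|\cdot\|_\infty\le M_\alpha\|\cdot\|_{[\H]^\alpha}$, or through $\|f\|_\infty\le\sup_x\|T_\lambda^{-1/2}k_x\|_\H\,\|T_\lambda^{1/2}f\|_\H\le M_\alpha t^{\alpha/2}\|T_\lambda^{1/2}f\|_\H$ (Lemma \ref{basis function norm bound}).

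\textbf{Step 1 (the noise piece).} Write $\lambda=1/t$. For the filter difference we use the resolvent-type identity coming from the analytic extension of $\varphi_t$ (Appendix \ref{analytic section}), namely a contour representation $\varphi_t(A)=\frac1{2\pi i}\oint_\Gamma\varphi_t(z)(z-A)^{-1}dz$. This gives
\begin{equation*}
\varphi_t(T_X)-\varphi_t(T)=\frac{1}{2\pi i}\oint_\Gamma\varphi_t(z)(z-T_X)^{-1}(T_X-T)(z-T)^{-1}dz .
\end{equation*}
We sandwich this with $T_\lambda^{\pm1/2}$ and $T_{X\lambda}^{\pm1/2}$. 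Along a contour at distance $\gtrsim\lambda$ from the spectrum, $\|T_\lambda^{1/2}(z-T)^{-1}T_\lambda^{1/2}\|$ and the analogous empirical quantity are bounded. This yields
\begin{equation*}
\big\|T_\lambda^{1/2}(\varphi_t(T_X)-\varphi_t(T))\hat g_\varepsilon\big\|_\H\lesssim \|T_\lambda^{-1/2}(T_X-T)T_\lambda^{-1/2}\|_\H\cdot\|T_\lambda^{-1/2}\hat g_\varepsilon\|_\H .
\end{equation*}
The first factor is $\lesssim\sqrt{t^\alpha\log n/n}$ by Lemma \ref{operator quotient diff norm bound ver2}, and Lemma \ref{operator quotient norm bound ver2} controls the change between $T_{X\lambda}$ and $T_\lambda$. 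For the second factor, $\|T_\lambda^{-1/2}\hat g_\varepsilon\|_\H$ is a Hilbert-valued mean of bounded terms of norm $\le M_\alpha t^{\alpha/2}D$ with variance $\lesssim\sigma^2\mathcal N_1(\lambda)\le\sigma^2 t^{1/\beta}$. The vector Bernstein inequality (Lemma \ref{Bernstein ineq vec ver}) then gives $\lesssim\sqrt{t^\alpha\log n/n}$. Multiplying by the factor $M_\alpha t^{\alpha/2}$ from the sup-norm conversion, the noise piece contributes $\lesssim t^{\alpha}\log n/n$.

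\textbf{Step 2 (the bias piece).} We have $\tilde f_t-f_t=\psi_t(T)f^*-\psi_t(T_X)f^*$, and we aim for $\|\tilde f_t-f_t\|_\infty\lesssim\sqrt{t^\alpha\log n/n}\;t^{-\min\{s-\alpha,2\}/2}$. This refines Lemma \ref{1st term of bias}, which only gave the first-order bound $t^{-(s-\alpha)/2}$. We again use the contour representation, now for $\psi_t$:
\begin{equation*}
\psi_t(T)-\psi_t(T_X)=\frac1{2\pi i}\oint_\Gamma\psi_t(z)(z-T_X)^{-1}(T-T_X)(z-T)^{-1}dz .
\end{equation*}
This is applied to $f^*=T^{(s-\alpha)/2}u^*$ with $\|u^*\|_{[\H]^\alpha}\le R$. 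The factor $(z-T)^{-1}T^{r}$ with $r=\min\{s-\alpha,2\}/2$, integrated against $\psi_t(z)$, produces $t^{-r}$ times bounded quantities. We bound the middle factor $T_\lambda^{-1/2}(T-T_X)T_\lambda^{-1/2}$ by Lemma \ref{operator quotient diff norm bound ver2}, obtaining a factor $\sqrt{t^\alpha\log n/n}$. When $s-\alpha>2$ the saturation exponent $2$ enters, because $\psi_t(T_X)$ cannot absorb more than one power of $(T-T_X)$ in this first-order expansion. In that case we split $T^{(s-\alpha)/2}=T\cdot T^{(s-\alpha)/2-1}$ and keep only the $T^1$ part in the resolvent argument, with the remaining power bounded. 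Finally we convert to the sup norm via $[\H]^\alpha$ and Lemma \ref{operator quotient norm bound}.

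\textbf{Step 3 and the main obstacle.} To make the bounds uniform in $x$ there is no extra work for the operator bounds, which hold on one high-probability event. Where pointwise concentration is used, we apply the $\varepsilon_0$-net argument of Lemma \ref{1st term of variance} with Hölder continuity (Assumption \ref{Holder assumption}). The truncation of unbounded noise follows Lemma \ref{2nd term of variance}.

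The main obstacle is the contour/resolvent step. We need uniform bounds for $(z-T_X)^{-1}$ and $(z-T)^{-1}$ in the weighted norms, along a contour that encircles the spectrum but stays at distance $\asymp\lambda$ near the origin. We also need $\oint|\varphi_t(z)|\,|dz|$-type integrals to scale correctly, namely $\lesssim t$ near zero with logarithmic losses at worst. For discrete gradient flow this requires the analytic extension guaranteed by Assumption \ref{learning rate assumption}. If the contour approach proves too lossy, we will fall back on the explicit integral representation $\varphi_t^{con}(A)=\int_0^t e^{-uA}du$ and Duhamel's formula $e^{-uT_X}-e^{-uT}=-\int_0^u e^{-(u-v)T_X}(T_X-T)e^{-vT}dv$, bounding each factor with the $T_\lambda^{\pm1/2}$ sandwich.
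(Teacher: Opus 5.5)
Your Step 1 does not give the claimed rate for the noise piece. Multiply your own factors:
\begin{itemize}
\item the sup-norm conversion costs $M_\alpha t^{\alpha/2}$;
\item the sandwiched difference $\|T_\lambda^{-1/2}(T_X-T)T_\lambda^{-1/2}\|_\H$ costs $\sqrt{t^\alpha\log n/n}$;
\item the contour integral $\oint_{\Gamma_t}|\varphi_t(z)||dz|$ costs $\log t$;
\item $\|T_\lambda^{-1/2}\hat g_\varepsilon\|_\H$ is genuinely of order $\sqrt{\mathcal N_1(\lambda)/n}\asymp\sqrt{t^{1/\beta}/n}$ up to logarithms. Its second moment is exactly $\sigma^2\mathcal N_1(\lambda)/n$ when $\E[\varepsilon^2|x]=\sigma^2$. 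You even recorded it as the larger $\sqrt{t^\alpha\log n/n}$.
\end{itemize}
The product is $t^{\alpha+\frac{1}{2\beta}}\,\mathrm{polylog}(n)/n$, or $t^{3\alpha/2}\log n\log t/n$ with your numbers, not $t^\alpha\log n/n$. The extra factor $t^{1/(2\beta)}\asymp\sqrt{\mathcal N_1(\lambda)}$ cannot be absorbed by adjusting $\varepsilon$, since $\alpha=\frac1\beta+\varepsilon$ with $\varepsilon$ arbitrarily small.

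This is not an arithmetic accident. Once you bound $(\varphi_t(T_X)-\varphi_t(T))\hat g_\varepsilon$ in a global norm and pass to $\|\cdot\|_\infty$ by Cauchy--Schwarz against $T_\lambda^{-1/2}k_x$, you pay the full length of the noise vector. The same happens via $M_\alpha\|\cdot\|_{[\H]^\alpha}$, which loses the same factor. That vector is spread over $\asymp\mathcal N_1(\lambda)$ directions, whereas its component along any fixed direction is only $O(\sqrt{\log n/n})$. The loss also matters downstream. In $\Delta_2$ of Lemma~\ref{approximation 1}, the second-order term would become $t^\alpha/\sqrt n$ up to logarithms, forcing roughly $\theta<\frac{1}{2\alpha}\approx\frac{\beta}{2}$ instead of $\theta<\beta$.

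The missing idea is the paper's pointwise route: move the operator onto $k_x$ and concentrate over the noise for each fixed $x$, conditionally on $X$. By self-adjointness on $\H$, $\big((\varphi_t(T_X)-\varphi_t(T))\hat g_\varepsilon\big)(x)=\frac1n\sum_i\zeta_i(x)\varepsilon_i$ with $\zeta_i(x)=\big((\varphi_t(T_X)-\varphi_t(T))k_x\big)(x_i)$. This is a weighted sum of independent centered noises with $X$-measurable weights. Two bounds control the weights:
\begin{itemize}
\item Your contour-plus-sandwich argument, applied to $k_x$ rather than to $\hat g_\varepsilon$, gives $\frac1n\sum_i\zeta_i(x)^2=\|T_X^{1/2}(\varphi_t(T_X)-\varphi_t(T))k_x\|_\H^2\lesssim\frac{t^\alpha\log n}{n}\,t^\alpha\log^2 t$ (Lemma~\ref{empirical norm bound of zeta}).
\item The $[\H]^\alpha$ version gives $\sup_i|\zeta_i(x)|\lesssim\sqrt{t^\alpha\log n/n}\,t^\alpha\log t$ (Lemma~\ref{norm bound of zeta}).
\end{itemize}
Scalar Bernstein then gives $\sqrt{\log n/n}\cdot\sqrt{t^\alpha\log n/n}\,t^{\alpha/2}$ at each $x$, i.e.\ $t^\alpha/n$ up to logarithms. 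The $\varepsilon_0$-net with H\"older continuity and a union bound make this uniform. So the net argument you relegate to Step 3 is exactly where the noise piece must be handled.

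A smaller point on Step 2: the $\H$-sandwich $T_\lambda^{-1/2}(T-T_X)T_\lambda^{-1/2}$ needs $T_\lambda^{-1/2}f^*\in\H$. This fails for $s<1$, which is allowed since only $s>1/\beta$ is assumed. Lemma~\ref{1st term of bias 2} instead works entirely in $[\H]^\alpha$, using:
\begin{itemize}
\item $\|T_\lambda^{-1}(T-T_X)\|_{[\H]^\alpha}\lesssim\sqrt{t^\alpha\log n/n}$;
\item $\|T_\lambda^{-1}f^*\|_{[\H]^\alpha}\lesssim t^{1-\min\{(s-\alpha)/2,1\}}$;
\item $\oint_{\Gamma_t}|\psi_t(z)||dz|\lesssim 1/t$.
\end{itemize}
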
 

\proof 
The proof is analogous to the proof of the estimation for the variance term in section \ref{variance estimation subsection}. 

By the definitions of $\hat{f}_t$, $\tilde{f}_t$ and $f_t$ given in (\ref{f hat}) and (\ref{f tilde}), we rewrite the left-hand side of (\ref{second-order estimation}) into 
\begin{equation}\label{decomposition of second-order representation}
    \begin{aligned} 
        \hat{f}_t-f_t-\frac{1}{n}\sum_{i=1}^n\varphi_t(T)k_{x_i}(\cdot)\varepsilon_i&=(\tilde{f}_t-f_t)+\left(\hat{f}_t-\tilde{f}_t-\frac{1}{n}\sum_{i=1}^n\varphi_t(T)k_{x_i}(\cdot)\varepsilon_i\right)\\
        &=(\tilde{f}_t-f_t)+\frac{1}{n}\sum_{i=1}^n\zeta_i(\cdot)\varepsilon_i,
    \end{aligned} 
\end{equation} 
where 
\begin{equation}\zeta_i(x)=\varphi_t(T_X)k_{x_i}(x)-\varphi_t(T)k_{x_i}(x).\end{equation} 

For the first term in (\ref{decomposition of second-order representation}), we will prove in Lemma \ref{1st term of bias 2} that with probability $1-\O(n^{-p})$, 
\begin{equation}\label{second-order estimation 1} 
    \|\tilde{f}_t-f_t\|_{[\H]^\alpha}\leq C\sqrt{\frac{t^\alpha\log n}{n}}\cdot t^{-\frac{\min\{s-\alpha,2\}}{2}}
\end{equation}
for some constant $C>0$ depending only on $\varepsilon$, $R$ and $\kappa$ (and $\eta$ additionally for discrete kernel gradient flow); 

For the second term in (\ref{decomposition of second-order representation}), by Lemma \ref{norm bound of zeta}, for $n$ sufficiently great, with probability $1-\O(n^{-p})$, we have 
\begin{equation}\label{zeta uniform} 
    |\zeta_i(x)|\leq\|\zeta\|_\infty\leq M_\alpha\|\zeta\|_{[\H]^\alpha}\leq C\sqrt{\frac{t^\alpha\log n}{n}}t^\alpha\log t
\end{equation}
for some constant $C>0$ depending only on $\varepsilon$,  $\beta$ and $\kappa$ (and $\eta$ additionally for discrete kernel gradient flow); By Lemma \ref{empirical norm bound of zeta}, we have 
\begin{equation}\label{zeta empirical} 
    \begin{aligned} 
        \sqrt{\frac{1}{n}\sum_{i=1}^n\zeta_i(x)^2}&=\|(\varphi_t(T_X)-\varphi_t(T))k_x(\cdot)\|_{L^2,n}\\
        &=\|T_X^{\frac{1}{2}}(\varphi_t(T_X)-\varphi_t(T))k_x(\cdot)\|_\H\leq C\sqrt{\frac{t^\alpha\log n}{n}}t^\frac{\alpha}{2}\log t
    \end{aligned}
\end{equation}
for some constant $C>0$ depending only on $\varepsilon$,  $\beta$ and $\kappa$ (and $\eta$ additionally for discrete kernel gradient flow). 

Applying the Bernstein inequality (Theorem \ref{Bernstein ineq}) with $\tau=(10+\frac{(1+\beta)d}{h})\log n$, we obtain that for any fixed $x\in\X$, with probability at least $1-2n^{-(10+\frac{(1+\beta)d}{h})}$, we have 
\begin{equation}\left|\frac{1}{n}\sum_{i=1}^n\eta_i(x)\varepsilon_i\right|\leq C\frac{t^\alpha\log n}{n}.\end{equation} 
We choose an $\varepsilon_0$-net $\X_0$ of $\X$ satisfying (\ref{net}), that is, 
\begin{equation}\varepsilon_0=C'n^{-\frac{1+\beta}{h}},\quad|\X_0|\leq n^{\frac{d(1+\beta)}{n}}.\end{equation} 
Then for any $x\in\X$, there exists $x_0\in\X_0$ such that $|x-x_0|\leq\varepsilon$ and 
\begin{equation}\begin{aligned} 
    \left|\frac{1}{n}\sum_{i=1}^n(\eta_i(x)-\eta_i(x_0))\varepsilon_i\right|&\leq C\cdot\sup_{x\in\X}\|(\varphi_t(T_X)-\varphi_t(T))k_x(\cdot)\|_\H\cdot|x-x_0|^h\\
    &\leq C\sqrt{\frac{t^{\frac{1+\alpha}{2}}\log n}{n}} t^{\frac{1+\alpha}{2}}\log t\cdot n^{-(1+\beta)}\\
    &\leq C\frac{t^\alpha\log n}{n}, 
\end{aligned}\end{equation}
where we use Lemma \ref{difference estimation} in the first inequality. Thus, with probability at least $1-|\X_0|\cdot 2n^{-(10+\frac{(1+\beta)d}{h})}=1-\O(n^{-10})$, we have 
\begin{equation}\label{second-order estimation 2}
    \left\|\hat{f}_t(x)-\tilde{f}_t(x)-\frac{1}{n}\sum_{i=1}^n\varphi_t(T)k_{x_i}(x)\varepsilon_i\right\|_\infty\leq C\frac{t^\alpha\log n}{n},
\end{equation} 
Finally, combining (\ref{decomposition of second-order representation}), (\ref{second-order estimation 1}) and (\ref{second-order estimation 2}), we complete the proof of this theorem. 
\qed

\begin{lemma}\label{1st term of bias 2} 
    When $n$ is sufficiently great, with probability $1-\O(n^{-10})$, we have 
    \begin{equation}\|\tilde{f}_t-f_t\|_{[\H]^\alpha}\leq C\sqrt{\frac{t^\alpha\log n}{n}}\cdot t^{-\frac{\min\{s-\alpha,2\}}{2}}\end{equation} 
    for some constant $C>0$ depending only on $\varepsilon$, $R$ and $\kappa$ (and $\eta$ additionally for discrete kernel gradient flow). 
\end{lemma}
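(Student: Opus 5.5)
We start from the identity (\ref{decomposition of the 1st term of bias}), $\tilde f_t-f_t=\psi_t(T)f^*-\psi_t(T_X)f^*$. The key observation is that both terms are separately of order $t^{-\frac{s-\alpha}{2}}$, but their difference picks up an extra factor coming from $T_X-T$, namely $\sqrt{t^\alpha\log n/n}$. We therefore write $f^*=T^{\frac{s'}{2}}u$ with $s'=\min\{s-\alpha,2\}$, where $\|u\|_{[\H]^\alpha}\lesssim R$; this is possible because $s-s'\ge\alpha$. We then aim to express $\psi_t(T)-\psi_t(T_X)$ through a resolvent-type integral.

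The cleanest route uses the analytic extension of $\psi_t$ (Appendix \ref{analytic section}), via a Cauchy contour integral
\begin{equation*}
\psi_t(T)-\psi_t(T_X)=\frac{1}{2\pi i}\oint_\Gamma\psi_t(z)\big[(z-T)^{-1}-(z-T_X)^{-1}\big]dz
=\frac{1}{2\pi i}\oint_\Gamma\psi_t(z)(z-T_X)^{-1}(T-T_X)(z-T)^{-1}dz .
\end{equation*}
The contour $\Gamma$ encloses $[0,\kappa^2]$ and passes at distance $\asymp\lambda=1/t$ to the left of $0$. On $\Gamma$ we insert $T_\lambda^{\pm1/2}$ and $T_{X\lambda}^{\pm1/2}$ factors. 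The product then splits into four pieces, each controlled by an earlier result:
\begin{itemize}
\item $\|T_\lambda^{-\frac12}(T_X-T)T_\lambda^{-\frac12}\|\lesssim\sqrt{t^\alpha\log n/n}$ by Lemma \ref{operator quotient diff norm bound ver2};
\item $\|T_{X\lambda}^{-\frac12}T_\lambda^{\frac12}\|\le\sqrt3$ by Lemma \ref{operator quotient norm bound ver2};
\item $\|(z-T)^{-1}T_\lambda^{\frac12}T^{\frac{s'}{2}}\|$, bounded by spectral calculus;
\item $\|(z-T_X)^{-1}T_{X\lambda}^{\frac12}\|$, also bounded by spectral calculus.
\end{itemize}
Integrating $|\psi_t(z)|$ against the spectral bounds along $\Gamma$ should give a factor $t^{-\frac{s'}{2}}$, up to a possible $\log t$. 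A $\log t$ would be harmless after enlarging $\varepsilon$ slightly, or it can be absorbed as in Lemma \ref{norm bound of zeta}.

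If the contour route is too heavy, a fallback is available for the continuous flow: use Duhamel's formula,
\begin{equation*}
e^{-tT}-e^{-tT_X}=\int_0^t e^{-(t-\tau)T_X}(T_X-T)e^{-\tau T}d\tau .
\end{equation*}
The discrete flow has the analogous telescoping sum $\sum_k(1-\eta T_X)^{m-1-k}\eta(T_X-T)(1-\eta T)^k$. In either form, each factor is bounded with the same sandwiching by $T_\lambda^{\pm1/2}$ and $T_{X\lambda}^{\pm1/2}$.

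The main obstacle is the norm. The paper's sandwich estimates (Lemmas \ref{operator quotient diff norm bound ver2} and \ref{operator quotient norm bound ver2}) live in $\H$, whereas we need $[\H]^\alpha$. The plan is to bound the $[\H]^\alpha$ norm by writing $\|g\|_{[\H]^\alpha}=\|T^{\frac{1-\alpha}{2}}$-weighted$\ g\|_{\H}$, so that an additional factor $T^{-\frac{1-\alpha}{2}}$ (the inverse of the weight, converting $\H$-bounds to $[\H]^\alpha$-bounds) sits on the left, next to the $T_X$-functions. Commuting this factor past $(z-T_X)^{-1}$ costs a factor $\|T_\lambda^{-\frac{1-\alpha}{2}}T_{X\lambda}^{\frac{1-\alpha}{2}}\|$, which we bound by Cordes' inequality together with Lemma \ref{operator quotient norm bound ver2}. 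The same exponent is then offset by using $T^{\frac{s'}{2}}$ with $u\in[\H]^\alpha$, so the powers balance to exactly $t^{-\frac{s'}{2}}$.

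The cap $s'\le2$ reflects the qualification of the resolvent bound: $\sup_{r}\frac{r^{s'/2}(r+\lambda)^{1/2}}{|z-r|}$ is uniformly $\lesssim\lambda^{\frac{s'-1}{2}}$ only when $s'\le2$. Finally, a union bound over the high-probability events of Lemmas \ref{operator quotient diff norm bound ver2} and \ref{operator quotient norm bound ver2} with $p=10$ gives probability $1-\O(n^{-10})$.
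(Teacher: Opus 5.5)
Your setup is fine: the identity $\tilde f_t-f_t=\psi_t(T)f^*-\psi_t(T_X)f^*$, the resolvent formula, and the split $f^*=T^{s'/2}u$. The proof breaks at the step you yourself flag as the main obstacle. The claim that the powers ``balance to exactly $t^{-s'/2}$'' holds only for $1\le s\le 2$.

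First, the weight goes the other way. We have $\|g\|_{[\H]^\alpha}=\|T^{\frac{1-\alpha}{2}}g\|_\H$, so it is the \emph{positive} power that sits on the left. Moving it onto $T_X$ costs $\|T_\lambda^{\frac{1-\alpha}{2}}T_{X\lambda}^{-\frac{1-\alpha}{2}}\|_\H\le 3^{\frac{1-\alpha}{2}}$ (Cordes plus Lemma~\ref{operator quotient norm bound ver2}). It leaves the factor $T_{X\lambda}^{\frac{1-\alpha}{2}}(z-T_X)^{-1}T_{X\lambda}^{\frac12}$, whose norm is $\sup_{r\in\sigma(T_X)}(r+\lambda)^{1-\frac\alpha2}/|z-r|$. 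For $z\in\Gamma_t$ with $|z|\lesssim\lambda$ this is $\asymp\lambda^{-\alpha/2}$ (take $r=0\in\sigma(T_X)$), and that is exactly where $|\psi_t|$ carries its mass.

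On the $T$ side, write $u=T^{-\frac{1-\alpha}{2}}v$ with $\|v\|_\H=\|u\|_{[\H]^\alpha}$. You then need $\sup_{r\in\sigma(T)}(r+\lambda)^{\frac12}r^{\frac{s'-1+\alpha}{2}}/|z-r|$. Two cases fail.

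\begin{itemize}
\item If $\alpha_0<s<1$, then $s'-1+\alpha=s-1<0$ and this supremum is infinite as $r=\lambda_i\to0$. Equivalently, $(z-T)^{-1}f^*$ lies in $\H$ only if $f^*$ does. So Lemma~\ref{operator quotient diff norm bound ver2}, which is an operator bound on $\H$, cannot even be applied. Your plan contains no concentration estimate for $T-T_X$ acting on $[\H]^\alpha$-inputs to replace it, and the lemma must cover this range.
\item If $s>2$, so that $s'>2-\alpha$, the supremum is of order one (attained at $r=\lambda_1$). Integrating against $|\psi_t|$ then gives only $\sqrt{t^\alpha\log n/n}\,t^{-\frac{2-\alpha}{2}}$, short of the target by $t^{\frac{s'-2+\alpha}{2}}$.
\end{itemize}

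Only for $1\le s\le 2$ does the integrand reduce to $|\psi_t(z)|(|z|+\lambda)^{\frac{s'-2}{2}}$ and integrate to $t^{-s'/2}$. So in that range your argument, with the sign fixed, does prove the lemma.

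Your ``qualification'' bound is also wrong. The estimate $\sup_r r^{s'/2}(r+\lambda)^{1/2}/|z-r|\lesssim\lambda^{\frac{s'-1}{2}}$ holds only for $s'\le 1$. That multiplier also omits the factor $r^{-\frac{1-\alpha}{2}}$ coming from $u\in[\H]^\alpha$.

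The Duhamel/telescoping fallback uses the same symmetric $\H$-sandwich. Since $e^{-\tau T}$ does not damp small eigenvalues, it inherits both defects.

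The paper never changes norms. It works in $[\H]^\alpha$ with the one-sided factorization
\[
(T_X-z)^{-1}T_{X\lambda}\cdot T_{X\lambda}^{-1}T_\lambda\cdot T_\lambda^{-1}(T-T_X)\cdot(T-z)^{-1}T_\lambda\cdot T_\lambda^{-1}f^*\,\psi_t(z),
\]
and bounds the pieces as follows:
\begin{itemize}
\item $\|T_\lambda^{-1}(T-T_X)\|_{[\H]^\alpha}\lesssim\sqrt{t^\alpha\log n/n}$ by Lemma~\ref{operator quotient diff norm bound} with $\gamma=\alpha$;
\item $T_{X\lambda}^{-1}T_\lambda$ by Lemma~\ref{operator quotient norm bound};
\item the resolvent factors by Lemma~\ref{norm estimation along contour};
\item $\oint_{\Gamma_t}|\psi_t(z)|\,|dz|\lesssim 1/t$ by Lemma~\ref{integral estimation along contour 2}.
\end{itemize}
The source condition enters only through $\|T_\lambda^{-1}f^*\|_{[\H]^\alpha}\lesssim t^{1-\min\{\frac{s-\alpha}{2},1\}}$, which is valid for every $s>\alpha$. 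That is where the cap at $2$ comes from, and no factor $\lambda^{-\alpha/2}$ is ever paid.

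Your reluctance to work in $[\H]^\alpha$ is not baseless. Those $[\H]^\alpha$ bounds apply spectral calculus and a self-adjoint Bernstein inequality to $T_X$ and $T_\lambda^{-1}T_{x_i}$, which are not self-adjoint on $[\H]^\alpha$. But your $\H$-route, as set up, cannot reach $s<1$ and does not deliver the stated exponent for $s>2$.
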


\proof 
Recall that we have proved in Lemma \ref{1st term of bias} (see (\ref{decomposition of the 1st term of bias})) that 
\begin{equation}\tilde{f}_t-f_t=\psi_t(T)f^*-\psi_t(T_X)f^*,\end{equation} 
and by the integration formula (\ref{analytic integral forumla}) in Theorem \ref{analytic functional estimation}, we have 
\begin{equation}\begin{aligned} 
    &\tilde{f}_t-f_t\\
    =&\frac{1}{2\pi i}\oint_{\Gamma_{t}}(R_{T_X}(z)-R_T(z))f^*\cdot \psi_t(z)dz\\
    =&\frac{1}{2\pi i}\oint_{\Gamma_{t}}(T_X-z)^{-1}(T-T_X)(T-z)^{-1}f^*\cdot \psi_t(z)dz\\
    =&\frac{1}{2\pi i}\oint_{\Gamma_t}(T_X-z)^{-1}T_{X\lambda}\cdot T_{X\lambda}^{-1}T_\lambda\cdot T_\lambda^{-1}(T-T_X)\cdot (T-z)^{-1}T_\lambda\cdot T_\lambda^{-1}f^*\cdot \psi_t(z)dz.
\end{aligned}\end{equation} 
By Lemma \ref{fractal estimation}, it is easy to prove that 
\begin{equation}\label{operated regression function estimation} 
    \|T_\lambda^{-1}f^*(\cdot)\|_{[\H]^\alpha}\leq Ct^{-\min\{\frac{s-\alpha}{2},1\}+1}
\end{equation} 
for some constant $C>0$ depending only on $R$. Then, combining (\ref{operated regression function estimation}), Lemma \ref{operator quotient diff norm bound}, Lemma \ref{operator quotient norm bound}, Lemma \ref{norm estimation along contour} and Lemma \ref{integral estimation along contour 2}, for $n$ sufficiently great, with probability $1-\O(n^{-10})$, we have 
\begin{equation}\begin{aligned} 
    &\|\tilde{f}_t-f_t\|_{[\H]^\alpha}\\
    \leq&\frac{1}{2\pi}\|(T_X-z)^{-1}T_{X\lambda}\|_{[\H]^\alpha}\cdot\|T_{X\lambda}^{-1}T_\lambda\|_{[\H]^\alpha}\cdot\|T_\lambda^{-1}(T-T_X)\|_{[\H]^\alpha}\\
    &\cdot\|(T-z)^{-1}T_\lambda\|_{[\H]^\alpha}\cdot\|T_\lambda^{-1}f^*\|_{[\H]^\alpha}\cdot\oint_{\Gamma_{t}}|\psi_t(z)dz|\\
    \leq&\frac{1}{2\pi}\cdot C\cdot 3\cdot CM_\alpha\sqrt{\frac{t^\alpha\log n}{n}}\cdot C\cdot Ct^{-\min\{\frac{s-\alpha}{2},1\}+1}\cdot \frac{C}{t}\\
    \leq& C'\sqrt{\frac{t^\alpha\log n}{n}}\cdot t^{-\min\{\frac{s-\alpha}{2},1\}},
\end{aligned}\end{equation}
where the constant $C'>0$ depends only on $\varepsilon$, $R$, $\kappa$ (and $\eta$ additionally for discrete kernel gradient flow). 
\qed

\begin{lemma}\label{norm bound of zeta}
    For any $\gamma\in[\alpha,1]$, if $n$ is sufficiently large, with probability $1-\O(n^{-10})$, we have 
    \begin{equation}\|(\varphi_t(T)-\varphi_t(T_X))k_x(\cdot)\|_{[\H]^\gamma}\leq C\sqrt{\frac{t^{\frac{\gamma+\alpha}{2}}\log n}{n}} t^{\frac{\gamma+\alpha}{2}}\log t,\end{equation}
    where $C>0$ is a constant depending only on $\varepsilon$, $\beta$ and $\kappa$ (and $\eta$ additionally for discrete kernel gradient flow).  
\end{lemma}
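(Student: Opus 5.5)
We would express the difference of filter functions through the Cauchy integral formula of Theorem \ref{analytic functional estimation}, in the same way the proof of Lemma \ref{1st term of bias 2} handles $\psi_t(T)-\psi_t(T_X)$. Since $\varphi_t$ admits an analytic extension (for the discrete flow this uses Assumption \ref{learning rate assumption}), we write
\begin{equation*}
(\varphi_t(T)-\varphi_t(T_X))k_x=\frac{1}{2\pi i}\oint_{\Gamma_t}\bigl(R_T(z)-R_{T_X}(z)\bigr)k_x\,\varphi_t(z)\,dz .
\end{equation*}
Here $\Gamma_t$ is the contour enclosing the spectra of $T$ and $T_X$ (on $[0,\kappa^2]$), kept at distance of order $\lambda=1/t$ from the origin.

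We then use the second resolvent identity $R_T(z)-R_{T_X}(z)=(T_X-z)^{-1}(T_X-T)(T-z)^{-1}$ and insert the regularized operators $T_\lambda$ and $T_{X\lambda}$. This gives the factorization
\begin{equation*}
(T_X-z)^{-1}T_{X\lambda}\cdot T_{X\lambda}^{-1}T_\lambda\cdot T_\lambda^{-1}(T_X-T)\cdot (T-z)^{-1}T_\lambda\cdot T_\lambda^{-1}k_x .
\end{equation*}
Each factor is bounded in $\|\cdot\|_{[\H]^\gamma}$ as follows.
\begin{itemize}
\item The two resolvent-type factors $(T_X-z)^{-1}T_{X\lambda}$ and $(T-z)^{-1}T_\lambda$ are uniformly bounded on $\Gamma_t$ by Lemma \ref{norm estimation along contour}.
\item $\|T_{X\lambda}^{-1}T_\lambda\|\le 3$ by Lemma \ref{operator quotient norm bound}.
\item $\|T_\lambda^{-1}(T_X-T)\|_{[\H]^\gamma}\lesssim M_\alpha\sqrt{t^{(\gamma+\alpha)/2}\log n/n}$ by Lemma \ref{operator quotient diff norm bound}, with $p=10$.
\item $\|T_\lambda^{-1}k_x\|_{[\H]^\gamma}\le M_\alpha t^{(\gamma+\alpha)/2}$ by Lemma \ref{basis function norm bound}.
\end{itemize}
Intersecting these events still leaves probability $1-\O(n^{-10})$, and the bound holds for every $x$ simultaneously, because the random operator bounds do not depend on $x$.

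It remains to bound the contour integral $\oint_{\Gamma_t}|\varphi_t(z)||dz|$. This is where the $\log t$ comes from, and it is the step we expect to be the main obstacle. On the part of $\Gamma_t$ near the origin (radius $\asymp\lambda$), we have $|\varphi_t(z)|\lesssim t$ and the arc length is $\asymp 1/t$, so this part contributes $O(1)$. Along the straight portions, $|\varphi_t(z)|\lesssim 1/|z|$ for $|z|\gtrsim 1/t$. Integrating $|z|^{-1}$ from $1/t$ up to a constant of order $\kappa^2$ gives $\log t$. For the discrete flow, one must also check that $|(1-\eta z)^{t/\eta}|$ stays bounded on the contour; this is where $\eta<1/(2\kappa^2)$ enters, and the argument presumably parallels Lemma \ref{integral estimation along contour 2} with $\varphi_t$ in place of $\psi_t$. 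Putting the pieces together yields
\begin{equation*}
C\sqrt{t^{\frac{\gamma+\alpha}{2}}\log n/n}\;t^{\frac{\gamma+\alpha}{2}}\log t .
\end{equation*}
The delicate point is that the resolvent bounds must hold uniformly on the contour near $z\approx-\lambda$ while the estimate on $|\varphi_t|$ is applied simultaneously, so the contour geometry has to be chosen compatibly with Lemma \ref{norm estimation along contour}.
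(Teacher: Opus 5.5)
Your proposal is correct and is essentially the paper's proof. The paper uses the same Cauchy-integral representation over $\Gamma_t$ and the same second-resolvent factorization $(T_X-z)^{-1}T_{X\lambda}\cdot T_{X\lambda}^{-1}T_\lambda\cdot T_\lambda^{-1}(T-T_X)\cdot(T-z)^{-1}T_\lambda\cdot T_\lambda^{-1}k_x$, bounding each factor with Lemmas \ref{norm estimation along contour}, \ref{operator quotient norm bound}, \ref{operator quotient diff norm bound} and \ref{basis function norm bound}; the step you flag as the main obstacle, $\oint_{\Gamma_t}|\varphi_t(z)\,dz|\le C\log t$, is already Lemma \ref{integral estimation along contour}, proved by integrating a $1/|z+\lambda|$ bound along the slanted segments as you sketch.
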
 

\proof 
By applying the analytic functional integration formula (\ref{analytic integral forumla}) in Theorem \ref{analytic functional estimation} on $T$ and $T_X$, we have 
\begin{equation}\begin{aligned} 
    &(\varphi_t(T)-\varphi_t(T_X))k_x(\cdot)\\
    =&\frac{1}{2\pi i}\oint_{\Gamma_t}(R_{T_X}(z)-R_{T}(z))k_x(\cdot)\cdot\varphi_t(z)dz\\
    =&\frac{1}{2\pi i}\oint_{\Gamma_t}(T_X-z)^{-1}(T-T_X)(T-z)^{-1}k_x(\cdot)\cdot\varphi_t(z)dz\\
    =&\frac{1}{2\pi i}\oint_{\Gamma_t}(T_X-z)^{-1}T_{X\lambda}\cdot T_{X\lambda}^{-1}T_\lambda\cdot T_\lambda^{-1}(T-T_X)\cdot(T-z)^{-1}T_\lambda\cdot T_\lambda^{-1}k_x(\cdot)\cdot\varphi_t(z)dz.
\end{aligned}\end{equation} 
where the contour $\Gamma_t$ is defined in Definition \ref{contour definition}, and we recall that $T_\lambda=T+\lambda$, $T_{X\lambda}=T_X+\lambda$, $\lambda=\frac{1}{t}$. Thus, by Lemma \ref{basis function norm bound}, Lemma \ref{operator quotient diff norm bound}, Lemma \ref{operator quotient norm bound}, Lemma \ref{norm estimation along contour} and Lemma \ref{integral estimation along contour}, for $n$ sufficiently great, with probability $1-\O(n^{-10})$, we have 
\begin{equation}\begin{aligned} 
    &\|(\varphi_t(T)-\varphi_t(T_X))k_x(\cdot)\|_{[\H]^\gamma}\\
    \leq&\frac{1}{2\pi}\|(T_X-z)^{-1}T_{X\lambda}\|_{[\H]^\gamma}\cdot\|T_{X\lambda}^{-1}T_\lambda\|_{[\H]^\gamma}\cdot\|T_\lambda^{-1}(T-T_X)\|_{[\H]^\gamma}\\
    &\cdot\|(T-z)^{-1}T_\lambda\|_{[\H]^\gamma}\cdot\|T_\lambda^{-1}k_x(\cdot)\|_{[\H]^\gamma}\cdot\oint_{\Gamma_t}|\varphi_t(z)dz|\\
    \leq&\frac{1}{2\pi}\cdot C\cdot 3\cdot CM_\alpha\sqrt{\frac{t^{\frac{\gamma+\alpha}{2}}\log n}{n}}\cdot C\cdot M_\alpha t^{\frac{\gamma+\alpha}{2}}\cdot C\log t. 
\end{aligned}\end{equation}
\qed

\begin{lemma}\label{empirical norm bound of zeta} 
    If $n$ is sufficiently large, with probability $1-\O(n^{-10})$, we have 
    \begin{equation}\|T_X^{\frac{1}{2}}(\varphi_t(T)-\varphi_t(T_X)k_x(\cdot))\|_\H\leq C\sqrt{\frac{t^\alpha\log n}{n}}t^{\frac{\alpha}{2}}\log t,\end{equation}
    where the constant $C>0$ depends only on $\varepsilon$, $\beta$ and $\kappa$ (and $\eta$ additionally for discrete kernel gradient flow). 
\end{lemma}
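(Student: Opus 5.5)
We follow the contour-integral argument of Lemma \ref{norm bound of zeta}, working in $\H$ with the half-power normalisations of Lemma \ref{operated basis function empirical norm bound} rather than in $[\H]^\gamma$. Applying the analytic functional integration formula (\ref{analytic integral forumla}) of Theorem \ref{analytic functional estimation} to $T$ and $T_X$ and using the resolvent identity gives
\begin{equation*}
T_X^{\frac{1}{2}}(\varphi_t(T)-\varphi_t(T_X))k_x=\frac{1}{2\pi i}\oint_{\Gamma_t}T_X^{\frac{1}{2}}(T_X-z)^{-1}(T-T_X)(T-z)^{-1}k_x\,\varphi_t(z)\,dz .
\end{equation*}
We then factor the integrand symmetrically around $T_\lambda^{-\frac{1}{2}}(T-T_X)T_\lambda^{-\frac{1}{2}}$:
\begin{equation*}
T_X^{\frac{1}{2}}(T_X-z)^{-1}T_{X\lambda}^{\frac{1}{2}}\cdot T_{X\lambda}^{-\frac{1}{2}}T_\lambda^{\frac{1}{2}}\cdot T_\lambda^{-\frac{1}{2}}(T-T_X)T_\lambda^{-\frac{1}{2}}\cdot T_\lambda^{\frac{1}{2}}(T-z)^{-1}T_\lambda^{\frac{1}{2}}\cdot T_\lambda^{-\frac{1}{2}}k_x .
\end{equation*}

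Each factor is bounded by an earlier result, with all probabilistic statements holding on a single event of probability $1-\O(n^{-10})$ for $n$ large.
\begin{enumerate}[(i)]
\item The middle factor is at most $C M_\alpha\sqrt{t^\alpha\log n/n}$ by Lemma \ref{operator quotient diff norm bound ver2}.
\item $\|T_{X\lambda}^{-\frac{1}{2}}T_\lambda^{\frac{1}{2}}\|_\H\le\sqrt3$ by Lemma \ref{operator quotient norm bound ver2}.
\item $\|T_\lambda^{-\frac{1}{2}}k_x\|_\H\le M_\alpha t^{\frac{\alpha}{2}}$ by Lemma \ref{basis function norm bound}.
\item The two spectral factors reduce, by functional calculus for the self-adjoint operators $T_X$ and $T$, to scalar suprema over $r\ge0$ and $z\in\Gamma_t$. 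These are $\sup|r^{\frac{1}{2}}(r+\lambda)^{\frac{1}{2}}/(r-z)|\le\sup|(r+\lambda)/(r-z)|$ and $\sup|(r+\lambda)/(r-z)|$, and both are bounded by a constant via Lemma \ref{norm estimation along contour}.
\end{enumerate}
Finally, the contour integral contributes $\oint_{\Gamma_t}|\varphi_t(z)\,dz|\le C\log t$ by Lemma \ref{integral estimation along contour}. Multiplying these bounds gives
\begin{equation*}
C M_\alpha^2\sqrt{t^\alpha\log n/n}\;t^{\frac{\alpha}{2}}\log t ,
\end{equation*}
which is the claimed estimate, with constants depending on $\varepsilon$ through $M_\alpha$ and on $\kappa$ and $\eta$ for the discrete case.

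The main obstacle is step (iv), specifically the factor carrying $T_X^{\frac{1}{2}}$. Lemma \ref{norm estimation along contour} is stated for $(T_X-z)^{-1}T_{X\lambda}$ in $[\H]^\gamma$ norm, and here I need the $\H$-norm of $T_X^{\frac{1}{2}}(T_X-z)^{-1}T_{X\lambda}^{\frac{1}{2}}$. My plan is to bound it directly by the scalar estimate above, using $r^{\frac{1}{2}}\le(r+\lambda)^{\frac{1}{2}}$, so that it reduces to the same quantity $\sup_{r,z}|(r+\lambda)/(r-z)|$ that the contour $\Gamma_t$ (Definition \ref{contour definition}) is designed to control. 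If the lemma's proof only covers $r$ in the spectrum of $T$, I will note that the spectrum of $T_X$ also lies in $[0,\kappa^2]$ and is enclosed by $\Gamma_t$, and rerun the same scalar estimate there.
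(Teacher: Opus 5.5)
Your proof is correct and follows the paper's argument: the same contour representation via the resolvent identity, the same symmetric factorization around $T_\lambda^{-\frac{1}{2}}(T-T_X)T_\lambda^{-\frac{1}{2}}$, and the same earlier lemmas for each factor and for $\oint_{\Gamma_t}|\varphi_t(z)dz|$. The one difference is cosmetic and only changes the constant. You bound the pure $T_X$-factor $T_X^{\frac{1}{2}}(T_X-z)^{-1}T_{X\lambda}^{\frac{1}{2}}$ directly by the scalar estimate $D$. The paper instead splits it as $T_X^{\frac{1}{2}}T_{X\lambda}^{-\frac{1}{2}}\cdot T_{X\lambda}^{\frac{1}{2}}T_\lambda^{-\frac{1}{2}}\cdot T_\lambda^{\frac{1}{2}}(T_X-z)^{-1}T_\lambda^{\frac{1}{2}}$ and uses the $3D$ bound.
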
 

\proof 
Similarly with Lemma \ref{norm bound of zeta}, we apply Theorem \ref{analytic functional estimation} to $T$ and $T_X$ and obtain 
\begin{equation}\begin{aligned} 
    &T_X^{\frac{1}{2}}(\varphi_t(T)-\varphi_t(T_X)k_x(\cdot))\\
    =&T_X^{\frac{1}{2}}\frac{1}{2\pi i}\oint_{\Gamma_t}(R_{T_X}(z)-R_{T}(z))k_x(\cdot)\cdot\varphi_t(z)dz\\
    =&\frac{1}{2\pi i}\oint_{\Gamma_t}T_X^{\frac{1}{2}}(T_X-z)^{-1}(T-T_X)(T-z)^{-1}k_x(\cdot)\cdot\varphi_t(z)dz\\
    =&\frac{1}{2\pi i}\oint_{\Gamma_t}T_X^{\frac{1}{2}}T_{X\lambda}^{-\frac{1}{2}}\cdot T_{X\lambda}^{\frac{1}{2}}T_\lambda^{-\frac{1}{2}}\cdot T_\lambda^{\frac{1}{2}}(T_X-z)^{-1}T_\lambda^{\frac{1}{2}}\cdot T_\lambda^{-\frac{1}{2}}(T-T_X)T_\lambda^{-\frac{1}{2}}\\
    &\quad\quad\quad\cdot T_\lambda^{\frac{1}{2}}(T-z)^{-1}T_\lambda^{\frac{1}{2}}\cdot T_\lambda^{-\frac{1}{2}}k_x(\cdot)\varphi_t(z)dz,
\end{aligned}\end{equation}
hence by Lemma \ref{basis function norm bound}, Lemma \ref{operator quotient diff norm bound ver2}, Lemma \ref{operator quotient norm bound ver2}, Lemma \ref{norm estimation along contour} and Lemma \ref{integral estimation along contour}, we have 
\begin{equation}\begin{aligned}
    &\|T_X^{\frac{1}{2}}(\varphi_t(T)-\varphi_t(T_X)k_x(\cdot))\|_\H\\
    \leq&\frac{1}{2\pi}\|T_X^{\frac{1}{2}}T_{X\lambda}^{-\frac{1}{2}}\|_\H\cdot\|T_{X\lambda}^{\frac{1}{2}}T_\lambda^{-\frac{1}{2}}\|_\H\cdot\|T_\lambda^{\frac{1}{2}}(T_X-z)^{-1}T_\lambda^{\frac{1}{2}}\|_\H\cdot\|T_\lambda^{-\frac{1}{2}}(T-T_X)T_\lambda^{-\frac{1}{2}}\|_\H\\
    &\cdot\|T_\lambda^{\frac{1}{2}}(T-z)^{-1}T_\lambda^{\frac{1}{2}}\|_\H\cdot\|T_\lambda^{-\frac{1}{2}}k_x(\cdot)\|_\H\cdot\oint_{\Gamma_t}|\varphi_t(z)dz|\\
    \leq&\frac{1}{2\pi}\cdot 1\cdot\sqrt{3}\cdot C\cdot CM_\alpha\sqrt{\frac{t^\alpha\log n}{n}}\cdot C\cdot CM_\alpha t^{\frac{\alpha}{2}}\cdot C\log t.
\end{aligned}\end{equation}
\qed 

Likewise, we also have the following version of second-order estimation: 

\begin{theorem}\label{second-order estimation theorem 2}
    Suppose that Assumptions \ref{edr}, \ref{Holder assumption}, \ref{embedding index assumption}, \ref{source condition} and \ref{noise moment} hold. Let $g_i$, $i=1,\dots,n$ be independent standard Gaussian random variables. By choosing $t=n^\theta$ for $\theta\in(0,\beta)$, for any $\varepsilon>0$ sufficiently small such that $0<\varepsilon<\min\{s-\frac{1}{\beta},\frac{1}{\theta}-\frac{1}{\beta}\}$, when $n$ is sufficiently great, we have 
    \begin{equation}\left\|\frac{1}{n}\sum_{i=1}^n\zeta_i(\cdot)\varepsilon_i g_i\right\|_\infty\leq C\sqrt{\frac{t^\alpha\log n}{n}}\cdot t^{-\frac{\min\{s-\alpha,2\}}{2}}+C\frac{t^\alpha\log n}{n},\end{equation}
    with probability $1-\O(n^{-10})$, where 
    \begin{equation} 
        \zeta_i(\cdot)=\varphi_t(T_X)k_{x_i}(x)-\varphi_t(T)k_{x_i}(\cdot),
    \end{equation} 
    and the constant $C>0$ depends only on $\varepsilon$, $d$, $\kappa$, $\beta$, $s$, $R$, $\sigma$, $L$, $h$ and $L_k$ for continuous kernel gradient flow, and on $\eta$ additionally for discrete kernel gradient flow. 
\end{theorem}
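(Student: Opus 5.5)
We follow the proof of the second term of Theorem \ref{second-order estimation theorem}, with $\varepsilon_i$ replaced by the products $\varepsilon_i g_i$. Condition on the design $X=(x_1,\dots,x_n)$ and place ourselves on the event of probability $1-\O(n^{-10})$ where Lemmas \ref{operator quotient norm bound}, \ref{operator quotient norm bound ver2}, \ref{norm bound of zeta} and \ref{empirical norm bound of zeta} all hold. On this event the weights $\zeta_i(x)$ are deterministic given $X$. They satisfy the uniform bound (\ref{zeta uniform}),
\begin{equation*}
\sup_{i,x}|\zeta_i(x)|\leq C\sqrt{\frac{t^\alpha\log n}{n}}\,t^\alpha\log t,
\end{equation*}
and the empirical $L^2$ bound (\ref{zeta empirical}),
\begin{equation*}
\frac{1}{n}\sum_{i=1}^n\zeta_i(x)^2\leq C\frac{t^\alpha\log n}{n}\,t^\alpha\log^2 t .
\end{equation*}
The variables $\varepsilon_i g_i$ are independent, centered and conditionally independent of $X$. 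As in the appendix we may take $|\varepsilon_i|\leq D$, since the truncation argument of Lemmas \ref{2nd term of variance} and \ref{3rd term of variance} transfers verbatim. Because $g_i$ is Gaussian, $\varepsilon_ig_i$ is then sub-Gaussian with $\E|\varepsilon_ig_i|^m\leq D^m\E|g|^m\leq \frac{1}{2}m!(D^2)(2D)^{m-2}$ up to constants. So it satisfies a Bernstein moment condition of the type in Assumption \ref{noise moment}, and Lemma \ref{Bernstein ineq} applies to $\xi_i(x)=\frac{1}{n}\zeta_i(x)\varepsilon_ig_i$ at each fixed $x$.

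For fixed $x$ take $\tau=(10+\frac{(1+\beta)d}{h})\log n$. The variance proxy is $v=\frac{C}{n^2}\sum_i\zeta_i(x)^2$, and the scale is $c=\frac{C}{n}\sup_i|\zeta_i(x)|$. The Bernstein bound then gives
\begin{equation*}
\sqrt{2v\tau}+c\tau\lesssim \sqrt{\frac{t^\alpha\log n}{n}}\,t^{\frac{\alpha}{2}}\log t\,\sqrt{\frac{\log n}{n}}+\frac{\log n}{n}\sqrt{\frac{t^\alpha\log n}{n}}\,t^\alpha\log t .
\end{equation*}
The first term is $\frac{t^\alpha\log n}{n}\log t$. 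The second is smaller by a factor $\sqrt{t^\alpha\log n/n}\,\log n\to0$, because $\theta\alpha<1$. The stray $\log t\lesssim\log n$ factor is either absorbed by reallocating a tiny power of $\varepsilon$ in $\alpha$ or tolerated in the constant, exactly as in the proof of Theorem \ref{second-order estimation theorem}. This yields the pointwise bound $C\frac{t^\alpha\log n}{n}$ with conditional probability $1-2n^{-(10+(1+\beta)d/h)}$.

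To make the bound uniform in $x$, take the $\varepsilon_0$-net $\X_0$ from (\ref{net}) and union bound over $|\X_0|\leq n^{d(1+\beta)/h}$ points. For the net approximation we need control of $\sum_i|\varepsilon_ig_i|$. On an extra event of probability $1-\O(n^{-10})$ we have $\max_i|g_i|\leq C\sqrt{\log n}$, hence $\frac{1}{n}\sum_i|\varepsilon_ig_i|\leq CD\sqrt{\log n}$. Lemma \ref{difference estimation} together with the Hölder continuity bound then gives
\begin{equation*}
\Big|\frac{1}{n}\sum_i(\zeta_i(x)-\zeta_i(x_0))\varepsilon_ig_i\Big|\lesssim\sqrt{\log n}\,\sup_z\|(\varphi_t(T_X)-\varphi_t(T))k_z\|_\H\,\varepsilon_0^h .
\end{equation*}
Since $\varepsilon_0^h\asymp n^{-(1+\beta)}$, this is negligible against $\frac{t^\alpha\log n}{n}$. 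Finally, integrate out the conditioning on $X$ and add the failure probabilities.

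This already gives the second term of the claimed bound, so the first term $C\sqrt{t^\alpha\log n/n}\,t^{-\min\{s-\alpha,2\}/2}$ is not needed; it is kept only so that the statement mirrors Theorem \ref{second-order estimation theorem}. The main obstacle is handling the unbounded multiplier $g_i$ jointly with the random weights $\zeta_i$, which depend on all of $X$ including $x_i$. I would handle it by conditioning on $X$, so that the $\zeta_i$ are fixed while the $\varepsilon_ig_i$ remain independent and centered, and by using the Gaussian moment bound in place of boundedness. The net step needs only the crude $\sqrt{\log n}$ bound on $\max_i|g_i|$.
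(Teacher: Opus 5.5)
Your proposal is correct and follows the paper's route. The paper's proof only says to rerun the argument for the $\frac{1}{n}\sum_i\zeta_i\varepsilon_i$ term of Theorem~\ref{second-order estimation theorem} with $\varepsilon_i$ replaced by $\varepsilon_i g_i$; your conditioning on $X$, the Gaussian moment bound, and the $\max_i|g_i|\lesssim\sqrt{\log n}$ control in the net step are exactly the details that substitution needs, and you are right that the $\tilde{f}_t-f_t$ piece (hence the first summand of the bound) never enters. One small correction: the stray $\log t$ cannot be ``tolerated in the constant,'' but your other fix works, namely running the lemmas with some $\alpha'\in(1/\beta,\alpha)$ so that $t^{\alpha'}\log t\lesssim t^{\alpha}$.
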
 

The proof of this theorem follows exactly the same arguments as that of Theorem \ref{second-order estimation theorem}; the only difference is that, the proof of Theorem \ref{second-order estimation theorem} is based on the estimation for $\frac{1}{n}\sum_{i=1}^n\zeta_i(\cdot)\varepsilon_i$, and one needs to replace $\varepsilon_i$ with $\varepsilon_ig_i$ throughout the proof of Theorem \ref{second-order estimation theorem}. For brevity, the proof of Theorem \ref{second-order estimation theorem 2} is omitted.

\subsection{Estimations for Kolmogorov distances}\label{kolmogorov distance section} 

Before we begin the proof of Theorem \ref{main result}, we first recall some important quantities. 

Recall that the function $C_t(x,x')$ defined  in (\ref{kernel of GP}) is 
\begin{equation}C_t(x,x')=\sigma^2\cdot\E_{z\sim\mu}(\varphi_t(T)k_x(z)\cdot\varphi_t(T)k_{x'}(z))=\sigma^2\int_\X\varphi_t(T)k_x(z)\cdot\varphi_t(T)k_{x'}(z)d\mu(z),\end{equation} 
and an empirical estimation of $C_t(x,x)$ given by (\ref{empirical kernel of GP}) is: 
\begin{equation}\widehat{C}_{n,t}(x,x)=\frac{1}{n}\sum_{j=1}^n|\varphi_t(T_X)k_x(x_j)\hat{\varepsilon}_j|^2,\end{equation}
where $\hat\varepsilon_j=y_j-\hat{f}_t(x_j)$. It will be shown in Lemma \ref{C hat and C} that $\widehat{C}_{n,t}(x,x)$ is a good estimation for $C_t(x,x)$. 

Recall that $W_t(x)$ is a Gaussian process indexed by $x\in\X$ defined by 
\begin{equation}W_t(x)\sim\mathrm{GP}\left(0,\frac{C_t(x,x')}{\sqrt{C_t(x,x)C_t(x',x')}}\right),\quad Z_t=\|W_t(\cdot)\|_\infty,\end{equation} 
and $\widetilde{Z}_{n,t}$ is defined by 
\begin{equation}\widetilde{W}_{n,t}(x)=\frac{1}{\sqrt{C_t(x,x)}}\cdot\sqrt{n}(\hat{f}_t(x)-f_t(x)),\quad\widetilde{Z}_{n,t}=\|\widetilde{W}_{n,t}(\cdot)\|_\infty.\end{equation} 
Recall that the multiplier bootstrap variable $\widehat{Z}_{n,t}$ defined in (\ref{W hat definition}) is
\begin{equation}
    \widehat{W}_{n,t}(x)=\frac{1}{\sqrt{\widehat{C}_{n,t}(x,x)}}\cdot\frac{1}{\sqrt{n}}\sum_{j=1}^n\varphi_t(T_X)k_x(x_j)\hat{\varepsilon}_j g_j,\quad \widehat{Z}_{n,t}=\|\widehat{W}_{n,t}(\cdot)\|_\infty
\end{equation}
where $g=(g_1,\dots,g_n)^T\sim N(0,I_n)$ is a standard $n$-dimensional Gaussian random variable. 

The proof of Theorem \ref{main result} is divided into the following three estimations: 
\begin{equation}\sqrt{n}\left\|\frac{\hat{f}_t(x)-f^*(x)}{\sqrt{\widehat{C}(x,x)}}\right\|_\infty\,\mathop{\approx}\limits^1\,\widetilde{Z}_{n,t}\,\mathop{\approx}\limits^2\,Z_t\,\mathop{\approx}\limits^3\,\widehat{Z}_{n,t}|\D_n.\end{equation}

\begin{lemma}\label{approximation 1} 
    \textnormal{(Theorem \ref{gaussian approximation for Z tilde})} For $t=n^\theta$, $\theta\in(\frac{1}{s},\beta)$, we have 
    \begin{equation}\sup_{a\in\R}\left|\P(\widetilde{Z}_{n,t}\leq a)-\P(Z_t\leq a)\right|\leq c_1 n^{-c_2}\end{equation} 
    for some constants $c_1,c_2>0$. 
\end{lemma}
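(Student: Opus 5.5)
We plan to prove Lemma \ref{approximation 1} by the standard route for Gaussian approximation of suprema of empirical processes: a Bahadur-type linearization, Gaussian approximation of the linear term, and anti-concentration to absorb the remainder. By Theorem \ref{second-order estimation theorem}, with probability $1-\O(n^{-10})$,
\begin{equation*}
\widetilde{W}_{n,t}(x)=\frac{1}{\sqrt{n}}\sum_{i=1}^n\frac{\varphi_t(T)k_{x_i}(x)\varepsilon_i}{\sqrt{C_t(x,x)}}+\mathcal{R}_n(x),
\end{equation*}
where $\sqrt{n}\,C_t(x,x)^{-1/2}$ times the right-hand side of (\ref{second-order estimation}) bounds $\|\mathcal{R}_n\|_\infty$. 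Assumption \ref{lower bound of C} gives $C_t(x,x)\gtrsim t^{1/\beta}$, so
\begin{equation*}
\|\mathcal{R}_n\|_\infty\lesssim t^{(\alpha-1/\beta)/2}\sqrt{\log n}\,\Bigl(t^{-\min\{s-\alpha,2\}/2}+\sqrt{t^\alpha\log n/n}\Bigr).
\end{equation*}
Since $\theta<\beta$, we can choose $\varepsilon$ small enough that $t^\alpha/n\le n^{-c}$. The condition $\theta>1/s$ is not needed for this bound, because both terms decay once $\varepsilon$ is small; it matters later, for the bias in estimation 1. Hence $\|\mathcal{R}_n\|_\infty\le n^{-c}$ for some $c>0$.

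The key step is the Gaussian approximation of $\bar Z_n=\sup_x|\frac{1}{\sqrt n}\sum_i h_x(x_i,\varepsilon_i)|$, where $h_x(z,e)=\varphi_t(T)k_z(x)e/\sqrt{C_t(x,x)}$. Because $\E[\varepsilon^2|x]=\sigma^2$, this class has exactly the covariance $C_t(x,x')/\sqrt{C_t(x,x)C_t(x',x')}$ of $W_t$. We will apply the Chernozhukov--Chetverikov--Kato coupling for VC-type classes \citep{vc-type}, which gives
\begin{equation*}
\sup_a\bigl|\P(\bar Z_n\le a)-\P(Z_t\le a)\bigr|\lesssim\Bigl(\frac{b^2K_n^7}{n}\Bigr)^{1/8}+\cdots
\end{equation*}
The ingredients are as follows. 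The envelope $b\lesssim D\,t^{\alpha}/t^{1/(2\beta)}$ comes from Lemma \ref{operated basis function norm bound}, with the noise truncated as in the variance section. The variance is normalized to one. For the entropy, $K_n\lesssim\log n$, because the Hölder continuity of $x\mapsto\varphi_t(T)k_z(x)$ (Lemma \ref{difference estimation}), together with the polynomial-in-$t$ lower bound on $C_t$ and its Hölder continuity, makes the class Lipschitz in $x$ with polynomial constants. This yields polynomial covering numbers on the compact $\X\subset\R^d$, equivalently an $n^{-c'}$-net argument. Then $b^2K_n^7/n\lesssim t^{2\alpha-1/\beta}\log^7 n/n$, which is a negative power of $n$ provided $\theta(2\alpha-1/\beta)<1$. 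I expect this to be the main obstacle. It holds when $\theta<\beta$ only if $\varepsilon$ is small and the envelope is sharpened: we would use the $L^2$ bound $t^{\alpha/2}$ of Lemma \ref{operated basis function norm bound} with $\gamma=0$ in the moment terms, and the sup-envelope only inside the Bernstein term. If the crude envelope fails, we would instead discretize on the net and apply the high-dimensional CLT of \citet{anti-concentration_b} with $p=|\X_0|$ polynomial in $n$. That route needs only third and fourth moments, which are controlled by $t^{\alpha}/t^{1/\beta}$ factors, and it gives an $n^{-c}$ rate.

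Finally we combine the pieces. For any $a$ and $r_n=n^{-c}$, on the good event,
\begin{equation*}
\P(\widetilde Z_{n,t}\le a)\le\P(\bar Z_n\le a+r_n)+\O(n^{-10}),
\end{equation*}
and symmetrically in the other direction. The Gaussian approximation then replaces $\bar Z_n$ by $Z_t$. The anti-concentration inequality for suprema of Gaussian processes \citep{anti-concentration_a} gives $\P(|Z_t-a|\le r_n)\lesssim r_n\E Z_t$, and $\E Z_t\lesssim\sqrt{\log n}$ by Dudley's bound with the same polynomial covering numbers. Collecting the three errors gives $c_1n^{-c_2}$.
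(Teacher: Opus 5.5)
Your proposal is correct and follows the paper's proof essentially step for step. The paper linearizes $\widetilde Z_{n,t}$ via Theorem~\ref{second-order estimation theorem}, with the same remainder $\Delta_2$ after dividing by $C_t(x,x)\gtrsim t^{1/\beta}$. It then applies the Chernozhukov--Chetverikov--Kato approximation to the VC-type class of Lemma~\ref{vc lemma}, with envelope $b\asymp t^{\alpha-1/(2\beta)}$ and $K_n\lesssim\log n$, and absorbs both errors by anti-concentration together with Dudley's bound $\E Z_t\lesssim\sqrt{\log n}$.

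The obstacle you flag is not actually one. Since $\theta<\beta$,
$$\theta\bigl(2\alpha-\tfrac{1}{\beta}\bigr)=\tfrac{\theta}{\beta}+2\theta\varepsilon<1$$
for small $\varepsilon$, so the crude sup-envelope already suffices. This is exactly the paper's choice $\gamma=(nt^{1/\beta-2\alpha})^{-1/8}$, and neither envelope sharpening nor the high-dimensional-CLT fallback is needed.
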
 

\proof 
Define 
\begin{equation}\widetilde{W}_{n,t}^0(x)=\frac{1}{\sqrt{C_t(x,x)}}\cdot\frac{1}{\sqrt{n}}\sum_{i=1}^n\varphi_t(T)k_x(x_i)\cdot\varepsilon_i,\quad\widetilde{Z}_{n,t}^0=\|\widetilde{W}_{n,t}^0(\cdot)\|_\infty.\end{equation} 

Then the proof of this lemma is divided into the following two parts: we first establish an upper bound for the Kolmogorov distance between $\widehat{Z}_{n,t}^0$ and $Z_t$, and then we show that the second-order estimation in Theorem \ref{Bahadur representation} implies an upper bound for the distance between $\widetilde{Z}_{n,t}$ and $\widetilde{Z}_{n,t}^0$. 

By Lemma \ref{vc lemma}, the function class \begin{equation}\mathcal{F}=\left\{f_z(x,\varepsilon)=C_n(z,z)^{-\frac{1}{2}}\varphi_t(T)k_z(x)\cdot\varepsilon:\,z\in\X\right\}\end{equation} 
is a VC-type class with envelope $F(x,\varepsilon)=C_1 t^{-\frac{1}{2\beta}+\alpha}|\varepsilon|$, $A=C_2 t^{\frac{3\alpha+1}{2}-\frac{1}{\beta}}$ and $v=\frac{d}{h}$, where the constants $C_1,C_2>0$ depends only on $\varepsilon$, $d$, $\sigma$, $\beta$, $h$ and $L_k$ (and $\kappa, \eta$ additionally for discrete kernel gradient flow). (The definition of VC-type class is given in Definition \ref{vc definition}.) 

Denote $S=\X\times\R$. Recall that $\mu$ is the marginal distribution of $\rho$ on $\X$. Denote as $\nu(\varepsilon|x)$ the distribution of the noise $\varepsilon=y-f^*(x)$ conditioning on $x$. By Assumption \ref{noise moment}, for any $q\in[4,\infty)$, we have 
\begin{equation}\|F(x,\varepsilon)\|_{L^q(S)}\leq C_1 t^{-\frac{1}{2\beta}+\alpha}\cdot\left(\frac{1}{2}q!\sigma^2L^{q-2}\right)^{\frac{1}{q}}.\end{equation}
For any $f_z\in\mathcal{F}$ and $m=2,3$, by Assumption \ref{noise moment}, Assumption \ref{lower bound of C} and Lemma \ref{operated basis function norm bound}, we have 
\begin{equation}\begin{aligned} 
    &\int_S\left(\frac{\varphi_t(T)k_z(x)}{\sqrt{C_t(z,z)}}\varepsilon\right)^m d\nu(\varepsilon|x)d\mu(x)\\
    \leq&\frac{1}{2}m!\sigma^2L^{m-2}\cdot t^{-\frac{m}{2\beta}}\cdot\|\varphi_t(T)k_z(\cdot)\|_\infty^{m-2}\cdot\int_\X(\varphi_t(T)k_z(x))^2dx\\
    &\leq\frac{1}{2}m!\sigma^2L^{m-2}\cdot t^{-\frac{m}{2\beta}}\cdot(M_\alpha^2 t^{\alpha})^{m-2}\cdot M_\alpha^2 t^\alpha\\
    &=C\sigma^2\cdot(t^{-\frac{1}{2\beta}+\alpha})^{m-2}\cdot(t^{-\frac{1}{2\beta}+\frac{\alpha}{2}})^2,
\end{aligned}\end{equation}
where $C>0$ depends only on $\varepsilon$ and $L$ (and $\kappa$, $\eta$ additionally for discrete kernel gradient flow). 

Therefore, we can choose $b=C' t^{-\frac{1}{2\beta}+\alpha}$ with $C'$ great enough such that 
\begin{equation}\label{gaussian approximation condition verified} 
    \|F(x,\varepsilon)\|_{L^q(S)}\leq b,\quad\mbox{and}\quad\int_S\left(\frac{\varphi_t(T)k_z(x)}{\sqrt{C_t(z,z)}}\varepsilon\right)^m d\nu(\varepsilon|x)d\mu(x)\leq(t^{-\frac{1}{2\beta}+\frac{\alpha}{2}}\sigma)^2b^{m-2}.
\end{equation} 

Applying Corollary 2.2 of \citep{anti-concentration_b}, we obtain that for any $\gamma\in(0,1)$, $q\in[4,\infty)$, 
\begin{equation}\P\left(\left|\widetilde{Z}_{n,t}^0-Z_t\right|>\frac{bK_n}{\gamma^{\frac{1}{2}}n^{\frac{1}{2}-\frac{1}{q}}}+\frac{(b\tilde{\sigma})^\frac{1}{2}K_n^{\frac{3}{4}}}{\gamma^{\frac{1}{2}}n^{\frac{1}{4}}}+\frac{(b\tilde\sigma K_n^2)^{\frac{1}{3}}}{\gamma^{\frac{1}{3}}n^{\frac{1}{6}}}\right)\geq C\left(\gamma+\frac{\log n}{n}\right),\end{equation}
where $K_n=cv(\log n\vee\log(Ab/\tilde\sigma))$, $\tilde\sigma=t^{-\frac{1}{2\beta}+\frac{\alpha}{2}}\sigma$, and $c,C>0$ depends only on $q$, $\varepsilon$, $d$, $\beta$, $\sigma$, $L$, $h$ and $L_k$ (and $\kappa$, $\eta$ additionally for discrete kernel gradient flow).  

Taking $\gamma=(n t^{\frac{1}{\beta}-2\alpha})^{-\frac{1}{8}}$, then by computation, we obtain that 
\begin{equation}\frac{bK_n}{\gamma^{\frac{1}{2}}n^{\frac{1}{2}-\frac{1}{q}}}+\frac{(b\tilde{\sigma})^\frac{1}{2}K_n^{\frac{3}{4}}}{\gamma^{\frac{1}{2}}n^{\frac{1}{4}}}+\frac{(b\tilde\sigma K_n^2)^{\frac{1}{3}}}{\gamma^{\frac{1}{3}}n^{\frac{1}{6}}}\leq\Delta_1,\end{equation}
where
\begin{equation}\label{Delta 1}
    \Delta_1=C\frac{(\log n)^{\frac{2}{3}}}{(n t^{\frac{1}{\beta}-2\alpha})^\frac{1}{8}\cdot t^{\frac{1}{6}(\frac{1}{\beta}-\alpha)}}.
\end{equation} 
Thus, 
\begin{equation}\label{Z0 tilde and Z}
    \P\left(\left|\widetilde{Z}_{n,t}^0-Z_t\right|>\Delta_1\right)\leq r_1,
\end{equation} 
where 
\begin{equation}\label{r 1}
    r_1=C\left((n t^{\frac{1}{\beta}-2\alpha})^{-\frac{1}{8}}+\frac{\log n}{n}\right).
\end{equation} 
For any $a\in\R$, by (\ref{Z0 tilde and Z}), we have 
\begin{equation}\begin{aligned} 
    \P\left(\widetilde{Z}_{n,t}^0\leq a\right)&=\P\left(\widetilde{Z}_{n,t}^0\leq a,\,\left|\widetilde{Z}_{n,t}^0-Z_t\right|\leq\Delta_1\right)+\P\left(\widetilde{Z}_{n,t}^0\leq a,\,\left|\widetilde{Z}_{n,t}^0-Z_t\right|>\Delta_1\right)\\
    &\leq\P\left(Z_t\leq a+\Delta_1\right)+r_1\\
    &\leq\P\left(Z_t\leq a\right)+\P\left(\left|Z_t-a\right|<\Delta_1\right)+r_1. 
\end{aligned}\end{equation} 
By Lemma \ref{expectation of gaussian process} and Theorem \ref{anti-concentration inequality}, we have 
\begin{equation}\P\left(\left|Z_n-a\right|<\Delta_1\right)\leq C\Delta_1\sqrt{\log n},\end{equation} 
hence 
\begin{equation}\P\left(\widetilde{Z}_{n,t}^0\leq a\right)-\P\left(Z_t\leq a\right)\leq C\Delta_1\sqrt{\log n}+r_1.\end{equation} 
Similarly, we also obtain that 
\begin{equation}\P\left(Z_t\leq a\right)-\P\left(\widetilde{Z}_{n,t}^0\leq a\right)\leq C\Delta_1\sqrt{\log n}+r_1.\end{equation} 
Therefore, we have the following estimation for the Kolmogorov distance between $\widehat{Z}_n^0$ and $Z_n$: 
\begin{equation}\label{Kolmogorov distance between Z0 hat and Z}
    \sup_{a\in\R}\left|\P\left(\widetilde{Z}_{n,t}^0\leq a\right)-\P\left(Z_t\leq a\right)\right|\leq C\Delta_1\sqrt{\log n}+r_1.
\end{equation} 

By Assumption \ref{lower bound of C} and (\ref{second-order estimation}), we have 
\begin{equation}\P\left(\left|\widetilde{Z}_{n,t}-\widetilde{Z}_{n,t}^0\right|>\Delta_2\right)\leq\O(n^{-10}),\end{equation} 
where 
\begin{equation}\Delta_2=\sqrt{\frac{n}{t^{\frac{1}{\beta}}}}\cdot\left(C\sqrt{\frac{t^\alpha\log n}{n}}\cdot t^{-\frac{\min\{s-\alpha,2\}}{2}}+C\frac{t^\alpha\log n}{n}\right).\end{equation} 

Then, for any $a\in\R$, by (\ref{Kolmogorov distance between Z0 hat and Z}) we have 
\begin{equation}\label{estimation combination}
    \begin{aligned} 
        \P\left(\widetilde{Z}_{n,t}\leq a\right)&=\P\left(\widetilde{Z}_{n,t}\leq a,\,\left|\widetilde{Z}_{n,t}-\widetilde{Z}_{n,t}^0\right|\leq\Delta_2\right)+\P\left(\widetilde{Z}_{n,t}\leq a,\,\left|\widetilde{Z}_{n,t}-\widetilde{Z}_{n,t}^0\right|>\Delta_2\right)\\
        &\leq\P\left(\widetilde{Z}_{n,t}^0\leq a+\Delta_2\right)+\O(n^{-10})\\
        &\leq\P\left(\widetilde{Z}_{n,t}^0\leq a+\Delta_2\right)-\P\left(Z_t\leq a+\Delta_2\right)+\P\left(Z_t\leq a+\Delta_2\right)+\O(n^{-10})\\
        &\leq C\Delta_1\sqrt{\log n}+r_1+\P\left(Z_t\leq a+\Delta_2\right)+\O(n^{-10}).
    \end{aligned}
\end{equation}
By Lemma \ref{expectation of gaussian process} and Theorem \ref{anti-concentration inequality}, we have 
\begin{equation}\P\left(Z_t\leq a+\Delta_2\right)\leq\P(Z_t\leq a)+\P\left(\left|Z_t-a\right|\leq\Delta_2\right)\leq\P(Z_t\leq a)+C\Delta_2\sqrt{\log n}.\end{equation}
Thus, 
\begin{equation}\P\left(\widetilde{Z}_{n,t}\leq a\right)\leq\P(Z_n\leq a)+C(\Delta_1+\Delta_2)\sqrt{\log n}+r_1+\O(n^{-10}).\end{equation}
Similarly, we also obtain that 
\begin{equation}\P(Z_t\leq a)\leq\P\left(\widetilde{Z}_{n,t}\leq a\right)+C(\Delta_1+\Delta_2)\sqrt{\log n}+r_1+\O(n^{-10}).\end{equation}
Therefore, the Kolmogorov distance between $\widetilde{Z}_{n,t}$ and $Z_t$ is bounded by 
\begin{equation}\sup_{a\in\R}\left|\P\left(\widetilde{Z}_{n,t}\leq a\right)-\P(Z_t\leq a)\right|\leq C(\Delta_1+\Delta_2)\sqrt{\log n}+r_1+\O(n^{-10})\leq c_1n^{-c_2}\end{equation} 
for some $c_1,c_2>0$. 
\qed

\begin{lemma}\label{approximation 2} 
    \textnormal{(Theorem \ref{gaussian approximation of Z hat})} For $t=n^\theta$, $\theta\in(\frac{1}{s},\beta)$, with probability at least $1-p_n$, we have 
    \begin{equation}\sup_{a\in\R}\left|\P\left(\left.\widehat{Z}_{n,t}\leq a\right|\D_n\right)-\P(Z_t\leq a)\right|\leq q_n,\end{equation} 
    where $p_n=c_1n^{-c_2}$, $q_n=c_3n^{-c_4}$ for some $c_i>0$, $i=1,2,3,4$ depending only on $\theta$, $d$, $\beta$, $\kappa$, $s$, $R$, $h$, $L_k$, $\sigma$ and $L$ (and $\eta$ additionally for discrete kernel gradient flow). 
\end{lemma}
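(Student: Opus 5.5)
The plan is to compare, conditionally on $\D_n$, the Gaussian process $\widehat{W}_{n,t}$ with $W_t$ via a Gaussian comparison inequality. The comparison is based on the sup-distance between their covariance functions, and the anti-concentration inequality for $Z_t$ then upgrades it to a Kolmogorov-distance bound. Conditionally on $\D_n$, $\widehat{W}_{n,t}$ is a centered Gaussian process with unit variance and covariance
\begin{equation*}
\widehat{\Sigma}(x,x')=\frac{\frac1n\sum_{j}\varphi_t(T_X)k_x(x_j)\varphi_t(T_X)k_{x'}(x_j)\hat\varepsilon_j^2}{\sqrt{\widehat{C}_{n,t}(x,x)\widehat{C}_{n,t}(x',x')}},
\end{equation*}
while $W_t$ has covariance $\Sigma(x,x')=C_t(x,x')/\sqrt{C_t(x,x)C_t(x',x')}$. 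I would invoke the Gaussian comparison lemma of \citet{anti-concentration_c} (comparison of suprema over a finite net, combined with anti-concentration, Theorem \ref{anti-concentration inequality}). Restricting both processes to the $\varepsilon_0$-net $\X_0$ of (\ref{net}), whose size is polynomial in $n$, this gives
\begin{equation*}
\sup_a\left|\P(\widehat{Z}_{n,t}\le a\mid\D_n)-\P(Z_t\le a)\right|\lesssim \Delta^{1/3}(\log n)^{2/3}+\text{(discretization error)},
\end{equation*}
where $\Delta=\sup_{x,x'}|\widehat\Sigma-\Sigma|$. The discretization error is handled exactly as in Lemma \ref{1st term of variance}: Lemma \ref{difference estimation} gives Hölder continuity of $x\mapsto\varphi_t(T_X)k_x$, and $\varepsilon_0$ is polynomially small. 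The $\sqrt{\log n}$ anti-concentration factor comes from Lemma \ref{expectation of gaussian process}.

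The main work is to show $\Delta\le c n^{-c'}$ with probability $1-\O(n^{-c''})$. By Assumption \ref{lower bound of C}, the denominators satisfy $C_t(x,x)\gtrsim t^{1/\beta}$, so it suffices to prove $\sup_{x,x'}|\widehat{C}_{n,t}(x,x')-C_t(x,x')|\le t^{1/\beta}n^{-c}$, where $\widehat C_{n,t}(x,x')$ denotes the off-diagonal analogue of (\ref{empirical kernel of GP}). I would split this difference into four pieces.

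First, replace $\hat\varepsilon_j$ by $\varepsilon_j$. We have $\hat\varepsilon_j-\varepsilon_j=f^*(x_j)-\hat f_t(x_j)$, which Theorem \ref{upper bound} bounds uniformly by $t^{-(s-\alpha)/2}+\sqrt{t^\alpha\log n/n}$, a negative power of $n$ since $\theta<\beta$ and $\varepsilon$ is small. The resulting error is controlled by Cauchy–Schwarz with Lemma \ref{operated basis function empirical norm bound}, giving $t^\alpha n^{-c}$. This product must be below $t^{1/\beta}n^{-c}$; I expect this to be the delicate exponent bookkeeping, since $t^\alpha$ exceeds $t^{1/\beta}$ by the factor $t^{\varepsilon}$. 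I would first try to absorb $t^\varepsilon$ by choosing $\varepsilon$ small relative to the polynomial gain.

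Second, replace $\varphi_t(T_X)$ by $\varphi_t(T)$. Here Lemma \ref{norm bound of zeta} and Lemma \ref{empirical norm bound of zeta} give $\|\zeta\|$ of order $\sqrt{t^\alpha\log n/n}\,t^{\alpha/2}\log t$ in empirical norm, and a polynomial gain again follows because $t^\alpha/n\to0$.

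Third, replace $\varepsilon_j^2$ by $\sigma^2$. Since $\E[\varepsilon^2\mid x]=\sigma^2$, the summand $\varphi_t(T)k_x(x_j)\varphi_t(T)k_{x'}(x_j)(\varepsilon_j^2-\sigma^2)$ is centered with envelope $t^{\alpha}$. Bernstein's inequality (Lemma \ref{Bernstein ineq}) on the net, followed by a union bound, gives deviation $\sqrt{t^{2\alpha}\log n/n}$.

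Fourth, handle the empirical-versus-population mean $\frac1n\sum_j g_x g_{x'}(x_j)-\E_\mu g_xg_{x'}$, treated in the same way.

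All four errors are of order $t^{\alpha}\sqrt{\log n/n}$ or smaller, up to the bias-type term from the first piece. After division by $t^{1/\beta}$, polynomial decay requires $\theta(2\alpha-1/\beta)<1$, roughly. I would expect this to hold for $\theta<\beta$ only after shrinking $\varepsilon$, and I expect matching these exponents against the constraint $\theta<\beta$ to be the main obstacle. If the crude sup bound is too weak, I would fall back on the sharper variance proxy $\E g_x^2\lesssim t^{\alpha}$ inside Bernstein, so that the deviation becomes $\sqrt{t^{\alpha}\cdot t^{\alpha}\log n/n}$. Collecting the failure probabilities gives $p_n$, and plugging $\Delta$ into the comparison bound gives $q_n$.
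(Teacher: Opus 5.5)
Your proposal is correct, but it follows a genuinely different route from the paper.

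\textbf{The paper's route.} The paper never uses the conditional Gaussianity of $\widehat{W}_{n,t}$ directly. It introduces the intermediate multiplier process $\widehat{W}^0_{n,t}$, built from the true noise $\varepsilon_j$, the population operator $\varphi_t(T)$ and the population normalisation $C_t$. It then proceeds as follows:
\begin{enumerate}
\item It couples $\widehat{Z}^0_{n,t}$ with a copy $Z_t^e$ of $Z_t$, using the multiplier-bootstrap coupling theorem for VC-type classes together with Lemma \ref{vc lemma}.
\item It removes the centering term $\mathcal{Z}_{n,t}$ and passes to a conditional statement via Markov's inequality.
\item It controls $|\widehat{Z}_{n,t}-\widehat{Z}^0_{n,t}|$ in sup-norm through the second-order estimate for the multiplier process (Theorem \ref{second-order estimation theorem 2}, Lemma \ref{epsilon hat and epsilon}) and Lemma \ref{C hat and C}.
\item It closes with anti-concentration.
\end{enumerate}

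\textbf{Your route.} You observe that, given $\D_n$, $\widehat{W}_{n,t}$ is exactly a centered unit-variance Gaussian process, so only its covariance matters. The problem then reduces to three pieces: entrywise covariance consistency on a polynomial-size net, a finite-dimensional Gaussian comparison inequality, and a discretization step.

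\textbf{What your route buys.} It removes the VC-type coupling, the centering term, and any $g_j$-dependent sup-norm perturbation bound for the multiplier process. The covariance estimates are only needed on $\X_0\times\X_0$, so a plain union bound suffices. Your argument also never uses $\theta>1/s$: in your first piece, $t^{\varepsilon}\|\hat f_t-f^*\|_\infty$ is polynomially small for every $\theta\in(0,\beta)$ once $\varepsilon$ is small.

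\textbf{What it costs.}
\begin{itemize}
\item You need an off-diagonal, \emph{polynomial-rate} version of Lemma \ref{C hat and C}, i.e.\ relative error $n^{-c}$ rather than $o(1)$. The paper's $\Delta_4$ tacitly needs this as well, since it contains a factor $o(1)\cdot t^{\varepsilon/2}\sqrt{\log n}$.
\item The $\Delta^{1/3}$ comparison rate only affects the unspecified exponent in $q_n$.
\item Discretizing $W_t$ itself is not done via Lemma \ref{difference estimation}. It requires a localized Dudley bound built on the canonical-metric estimate from the proof of Lemma \ref{expectation of gaussian process}.
\end{itemize}

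\textbf{A bookkeeping correction for your third and fourth pieces.} The summand $\varphi_t(T)k_x(x_j)\,\varphi_t(T)k_{x'}(x_j)$ has envelope of order $t^{2\alpha}$, not $t^{\alpha}$, because each factor is $\lesssim M_\alpha^2t^{\alpha}$ by Lemma \ref{operated basis function norm bound}. The correct variance proxy is $\|\varphi_t(T)k_{x'}\|_\infty^2\,\E_\mu(\varphi_t(T)k_x)^2\lesssim t^{3\alpha}$. Bernstein then gives deviation $\sqrt{t^{3\alpha}\log n/n}+t^{2\alpha}\log n/n$. After dividing by $t^{1/\beta}$ this is still $n^{-c}$ for small $\varepsilon$, because $\theta/\beta<1$.

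The crude variance bound $t^{4\alpha}$ would fail once $\theta\ge\beta/2$. So the refined variance proxy you list as a fallback is actually required. With that fix, all four pieces go through as you describe, the $t^{\varepsilon}$ factors being absorbed by taking $\varepsilon$ small.
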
 

\proof 
We first define a random variable $Z_t^e$ which is identically distributed with $Z_t$ conditioning on the data $\D_n$: 
\begin{equation}Z_t^e|\D_n\mathop{=}\limits^{d}Z_t.\end{equation} 
We further define 
\begin{equation}\widehat{W}_{n,t}^0(x)=\frac{1}{\sqrt{C_t(x,x)}}\cdot\frac{1}{\sqrt{n}}\sum_{j=1}^n\varphi_t(T)k_x(x_j)\varepsilon_j g_j,\quad \widehat{Z}_{n,t}^0=\|\widehat{W}_{n,t}^0\|_\infty.\end{equation} 
The proof of this lemma is based on the following two estimations: 
\begin{equation}\widehat{Z}_{n,t}|\D_n\,\mathop{\approx}\limits^{(A)}\,\widehat{Z}_{n,t}^0|\D_n\,\mathop{\approx}\limits^{(B)}\,Z_{t}^e|\D_n\,=\,Z_t.\end{equation}

\textit{Estimation (B).} By Theorem 2.2 in \cite{anti-concentration_c}, and combining with Lemma \ref{vc lemma}, for any $\gamma\in(0,1)$, we have 
\begin{equation}\P\left(\left|\|\widehat{W}_{n,t}^0-\mathcal{Z}_{n,t}\|_\infty-Z_t^e\right|>\frac{bK_n}{\gamma^{1+\frac{1}{q}}n^{\frac{1}{2}-\frac{1}{q}}}+\frac{(b\tilde{\sigma})^{\frac{1}{2}}K_n^{\frac{3}{4}}}{\gamma^{1+\frac{1}{q}}n^{\frac{1}{4}}}\right)<C(\gamma+\frac{1}{n}),\end{equation}
where $K_n=cv(\log n\vee\log(Ab/\tilde\sigma))$, $\tilde\sigma=t^{-\frac{1}{2\beta}+\frac{\alpha}{2}}\sigma$, and 
\begin{equation} 
    \mathcal{Z}_{n,t}=\frac{1}{n^{3/2}}\sum_{j,k=1}^n\varphi_t(T)k_x(X_j)\varepsilon_jg_k=\frac{1}{\sqrt{n}}\sum_{k=1}^n g_k\cdot\frac{1}{n}\sum_{j=1}^n \varphi_t(T)k_x(X_j)\varepsilon_j.
\end{equation}
By the tail approximation of Gaussian distribution, we obtain that 
\begin{equation} 
    \P\left(\left|\frac{1}{\sqrt{n}}\sum_{k=1}^n g_k\right|\geq M\right)=\P\left(|g_1|\geq{M}\right)\leq \frac{2e^{-M^2/2}}{\sqrt{2\pi} M}\leq n^{-3/2}, 
\end{equation} 
where $M=\sqrt{3\log n}$. Moreover, similarly with Theorem \ref{variance estimation}, we can prove that
\begin{equation}\label{similar with variance estimation}
    \P\left(\sup_{x\in\X}\left|\frac{1}{n}\sum_{j=1}^n \varphi_t(T)k_x(X_j)\varepsilon_j\right|>\sqrt{\frac{\log n}{n}}t^{\frac{\alpha}{2}}\right)=\O(n^{-10}).
\end{equation} 
(Recall that Theorem \ref{variance estimation} provides an estimate $\frac{1}{n}\sum_{j=1}^n \varphi_t(T_X)k_x(X_j)\varepsilon_j$, and the proof of (\ref{similar with variance estimation}) follows directly from that of Theorem \ref{variance estimation} with $T_X$ replaced by $T$). Then we have 
\begin{equation} 
    \P\left(\|\mathcal{Z}_{n,t}\|_\infty>\frac{\sqrt{3}\log n}{\sqrt{n}}t^{\frac{\alpha}{2}}\right)=o(n^{-3/2})
\end{equation} 
Therefore, 
\begin{equation}
    \P\left(\left|\widehat{Z}_{n,t}^0-Z_t^e\right|>\frac{bK_n}{\gamma^{1+\frac{1}{q}}n^{\frac{1}{2}-\frac{1}{q}}}+\frac{(b\tilde{\sigma})^{\frac{1}{2}}K_n^{\frac{3}{4}}}{\gamma^{1+\frac{1}{q}}n^{\frac{1}{4}}}+\frac{\sqrt{3}\log n}{\sqrt{n}}t^{\frac{\alpha}{2}}\right)<C(\gamma+\frac{1}{n}).
\end{equation}

By taking $\gamma=(nt^{\frac{1}{\beta}-2\alpha})^{-\frac{1}{8}}$ and $q$ great enough, we have 
\begin{equation}\frac{bK_n}{\gamma^{1+\frac{1}{q}}n^{\frac{1}{2}-\frac{1}{q}}}+\frac{(\log n)^{\frac{3}{4}}}{(n t^{\frac{1}{\beta}-2\alpha})^{\frac{1}{8}-\frac{1}{8q}}t^{\frac{1}{4}(\frac{1}{\beta}-\alpha)}}\leq C\frac{\log n}{(n t^{\frac{1}{\beta}-2\alpha})^{\frac{1}{10}}\cdot t^{\frac{1}{4}(\frac{1}{\beta}-\alpha)}},\end{equation}
hence we have 
\begin{equation}\P\left(\left|\widehat{Z}_{n,t}^0-Z_t^e\right|\geq\Delta_3\right)<r_2,\end{equation} 
where 
\begin{equation}\Delta_3=C\frac{\log n}{(n t^{\frac{1}{\beta}-2\alpha})^{\frac{1}{10}}\cdot t^{\frac{1}{4}(\frac{1}{\beta}-\alpha)}}+\frac{\sqrt{3}\log n}{\sqrt{n}}t^{\frac{\alpha}{2}},\quad r_2=C((nt^{\frac{1}{\beta}-2\alpha})^{-\frac{1}{8}}+\frac{1}{n}).\end{equation} 
By Markov's inequality, for any $\eta\in(0,1)$, with probability at least $1-\eta$ (the randomness comes from $\D_n$), we have 
\begin{equation}\P\left(\left.\left|\widehat{Z}_n^0-Z_n^e\right|\geq\Delta_3\right|\D_n\right)<\frac{r_2}{\eta},\end{equation} 
and then, using the same discussion in the proof of Lemma \ref{approximation 1}, by Lemma \ref{expectation of gaussian process} and Theorem \ref{anti-concentration inequality}, we have 
\begin{equation}\label{estimation A} 
    \sup_{a\in\R}\left|\P\left(\left.\widehat{Z}_{n,t}^0\leq a\right|\D_n\right)-\P\left(\left.Z_t^e\leq a\right|\D_n\right)\right|\leq\frac{r_2}{\eta}+C\Delta_3\sqrt{\log n},
\end{equation}
where we can choose $\eta$ properly and $\alpha=\frac{1}{\beta}+\varepsilon$ sufficiently close to $\frac{1}{\beta}$ (depending on $\theta$) such that 
\begin{equation}\eta\leq c_1n^{-c_2},\quad\frac{r_2}{\eta}+C\Delta_3\sqrt{\log n}\leq c_3n^{-c^4}\end{equation}
for some $c_1,c_2,c_3,c_4>0$. 

\textit{Estimation (A).} By Assumption \ref{lower bound of C} and Lemma \ref{C hat and C}, with probability $1-\O(n^{-10})$, we have $\widehat{C}_{n,t}(x,x)^{-\frac{1}{2}}\leq Ct^{-\frac{1}{2\beta}}$. Then, By Assumption \ref{lower bound of C}, Lemma \ref{variance estimation}, Lemma \ref{epsilon hat and epsilon} and Lemma \ref{C hat and C}, with probability $1-\O(n^{-10})$, we have 
\begin{equation}\begin{aligned} 
    \left|\widehat{Z}_{n,t}-\widehat{Z}_{n,t}^0\right|&\leq\sqrt{n}\left\|\frac{\sum_{i=1}^n\varphi_t(T_X)k_x(x_i)\varepsilon_i g_i}{n\widehat{C}_n(x,x)^{\frac{1}{2}}}-\frac{\sum_{i=1}^n\varphi_t(T)k_x(x_i)\varepsilon_i g_i}{nC_n(x,x)^{\frac{1}{2}}}\right\|_\infty\\
    &\leq\sqrt{n}\left\|\frac{\sum_{i=1}^n\varphi_t(T_X)k_x(x_i)\varepsilon_i g_i-\sum_{i=1}^n\varphi_t(T)k_x(x_i)\varepsilon_i g_i}{n\widehat{C}_n(x,x)^{\frac{1}{2}}}\right\|_\infty\\
    &\quad +\sqrt{n}\left\|\frac{1}{n}\sum_{i=1}^n\varphi_t(T)k_x(x_i)\varepsilon_i g_i\cdot\frac{C_n(x,x)-\widehat{C}_n(x,x)}{C_n(x,x)^{\frac{1}{2}}\widehat{C}_n(x,x)^{\frac{1}{2}}(C_n(x,x)^{\frac{1}{2}}+\widehat{C}_n(x,x)^{\frac{1}{2}})}\right\|_\infty\\
    &\leq\sqrt{n}\cdot(Ct^{-\frac{\min\{s-\alpha,2\}}{2}}\sqrt{\frac{t^\alpha\log n}{n}}+C\frac{t^\alpha\log n}{n})\cdot Ct^{-\frac{1}{2\beta}}\\
    &\quad+\sqrt{n}\cdot\sqrt{\frac{t^\alpha\log n}{n}}\cdot o(t^{\frac{1}{\beta}})\cdot Ct^{-\frac{3}{2\beta}}\\
    &=:\Delta_4. 
\end{aligned}\end{equation} 
In other words, 
\begin{equation}\P\left(\left|\widehat{Z}_n-\widehat{Z}_n^0\right|>\Delta_4\right)<\O(n^{-10}).\end{equation} 
By Markov's inequality, for any $\eta\in(0,1)$, with probability at least $1-\O(n^{-5})$, we have 
\begin{equation}\label{estimation B}
    \P\left(\left.\left|\widehat{Z}_n-\widehat{Z}_n^0\right|>\Delta_4\right|\D_n\right)<\O(n^{-5}).
\end{equation}

~ 

Now we have two estimations (\ref{estimation A}) and (\ref{estimation B}). Then, using the same argument as in (\ref{estimation combination}), by Lemma \ref{expectation of gaussian process} and Theorem \ref{anti-concentration inequality}, with probability $1-p_n$, we have 
\begin{equation}\sup_{a\in\R}\left|\P\left(\left.\widehat{Z}_{n,t}\leq a\right|\D_n\right)-\P\left(Z_t\leq a\right)\right|=\sup_{a\in\R}\left|\P\left(\left.\widehat{Z}_{n,t}\leq a\right|\D_n\right)-\P\left(\left.Z_t^e\leq a\right|\D_n\right)\right|\leq q_n\end{equation} 
for some $p_n=c_5n^{-c_6}$, $q_n=c_7n^{-c^8}$. 
\qed 

~

\textbf{Final proof of Theorem \ref{main result}}. By Lemma \ref{2nd term of bias}, we have 
\begin{equation}\|f_t-f^*\|_\infty\leq Ct^{-\frac{s-\alpha}{2}},\end{equation} 
and recall that by Assumption \ref{lower bound of C} and Lemma \ref{C hat and C}, we have $\widehat{C}_{n,t}(x,x)^{-\frac{1}{2}}\leq Ct^{-\frac{1}{2\beta}}$. Then we obtain that 
\begin{equation}\begin{aligned} 
    \P\left(\sqrt{n}\cdot\left\|\frac{\hat{f}_t(x)-f^*(x)}{\widehat{C}_{n,t}(x,x)^{\frac{1}{2}}}\right\|_\infty\leq a\right)&=\P\left(\sqrt{n}\cdot\left\|\frac{\hat{f}_t(x)-f_t(x)+f_t(x)-f^*(x)}{\widehat{C}_{n,t}(x,x)^{\frac{1}{2}}}\right\|_\infty\leq a\right)\\
    &\leq\P\left(\widetilde{Z}_{n,t}\leq a+C\sqrt{n t^{-\frac{1}{\beta}}}t^{\frac{s-\alpha}{2}}\right). 
\end{aligned}\end{equation}
Thus, 
\begin{equation}\label{decomposition into P1 and P2}
    \P\left(\sqrt{n}\cdot\left\|\frac{\hat{f}_t(x)-f^*(x)}{\widehat{C}_{n,t}(x,x)^{\frac{1}{2}}}\right\|_\infty\leq a\right)-\P\left(\left.\widehat{Z}_{n,t}\leq a\right|\D_n\right)\leq P_1(a)+P_2(a),
\end{equation} 
where 
\begin{equation}P_1(a)=\P\left(\widetilde{Z}_{n,t}\leq a+C\sqrt{n t^{-\frac{1}{\beta}}}t^{\frac{s-\alpha}{2}}\right)-\P(Z_t\leq a),\end{equation} 
\begin{equation}P_2(a)=\P(Z_t\leq a)-\P\left(\left.\widehat{Z}_{n,t}\leq a\right|\D_n\right).\end{equation} 
By Lemma \ref{approximation 1} and Lemma \ref{approximation 2}, with probability at least $1-p_n$, we have 
\begin{equation}\label{P1}
    \sup_{a\in\R}P_2(a)\leq q_n,
\end{equation}
where $p_n=c_1n^{-c_2}$, $q_n=c_3n^{-c_4}$ for some $c_i>0$ ,$i=1,2,3,4$. 

In order to estimate $P_1(t)$, we first note that 
\begin{equation}\begin{aligned} 
    P_1(t)&=\P\left(\widetilde{Z}_{n,t}\leq a+C\sqrt{n t^{-\frac{1}{\beta}}}t^{\frac{s-\alpha}{2}}\right)-\P\left(Z_t\leq a+C\sqrt{n t^{-\frac{1}{\beta}}}t^{\frac{s-\alpha}{2}}\right)\\
    &\quad+\P\left(Z_t\leq a+C\sqrt{n t^{-\frac{1}{\beta}}}t^{\frac{s-\alpha}{2}}\right)-\P(Z_t\leq a)\\
    &\leq r_n+\P\left(a\leq Z_t\leq a+C\sqrt{n t^{-\frac{1}{\beta}}}t^{\frac{s-\alpha}{2}}\right)\\
    &\leq r_n+\P\left(|Z_t-a|\leq C\sqrt{n t^{-\frac{1}{\beta}}}t^{\frac{s-\alpha}{2}}\right)\\
    &\leq r_n+C\sqrt{n t^{-\frac{1}{\beta}}}t^{\frac{s-\alpha}{2}}\sqrt{\log n},
\end{aligned}\end{equation}
where $r_n=c_5n^{-c_6}$ for some $c_5,c_6>0$. Here, we use Lemma \ref{approximation 1} in the first inequality, and use Lemma \ref{expectation of gaussian process} and Theorem \ref{anti-concentration inequality} in the second inequality. Thus, there exists some $c_7,c_8>0$ such that 
\begin{equation}\label{P2}
    \sup_{a\in\R}P_1(a)\leq c_7n^{-c_8}.
\end{equation} 
Combining (\ref{decomposition into P1 and P2}) with (\ref{P1}) and (\ref{P2}), we obtain that with probability at least $1-c'_1n^{-c'_2}$, 
\begin{equation}\sup_{a\in\R}\left(\P\left(\sqrt{n}\cdot\left\|\frac{\hat{f}_t(x)-f^*(x)}{\widehat{C}_{n,t}(x,x)^{\frac{1}{2}}}\right\|_\infty\leq a\right)-\P\left(\left.\widehat{Z}_{n,t}\leq a\right|\D_n\right)\right)\leq c'_1n^{-c'_2}\end{equation} 
for some $c'_i>0$, $i=1,2,3,4$. 

The proof for the lower bound is similar. In conclusion, with probability at least $1-c'_1n^{-c'_2}$, we have 
\begin{equation}\sup_{a\in\R}\left(\P\left|\sqrt{n}\cdot\left\|\frac{\hat{f}_t(x)-f^*(x)}{\widehat{C}_n(x,x)^{\frac{1}{2}}}\right\|_\infty\leq a\right|-\P\left(\left.\widehat{Z}_n\leq a\right|\D_n\right)\right)\leq c'_1n^{-c'_2},\end{equation} 
which completes the proof. 
\qed

\subsection{Auxiliary Lemmata} 

\begin{lemma}\label{Holder coefficient of C} 
    For any $\alpha=\alpha_0+\varepsilon\in(\alpha_0,1]$ and $x_1,x_2\in\X$, we have 
    \begin{equation}
        |C_t(x_1,x_1)-C_t(x_2,x_2)|\leq C t^{\frac{1+3\alpha}{2}}\cdot|x_1-x_2|^h
    \end{equation} 
    and 
    \begin{equation} 
        |C_t(x_1,x_1)-C_t(x_1,x_2)|\leq C t^{\frac{1+3\alpha}{2}}\cdot|x_1-x_2|^h
    \end{equation} 
    for some constant $C>0$ depending only on $\sigma$, $\varepsilon$ and $L_k$. 
\end{lemma}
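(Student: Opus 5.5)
The two claimed H\"older bounds both reduce to controlling the modulus of continuity of the map $x\mapsto\varphi_t(T)k_x$ in an $L^2(\mu)$-type norm, combined with the uniform size bound from Lemma \ref{operated basis function norm bound}. Write $u_x=\varphi_t(T)k_x$. Then $C_t(x,x')=\sigma^2\langle u_x,u_{x'}\rangle_{L^2(\mu)}$. For the second inequality we use Cauchy--Schwarz:
\begin{equation*}
|C_t(x_1,x_1)-C_t(x_1,x_2)|=\sigma^2|\langle u_{x_1},u_{x_1}-u_{x_2}\rangle_{L^2}|\leq\sigma^2\|u_{x_1}\|_{L^2}\,\|u_{x_1}-u_{x_2}\|_{L^2}.
\end{equation*}
For the first inequality we use the polarization-type split
\begin{equation*}
C_t(x_1,x_1)-C_t(x_2,x_2)=\sigma^2\langle u_{x_1}-u_{x_2},u_{x_1}+u_{x_2}\rangle_{L^2},
\end{equation*}
which is bounded by $\sigma^2(\|u_{x_1}\|_{L^2}+\|u_{x_2}\|_{L^2})\|u_{x_1}-u_{x_2}\|_{L^2}$. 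So both bounds follow once we control the size factor and the difference factor separately.

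\textbf{Size factor.} Lemma \ref{operated basis function norm bound} with $\gamma=0$ gives $\|u_x\|_{L^2}\leq M_\alpha t^{\alpha/2}$, uniformly in $x$. If a cruder route is more convenient, we can instead use the $\gamma=\alpha$ version, $\|u_x\|_\infty\leq M_\alpha^2t^\alpha$.

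\textbf{Difference factor.} Since $\varphi_t(T)$ is linear, $u_{x_1}-u_{x_2}=\varphi_t(T)(k_{x_1}-k_{x_2})$. The bound $|\varphi_t(z)|\leq t$ (Lemma \ref{estimation of filter function}) gives $\|\varphi_t(T)\|_{\H}\leq t$, and the embedding $\H\hookrightarrow L^2$ is bounded by $\kappa$. Hence
\begin{equation*}
\|u_{x_1}-u_{x_2}\|_{L^2}\lesssim t\,\|k_{x_1}-k_{x_2}\|_\H.
\end{equation*}
By the reproducing property and Assumption \ref{Holder assumption},
\begin{equation*}
\|k_{x_1}-k_{x_2}\|_\H^2=k(x_1,x_1)-2k(x_1,x_2)+k(x_2,x_2)\leq 2L_k|x_1-x_2|^{2h},
\end{equation*}
so $\|k_{x_1}-k_{x_2}\|_\H\leq\sqrt{2L_k}\,|x_1-x_2|^h$. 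This is presumably also the content of Lemma \ref{difference estimation}, which was used for the net arguments.

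\textbf{Sharpening the power of $t$.} The naive product is $t^{\alpha/2}\cdot t\cdot|x_1-x_2|^h$. The stated exponent $\frac{1+3\alpha}{2}$ is larger, so even this crude bound would suffice as long as $t\geq1$. Alternatively, the factor $t$ can be replaced by the sharper $\|\varphi_t(T)T^{1/2}\|\lesssim t^{1/2}$ when the difference is measured in $L^2$, or combined with the $t^{(1+\alpha)/2}$ bound on $\sup_z\|\varphi_t(T)k_z\|_\H$ from Lemma \ref{operated basis function norm bound} with $\gamma=1$. Either way the exponent lands at or below $\frac{1+3\alpha}{2}$, for example $t^{\alpha}\cdot t^{(1+\alpha)/2}$ if $L^\infty$ size bounds are used. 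We will just check that the chosen combination is dominated by $t^{(1+3\alpha)/2}$ for $t\geq1$.

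\textbf{Main obstacle.} The only real subtlety is the difference factor: controlling $\varphi_t(T)(k_{x_1}-k_{x_2})$ in a norm compatible with the $L^2(\mu)$ inner product. The paper writes $T$ with Lebesgue measure, so we must make sure the filter-function operator bound is applied in the same norm in which $C_t$ is defined. If $\mu$ differs from Lebesgue measure, we will instead bound $\|\cdot\|_{L^2(\mu)}\leq\|\cdot\|_\infty\leq\kappa\|\cdot\|_\H$. This route is measure-independent and gives a constant depending only on $\sigma,\varepsilon,L_k$ (through $M_\alpha$ and $\kappa$).
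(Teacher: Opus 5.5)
\textbf{The crude route does not give the stated exponent.} Your overall structure (size factor times difference factor) matches the paper, but the estimates you plug in do not give $t^{\frac{1+3\alpha}{2}}$. With $\|u_x\|_{L^2}\le M_\alpha t^{\alpha/2}$ and $\|u_{x_1}-u_{x_2}\|_{L^2}\lesssim t\,\|k_{x_1}-k_{x_2}\|_\H$ you get the exponent $1+\frac{\alpha}{2}$. However, $1+\frac{\alpha}{2}\le\frac{1+3\alpha}{2}$ holds if and only if $\alpha\ge\frac12$.

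The lemma is claimed for every $\alpha\in(\alpha_0,1]$ with $\alpha_0=1/\beta$, and $\alpha$ is meant to sit just above $1/\beta$ (e.g.\ $\beta=4$ in the Mat\'ern experiment). So the claim that ``even this crude bound would suffice'' is false exactly in the regime of interest.

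Your measure-independent fallback is worse. Bounding through $\|\cdot\|_{L^2(\mu)}\le\|\cdot\|_\infty\le\kappa\|\cdot\|_\H$ and $\|\varphi_t(T)\|_{\H}\le t$ gives $t^{1+\alpha/2}$, or $t^{1+\alpha}$ with an $L^\infty$ size factor. Both exceed $t^{\frac{1+3\alpha}{2}}$ for $\alpha<\frac12$, and the latter for every $\alpha<1$. The loss comes from paying the full operator norm $\|\varphi_t(T)\|_{\H\to\H}\asymp t$ on $k_{x_1}-k_{x_2}$.

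\textbf{The missing step (the paper's route).} The idea you need for a measure-free proof is the symmetry $\varphi_t(T)k_{x_1}(x)=\sum_i\varphi_t(\lambda_i)\lambda_ie_i(x_1)e_i(x)=\varphi_t(T)k_x(x_1)$. Equivalently, use self-adjointness of $\varphi_t(T)$ on $\H$ plus the reproducing property. This gives $u_{x_1}(x)-u_{x_2}(x)=\langle\varphi_t(T)k_x,\,k_{x_1}-k_{x_2}\rangle_\H$. Hence
\[
|u_{x_1}(x)-u_{x_2}(x)|\le\sqrt{2L_k}\,\sup_z\|\varphi_t(T)k_z\|_\H\,|x_1-x_2|^h\le\sqrt{2L_k}\,M_\alpha t^{\frac{1+\alpha}{2}}|x_1-x_2|^h,
\]
using Lemma \ref{difference estimation} and Lemma \ref{operated basis function norm bound} with $\gamma=1$. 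Multiply by $|u_{x_1}+u_{x_2}|\le 2M_\alpha^2t^\alpha$ and integrate against $\mu$. This yields exactly $t^{\frac{1+3\alpha}{2}}$ for any $\mu$, which is the paper's proof. Your remark ``$t^{\alpha}\cdot t^{(1+\alpha)/2}$'' names this exponent, but it omits the step that makes $\sup_z\|\varphi_t(T)k_z\|_\H$, rather than $\|\varphi_t(T)\|_{\H}$, the relevant quantity.

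\textbf{Your $T^{1/2}$ alternative also works, conditionally.} It requires $T$ to be the integral operator of $\mu$, which is the intended reading since $g=\E\tilde g=Tf^*$. In that case
\[
\|\varphi_t(T)(k_{x_1}-k_{x_2})\|_{L^2(\mu)}=\|T^{1/2}\varphi_t(T)(k_{x_1}-k_{x_2})\|_\H\lesssim t^{1/2}\sqrt{2L_k}\,|x_1-x_2|^h,
\]
by Lemma \ref{estimation of filter function} with $\theta=\frac12$. Together with $\|u_x\|_{L^2(\mu)}\le M_\alpha t^{\alpha/2}$, this gives the sharper bound $t^{\frac{1+\alpha}{2}}$. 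If you take this route, commit to it and drop the claim that the crude bound already suffices.
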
 

\proof 
For the first inequality, by Lemma \ref{operated basis function norm bound} and Lemma \ref{difference estimation}, we have 
\begin{equation}\begin{aligned} 
    &|C_t(x_1,x_1)-C_t(x_2,x_2)|\\
    \leq&\sigma^2\int_\X\left|(\varphi_t(T)k_{x_1}(x))^2-(\varphi_t(T)k_{x_2}(x))^2\right|d\mu(x)\\
    \leq&\sigma^2\int_\X\left|\varphi_t(T)k_{x_1}(x)+\varphi_t(T)k_{x_2}(x)\right|\cdot\left|\varphi_t(T)k_{x_1}(x)-\varphi_t(T)k_{x_2}(x)\right|d\mu(x)\\
    \leq&\sigma^2\cdot 2\sup_{x\in\X}\|\varphi_t(T)k_x(\cdot)\|_\infty\cdot\sup_{x\in\X}\|\varphi_t(T)k_x(\cdot)\|_\H\cdot\sqrt{L_k}|x_1-x_2|^h\\
    \leq&\sigma^2\cdot 2M_\alpha^2 t^\alpha\cdot M_\alpha t^{\frac{1+\alpha}{2}}\cdot\sqrt{L_k}|x_1-x_2|^h\\
    =&2\sigma^2M_\alpha^3\sqrt{L_k}\cdot t^{\frac{1+3\alpha}{2}}\cdot|x_1-x_2|^h.
\end{aligned}\end{equation} 
The proof for the second inequality is similar. 
\qed 

\begin{lemma}\label{vc lemma}
    The function class 
    \begin{equation}\mathcal{F}=\left\{f_z(x,\varepsilon)=C_n(z,z)^{-\frac{1}{2}}\varphi_t(T)k_z(x)\cdot\varepsilon:\,z\in\X\right\}\end{equation} 
    is a VC-type class with envelope $F(x,\varepsilon)=C_1 t^{-\frac{1}{2\beta}+\alpha}|\varepsilon|$, $A=C_2 t^{\frac{3\alpha+1}{2}-\frac{1}{\beta}}$ and $v=\frac{d}{h}$, where the constants $C_1,C_2>0$ depends only on $\varepsilon$, $d$, $\sigma$, $\beta$, $h$ and $L_k$. 
\end{lemma}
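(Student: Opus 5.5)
We verify the VC-type property directly from the definition (Definition \ref{vc definition}): we need $\sup_Q N(\mathcal{F},\|\cdot\|_{L^2(Q)},\tau\|F\|_{L^2(Q)})\le (A/\tau)^v$ for all $\tau\in(0,1]$, with the supremum over finitely supported probability measures $Q$ on $S=\X\times\R$. The plan is to first establish the envelope bound, then a H\"older-type Lipschitz bound of the map $z\mapsto f_z$ in sup-norm relative to the envelope, and finally pull back a Euclidean covering of the compact set $\X\subset\R^d$.

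\textbf{Envelope.} By Lemma \ref{operated basis function norm bound} with $\gamma=\alpha$ and the embedding $\|\cdot\|_\infty\le M_\alpha\|\cdot\|_{[\H]^\alpha}$, we have $\sup_{z,x}|\varphi_t(T)k_z(x)|\le M_\alpha^2 t^{\alpha}$. Assumption \ref{lower bound of C} gives $C_t(z,z)^{-1/2}\le (c\sigma^2)^{-1/2}t^{-\frac{1}{2\beta}}$. Multiplying, $|f_z(x,\varepsilon)|\le C_1t^{-\frac{1}{2\beta}+\alpha}|\varepsilon|=F(x,\varepsilon)$.

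\textbf{Regularity in $z$.} For $z_1,z_2\in\X$, we write
\[
f_{z_1}-f_{z_2}=\varepsilon\Big[C_t(z_1,z_1)^{-\frac12}\big(\varphi_t(T)k_{z_1}-\varphi_t(T)k_{z_2}\big)(x)+\varphi_t(T)k_{z_2}(x)\big(C_t(z_1,z_1)^{-\frac12}-C_t(z_2,z_2)^{-\frac12}\big)\Big].
\]
For the first term we use the reproducing property: $|\varphi_t(T)(k_{z_1}-k_{z_2})(x)|=|\langle \varphi_t(T)k_x,k_{z_1}-k_{z_2}\rangle_\H|\le \|\varphi_t(T)k_x\|_\H\,\|k_{z_1}-k_{z_2}\|_\H$. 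Here $\|k_{z_1}-k_{z_2}\|_\H^2=k(z_1,z_1)-2k(z_1,z_2)+k(z_2,z_2)\le 2L_k|z_1-z_2|^{2h}$ by Assumption \ref{Holder assumption}, and $\|\varphi_t(T)k_x\|_\H\le M_\alpha t^{\frac{1+\alpha}{2}}$ by Lemma \ref{operated basis function norm bound} with $\gamma=1$; this is exactly Lemma \ref{difference estimation}. For the second term, $|a^{-1/2}-b^{-1/2}|\le |a-b|/(2\min(a,b)^{3/2})$. Combined with Lemma \ref{Holder coefficient of C} and Assumption \ref{lower bound of C}, this gives a bound of order $t^{\frac{1+3\alpha}{2}}t^{-\frac{3}{2\beta}}|z_1-z_2|^h$, and multiplying by $\sup|\varphi_t(T)k_{z_2}|\le M_\alpha^2t^\alpha$ yields a bound for the second term. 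Collecting the powers, $|f_{z_1}-f_{z_2}|\le |\varepsilon|\,K_t|z_1-z_2|^h$ with $K_t$ polynomial in $t$. Dividing by the envelope shows $\|f_{z_1}-f_{z_2}\|_{L^2(Q)}\le (K_t/(C_1t^{-\frac{1}{2\beta}+\alpha}))|z_1-z_2|^h\|F\|_{L^2(Q)}$ for every $Q$.

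\textbf{Covering.} Take a Euclidean $\delta$-net of $\X$ with $\delta=(\tau/A')^{1/h}$, where $A'$ is the ratio above. Its cardinality is at most $(C\,\mathrm{diam}(\X)/\delta)^d=(A/\tau)^{d/h}$ after absorbing constants into $A$. This gives $v=d/h$, and $A$ is the stated power of $t$. The main obstacle is the bookkeeping of the $t$-exponents in the second term to match $A=C_2t^{\frac{3\alpha+1}{2}-\frac1\beta}$. If the normalization derivative produces a larger power, I would simply enlarge $A$. Any polynomial power of $t=n^\theta$ only enters $K_n$ logarithmically, so the downstream use in Lemma \ref{approximation 1} is unaffected. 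Measurability is immediate since the class is indexed by the separable set $\X$ continuously.
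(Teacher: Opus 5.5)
Your proposal is correct and follows the paper's proof essentially step for step. You take the envelope from Lemma \ref{operated basis function norm bound} and Assumption \ref{lower bound of C}, split $f_{z_1}-f_{z_2}$ into the same two terms controlled by Lemma \ref{difference estimation} and Lemma \ref{Holder coefficient of C}, and pull back a Euclidean net of $\X$. The exponent bookkeeping you were worried about closes exactly: the normalization term dominates because $2\alpha>\frac{1}{\beta}$, giving $K_t\asymp t^{\frac{1+5\alpha}{2}-\frac{3}{2\beta}}$, and dividing by the envelope yields precisely $A\asymp t^{\frac{1+3\alpha}{2}-\frac{1}{\beta}}$, so no enlargement of $A$ is needed.
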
 

\proof 
By Lemma \ref{operated basis function norm bound} and Assumption \ref{lower bound of C}, the function $F(x,\varepsilon)$ is clearly an envelope of $\mathcal{F}$. 

Let $S=\X\times\R$ (equipped with Borel $\sigma$-field), and suppose that $Q$ is a finite probability distribution on $S$: 
\begin{equation}Q=\sum_{m=1}^M\pi_m\delta_{(x_m,\varepsilon_m)},\end{equation} 
where $\pi_m>0$, $\pi_1+\dots+\pi_M=1$ and $\delta_{(x_m,\varepsilon_m)}$ is the Dirac measure at the point $(x_m,\varepsilon_m)$. Then by direct computation, we obtain that 
\begin{equation}\|F\|_{L^2(Q)}^2=\sum_{m=1}^M\pi_m C_1^2 t^{-\frac{1}{\beta}+2\alpha}\varepsilon_m^2\end{equation} 
and 
\begin{equation}e_Q(f_z,f_w)^2=\sum_{m=1}^M\pi_m\left(\frac{\varphi_t(T)k_z(x_m)}{C_n(z,z)^{\frac{1}{2}}}-\frac{\varphi_t(T)k_w(x_m)}{C_n(w,w)^{\frac{1}{2}}}\right)^2\varepsilon_m^2.\end{equation}
Note that 
\begin{equation}\begin{aligned} 
    &\left|\frac{\varphi_t(T)k_z(x_m)}{C_n(z,z)^{\frac{1}{2}}}-\frac{\varphi_t(T)k_w(x_m)}{C_n(w,w)^{\frac{1}{2}}}\right|\\
    \leq&\left|\frac{\varphi_t(T)k_z(x_m)}{C_n(z,z)^{\frac{1}{2}}}-\frac{\varphi_t(T)k_w(x_m)}{C_n(z,z)^{\frac{1}{2}}}\right|+\left|\frac{\varphi_t(T)k_w(x_m)}{C_n(z,z)^{\frac{1}{2}}}-\frac{\varphi_t(T)k_w(x_m)}{C_n(w,w)^{\frac{1}{2}}}\right|.
\end{aligned}\end{equation} 
By Assumption \ref{lower bound of C}, Lemma \ref{operated basis function norm bound} and Lemma \ref{difference estimation}, we have 
\begin{equation}\begin{aligned} 
    \left|\frac{\varphi_t(T)k_z(x_m)}{C_n(z,z)^{\frac{1}{2}}}-\frac{\varphi_t(T)k_w(x_m)}{C_n(z,z)^{\frac{1}{2}}}\right|&\leq c\sigma^{-1}t^{-\frac{1}{2\beta}}\cdot\sup_{z\in\X}\|\varphi_t(T)k_z(\cdot)\|_\H\cdot\sqrt{L_k}|z-w|^h\\
    &\leq c\sigma^{-1}t^{-\frac{1}{2\beta}}\cdot M_\alpha t^{\frac{1+\alpha}{2}}\cdot\sqrt{L_k}|z-w|^h
\end{aligned}\end{equation}
and by Assumption \ref{lower bound of C}, Lemma \ref{operated basis function norm bound} and Lemma \ref{Holder coefficient of C}, we have 
\begin{equation}\begin{aligned} 
    &\left|\frac{\varphi_t(T)k_w(x_m)}{C_n(z,z)^{\frac{1}{2}}}-\frac{\varphi_t(T)k_w(x_m)}{C_n(w,w)^{\frac{1}{2}}}\right|\\
    =&\left|\varphi_t(T)k_w(x_m)\frac{C_n(z,z)-C_n(w,w)}{C_n(z,z)^{\frac{1}{2}}C_n(w,w)^{\frac{1}{2}}(C_n(z,z)^{\frac{1}{2}}+C_n(w,w)^{\frac{1}{2}})}\right|\\
    \leq&M_\alpha^2 t^\alpha\cdot c\sigma^{-3}t^{-\frac{3}{2\beta}}\cdot 2\sigma^2M_\alpha^3\sqrt{L_k}\cdot t^{\frac{1+3\alpha}{2}}\cdot|z-w|^h.
\end{aligned}\end{equation} 
Then
\begin{equation}\left|\frac{\varphi_t(T)k_z(x_m)}{C_n(z,z)^{\frac{1}{2}}}-\frac{\varphi_t(T)k_w(x_m)}{C_n(w,w)^{\frac{1}{2}}}\right|\leq C\sigma^{-1}t^{-\frac{3}{2\beta}+\frac{1+5\alpha}{2}}|z-w|^h\end{equation}
for some constant $C>0$ depending only on $\varepsilon$ and $L_k$. Therefore, 
\begin{equation}e_Q(f_z,f_w)^2\leq\sum_{m=1}^M\pi_m\cdot C\sigma^{-2}t^{-\frac{3}{\beta}+1+5\alpha}|z-w|^{2h}\cdot\varepsilon_m^2.\end{equation} 
For any $\varepsilon_0>0$, we have $e_Q(f_z,f_w)\leq\varepsilon_0\|F\|_{L^2(Q)}$ if 
\begin{equation}|z-w|^{2h}\leq Ct^{-\frac{2}{\beta}+1+3\alpha}\varepsilon_0^2.\end{equation} 
Denote $\tilde{\varepsilon}_0^{2h}=Ct^{-\frac{2}{\beta}+1+3\alpha}\varepsilon_0^2$. Then there exists a $\tilde{\varepsilon}_0$ net $\X_0$ of $\X$ such that $|\X_0|\leq C(\tilde{\varepsilon}_0)^{-d}$. In other words, there exists an $\varepsilon_0$ net $\mathcal{F}_0$ of $\mathcal{F}$ with respect to $\|\cdot\|_{L^2(Q)}$ such that 
\begin{equation}|\mathcal{F}_0|\leq C(\tilde{\varepsilon}_0)^{-d}=\left(\frac{C t^{-\frac{1}{\beta}+\frac{1+3\alpha}{2}}}{\varepsilon_0}\right)^{\frac{d}{h}}.\end{equation}
This completes the proof of the lemma. 
\qed 

\begin{lemma}\label{epsilon hat and epsilon} 
    Assume $t=n^{\theta}$ for $\theta\in(\frac{1}{s},\beta)$. Then for sufficiently small $\varepsilon>0$, when $n$ is sufficiently great, with great probability $1-\O(n^{-10})$, we have 
    \begin{equation}\left\|\frac{1}{n}\sum_{i=1}^n\varphi_t(T_X)k_x(x_i)\hat\varepsilon_i g_i-\frac{1}{n}\sum_{i=1}^n\varphi_t(T)k_x(x_i)\varepsilon_i g_i\right\|_\infty\leq C\sqrt{\frac{t^\alpha\log n}{n}}t^{-\frac{\min\{s-\alpha,2\}}{2}}+C\frac{t^\alpha\log n}{n}\end{equation} 
    for some constants $C>0$ depending only on $\varepsilon$, $d$, $\kappa$, $\beta$, $s$, $R$, $\sigma$, $L$, $h$ and $L_k$ (and $\eta$ additionally for discrete kernel gradient flow), where $\alpha=\alpha_0+\varepsilon$. 
\end{lemma}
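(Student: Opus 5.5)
We bound the difference by splitting it into two pieces:
\begin{equation*}
\frac{1}{n}\sum_{i=1}^n\varphi_t(T_X)k_x(x_i)\hat\varepsilon_i g_i-\frac{1}{n}\sum_{i=1}^n\varphi_t(T)k_x(x_i)\varepsilon_i g_i
=\frac{1}{n}\sum_{i=1}^n\zeta_i(x)\varepsilon_i g_i+\frac{1}{n}\sum_{i=1}^n\varphi_t(T_X)k_x(x_i)(\hat\varepsilon_i-\varepsilon_i)g_i .
\end{equation*}
Here $\zeta_i(x)=\varphi_t(T_X)k_{x_i}(x)-\varphi_t(T)k_{x_i}(x)$, and we use the symmetry $\varphi_t(T_X)k_x(x_i)=\varphi_t(T_X)k_{x_i}(x)$ (Lemma \ref{exchange of x and x_i}). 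The first piece is exactly the quantity controlled by Theorem \ref{second-order estimation theorem 2}. With probability $1-\O(n^{-10})$ it is bounded by the right-hand side of the claim, so nothing more is needed for it.

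For the second piece, note that $\hat\varepsilon_i-\varepsilon_i=f^*(x_i)-\hat f_t(x_i)=-(\hat f_t-f^*)(x_i)$. Write $\hat f_t-f^*=(\hat f_t-f_t)+(f_t-f^*)$. By Theorem \ref{second-order estimation theorem}, $\hat f_t-f_t$ equals $\frac1n\sum_j\varphi_t(T)k_{x_j}\varepsilon_j$ up to a sup-norm error of the target order. The second piece is then, conditionally on $\D_n$, a Gaussian linear form in $g$. Its conditional standard deviation at a point $x$ is at most
\begin{equation*}
\Bigl(\frac1{n^2}\sum_i\varphi_t(T_X)k_x(x_i)^2\Bigr)^{1/2}\|\hat f_t-f^*\|_\infty .
\end{equation*}
By Lemma \ref{operated basis function empirical norm bound}, the empirical norm factor is at most $CM_\alpha\sqrt{t^\alpha/n}$. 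For the sup-norm factor we use Theorem \ref{upper bound} with $p=10$. Because $\theta>1/s$, the bias part $t^{-\frac{s-\alpha}{2}}$ is dominated, and we get $\|\hat f_t-f^*\|_\infty\lesssim\sqrt{t^\alpha\log n/n}$. A Gaussian tail bound at each point of the $\varepsilon_0$-net \eqref{net}, followed by a union bound, contributes an extra $\sqrt{\log n}$. The net is extended to all of $\X$ using Lemma \ref{difference estimation} and the bound $\max_i|g_i|\lesssim\sqrt{\log n}$, exactly as in the proof of Lemma \ref{1st term of variance}. Together these give the second piece a bound of order $t^\alpha\log n/n$.

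The main obstacle is this second piece. The residual $\hat f_t-f^*$ depends on all the $\varepsilon_j$, so a crude bound of the form ``sup of coefficients times $\|\hat f_t-f^*\|_\infty$'' must be kept at order $\sqrt{t^\alpha/n}$ per coefficient in $\ell^2$, not $t^\alpha$ in $\ell^\infty$. To achieve this, I would condition on $\D_n$ throughout and use only the Gaussianity of $g$. The dependence then enters solely through the deterministic factor $\|\hat f_t-f^*\|_\infty$, which is already controlled on a high-probability event.

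If the logarithmic powers do not match exactly, I would absorb them by adjusting $\varepsilon$, since $\alpha$ may be taken slightly larger. Finally, I would intersect the good events of Theorems \ref{upper bound}, \ref{second-order estimation theorem}, \ref{second-order estimation theorem 2} and Lemma \ref{operated basis function empirical norm bound}, each of probability $1-\O(n^{-10})$, to conclude.
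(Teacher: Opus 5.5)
Your proposal is correct and follows essentially the same route as the paper. It uses the same split into the $\zeta_i\varepsilon_i g_i$ term (Theorem \ref{second-order estimation theorem 2}) and the residual term with $\hat\varepsilon_i-\varepsilon_i$, and bounds the residual via $\|\hat f_t-f^*\|_\infty\lesssim\sqrt{t^\alpha\log n/n}$ (Theorem \ref{upper bound} with $\theta>1/s$), Lemma \ref{operated basis function empirical norm bound}, and Gaussian concentration in $g$ conditionally on $\D_n$.

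Your explicit conditional-variance computation gives the residual term order $t^\alpha\log n/n$, which is exactly what the stated right-hand side needs. The paper's write-up records only $C\sqrt{t^\alpha\log n/n}$ at that step, which would not fit the claimed bound. Your detour through Theorem \ref{second-order estimation theorem} for $\hat f_t-f_t$ is unnecessary.
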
 

\proof 
Note that 
\begin{equation}\label{bootstrap variable decomposition} 
    \begin{aligned} 
        &\left\|\frac{1}{n}\sum_{i=1}^n\varphi_t(T_X)k_x(x_i)\hat\varepsilon_i g_i-\frac{1}{n}\sum_{i=1}^n\varphi_t(T)k_x(x_i)\varepsilon_i g_i\right\|_\infty\\
        =&\left\|\frac{1}{n}\sum_{i=1}^n\varphi_t(T_X)k_x(x_i)\hat\varepsilon_i g_i-\frac{1}{n}\sum_{i=1}^n\varphi_t(T_X)k_x(x_i)\varepsilon_i g_i\right\|_\infty\\
        &+\left\|\frac{1}{n}\sum_{i=1}^n\varphi_t(T_X)k_x(x_i)\varepsilon_i g_i-\frac{1}{n}\sum_{i=1}^n\varphi_t(T)k_x(x_i)\varepsilon_i g_i\right\|_\infty.
    \end{aligned}
\end{equation} 
By Lemma \ref{second-order estimation theorem 2}, the second term in (\ref{bootstrap variable decomposition}) is controlled by 
\begin{equation}\label{bootstrap variable estimation 1} 
    \begin{aligned} 
        &\left\|\frac{1}{n}\sum_{i=1}^n\varphi_t(T_X)k_x(x_i)\varepsilon_i g_i-\frac{1}{n}\sum_{i=1}^n\varphi_t(T)k_x(x_i)\varepsilon_i g_i\right\|_\infty\\
        \leq& C\sqrt{\frac{t^\alpha\log n}{n}}t^{-\frac{\min\{s-\alpha,2\}}{2}}+C\frac{t^\alpha\log n}{n},
    \end{aligned}
\end{equation} 
and it remains to estimate the first term in (\ref{bootstrap variable decomposition}). 

First, we note that 
\begin{equation}|\hat{\varepsilon}_i-\varepsilon_i|=|(y_i-\hat{f}_t(x_i))-(y_i-f^*(x_i))|=|\hat{f}_t(x_i)-f^*(x_i)|\leq\|\hat{f}_t-f^*\|_\infty.\end{equation}
Recall that we choose $t=n^{-\theta}$ for $\theta\in(\frac{1}{s},\beta)$, whence the variance term dominates the upper bound of $\|\hat{f}_t-f^*\|_\infty$ by Theorem \ref{upper bound}: 
\begin{equation}\label{error estimation bound} 
    |\hat\varepsilon_i-\varepsilon_i|\leq\|\hat{f}_t-f^*\|_\infty\leq\sqrt{\frac{t^\alpha\log n}{n}}.
\end{equation} 
Define $\eta_i(x)=\frac{1}{n}\varphi_t(T_X)k_x(x_i)(\hat\varepsilon_i-\varepsilon_i)$. By (\ref{error estimation bound}) and Lemma \ref{operated basis function norm bound}, we have 
\begin{equation}|\eta_i(x)|\leq\frac{1}{n}\cdot C\sqrt{\frac{t^\alpha\log n}{n}}t^\alpha,\end{equation} 
and by (\ref{error estimation bound}) and Lemmma \ref{operated basis function empirical norm bound}, we have 
\begin{equation}\sum_{i=1}^n(\eta_i(x))^2\leq\frac{t^\alpha}{\log n}{n}.\end{equation}

Thus, by Bernstein inequality (Theorem \ref{Bernstein ineq}) and the fact that $g_i\sim N(0,1)$, we obtain that the first term of the right-hand side of (\ref{bootstrap variable decomposition}) is bounded by 
\begin{equation}\label{bootstrap variable estimation 2}
    \begin{aligned} 
        &\left\|\frac{1}{n}\sum_{i=1}^n\varphi_t(T_X)k_x(x_i)\hat\varepsilon_i g_i-\frac{1}{n}\sum_{i=1}^n\varphi_t(T_X)k_x(x_i)\varepsilon_i g_i\right\|_\infty\\
        =&\left\|\sum_{i=1}^n\eta_i(x)\cdot g_i\right\|_\infty\leq C\sqrt{\frac{t^\alpha\log n}{n}}.
    \end{aligned}
\end{equation} 

Finally, combining (\ref{bootstrap variable estimation 1}) and (\ref{bootstrap variable estimation 2}), we have 
\begin{equation}
    \begin{aligned} 
        &\left\|\frac{1}{n}\sum_{i=1}^n\varphi_t(T_X)k_x(x_i)\hat\varepsilon_i g_i-\frac{1}{n}\sum_{i=1}^n\varphi_t(T)k_x(x_i)\varepsilon_i g_i\right\|_\infty\\
        \leq& C\sqrt{\frac{t^\alpha\log n}{n}}t^{-\frac{\min\{s-\alpha,2\}}{2}}+C\frac{t^\alpha\log n}{n}.
    \end{aligned} 
\end{equation} 
\qed

\begin{lemma}\label{C hat and C} 
    When $n$ is sufficiently great, with probability $1-\O(n^{-10})$, we have 
    \begin{equation}\sup_{x\in\X}\left|\widehat{C}_{n,t}(x,x)-C_t(x,x)\right|=o(t^{\frac{1}{\beta}}),\end{equation}
    where the invention $o$ hides all the terms involving $\varepsilon$, $d$, $\kappa$, $\beta$, $s$, $R$, $\sigma$, $L$, $h$ and $L_k$ (and $\eta$ additionally for discrete kernel gradient flow). 
\end{lemma}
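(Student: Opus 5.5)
We need $\sup_x|\widehat C_{n,t}(x,x)-C_t(x,x)|=o(t^{1/\beta})$. The plan is to telescope through three intermediate quantities,
\begin{equation*}
    \widehat C_{n,t}(x,x)\;\to\;C^{(1)}(x):=\frac1n\sum_{i=1}^n(\varphi_t(T_X)k_x(x_i))^2\varepsilon_i^2\;\to\;C^{(2)}(x):=\frac1n\sum_{i=1}^n(\varphi_t(T)k_x(x_i))^2\varepsilon_i^2\;\to\;C_t(x,x),
\end{equation*}
bounding each difference uniformly in $x$ with probability $1-\O(n^{-10})$. We will work with bounded noise $|\varepsilon|\le D$, which the truncation convention of this section permits. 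The target throughout is $t^{\alpha}\cdot n^{-c}$ for some $c>0$. Since $\alpha$ can be taken arbitrarily close to $1/\beta$ and $t=n^\theta$, this is $o(t^{1/\beta})$ once $\varepsilon$ is small enough relative to $c$.

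\emph{Step 1 (residuals).} Write $a_i=\varphi_t(T_X)k_x(x_i)$ and use $|\hat\varepsilon_i^2-\varepsilon_i^2|\le|\hat\varepsilon_i-\varepsilon_i|(2|\varepsilon_i|+|\hat\varepsilon_i-\varepsilon_i|)$ together with $|\hat\varepsilon_i-\varepsilon_i|\le\|\hat f_t-f^*\|_\infty$. Theorem \ref{upper bound} with $\theta>1/s$ gives $\|\hat f_t-f^*\|_\infty\lesssim\sqrt{t^\alpha\log n/n}$. Lemma \ref{operated basis function empirical norm bound} gives $\frac1n\sum_i a_i^2\lesssim t^\alpha$. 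Hence $|\widehat C_{n,t}-C^{(1)}|\lesssim t^\alpha\sqrt{t^\alpha\log n/n}$, and this is $t^\alpha n^{-c}$ because $\theta\alpha<1$.

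\emph{Step 2 (replace $T_X$ by $T$).} Write $b_i=\varphi_t(T)k_x(x_i)$, so $a_i-b_i=\zeta_i(x)$. Then
\begin{equation*}
    |C^{(1)}-C^{(2)}|\le D^2\Big(\frac1n\sum_i\zeta_i^2\Big)^{1/2}\Big[\Big(\frac1n\sum_i a_i^2\Big)^{1/2}+\Big(\frac1n\sum_i b_i^2\Big)^{1/2}\Big].
\end{equation*}
Lemma \ref{empirical norm bound of zeta} gives $(\frac1n\sum_i\zeta_i^2)^{1/2}\lesssim\sqrt{t^\alpha\log n/n}\,t^{\alpha/2}\log t$. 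For the bracket, the $a_i$ term is controlled by Lemma \ref{operated basis function empirical norm bound}, and the $b_i$ term by the triangle inequality with the previous bound. The product is therefore again $t^\alpha\cdot n^{-c}$ up to logarithms.

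\emph{Step 3 (concentration, the main step).} For fixed $x$, $C^{(2)}(x)$ is an average of i.i.d.\ variables $b(x_i)^2\varepsilon_i^2$ whose mean is exactly $C_t(x,x)$, using $\E[\varepsilon^2|x]=\sigma^2$ as assumed in Theorem \ref{main result}. Lemma \ref{operated basis function norm bound} gives $\|\varphi_t(T)k_x\|_\infty\le M_\alpha^2t^\alpha$, so each summand is bounded by $D^2M_\alpha^4t^{2\alpha}$. The variance is at most $D^2M_\alpha^4t^{2\alpha}\cdot C_t(x,x)\lesssim t^{3\alpha}$. Bernstein's inequality (Lemma \ref{Bernstein ineq}) then gives a pointwise deviation $\lesssim\sqrt{t^{3\alpha}\log n/n}+t^{2\alpha}\log n/n$, which equals $t^\alpha\big(\sqrt{t^\alpha\log n/n}+t^\alpha\log n/n\big)=t^\alpha n^{-c}$.

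To make this uniform in $x$, take the $\varepsilon_0$-net (\ref{net}) and combine Lemma \ref{Holder coefficient of C} with Lemma \ref{difference estimation}. Differences of $b(x_i)^2$ between $x$ and a net point are bounded by $t^{(1+3\alpha)/2}|x-x_0|^h$, which is negligible at mesh $n^{-(1+\beta)/h}$. A union bound over $|\X_0|$ points then yields probability $1-\O(n^{-10})$. The obstacle I expect is this uniform step: the Hölder moduli carry polynomial factors of $t$, so the mesh must shrink polynomially in $n$ and the Bernstein tail must absorb the polynomial cardinality of the net. Choosing $\tau\asymp\log n$ with a large enough constant, exactly as in Lemma \ref{1st term of variance}, handles this. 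Summing Steps 1–3 gives the claim.
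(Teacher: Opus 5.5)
Your argument is correct, but it orders the telescoping differently from the paper. The paper uses four intermediate approximations. First, (A) replaces $\hat\varepsilon_i$ by $\varepsilon_i$. Next, (B) replaces $\varepsilon_i^2$ by $\sigma^2$; this is a concentration over the noise, conditional on the design, with the sample-dependent weights $\varphi_t(T_X)k_x(x_i)$. Then (C) swaps $\varphi_t(T_X)$ for $\varphi_t(T)$ in the noise-free empirical norm via Lemma \ref{empirical norm bound of zeta}. Finally, (D) is a concentration over the design of $\frac1n\sum_i(\varphi_t(T)k_x(x_i))^2$ around $\|\varphi_t(T)k_x\|_{L^2}^2$. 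It is made uniform by covering the class $\{\varphi_t(T)k_x\}$ and applying Lemma \ref{concentration inequality bounded cases}.

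You instead swap the operator first, while the weights $\varepsilon_i^2$ are still present, paying a factor $D^2$. After that, the summands $(\varphi_t(T)k_x(x_i))^2\varepsilon_i^2$ are i.i.d.\ over the pairs $(x_i,\varepsilon_i)$ with mean exactly $C_t(x,x)$, so (B) and (D) collapse into a single Bernstein bound.

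What your route buys is one concentration step instead of two. Also, its uniformity argument involves only the deterministic functions $\varphi_t(T)k_x$. Their Hölder moduli are controlled by Lemma \ref{Holder coefficient of C}, Lemma \ref{difference estimation} and the deterministic half of Lemma \ref{operated basis function norm bound}. By contrast, the net argument behind the paper's step (B) must run through the sample-dependent functions $\varphi_t(T_X)k_x$, using their high-probability bounds.

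The price is that your operator-swap step relies on the bounded-noise convention (or on $\max_i\varepsilon_i^2\lesssim\log^2 n$ for sub-exponential noise), whereas the paper's (C) is noise-free. Since every error term carries a polynomial factor $n^{-c}$ relative to $t^{\alpha}$, that loss is harmless. Both routes need $\E[\varepsilon^2|x]=\sigma^2$ for the centering, which you correctly import from Theorem \ref{main result}. Your mesh $n^{-(1+\beta)/h}$ is adequate because $t^{(1+3\alpha)/2}n^{-(1+\beta)}=o(t^{1/\beta})$ once $3\beta\varepsilon<1+\beta$.
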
 

\proof 
Our proof is based on the following four approximations: 
\begin{equation}\begin{aligned}
    \widehat{C}_n(x,x)&=\frac{1}{n}\sum_{i=1}^n|\varphi_t(T_X)k_x(x_i)\hat\varepsilon_i|^2\\
    &\mathop{\approx}\limits^{(A)}\,\frac{1}{n}\sum_{i=1}^n|\varphi_t(T_X)k_x(x_i)\varepsilon_i|^2\\
    &\mathop{\approx}\limits^{(B)}\,\frac{\sigma^2}{n}\sum_{i=1}^n|\varphi_t(T_X)k_x(x_i)|^2\\
    &\mathop{\approx}\limits^{(C)}\,\frac{\sigma^2}{n}\sum_{i=1}^n|\varphi_t(T)k_x(x_i)|^2\\
    &\mathop{\approx}\limits^{(D)}\,\sigma^2\|\varphi_t(T)k_x(\cdot)\|_{L^2}^2=C_n(x,x).
\end{aligned}\end{equation}

\textit{Approximation (A).} By Lemma \ref{operated basis function empirical norm bound} and estimation (\ref{error estimation bound}), for $n$ sufficiently great, with probability $1-\O(n^{-10})$, we have 
\begin{equation}\begin{aligned} 
    &\left\|\frac{1}{n}\sum_{i=1}^n|\varphi_t(T_X)k_x(x_i)\hat\varepsilon_i|^2-\frac{1}{n}\sum_{i=1}^n|\varphi_t(T_X)k_x(x_i)\varepsilon_i|^2\right\|_\infty\\
    \leq&\sup_{x\in\X}\frac{1}{n}\sum_{i=1}^n\left|\varphi_t(T_X)k_x(x_i)|^2\right|\cdot\sup_{1\leq i\leq n}\left|\hat\varepsilon_i^2-\varepsilon_i^2\right|\\
    &\leq Ct^\alpha\cdot\sqrt{\frac{t^\alpha\log n}{n}}. 
\end{aligned}\end{equation} 
Recall that we choose $\varepsilon\in(0,\min\{s-\frac{1}{\beta},\frac{1}{\theta}-\frac{1}{\beta}\})$ such that $\alpha=\frac{1}{\beta}+\varepsilon$ is sufficiently close to $\frac{1}{\beta}$, whence we have 
\begin{equation}\label{approximation A of C(x,x)}
    \sup_{x\in\X}\left|\frac{1}{n}\sum_{i=1}^n|\varphi_t(T_X)k_x(x_i)\hat\varepsilon_i|^2-\frac{1}{n}\sum_{i=1}^n|\varphi_t(T_X)k_x(x_i)\varepsilon_i|^2\right|=o(t^\frac{1}{\beta}).
\end{equation} 

\textit{Approximation (B).} The proof of approximation (B) follows the same discussion as in the truncation part of the proof of Theorem \ref{variance estimation}. We first have the following discussion: By Assumption \ref{noise moment}, Lemma \ref{operated basis function norm bound} and Lemma \ref{operated basis function empirical norm bound}, for any fixed $x\in\X$, when $n$ is sufficiently great, with probability $1-\O(n^{-10})$, we have 
\begin{equation}\sum_{i=1}^n\frac{1}{n^2}|\varphi_t(T_X)k_x(x_i)|^4\leq\frac{1}{n}\sup_{x\in\X}\|\varphi_t(T_X)k_x(\cdot)\|_\infty^2\cdot\frac{1}{n}\sum_{i=1}^n|\varphi_t(T_X)k_x(x_i)|^2\leq\frac{C}{n}t^{3\alpha}\end{equation}
and 
\begin{equation}\left|\frac{1}{n}\sum_{i=1}^n|\varphi_t(T_X)k_x(x_i)\varepsilon_i|^2-\frac{\sigma^2}{n}\sum_{i=1}^n|\varphi_t(T_X)k_x(x_i)|^2\right|_\infty\leq C\sqrt{\frac{t^\alpha\log n}{n}}\cdot t^{-\alpha}=o(t^{\frac{1}{\beta}}).\end{equation} 
Thus, using the truncation argument similar with (\ref{variance decomposition}), by Bernstein inequality (Theorem \ref{Bernstein ineq}), we obtain that when $n$ is sufficiently great, with probability $1-\O(n^{-10})$, 
\begin{equation}\label{approximation B of C(x,x)}
    \sup_{x\in\X}\left|\frac{1}{n}\sum_{i=1}^n|\varphi_t(T_X)k_x(x_i)\varepsilon_i|^2-\frac{\sigma^2}{n}\sum_{i=1}^n|\varphi_t(T_X)k_x(x_i)|^2\right|=o(t^{\frac{1}{\beta}}).
\end{equation} 

Before proving approximation (C), we first prove approximation (D). 

\textit{Approximation (D).} Let $\mathcal{F}_n$ be a $\frac{1}{n}$-net of the function family $\{\varphi_t(T)k_x(\cdot):\,x\in\X\}$. For any $\varphi_t(T)k_x(\cdot)$, denote $\xi_i=|\varphi_t(T)k_x(x_i)|^2$. By Lemma \ref{operated basis function norm bound}, we have 
\begin{equation}|\xi_i|\leq\|\varphi_t(T)k_x(\cdot)\|_\infty^2\leq M_\alpha^2\|\varphi_t(T)k_x(\cdot)\|_{[\H]^\alpha}^2\leq Ct^{2\alpha},\end{equation} 
\begin{equation}\E\xi_i=\|\varphi_t(T)k_x(\cdot)\|_{L^2}^2\leq Ct^{\alpha},\end{equation}
\begin{equation}\E|\xi_i|^2\leq\|\varphi_t(T)k_x(\cdot)\|_\infty^2\cdot\|\varphi_t(T)k_x(\cdot)\|_{L^2}^2\leq Ct^{3\alpha}.\end{equation}  
By Lemma \ref{concentration inequality bounded cases}, for any $\varepsilon>0$ and $\delta\in(0,1)$, with probability at least $1-\delta$, we have 
\begin{equation}\begin{aligned} 
    &\left|\|\varphi_t(T)k_x(\cdot)\|_{L^2,n}^2-\|\varphi_t(T)k_x(\cdot)\|_{L^2}^2\right|\\
    =&\left|\frac{1}{n}\sum_{i=1}^n\xi_i-\E\xi_i\right|\leq\varepsilon_0\cdot Ct^{3\alpha}+\frac{3+4\varepsilon_0 t^{2\alpha}}{6\varepsilon_0 n}\log\frac{2|\mathcal{F}_n|}{\delta},
\end{aligned}\end{equation} 
By choosing $\delta=n^{-10}$ and $\varepsilon_0=n^{-c_0}t^{-2\alpha}$ for $c_0\in(\theta(\frac{1}{\beta}-\alpha),1-\theta(2\alpha-\frac{1}{\beta}))$, we obtain that with probability $1-\O(n^{-10})$, 
\begin{equation}\left|\|\varphi_t(T)k_x(\cdot)\|_{L^2,n}^2-\|\varphi_t(T)k_x(\cdot)\|_{L^2}^2\right|\leq Cn^{-c_0}t^\alpha+Cn^{-(1-c_0)}t^{2\alpha}\log(n|\mathcal{F}_n|)\end{equation}
for any $\varphi_t(T)k_x(\cdot)\in\mathcal{F}_n$. 

We need to estimate the covering number $|\mathcal{F}_n|$. By Lemma \ref{difference estimation} and Lemma \ref{operated basis function empirical norm bound}, we have 
\begin{equation}\label{operated basis function holder estimation} 
    \|\varphi_t(T)k_{x_1}(\cdot)-\varphi_t(T)k_{x_2}(\cdot)\|_\infty\leq C\cdot\sup_{x\in\X}\|\varphi_t(T)k_x(\cdot)\|_\H\cdot|x_1-x_2|^h\leq Ct^{\frac{1+\alpha}{2}}|x_1-x_2|^h 
\end{equation} 
for any $x\in\X$. Moreover, it is well-known that the $\varepsilon$-covering number of a bounded domain in $\R^d$ is bounded by $C\varepsilon^{-d}$ (see section 4.8 of \citep{vershynin} for example). Thus, by (\ref{operated basis function holder estimation}), we can choose $\mathcal{F}_n$ such that $|\mathcal{F}_n|\leq C(nt^{\frac{1+\alpha}{2}})^{d/h}$. Recall that we select $t=n^{\theta}$ for some $\theta\in(\frac{1}{s},\beta)$, and $c_0\in(\theta(\frac{1}{\beta}-\alpha),1-\theta(2\alpha-\frac{1}{\beta}))$. Thus, 
\begin{equation}\label{estimation in F_n} 
    \begin{aligned}
        &\left|\frac{\sigma^2}{n}\sum_{i=1}^n|\varphi_t(T)k_x(x_i)|^2-\sigma^2\|\varphi_t(T)k_x(\cdot)\|_{L^2}^2\right|\\
        =&\sigma^2\left|\|\varphi_t(T)k_x(\cdot)\|_{L^2,n}^2-\|\varphi_t(T)k_x(\cdot)\|_{L^2}^2\right|\\
        \leq&Cn^{-c_0}t^\alpha+Cn^{-(1-c_0)}t^{2\alpha}\log(n)\\
        =&o(t^{\frac{1}{\beta}}) 
    \end{aligned}
\end{equation} 
for any $\varphi_t(T)k_x(\cdot)\in\mathcal{F}_n$. 

By the definition of $\mathcal{F}_n$, for any $x\in\X$, there exists some $z\in\X$ such that $\varphi_t(T)k_z(\cdot)\in\mathcal{F}_n$ and 
\begin{equation}\label{covering argument} 
    \|\varphi_t(T)k_x(\cdot)-\varphi_t(T)k_z(\cdot)\|_\infty\leq\frac{1}{n}=o(t^{\frac{1}{\beta}}).
\end{equation} 
Combining (\ref{estimation in F_n}) and (\ref{covering argument}), we obtain that 
\begin{equation}\label{approximation D of C(x,x)} 
    \sup_{x\in\X}\left|\frac{\sigma^2}{n}\sum_{i=1}^n|\varphi_t(T)k_x(x_i)|^2-\sigma^2\|\varphi_t(T)k_x(\cdot)\|_{L^2}^2\right|=o(t^{\frac{1}{\beta}}).
\end{equation}

\textit{Approximation (C).} Note that 
\begin{equation}\begin{aligned} 
    &\left|\frac{\sigma^2}{n}\sum_{i=1}^n|\varphi_t(T_X)k_x(x_i)|^2-\frac{\sigma^2}{n}\sum_{i=1}^n|\varphi_t(T)k_x(x_i)|^2\right|\\
    =&\sigma^2\left|\|\varphi_t(T_X)k_x(\cdot)\|_{L^2,n}^2-\|\varphi_t(T)k_x(\cdot)\|_{L^2,n}^2\right|\\
    \leq&\sigma^2\|(\varphi_t(T_X)-\varphi_t(T))k_x(\cdot)\|_{L^2,n}\cdot\left(\|(\varphi_t(T_X)-\varphi_t(T))k_x(\cdot)\|_{L^2,n}+2\|\varphi_t(T)k_x(\cdot)\|_{L^2,n}\right),
\end{aligned}\end{equation} 
then by Lemma \ref{empirical norm bound of zeta}, we have 
\begin{equation}
    \begin{aligned} 
        &\sup_{x\in\X}\left|\frac{\sigma^2}{n}\sum_{i=1}^n|\varphi_t(T_X)k_x(x_i)|^2-\frac{\sigma^2}{n}\sum_{i=1}^n|\varphi_t(T)k_x(x_i)|^2\right|\\
        \leq&\sigma^2 S\left(S+2\sup_{x\in\X}\|\varphi_t(T)k_x(\cdot)\|_{L^2,n}\right),
    \end{aligned} 
\end{equation} 
where 
\begin{equation}S=\sqrt{\frac{t^\alpha\log n}{n}}t^{\frac{\alpha}{2}}\log t.\end{equation} 
By (\ref{approximation D of C(x,x)}) and Lemma \ref{operated basis function norm bound}, with probability $1-\O(n^{-10})$, we have 
\begin{equation}\|\varphi_t(T)k_x(\cdot)\|_{L^2,n}\leq Ct^{\frac{\alpha}{2}},\end{equation} 
hence 
\begin{equation}\label{approximation C of C(x,x)}
    \sup_{x\in\X}\left|\frac{\sigma^2}{n}\sum_{i=1}^n|\varphi_t(T_X)k_x(x_i)|^2-\frac{\sigma^2}{n}\sum_{i=1}^n|\varphi_t(T)k_x(x_i)|^2\right|\leq S(S+Ct^{\frac{\alpha}{2}})=o(t^{\frac{1}{\beta}}).
\end{equation}

Finally, combining (\ref{approximation A of C(x,x)}), (\ref{approximation B of C(x,x)}), (\ref{approximation D of C(x,x)}) and (\ref{approximation C of C(x,x)}) together, we obtain that when $n$ is sufficiently great, with probability $1-\O(n^{-10})$, 
\begin{equation}\sup_{x\in\X}\left|\widehat{C}_{n,t}(x,x)-C_t(x,x)\right|=o(t^{\frac{1}{\beta}}).\end{equation} 
\qed

\begin{lemma}\label{expectation of gaussian process}
    For $t=n^\theta$, $\theta\in(0,\beta)$, we have 
    \begin{equation}\E(Z_t)\leq C\sqrt{\log n},\end{equation} 
    where the constant $C>0$ depends only on $d$, $\beta$, $\sigma$, $h$ and $L_k$. 
\end{lemma}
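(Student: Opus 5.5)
Since $W_t$ is a centered Gaussian process with unit variance at every point ($\mathrm{Var}\,W_t(x)=C_t(x,x)/C_t(x,x)=1$), I will bound $\E Z_t=\E\sup_x|W_t(x)|$ with Dudley's entropy integral. This reduces the task to a covering number estimate for $\X$ under the canonical metric
\begin{equation*}
d(x,x')=\bigl(\E|W_t(x)-W_t(x')|^2\bigr)^{1/2}=\Bigl(2-2\,\tfrac{C_t(x,x')}{\sqrt{C_t(x,x)C_t(x',x')}}\Bigr)^{1/2}\le 2 .
\end{equation*}
Because the diameter is at most $2$, the integral runs over $[0,2]$. Everything therefore comes down to showing that $d$ is H\"older in the Euclidean distance, with a constant at most polynomial in $t$.

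\emph{Step 1: H\"older bound for $d$.} Write $\rho(x,x')=2-2C_t(x,x')/\sqrt{C_t(x,x)C_t(x',x')}$ and express it through differences of the normalized functions $u_x=\varphi_t(T)k_x/\sqrt{C_t(x,x)}$ in $L^2(\mu)$: $\rho(x,x')=\sigma^2\|u_x-u_{x'}\|_{L^2}^2$. Then $d(x,x')=\sigma\|u_x-u_{x'}\|_{L^2}\le\sigma\|u_x-u_{x'}\|_\infty$. For the sup norm I will reuse the exact splitting from the proof of Lemma \ref{vc lemma}. One piece uses Assumption \ref{lower bound of C}, Lemma \ref{difference estimation} and Lemma \ref{operated basis function norm bound}. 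The other uses Lemma \ref{Holder coefficient of C} together with the bound $\|\varphi_t(T)k_w\|_\infty\le M_\alpha^2t^\alpha$. Together these give
\begin{equation*}
d(x,x')\le C\,t^{a}|x-x'|^h,\qquad a=-\tfrac{3}{2\beta}+\tfrac{1+5\alpha}{2},
\end{equation*}
with $C$ depending only on $\varepsilon,\sigma,L_k$ (and on $\kappa,\eta$ for the discrete flow).

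\emph{Step 2: entropy.} Since $\X\subset\R^d$ is compact, its Euclidean $r$-covering number is at most $C r^{-d}$. By Step 1, this gives
\begin{equation*}
N(\X,d,u)\le C\bigl(t^{a}/u\bigr)^{d/h},\qquad 0<u\le 2 .
\end{equation*}
Hence $\log N(\X,d,u)\le C\frac{d}{h}\bigl(\log t+\log(1/u)+1\bigr)$.

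\emph{Step 3: Dudley.} The integral gives
\begin{equation*}
\E Z_t\le \E|W_t(x_0)|+C\int_0^2\sqrt{\log N(\X,d,u)}\,du\le C'\bigl(1+\sqrt{\log t}\bigr).
\end{equation*}
Here I use that $\int_0^2\sqrt{\log(1/u)}\,du<\infty$ and that $\sqrt{A+B}\le\sqrt A+\sqrt B$. Finally, with $t=n^\theta$ and $\theta<\beta$ we have $\log t\le\beta\log n$, so $\E Z_t\le C\sqrt{\log n}$. The resulting constant depends only on $d,h,\beta,\sigma,L_k$, plus the $\varepsilon$-dependence absorbed through $M_\alpha$.

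\emph{Main obstacle.} The delicate point is Step 1. The normalization by $\sqrt{C_t}$ has to be controlled uniformly, and this is where Assumption \ref{lower bound of C} enters. The exponent $a$ must stay polynomial in $t$, because only then does it contribute just a $\log t$ to the entropy. I will not try to optimize the exponent, since it only affects the constant through $\log t\lesssim\log n$. If a measurability or separability issue arises, I will take the supremum over a countable dense subset, which is legitimate because Step 1 shows the sample paths are continuous.
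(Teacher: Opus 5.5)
Your proposal is correct and follows essentially the paper's route: Dudley's entropy integral, a H\"older bound on the canonical metric with a constant polynomial in $t$ (via Assumption \ref{lower bound of C}), and Euclidean covering of $\X$, so the entropy contributes only $\sqrt{\log t}\lesssim\sqrt{\log n}$. The only difference is cosmetic. The paper bounds $d^2=2-2\,C_t(x,x')/\sqrt{C_t(x,x)C_t(x',x')}$ directly through Lemma \ref{Holder coefficient of C}, whereas you bound $d=\sigma\|u_x-u_{x'}\|_{L^2}\le\sigma\|u_x-u_{x'}\|_\infty$ using the splitting from Lemma \ref{vc lemma}; your explicit $\E|W_t(x_0)|$ term for the supremum of the absolute value is slightly more careful than the paper.
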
 

\proof 
We first define a semi-metric $d(\cdot,\cdot)$ on $\X$ by 
\begin{equation}d(z_1,z_2)=\sqrt{\E\left(W_t(z_1)-W_t(z_2)\right)^2},\quad\forall z_1,z_2\in\X.\end{equation} 
Let $\mathcal{N}(\X,d,\varepsilon)$ be the $\varepsilon$-covering number of $\X$ with respect to this semi-metric. By Dudley's inequality (Corollary 2.2.8 of \citep{dudley_ineq}), we have 
\begin{equation}\label{dudley's inequality}
    \E(Z_t)\leq C\int_0^\infty\sqrt{\log\mathcal{N}(\X,d,\varepsilon)}d\varepsilon
\end{equation} 
for some universal constant $C>0$. Thus, in order to prove the lemma, it suffices to estimate the covering number $\mathcal{N}(\X,d,\varepsilon)$. 

By the definition of the Gaussian process $W_n$, the random variable $W_n(z_1)-W_n(z_2)$ has mean zero and variance $2-2C_n(z_1,z_2)C_n(z_1,z_1)^{-\frac{1}{2}}C_n(z_2,z_2)^{-\frac{1}{2}}$. Then by Assumption \ref{lower bound of C} and Lemma \ref{Holder coefficient of C}, for any $\alpha=\alpha_0+\varepsilon\in(\alpha_0,1]$, we have 
\begin{equation}\begin{aligned} 
    \frac{1}{2}\E(W_t(z_1)-W_t(z_2))^2&=1-\frac{C_t(z_1,z_2)}{C_t(z_1,z_1)^{\frac{1}{2}}C_t(z_2,z_2)^{\frac{1}{2}}}\\
    &\leq Ct^{-\frac{1}{\beta}}(C_n(z_1,z_1)^{\frac{1}{2}}C_n(z_2,z_2)^{\frac{1}{2}}-C_n(z_1,z_2))\\
    &\leq Ct^{-\frac{1}{\beta}}|C_n(z_1,z_1)^{\frac{1}{2}}C_n(z_2,z_2)^{\frac{1}{2}}-C_n(z_1,z_1)|\\
    &\quad+Ct^{-\frac{1}{\beta}}|C_n(z_1,z_1)-C_n(z_1,z_2)|\\
    &\leq Ct^{-1/\beta}\cdot t^{\frac{1+3\alpha}{2}}|z_1-z_2|^h, 
\end{aligned}\end{equation} 
where $|\cdot|$ is the Euclidean norm in $\R^d$. Again, using the fact that the $\varepsilon_0$-covering number of $(\X,|\cdot|)$ is bounded by $C\varepsilon_0^{-d}$, we obtain that 
\begin{equation}\mathcal{N}(\X,d,\varepsilon)\leq C(t^{\frac{1}{\beta}-\frac{1+3\alpha}{2}}\varepsilon^2)^{-\frac{d}{h}}.\end{equation} 
Since $\E(W_n(z))^2=1$, then the diameter of the semi-metric space $(\X,d)$ is bounded by some universal constant $C_0>0$. Thus, by Dudley's inequality (\ref{dudley's inequality}), we obtain 
\begin{equation}\begin{aligned} 
    \E(Z_t)&\leq C\int_0^{C_0}\sqrt{\log(t^{\frac{1}{\beta}-\frac{1+3\alpha}{2}}\varepsilon^2)^{-\frac{d}{h}}}d\varepsilon=C\int_0^{C_0}\sqrt{\log t+\log\varepsilon}d\varepsilon\leq C\sqrt{\log t}\leq C\sqrt{\log n}.
\end{aligned}\end{equation} 
\qed

\section{Auxiliary Results}

\subsection{Some useful bounds}

\begin{lemma}\label{fractal estimation}
    Let $\lambda>0$. Then for any $\gamma\in[0,1]$, we have 
    \begin{equation}\sup_{r\geq 0}\frac{r^\gamma}{r+\lambda}\leq\lambda^{\gamma-1}.\end{equation} 
\end{lemma}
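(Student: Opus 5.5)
The statement is elementary: for $\lambda>0$, $\gamma\in[0,1]$, show $\sup_{r\ge0} r^\gamma/(r+\lambda)\le\lambda^{\gamma-1}$. I will handle the endpoints directly and then treat the interior case with a weighted AM--GM (Young) inequality. No calculus maximization is needed.

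\textbf{Endpoints.} For $\gamma=0$ we have $1/(r+\lambda)\le 1/\lambda=\lambda^{-1}$, since $r\ge0$. For $\gamma=1$ we have $r/(r+\lambda)\le 1=\lambda^0$. The case $r=0$ is also trivial, with the convention $0^0=1$ when $\gamma=0$.

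\textbf{Interior case $\gamma\in(0,1)$.} By weighted AM--GM,
\begin{equation*}
r^\gamma\lambda^{1-\gamma}\le \gamma r+(1-\gamma)\lambda\le r+\lambda .
\end{equation*}
Dividing by $\lambda^{1-\gamma}(r+\lambda)>0$ gives
\begin{equation*}
\frac{r^\gamma}{r+\lambda}\le\lambda^{\gamma-1}
\end{equation*}
for every $r\ge0$. Taking the supremum over $r\ge 0$ yields the claim.

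An alternative route is to write $r^\gamma/(r+\lambda)=\lambda^{\gamma-1}u^\gamma/(u+1)$ with $u=r/\lambda$. One then notes $u^\gamma\le\max(1,u)\le u+1$, which gives the same bound with constant exactly $1$.

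There is no real obstacle here. The only point requiring care is to keep the constant equal to $1$ rather than some $C(\gamma)$. The AM--GM step does this automatically, because $\gamma r+(1-\gamma)\lambda\le r+\lambda$.
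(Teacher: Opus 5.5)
Your proof is correct and takes essentially the paper's approach. The paper's one-line argument is exactly your alternative route: it notes $(r/\lambda)^\gamma\le 1+r/\lambda=(r+\lambda)/\lambda$ and divides. Your weighted AM--GM version gives the same constant-$1$ bound equally well, passing through the slightly sharper intermediate bound $\gamma r+(1-\gamma)\lambda$.
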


\proof 
The result follows directly from the following estimation: 
\begin{equation}\left(\frac{r}{\lambda}\right)^\gamma\leq 1+\frac{r}{\lambda}=\frac{r+\lambda}{\lambda}.\end{equation} 
\qed 

\begin{lemma}\label{estimation of filter function 2} 
    Recall that the filter functions $\varphi_t^{con}$ and $\varphi_t^{dis}$ are defined in Definition \ref{filter function def}. There exists a universal constant $E>0$ such that for all $t>1$ and $r>0$; 
    \begin{equation}\varphi_t^{con}(r)\leq\frac{E}{r+\lambda},\quad\lambda=\frac{1}{t};\end{equation}
    There exists a constant $E>0$ depending only on $\eta$ and $\kappa$ such that for any $t>1$ and $r\in(0,\kappa^2]$, we have 
    \begin{equation} \varphi_t^{dis}(r)\leq\frac{E}{r+\lambda},\quad\lambda=\frac{1}{t}.\end{equation}
\end{lemma}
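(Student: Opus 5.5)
The plan is to prove the two bounds separately, in each case splitting $r>0$ into a small-$r$ regime $r\le\lambda=1/t$ and a large-$r$ regime $r\ge\lambda$. In the small regime we show $\varphi_t(r)\le t$; in the large regime we show $\varphi_t(r)\le 1/r$. These combine as
\begin{equation}
\varphi_t(r)\le \min\{t,1/r\}=\frac{1}{\max\{\lambda,r\}}\le\frac{2}{r+\lambda},
\end{equation}
since $r+\lambda\le 2\max\{r,\lambda\}$. This gives the constant $E=2$ for the continuous filter, and potentially for the discrete one as well.

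\textbf{Continuous case.} Here $\varphi_t^{con}(r)=(1-e^{-tr})/r$. Using $1-e^{-u}\le\min\{u,1\}$ for $u\ge0$, we get $\varphi_t^{con}(r)\le\min\{t,1/r\}$ directly. The display above then yields $E=2$, and this constant is universal.

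\textbf{Discrete case.} Here $\varphi_t^{dis}(r)=(1-(1-\eta r)^{t/\eta})/r$ for $r\in(0,\kappa^2]$. By Assumption \ref{learning rate assumption} we have $\eta r\le\eta\kappa^2<1/2$, so $1-\eta r\in(1/2,1)$. We will use two elementary bounds.
\begin{enumerate}[(i)]
\item Since $0<(1-\eta r)^{t/\eta}<1$, the numerator lies in $(0,1)$, so $\varphi_t^{dis}(r)\le 1/r$.
\item Bernoulli's inequality $(1-x)^m\ge 1-mx$, with $x=\eta r\in[0,1)$ and $m=t/\eta$, gives $1-(1-\eta r)^{t/\eta}\le t r$, and hence $\varphi_t^{dis}(r)\le t$.
\end{enumerate}
Bernoulli's inequality needs $m\ge1$. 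When $t$ is a multiple of $\eta$ and $t>1$, we have $m=t/\eta\ge1$ because $\eta<1/(2\kappa^2)$. We still need to check that $\eta<1/(2\kappa^2)$ in fact gives $\eta\le 1$ under the normalisation in use, or else fall back on the general bound $(1-x)^m\ge 1-mx$, which holds for all real $m\ge1$ and $x\in[0,1]$.

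The only point needing care is exactly this exponent issue: $t/\eta$ may be non-integer, or smaller than $1$ if $\eta$ is large. If $m<1$, we instead use concavity: $1-(1-x)^m\le 1-(1-x)=x$, which gives $\varphi_t^{dis}(r)\le\eta$. Combining this with $\varphi_t^{dis}(r)\le 1/r$ and with $t>1$, we obtain $\varphi_t^{dis}(r)\le \max\{1,\eta\}\,t$ in the small regime. The final constant is then $E=2\max\{1,\eta\}$, which depends only on $\eta$ (and on $\kappa$ only through the restriction $r\le\kappa^2$ used to keep $1-\eta r$ positive).
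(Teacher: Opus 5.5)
Your proof is correct, and it follows a slightly different route from the paper's.

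\textbf{Shared structure.} Both arguments rest on two facts: $r\varphi_t(r)\le 1$ and $\varphi_t(r)\lesssim t$. The paper multiplies through and bounds the sum $\varphi_t(r)(r+\lambda)=r\varphi_t(r)+\varphi_t(r)/t$. You instead combine the two bounds through $\min\{t,1/r\}\le 2/(r+\lambda)$. These are equivalent.

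\textbf{Where you differ: the discrete bound $\varphi_t^{dis}(r)\lesssim t$.} The paper compares with the exponential, $(1-\eta r)^{t/\eta}\ge e^{-Crt}$. One can take $C=(1-\eta\kappa^2)^{-1}$, using $\log(1-x)\ge -x/(1-x)$. This comparison holds for every real exponent $t/\eta>0$. As a result there is no case split, the hypothesis $t>1$ is never used, and the constant is $E=1+C<3$ under Assumption \ref{learning rate assumption}.

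Bernoulli's inequality reverses for exponents below $1$, which is why you need the extra branch $t/\eta<1$. There you use $y^m\ge y$ on $(0,1]$ together with $t>1$, and pay the factor $\max\{1,\eta\}$. What your route buys is a constant $2\max\{1,\eta\}$ that does not depend on $\kappa$. What it costs is that this constant can be large when $\eta$ is large.

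\textbf{A paragraph to delete.} Remove the paragraph beginning ``Bernoulli's inequality needs $m\ge1$''; its reasoning is wrong on two counts:
\begin{enumerate}
\item When $t$ is a positive multiple of $\eta$, $m\ge 1$ is automatic. It has nothing to do with $\eta<1/(2\kappa^2)$.
\item That assumption does not force $\eta\le 1$. For example, $\kappa^2=1/4$ allows any $\eta<2$.
\end{enumerate}
None of this leaves a gap, because your $m<1$ branch already handles exactly the case that paragraph worries about.
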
 

\proof 
First, we note that for any $r>0$, 
\begin{equation}\frac{1-e^{-tr}}{r}(r+\frac{1}{t})=1-e^{-tr}+\frac{1-e^{-tr}}{tr}\leq 1+\sup_{x\in(0,\infty)}\frac{1-e^{-x}}{x}<\infty\end{equation}
and for any $r\in(0,\kappa^2]$, we have 
\begin{equation}\begin{aligned} 
    \frac{1-(1-\eta r)^{t/\eta}}{r}(r+\frac{1}{t})&=1-(1-\eta r)^{t/\eta}+\frac{1-(1-\eta r)^{t/\eta}}{rt}\\
    &\leq 1+\frac{1-(1-\eta r)^{t/\eta}}{rt}\\
    &\leq 1+\frac{1-e^{-Crt}}{rt}\\
    &\leq 1+\sup_{x\in(0,\infty)}\frac{1-e^{-Cx}}{x}<\infty,
\end{aligned}\end{equation}  
where we use the inequality $(1-\eta r)^{t/\eta}\geq e^{-Crt}$, $\forall r\in(0,\kappa^2]$, $t>0$ for some $C>0$ depending only on $\eta$ and $\kappa$. 
\qed 

\begin{lemma}\label{estimation of filter function} 
    For any $\theta\in(0,1)$ and $t\in(0,\infty)$, we have 
    \begin{equation}\sup_{r>0}\varphi_t^{con}(r)r^{\theta}\leq Ct^{1-\theta}\end{equation} 
    for some universal constant $C>0$, and 
    \begin{equation}\sup_{r\in(0,\kappa^2]}\varphi_t^{dis}(r)r^{\theta}\leq Ct^{1-\theta}\end{equation} 
    for some constant $C>0$ depending only on $\eta$ and $\kappa$. 
\end{lemma}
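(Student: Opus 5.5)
We need to prove Lemma (estimation of filter function): $\sup_{r>0}\varphi_t^{con}(r)r^\theta\le Ct^{1-\theta}$, and the analogous discrete bound on $(0,\kappa^2]$.

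The plan is to reduce both bounds to an elementary one-variable inequality by rescaling $x=tr$. For the continuous filter, write
\begin{equation*}
\varphi_t^{con}(r)r^\theta=\frac{1-e^{-tr}}{r}\,r^\theta=t^{1-\theta}\cdot\frac{1-e^{-x}}{x^{1-\theta}},\qquad x=tr\in(0,\infty).
\end{equation*}
It then suffices to show that $g(x)=(1-e^{-x})x^{\theta-1}$ is bounded on $(0,\infty)$, with a bound depending only on $\theta$. We split at $x=1$. For $x\le 1$, use $1-e^{-x}\le x$, so $g(x)\le x^\theta\le 1$. For $x\ge1$, use $1-e^{-x}\le 1$, so $g(x)\le x^{\theta-1}\le 1$. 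Hence $C=1$ works, and it is in fact universal.

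For the discrete filter, set $r\in(0,\kappa^2]$. Assumption \ref{learning rate assumption} gives $0<\eta r\le \eta\kappa^2<\tfrac12$, so $1-\eta r\in(\tfrac12,1)$. We use two bounds on $\psi_t^{dis}(r)=(1-\eta r)^{t/\eta}$:
\begin{enumerate}[(i)]
\item Bernoulli's inequality, for $t/\eta\ge1$: $1-(1-\eta r)^{t/\eta}\le (t/\eta)\eta r=tr$.
\item Trivially, $1-(1-\eta r)^{t/\eta}\le 1$.
\end{enumerate}
Then $\varphi_t^{dis}(r)r^\theta\le \min\{tr,1\}\,r^{\theta-1}$, and the same split at $tr=1$ gives the bound $t^{1-\theta}$.

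The only subtlety is bound (i) when $t/\eta$ is non-integer or smaller than $1$. Here I would use concavity instead. Write $(1-\eta r)^{t/\eta}=e^{(t/\eta)\log(1-\eta r)}$ and use $\log(1-u)\ge -u/(1-u)\ge -2u$ for $u<\tfrac12$. This yields $1-(1-\eta r)^{t/\eta}\le 1-e^{-2tr}\le 2tr$, which loses only the constant $2$. The same computation shows why the restriction $r\le\kappa^2$ and Assumption \ref{learning rate assumption} are needed: they keep the base positive and bounded away from $0$, and this is also the source of the constant's dependence on $\eta,\kappa$.

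I expect no real obstacle. The only care needed is the non-integer exponent handled by the logarithmic bound, after which the constant is $C=2$ in the discrete case.
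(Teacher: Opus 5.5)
Your argument is correct and is essentially the paper's. The paper first bounds $\varphi_t(r)\le E/(r+1/t)$ (Lemma \ref{estimation of filter function 2}); its discrete case uses the same estimate $(1-\eta r)^{t/\eta}\ge e^{-Ctr}$ that you derive from $\log(1-u)\ge -u/(1-u)$. It then applies $r^\theta/(r+\lambda)\le\lambda^{\theta-1}$ (Lemma \ref{fractal estimation}), which is your split at $tr=1$ in different packaging, since $\min\{t,1/r\}\asymp 1/(r+1/t)$.

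Your direct version has two small advantages: explicit constants, and coverage of all $t>0$ as stated, whereas the cited auxiliary lemma is stated only for $t>1$. Note also that Bernoulli's inequality already handles non-integer exponents $t/\eta\ge 1$, so only the case $t/\eta<1$ actually needs your logarithmic bound.
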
 

\proof 
By Lemma \ref{estimation of filter function 2}, we have 
\begin{equation}(1-e^{-tr})r^{\theta-1}\leq C\frac{r^\theta}{r+\lambda},\quad (1-(1-\eta r)^{t/\eta})r^{\theta-1}\leq C\frac{r^\theta}{r+\lambda}\end{equation} 
where $\lambda=\frac{1}{t}$. By Lemma \ref{fractal estimation}, we have 
\begin{equation}\frac{r^\theta}{r+\lambda}\leq\lambda^{\theta-1}=t^{1-\theta}.\end{equation} 
In conclusion, 
\begin{equation}(1-e^{-tr})r^{\theta-1}\leq Ct^{1-\theta}\end{equation} 
for any $\theta\in(0,1)$ and $r,t>0$, and 
\begin{equation}(1-(1-\eta r)^{t/\eta})r^{\theta-1}\leq Ct^{1-\theta}\end{equation} 
for any $\theta\in(0,1)$ and $r\in(0,\kappa^2]$, $t>0$ 
\qed 

\begin{lemma}\label{exponential estimation}
    Recall that the remainder functions $\psi_t^{con}$ and $\psi_t^{dis}$ are defined in Definition \ref{filter function def}. 

    (1) For any $s>0$, let $r^*=\frac{s}{t}$ and $F_s=(\frac{s}{e})^s$. Then the function  $r\mapsto r^s\psi_t^{con}(r)$ is increasing on $r\in[0,r^*]$, is decreasing on $r\in[r^*,\infty)$, and 
    \begin{equation} 
        \sup_{r\geq 0}r^s\psi_t^{con}(r)=(r^*)^s\psi_t^{con}(r^*)=F_s t^{-s}; 
    \end{equation} 
    
    (2) For any $s>0$, let $r^*=\frac{s}{\eta(t+s)}$. Then the function $r\mapsto r^s\psi_t^{dis}(r)$, $r\in(0,\kappa^2]$ is increasing when $r<r^*$, is decreasing when $r>r^*$, and there exists a constant $F_s>0$ depending only on $s$, $\eta$ and $\kappa^2$ such that 
    \[\sup_{r\in(0,\kappa^2]}r^s\psi_t^{dis}(r)\leq F_s t^s.\] 
\end{lemma}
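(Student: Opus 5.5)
For part (1), I will simply do one-variable calculus on $g(r)=r^s e^{-tr}$ for $r\ge 0$. Its logarithmic derivative is $\frac{d}{dr}\log g(r)=\frac{s}{r}-t$. This is positive for $r<s/t$ and negative for $r>s/t$. That gives the claimed monotonicity, with the maximizer at $r^*=s/t$. Substituting back gives
\begin{equation*}
g(r^*)=\left(\frac{s}{t}\right)^s e^{-s}=\left(\frac{s}{e}\right)^s t^{-s}=F_s t^{-s}.
\end{equation*}
The case $r=0$ is trivial since $g(0)=0$.

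For part (2), I first check that everything is well defined. By Assumption \ref{learning rate assumption}, for $r\in(0,\kappa^2]$ we have $1-\eta r\in[\frac12,1)$. So $\psi^{dis}_t(r)=(1-\eta r)^{t/\eta}$ is positive and smooth there, and $\log(1-\eta r)$ is finite.

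For the monotonicity claim I again compute the logarithmic derivative of $h(r)=r^s(1-\eta r)^{t/\eta}$:
\begin{equation*}
\frac{d}{dr}\log h(r)=\frac{s}{r}-\frac{t}{1-\eta r}.
\end{equation*}
This is a strictly decreasing function of $r$ on $(0,1/\eta)$. It vanishes exactly when $s(1-\eta r)=tr$, i.e. at $r=\frac{s}{t+\eta s}$. So $h$ increases before this critical point and decreases after it. I expect the main (only) obstacle to be reconciling this critical point with the expression $r^*=\frac{s}{\eta(t+s)}$ written in the statement. I would state the critical point as the one obtained from this computation, and note that only the unimodality is used later, not the exact location. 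If the critical point exceeds $\kappa^2$, then $h$ is simply increasing on the whole interval, which is harmless for the supremum bound.

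For the supremum bound, I will avoid evaluating $h$ at its critical point and instead compare with part (1). The elementary inequality $\log(1-u)\le -u$ for $u\in[0,1)$ gives
\begin{equation*}
(1-\eta r)^{t/\eta}=\exp\Big(\tfrac{t}{\eta}\log(1-\eta r)\Big)\le e^{-tr}.
\end{equation*}
Hence $r^s\psi_t^{dis}(r)\le r^s\psi_t^{con}(r)\le (s/e)^s t^{-s}$ for all $r\in(0,\kappa^2]$. This yields the decay $F_s t^{-s}$ used throughout the paper (e.g. in Lemma \ref{2nd term of bias}); the exponent $t^{s}$ in the displayed statement must be read as $t^{-s}$. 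The resulting constant even depends only on $s$, which is stronger than the stated dependence on $s,\eta,\kappa$.
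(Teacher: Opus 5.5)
Your argument is correct, and so are both of your corrections to the statement. The paper's own proof records the derivative $\frac{d}{dr}\bigl(r^s(1-\eta r)^{t/\eta}\bigr)=(s-(t+\eta s)r)(1-\eta r)^{t/\eta-1}r^{s-1}$ (the printed exponent $t-1$ is a typo). Its zero is $s/(t+\eta s)$, which is exactly your critical point. The stated $\frac{s}{\eta(t+s)}$ would be the maximizer for the exponent $t$ rather than $t/\eta$. The bound must also read $F_st^{-s}$, since that is how it is used in Lemmas \ref{2nd term of bias} and \ref{1st term of bias}.

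Part (1) and the monotonicity in part (2) are proved as in the paper, by the sign of the derivative. For the supremum in part (2) you take a different route. The paper only writes down the derivative, so its intended argument is to evaluate at the maximizer, obtain $\bigl(\frac{s}{t+\eta s}\bigr)^s\bigl(\frac{t}{t+\eta s}\bigr)^{t/\eta}$, and then bound the second factor. You instead dominate $\psi_t^{dis}\le\psi_t^{con}$ pointwise via $\log(1-u)\le -u$ and fall back on part (1).

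What your comparison buys: it is shorter and does not require locating the maximizer at all, so the mislabelled $r^*$ is irrelevant to it. It also yields the constant $(s/e)^s$, the same as in the continuous case and independent of $\eta$ and $\kappa$. What the critical-point evaluation buys: the exact value of the supremum, which is more information than the paper ever uses.
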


\proof 
The result follows directly from 
\begin{equation} 
    \frac{d}{dr}(r^se^{-tr})=(s-tr)r^{s-1}e^{-tr}
\end{equation}
and 
\begin{equation} 
    \frac{d}{dr}(r^s(1-\eta r)^{t/\eta})=(s-(t+\eta s)r)(1-\eta r)^{t-1}r^{s-1}.
\end{equation}
\qed

\begin{lemma}\label{norm bound of T} 
    For any $\gamma\in[0,\infty)$, we have 
    \begin{equation}\|T\|_{[\H]^\gamma}\leq \kappa^2.\end{equation} 
\end{lemma}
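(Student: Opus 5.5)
The plan is to work on the spectral side. Any $f\in[\H]^\gamma$ has an expansion $f=\sum_i f_i\lambda_i^{\gamma/2}e_i$ with $\|f\|_{[\H]^\gamma}^2=\sum_i f_i^2$. The goal is to show that $T$ acts diagonally in this basis with eigenvalues $\lambda_i$, and then bound its operator norm by $\lambda_1$. Finally, $\lambda_1$ is controlled by the kernel bound (\ref{bound of kernel}).

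First, note that $Te_i=\lambda_ie_i$ in $L^2(\X)$ by the definition of the integral operator (\ref{integral operator definition}) and the Mercer decomposition. For $f\in[\H]^\gamma$, linearity and continuity of $T$ on $L^2$ then give
\begin{equation*}
Tf=\sum_{i=1}^\infty \lambda_i f_i\lambda_i^{\gamma/2}e_i .
\end{equation*}
Its $[\H]^\gamma$-coordinates are $\lambda_i f_i$. Since $\{\lambda_i f_i\}\in l^2$, the image lies in $[\H]^\gamma$ (indeed in $[\H]^{\gamma+2}$), and
\begin{equation*}
\|Tf\|_{[\H]^\gamma}^2=\sum_{i=1}^\infty\lambda_i^2f_i^2\le \lambda_1^2\|f\|_{[\H]^\gamma}^2 .
\end{equation*}
Here we use that the $\lambda_i$ are nonincreasing and positive. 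This gives $\|T\|_{[\H]^\gamma}\le\lambda_1$, with equality attained at $f=\lambda_1^{\gamma/2}e_1$.

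It remains to show $\lambda_1\le\kappa^2$. This argument already appears in the proof of Lemma \ref{basis function norm bound}:
\begin{equation*}
\lambda_1=\lambda_1\int_\X e_1(x)^2d\mu(x)\le\int_\X\sum_i\lambda_ie_i(x)^2d\mu(x)=\int_\X k(x,x)d\mu(x)\le\kappa^2 .
\end{equation*}
The first step uses $\|e_1\|_{L^2}=1$, and the inequality drops the nonnegative terms $i\ge2$. Alternatively, $\lambda_1\le\sum_i\lambda_i=\tr\,T\le\kappa^2$.

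The only delicate point is justifying the termwise action of $T$ on the series when $\gamma$ is small, say $\gamma=0$, where $f$ is merely in $L^2$. This is handled by the $L^2$-boundedness of $T$: the partial sums converge in $L^2$, and $T$ commutes with the limit. No further obstacle is expected.
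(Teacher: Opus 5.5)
Your proof is correct and takes the same route as the paper, which likewise identifies $\|T\|_{[\H]^\gamma}$ with the top eigenvalue $\lambda_1$ and bounds $\lambda_1=\lambda_1\int_\X e_1^2\le\int_\X k(x,x)\le\kappa^2$. The only difference is that you spell out the diagonal action of $T$ in the $[\H]^\gamma$ basis, which the paper takes for granted.
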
 

\proof 
For any $\gamma\in[0,\infty)$, the greatest eigenvalue of $T$ is $\lambda_1$, which is computed by 
\begin{equation}\lambda_1=\int_\X\lambda_1e_1(x)^2dx\leq\int_\X\sum_{j=1}^\infty\lambda_je_j(x)^2dx=\int_\X k(x,x)dx\leq\kappa^2.\end{equation} 
\qed 

\begin{lemma}\label{effect dimension bound} 
    The effect dimension of $\H$ \citep{optimal_rates_krr}, which is defined by 
    \begin{equation}\mathcal{N}_1(\lambda)=\sum_{i=1}^\infty\frac{\lambda_i}{\lambda_i+\lambda},\end{equation} 
    satisfies 
    \begin{equation}C_1\lambda^{-\frac{1}{\beta}}\leq\mathcal{N}_1(\lambda)\leq C_2\lambda^{-\frac{1}{\beta}}\end{equation} 
    for some universal constants $C_1,C_2>0$. 
\end{lemma}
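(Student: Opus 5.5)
The statement to prove is Lemma \ref{effect dimension bound}: the effective dimension satisfies $\mathcal{N}_1(\lambda)\asymp\lambda^{-1/\beta}$. I would reduce it, via Assumption \ref{edr}, to a sum of the explicit function $i\mapsto \frac{i^{-\beta}}{i^{-\beta}+\lambda}$ and compare that sum with an integral. Throughout I take $\lambda\in(0,1]$ (the regime $\lambda=1/t$ with $t>1$ used in the paper), and I use $\beta>1$, which is implicit because $\alpha_0=1/\beta<1$. The constants will depend on $c,C,\beta$; "universal" should be read in that sense.

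\textbf{Monotone comparison.} The map $u\mapsto\frac{u}{u+\lambda}$ is increasing on $u>0$. Assumption \ref{edr} therefore gives
\[
\frac{ci^{-\beta}}{ci^{-\beta}+\lambda}\leq\frac{\lambda_i}{\lambda_i+\lambda}\leq\frac{Ci^{-\beta}}{Ci^{-\beta}+\lambda}.
\]
Replacing $\lambda$ by $\lambda/c$ or $\lambda/C$, it suffices to show that $S(\mu):=\sum_{i\geq1}\frac{i^{-\beta}}{i^{-\beta}+\mu}$ satisfies $S(\mu)\asymp\mu^{-1/\beta}$ for $\mu$ in a bounded range. The rescaling changes the bound only by the constant factors $c^{1/\beta}$ and $C^{1/\beta}$.

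\textbf{Splitting the sum.} Set $N=\lfloor\mu^{-1/\beta}\rfloor$, so that $i^{-\beta}\geq\mu$ exactly when $i\leq\mu^{-1/\beta}$.

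For the upper bound, each of the first $N$ terms is at most $1$, contributing at most $\mu^{-1/\beta}$. For the tail $i>N$, bound each term by $i^{-\beta}/\mu$; since $\beta>1$,
\[
\sum_{i>N}\frac{i^{-\beta}}{\mu}\lesssim\mu^{-1}(N+1)^{1-\beta}\lesssim\mu^{-1}\mu^{(\beta-1)/\beta}=\mu^{-1/\beta}.
\]

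For the lower bound, each term with $i\leq N$ is at least $1/2$, so $S(\mu)\geq N/2$. This gives $S(\mu)\gtrsim\mu^{-1/\beta}$ whenever $\mu^{-1/\beta}\geq2$. For $\mu$ between that threshold and the bounded range, the first term alone, $\frac{1}{1+\mu}$, is bounded below by a constant, while $\mu^{-1/\beta}$ is bounded above, so the bound still holds.

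\textbf{Main obstacle.} The only delicate point is the small-index/large-$\lambda$ edge case and keeping track of the constants. The tail-sum estimate itself is routine integral comparison.
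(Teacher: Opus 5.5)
Your proof is correct. After the common first step it takes a different route from the paper.

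Both proofs use the monotonicity of $u\mapsto u/(u+\lambda)$ to replace $\lambda_i$ by $c\,i^{-\beta}$ or $C\,i^{-\beta}$. The paper then makes a single integral comparison. Since $i\mapsto C/(C+\lambda i^{\beta})$ is decreasing, the sum is at most $\int_0^\infty \frac{C}{C+\lambda u^\beta}\,du$. The substitution $s=\lambda^{1/\beta}u$ then pulls out $\lambda^{-1/\beta}$ times the constant $\int_0^\infty\frac{C}{C+s^\beta}\,ds$, which is finite because $\beta>1$. The paper dismisses the lower bound as ``similar''. You instead split the sum at $N=\lfloor\mu^{-1/\beta}\rfloor$, count the head, and bound the tail by $\mu^{-1}\sum_{i>N}i^{-\beta}$.

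The paper's route is shorter and gives an explicit constant. Yours makes visible that $\mathcal{N}_1(\lambda)$ is essentially the number of eigenvalues above $\lambda$. Both upper-bound arguments in fact work for all $\lambda>0$.

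Where your version earns its keep is the lower bound. The paper's analogous comparison only yields $\lambda^{-1/\beta}\int_{\lambda^{1/\beta}}^\infty\frac{c}{c+s^\beta}\,ds$, which is $\gtrsim\lambda^{-1/\beta}$ only for bounded $\lambda$. The lower bound genuinely fails as $\lambda\to\infty$, because $\mathcal{N}_1(\lambda)\le\lambda^{-1}\sum_i\lambda_i=O(\lambda^{-1})$ decays faster than $\lambda^{-1/\beta}$ when $\beta>1$. So your restriction to $\lambda\in(0,1]$ (the regime $\lambda=1/t$ in which the lemma is used) is a correction the stated lemma needs. The same goes for your remark that the constants depend on $c$, $C$, $\beta$ rather than being universal.
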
 

\proof 
By Assumption \ref{edr}, there exist constants $c,C>0$ such that 
\begin{equation}ci^{-\beta}\leq\lambda_i\leq Ci^{-\beta},\end{equation} 
hence 
\begin{equation}
    \begin{aligned} 
        \mathcal{N}_1(\lambda)&=\sum_{i=1}^\infty\frac{\lambda_i}{\lambda_i+\lambda}\leq\sum_{i=1}^\infty\frac{Ci^{-\beta}}{Ci^{-\beta}+\lambda}=\sum_{i=1}^\infty\frac{C}{C+\lambda i^{\beta}}\\
        &\leq \int_0^\infty\frac{C}{C+\lambda t^\beta}dt=\lambda^{-\frac{1}{\beta}}\int_0^\infty\frac{C}{C+s^\beta}ds=C_2\lambda^{-\frac{1}{\beta}}.
    \end{aligned} 
\end{equation} 
The lower bound for $\mathcal{N}_1(\lambda)$ is similar. 
\qed

\begin{lemma}\label{difference estimation}
    For any $f\in\H$ and $x,x'\in\X$, we have 
    \begin{equation}|f(x)-f(x')|\leq \sqrt{L_k}\|f\|_\H|x-x'|^h.\end{equation}
\end{lemma}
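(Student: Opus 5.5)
The proof will go through the reproducing property and the Cauchy--Schwarz inequality in $\H$. This reduces the Hölder regularity of every $f\in\H$ to the Hölder regularity of the kernel on the diagonal.

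\emph{Step 1.} For $f\in\H$ and $x,x'\in\X$, the reproducing property (\ref{reproducing property}) gives $f(x)-f(x')=\langle f,k_x-k_{x'}\rangle_\H$. Hence
\begin{equation*}
|f(x)-f(x')|\leq\|f\|_\H\,\|k_x-k_{x'}\|_\H .
\end{equation*}
It therefore suffices to show that $\|k_x-k_{x'}\|_\H^2\leq L_k|x-x'|^{2h}$.

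\emph{Step 2.} Expanding the norm and using the symmetry $k(x,x')=k(x',x)$,
\begin{equation*}
\|k_x-k_{x'}\|_\H^2=k(x,x)-2k(x,x')+k(x',x')=\bigl[k(x,x)-k(x',x)\bigr]-\bigl[k(x,x')-k(x',x')\bigr].
\end{equation*}
Each bracket is an increment of $k$ in which only the first argument moves from $x$ to $x'$. The displacement $(x,\cdot)\mapsto(x',\cdot)$ has Euclidean length exactly $|x-x'|$, so Assumption \ref{Holder assumption} bounds each bracket by $L_k|x-x'|^{2h}$.

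\emph{Step 3 (main obstacle).} Bounding the two brackets separately by the triangle inequality only yields $2L_k|x-x'|^{2h}$, which loses a factor $\sqrt2$ against the stated constant. To reach exactly $L_k$, the plan is to exploit that the expression is a \emph{difference} of two first-argument increments, namely $\Phi(x)-\Phi(x')$ with $\Phi(u)=k(x,u)-k(x',u)$, rather than a sum.

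The first thing I would try is to write the whole expression as a single increment of $k$ between two points of $\X\times\X$ at distance $|x-x'|$. If that does not close, the fallback is to use the nonnegativity $\|k_x-k_{x'}\|_\H^2\geq0$ together with the sign structure of $\Phi$. The goal there is to show that at most one of the two brackets contributes with a positive sign, so that the total is at most $\max$ of the two brackets and hence at most $L_k|x-x'|^{2h}$.

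This sign bookkeeping is the step I expect to be delicate. If it cannot be made rigorous, the honest constant from Steps 1--2 is $\sqrt{2L_k}$. That constant is harmless for every later application, since $L_k$ is only ever a multiplicative constant absorbed into $C$.

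Finally, taking square roots in Step 2 and inserting the result into Step 1 gives $|f(x)-f(x')|\leq\sqrt{L_k}\|f\|_\H|x-x'|^h$.
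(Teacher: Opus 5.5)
Your Steps 1--2 are correct and already give a complete proof, but with constant $\sqrt{2L_k}$. The reproducing property and Cauchy--Schwarz reduce the claim to bounding $\|k_x-k_{x'}\|_\H^2=k(x,x)-2k(x,x')+k(x',x')$. Each of your two first-argument increments is at most $L_k|x-x'|^{2h}$ by Assumption \ref{Holder assumption}. This is also all the paper's proof achieves. It too ends at $2L_k|x-x'|^{2h}$, and it gets there through the incorrect identity $\|k_x-k_{x'}\|_\H^2=k(x,x)k(x',x')-k(x,x')^2$ (a Gram determinant, not the squared distance) together with an implicit $\kappa\le 1$. So your Step 2 is in fact the correct form of that computation.

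The gap is Step 3 together with your last sentence. Taking square roots in Step 2 gives $\sqrt{2L_k}$, not $\sqrt{L_k}$, and neither of your proposed repairs can remove the factor $2$.

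\emph{Sign bookkeeping.} This fails whenever $k(x,x)=k(x',x')$, for instance for every translation-invariant kernel. Then both contributions $k(x,x)-k(x,x')$ and $k(x',x')-k(x,x')$ are nonnegative by $k(x,x')\le\sqrt{k(x,x)k(x',x')}$.

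\emph{Single increment.} No rewriting can work, because the constant $\sqrt{L_k}$ is false under Assumption \ref{Holder assumption}. On $\X=[0,3\pi/4]$ take $k(x,y)=\cos(x-y)$ and $h=\frac12$. The gradient of $(x_1,x_2)\mapsto\cos(x_1-x_2)$ is $-\sin(x_1-x_2)(1,-1)$, so the sharp constant is $L_k=\sqrt2$. Yet for $x=0$, $x'=3\pi/4$ one gets $\|k_x-k_{x'}\|_\H^2=2+\sqrt2\approx 3.41$, which exceeds $L_k|x-x'|=\frac{3\sqrt2\pi}{4}\approx 3.33$. Since $\sup_{\|f\|_\H\le 1}|f(x)-f(x')|=\|k_x-k_{x'}\|_\H$, the lemma fails for $f=(k_x-k_{x'})/\|k_x-k_{x'}\|_\H$.

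Assumption \ref{edr} does not rescue the statement either. Adding $\epsilon k_0$, for a Lipschitz kernel $k_0$ with the right eigenvalue decay, only increases the left side and moves $L_k$ by $O(\epsilon)$.

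So drop Step 3 and state the result as $|f(x)-f(x')|\le\sqrt{2L_k}\,\|f\|_\H|x-x'|^h$. As you note, the extra $\sqrt2$ is absorbed into the constants in every later use of the lemma.
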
 

\proof 
By direct computations, we obtain 
\begin{equation}|f(x)-f(x')|=|\langle f,k(x,\cdot)-k(x',\cdot)\rangle_\H|\leq\|f\|_\H\|k(x,\cdot)-k(x',\cdot)\|_\H\end{equation} 
and 
\begin{equation}\begin{aligned} 
    \|k(x,\cdot)-k(x',\cdot)\|_\H^2&=k(x,x)k(x',x')-k(x,x')^2\\
    &\leq k(x,x)|k(x',x')-k(x,x')|+k(x,x')|k(x,x)-k(x,x')|\\
    &\leq 2L_k|x-x'|^{2h}.
\end{aligned}\end{equation}
\qed 

\begin{lemma}\label{exchange of x and x_i} 
    Recall that $X=(x_1,\dots,x_n)$ is the matrix of samples. For any $x\in\X$ and $x_i$ being one of the samples, we have 
    \begin{equation}\varphi_t(T_X)k_{x_i}(x)=\varphi_t(T_X)k_x(x_i).\end{equation} 
\end{lemma}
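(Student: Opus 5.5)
The identity $\varphi_t(T_X)k_{x_i}(x)=\varphi_t(T_X)k_x(x_i)$ should follow from self-adjointness of $T_X$ on $\H$ together with the reproducing property (\ref{reproducing property}). The plan is to write both sides as RKHS inner products and move $\varphi_t(T_X)$ from one slot to the other.

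First I check that $T_X$ is a bounded, positive, self-adjoint operator on $\H$. For $f,g\in\H$,
\begin{equation*}
\langle T_Xf,g\rangle_\H=\frac{1}{n}\sum_{j=1}^n f(x_j)\langle k_{x_j},g\rangle_\H=\frac{1}{n}\sum_{j=1}^n f(x_j)g(x_j),
\end{equation*}
which is symmetric in $f,g$ and nonnegative when $f=g$. Hence $T_X$ has spectrum in $[0,\|T_X\|_\H]\subset[0,\kappa^2]$, using Lemma \ref{operator norm bound} with $\gamma=1$ and $M_1\le\kappa$. Moreover, $T_X$ has finite rank with range in $\mathrm{span}\{k_{x_j}\}$, so it has a finite spectral decomposition on $\H$.

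Next, $\varphi_t$ is a real-valued function on $[0,\kappa^2]$. For the continuous flow it is $(1-e^{-tz})/z$, extended by continuity to the value $t$ at $z=0$. For the discrete flow it is $\sum_{m=0}^{t/\eta-1}\eta(1-\eta z)^m$, a real polynomial. In the discrete case, $\varphi_t(T_X)$ is literally a real polynomial in $T_X$. In the continuous case it is $\sum_{m\ge1}\frac{(-1)^{m-1}t^m}{m!}T_X^{m-1}$, a norm-convergent power series in $T_X$, or equivalently a functional-calculus expression via the finite spectral decomposition. In either case $\varphi_t(T_X)$ is self-adjoint on $\H$, since each power $T_X^m$ is self-adjoint and the coefficients are real.

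Finally, by the reproducing property,
\begin{equation*}
\varphi_t(T_X)k_{x_i}(x)=\langle \varphi_t(T_X)k_{x_i},k_x\rangle_\H=\langle k_{x_i},\varphi_t(T_X)k_x\rangle_\H=\varphi_t(T_X)k_x(x_i).
\end{equation*}
The middle equality is self-adjointness, and the outer equalities use $\varphi_t(T_X)k_z\in\H$ for $z\in\X$. The identity in fact holds for any two points of $\X$, not only for sample points.

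The only step needing care is making sure $\varphi_t(T_X)$ is understood as an operator on $\H$ rather than on $C^0(\X)$. The paper's alternative definition of $T_X:C^0(\X)\to\H$ agrees with this one on $\H$, so restricting to $\H$ removes the ambiguity. No probabilistic input is needed.
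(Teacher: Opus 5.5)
Your proof is correct, but it takes a more direct route than the paper. The paper works through the Gram matrix $K=\frac{1}{n}\K(X,X)$. From $T_Xk_{x_j}=\sum_l K_{jl}k_{x_l}$ it gets $\varphi_t(T_X)(k_{x_1},\dots,k_{x_n})^T=\varphi_t(K)(k_{x_1},\dots,k_{x_n})^T$. It then proves the sampling identity $(\varphi_t(T_X)f)(X)=\varphi_t(K)f(X)$ for $f\in\H$, applies it with $f=k_x$, and finishes with the symmetry $k(x,x_j)=k(x_j,x)$.

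The first step of that sampling identity, $(\varphi_t(T_X)f)(X)=\langle\varphi_t(T_X)\K(X,\cdot),f\rangle_\H$, silently uses the self-adjointness of $\varphi_t(T_X)$ on $\H$. You verify that property explicitly, so your argument is the core of the paper's proof without the matrix bookkeeping. Your version has two advantages:
\begin{itemize}
\item it gives the identity for arbitrary pairs of points in $\X$, not only sample points;
\item it avoids the paper's claim that $\varphi_t(T_X)$ has finite rank, which is harmless but inaccurate: $\varphi_t(0)=t$ for both flows, so $\varphi_t(T_X)$ equals $t\,I$ on the infinite-dimensional $\ker T_X$.
\end{itemize}
What the paper's route buys in exchange is the explicit formula $(\varphi_t(T_X)k_x(x_i))_{i=1}^n=\varphi_t\bigl(\frac{1}{n}\K(X,X)\bigr)\K(X,x)$. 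That formula is what justifies the closed-form expressions for $\widehat{C}_{n,t}$ and $\widehat{W}_{n,t}$ in the main text.
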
 

\proof 
Denote $K=\frac{1}{n}\K(X,X)$. By the definition of the operator $\varphi_t(T_X)$, we have the following equality: 
\begin{equation}\label{exchange of x and x_i step 1}
    \varphi_t(T_X)(k_{x_1}(x),\dots,k_{x_n}(x))^T=\varphi_t(K)(k_{x_1}(x),\dots,k_{x_n}(x))^T.
\end{equation} 
Also note that the rank of the operator $\varphi_t(T_X):\,\H\to\H$ is finite, and its co-kernel in $\H$ is contained in $\mathrm{span}\{k_{x_1}(\cdot),\dots,k_{x_n}(\cdot)\}$. Thus, for any $f\in\H$, by the reproducing property of $\H$, we have 
\begin{equation}\label{matrix representation of T_X func version} 
    \begin{aligned} 
        (\varphi_t (T_X)f)(X)&=\langle\varphi_t (T_X)\K(X,\cdot),f(\cdot)\rangle_\H\\
        &=\langle\varphi_t (\frac{1}{n}\K(X,X))\K(X,\cdot),f(\cdot)\rangle_\H=\varphi_t (\frac{1}{n}\K(X,X))f(X).
    \end{aligned}
\end{equation} 
By taking $f(\cdot)=k_x(\cdot)$, we obtain that 
\begin{equation}\label{exchange of x and x_i step 2}
    \begin{aligned} 
        (\varphi_t(T_X)k_x(x_1),\dots,\varphi_t(T_X)k_x(x_n))^T&=\varphi_t(K)(k_x(x_1),\dots,k_x(x_n))^T\\
        &=\varphi_t(K)(k_{x_1}(x),\dots,k_{x_n}(x))^T.
    \end{aligned}
\end{equation} 
Combining (\ref{exchange of x and x_i step 1}) and (\ref{exchange of x and x_i step 2}) together, we complete the proof. 
\qed 

\begin{lemma}\label{lower bound verification} 
    Suppose that $k(x,x')$ is an inner product kernel on the $d$-dimensional sphere $\mathbb{S}^d$. Then its Mercer decomposition is in the following form: 
    \begin{equation} 
        k(x,x')=\sum_{k=0}^\infty\mu_k\sum_{l=1}^{N(d,k)}Y_{k,l}(x)Y_{k,l}(x'),
    \end{equation} 
    where $Y_{k,l}$ are the spherical harmonics, and $N(d,k)=\binom{k+d}{k}-\binom{k+d-2}{k-2}$. Furthermore, if $\mu_k\asymp k^{-d\beta}$ for some $\beta>1$, then $k(x,x')$ satisfies Assumption \ref{edr} and \ref{lower bound of C}.
\end{lemma}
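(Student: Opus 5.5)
The Mercer form is the classical Funk--Hecke / addition-theorem expansion, so the work is in the eigenvalue count for Assumption \ref{edr} and a pointwise computation of $C_t(x,x)$ for Assumption \ref{lower bound of C}. I take $\mu$ to be the uniform measure on $\mathbb{S}^d$. Since $k(x,x')=\Phi(\langle x,x'\rangle)$ is invariant under rotations, the integral operator $T$ commutes with the action of $O(d+1)$. Each space of degree-$k$ spherical harmonics is an irreducible invariant subspace, so $T$ acts on it as a scalar $\mu_k$ (Funk--Hecke). The eigenvalues of $T$ are therefore $\mu_k$, each with multiplicity $N(d,k)$, and $\{Y_{k,l}\}$ is an orthonormal eigenbasis. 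This gives the stated Mercer form.

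\textbf{Step 1 (Assumption \ref{edr}).} Since $N(d,k)\asymp k^{d-1}$, the number of eigenvalues attached to degrees $\le K$ is $\sum_{k\le K}N(d,k)\asymp K^{d}$. Let the $i$-th eigenvalue in decreasing order belong to degree $k$. Then $i\asymp k^{d}$, i.e.\ $k\asymp i^{1/d}$. We also need monotonicity up to constants: under $\mu_k\asymp k^{-d\beta}$, the reordering changes indices only by constant factors, which is enough. Hence $\lambda_i=\mu_k\asymp k^{-d\beta}\asymp i^{-\beta}$.

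\textbf{Step 2 (Assumption \ref{lower bound of C}).} The key tool is the addition theorem $\sum_{l=1}^{N(d,k)}Y_{k,l}(x)^2=N(d,k)/|\mathbb{S}^d|$, which is constant in $x$. Writing $k_x=\sum_{k,l}\mu_kY_{k,l}(x)Y_{k,l}$, we get
\[
\varphi_t(T)k_x=\sum_{k,l}\varphi_t(\mu_k)\mu_kY_{k,l}(x)Y_{k,l}.
\]
By orthonormality in $L^2(\mu)$,
\[
C_t(x,x)=\sigma^2\sum_k(\varphi_t(\mu_k)\mu_k)^2\,\frac{N(d,k)}{|\mathbb{S}^d|},
\]
which is independent of $x$. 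Note that $\varphi_t(z)z=1-\psi_t(z)$. For the continuous flow, $1-e^{-t\mu_k}\ge 1-e^{-1}$ whenever $t\mu_k\ge1$. For the discrete flow, $1-(1-\eta\mu_k)^{t/\eta}\ge 1-e^{-t\mu_k}$ under Assumption \ref{learning rate assumption}, so the same bound holds. Therefore
\[
C_t(x,x)\gtrsim\sigma^2\,\#\{i:\lambda_i\ge 1/t\}\gtrsim\sigma^2 t^{1/\beta},
\]
using Step 1.

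\textbf{Main obstacle.} The only delicate point is the index bookkeeping in Step 1: the $\mu_k$ need not be exactly monotone in $k$, so the eigenvalues must be reordered and the two-sided bounds $ci^{-\beta}\le\lambda_i\le Ci^{-\beta}$ checked after reordering. I would handle this by counting $\#\{i:\lambda_i\ge\epsilon\}\asymp\epsilon^{-1/\beta}$ directly from $\mu_k\asymp k^{-d\beta}$ and $N(d,k)\asymp k^{d-1}$. Inverting that counting function then gives the eigenvalue decay and also supplies the count used in Step 2.
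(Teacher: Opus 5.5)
Your proposal is correct and follows essentially the same route as the paper: the addition theorem makes $C_t(x,x)$ independent of $x$ and equal to $\sigma^2\sum_k(\varphi_t(\mu_k)\mu_k)^2N(d,k)$ (up to the normalization of $\mu$), and the decay $\lambda_i\asymp i^{-\beta}$ comes from $N(d,k)\asymp k^{d-1}$ and $\sum_{i\le k}N(d,i)\asymp k^d$. The differences are minor. You lower-bound the spectral sum by truncating to $\mu_k\ge 1/t$ and counting those eigenvalues, whereas the paper uses $(1-e^{-t\mu})^2\gtrsim(\mu/(\mu+1/t))^2$ together with the effective-dimension bound. You also treat the discrete filter explicitly via $(1-\eta\mu)^{t/\eta}\le e^{-t\mu}$, which the paper's proof leaves implicit. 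Both arguments give the bound only for $t$ bounded below, which suffices since $t=n^\theta\to\infty$.
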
 

\proof 
We refer to \citet{aha} for the proof of the Mercer decomposition of $k(x,x')$. 

Suppose $\mu_k\asymp k^{-d\beta}$. Denoting $\lambda_j$ as the corresponding eigenvalues of $k(x,x')$ without multiplicity. Then $\lambda_j\asymp j^{-\beta}$ follows directly from the fact that $\mu_k\asymp k^{-d\beta}$, $N(d,k)\asymp k^{d-1}$ and $\sum_{i=0}^k N(d,i)\asymp k^d$. Furthermore, combining with Lemma \ref{effect dimension bound}, we obtain that 
\begin{equation} 
    \begin{aligned} 
        C_t(x,x)&=\sigma^2\sum_{m=0}^\infty\sum_{l=1}^{N(d,m)}(1-e^{-t\mu_m})^2 Y_{m,l}(x)^2\\
        &=\sigma^2\sum_{m=0}^\infty(1-e^{-t\mu_m})^2 N(d,m)\\
        &\gtrsim\sigma^2\sum_{m=0}^\infty\left(\frac{\mu_m}{\lambda+\mu_m}\right)^2 N(d,m)\\
        &\gtrsim\sigma^2\lambda^{-\frac{1}{\beta}}=\sigma^2 t^{\frac{1}{\beta}},
    \end{aligned} 
\end{equation} 
where $\lambda=1/t$. 
\qed

\subsection{Concentration inequalities} 

\begin{lemma}\label{Bernstein ineq} 
    \textnormal{(Theorem 2.10 of \citealt{concentration})} Let $\xi_{in}$, $i=1,\dots,n$ be independent real-valued random variables. Assume that there exist constants $v>0$, $c>0$ such that $\sum_{i=1}^n\E\xi_{in}^2\leq v$ and 
    \begin{equation}\sum_{i=1}^n\E|\xi_{in}|^m\leq\frac{m!}{2}vc^{m-2},\quad\forall m\geq 3,\end{equation} 
    then 
    \begin{equation}\P\left(\sum_{i=1}^n(\xi_i-\E\xi_i)>\sqrt{2v\tau}+c\tau\right)<e^{-\tau}.\end{equation} 
\end{lemma}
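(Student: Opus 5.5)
This is the classical Bernstein inequality (cited from Boucheron–Lugosi–Massart), so I would prove it by the Cramér–Chernoff method. Put $S=\sum_{i=1}^n(\xi_{in}-\E\xi_{in})$. By Markov's inequality, for any $\lambda\in(0,1/c)$,
\begin{equation*}
\P(S>u)\leq e^{-\lambda u}\,\E e^{\lambda S}=e^{-\lambda u}\prod_{i=1}^n\E e^{\lambda(\xi_{in}-\E\xi_{in})}.
\end{equation*}
The whole argument therefore comes down to bounding each log-moment generating function by a sub-gamma expression, and then optimizing over $\lambda$.

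\textbf{Step 1 (moment generating function).} I would use $\log\E e^{\lambda(\xi-\E\xi)}\leq \E e^{\lambda\xi}-1-\lambda\E\xi$, which follows from $\log x\leq x-1$. Expanding the exponential gives
\begin{equation*}
\E e^{\lambda\xi}-1-\lambda\E\xi=\sum_{m\geq2}\frac{\lambda^m\E\xi^m}{m!}\leq\sum_{m\geq 2}\frac{\lambda^m\E|\xi|^m}{m!}.
\end{equation*}
Summing over $i$ and applying the hypotheses ($\sum_i\E\xi_{in}^2\le v$ for $m=2$, and the moment condition for $m\geq3$) yields
\begin{equation*}
\log\E e^{\lambda S}\leq\sum_{m\geq2}\frac{v}{2}\lambda^m c^{m-2}=\frac{v\lambda^2}{2(1-c\lambda)},\qquad 0<\lambda<1/c.
\end{equation*}
The interchange of sum and expectation is justified by monotone convergence, since the resulting series converges for $\lambda<1/c$.

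\textbf{Step 2 (Chernoff optimization).} Step 1 gives $\P(S>u)\leq\exp\bigl(-\sup_{\lambda\in(0,1/c)}[\lambda u-\tfrac{v\lambda^2}{2(1-c\lambda)}]\bigr)$. This is the main computational point. The cleanest route is to show that at $u=\sqrt{2v\tau}+c\tau$ the supremum is at least $\tau$. I would take the explicit choice
\begin{equation*}
\lambda=\frac{\sqrt{2\tau/v}}{1+c\sqrt{2\tau/v}}.
\end{equation*}
Write $a=\sqrt{2\tau/v}$, so that $\lambda=a/(1+ca)$ and $1-c\lambda=1/(1+ca)$. Substituting, the exponent becomes
\begin{equation*}
\frac{a(\sqrt{2v\tau}+c\tau)}{1+ca}-\frac{va^2}{2(1+ca)}=\frac{2\tau+ca\tau-\tau}{1+ca}=\tau .
\end{equation*}
Hence $\P(S\geq\sqrt{2v\tau}+c\tau)\leq e^{-\tau}$.

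The strict inequality in the statement is inessential. The bound $\P(S>u)\le e^{-\tau}$ is what the Chernoff argument gives directly, and if strict inequality is required one can note that it holds whenever $\tau>0$ and the distribution is nondegenerate. Alternatively, one simply accepts $\le$, which is all that is used in the applications above. I expect no real obstacle beyond the algebra in Step 2.
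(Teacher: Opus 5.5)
Your proof is correct and is essentially the Cramér–Chernoff argument behind the cited Theorem~2.10: bound each log-MGF by $\E e^{\lambda\xi}-1-\lambda\E\xi$, sum the moment series to get the sub-gamma bound $v\lambda^2/(2(1-c\lambda))$, and apply Chernoff. The source uses only positive parts $\E(\xi)_+^m$ where you use absolute moments, which is what the stated hypothesis provides; also, your $\lambda=a/(1+ca)$ is exactly the optimizer, so the exponent equals $\tau$. The strict inequality needs no nondegeneracy assumption: for $\tau>0$ you have $u>0$, and equality in Markov's inequality for the strictly positive variable $e^{\lambda S}$ would force $S=u$ a.s., contradicting $\E S=0$, so $\P(S\ge u)<e^{-\tau}$.
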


\begin{lemma}\label{Bernstein ineq vec ver}
    \textnormal{(Theorem 26 of \citealt{concentration_2})} Let $H$ be a Hilbert space, and $\xi_1,\xi_2,\dots$ are i.i.d. random variables on $H$. Suppose that 
    \begin{equation}\E\|\xi_i\|_H^m\leq\frac{1}{2}m!\sigma^2 L^{m-2},\quad\forall m>2,\end{equation} 
    then for any $\delta\in(0,1)$, with probability at least $1-\delta$, we have 
    \begin{equation}\left\|\frac{1}{n}\sum_{i=1}^n\xi_i-\E\xi_1\right\|_H\leq 4\sqrt{2}\left(\frac{L}{n}+\frac{\sigma}{\sqrt{n}}\right)\log\frac{2}{\delta}.\end{equation} 
\end{lemma}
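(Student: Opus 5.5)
The inequality is a Hilbert-space Bernstein bound. I would prove it with the standard Pinelis--Yurinsky argument: a moment generating function bound for the norm of a sum of independent centered Hilbert-valued variables, then a Chernoff step, then inversion in terms of $\delta$.

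\textbf{Step 1 (centering).} Put $\zeta_i=\xi_i-\E\xi_1$, so the $\zeta_i$ are i.i.d. and mean zero in $H$. Convexity gives $\|a+b\|^m\le 2^{m-1}(\|a\|^m+\|b\|^m)$, and Jensen gives $\|\E\xi_1\|_H^m\le\E\|\xi_1\|_H^m$. Together these yield
\begin{equation*}
\E\|\zeta_i\|_H^m\le 2^m\E\|\xi_i\|_H^m\le\tfrac12 m!\,(2\sigma)^2(2L)^{m-2},\qquad m\ge 2.
\end{equation*}
For $m=2$ the hypothesis is not stated, but $\E\|\zeta_i\|^2\le\E\|\xi_i\|^2$ can be used there, or the hypothesis read with $m\ge2$. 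So the $\zeta_i$ satisfy the same Bernstein moment condition with $\sigma'=2\sigma$ and $L'=2L$.

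\textbf{Step 2 (key recursion in Hilbert space).} Let $S_k=\sum_{i\le k}\zeta_i$. I would show
\begin{equation*}
\E\cosh(\lambda\|S_k\|)\le\E\cosh(\lambda\|S_{k-1}\|)\cdot\E\bigl(e^{\lambda\|\zeta_k\|}-\lambda\|\zeta_k\|\bigr).
\end{equation*}
This is Pinelis' lemma for $2$-smooth spaces; a Hilbert space is $2$-smooth with constant $1$. The proof conditions on $S_{k-1}$ and expands $g(s)=\cosh(\lambda\|S_{k-1}+s\zeta_k\|)$ by Taylor's formula in $s$. The first-order term vanishes in expectation because $\E\zeta_k=0$ and $\zeta_k$ is independent of $S_{k-1}$. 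The second derivative is controlled by $\lambda^2\|\zeta_k\|^2\cosh(\lambda\|S_{k-1}\|)e^{\lambda s\|\zeta_k\|}$, using the parallelogram identity; this is where the Hilbert structure enters. This is the main obstacle: the computation must be done carefully near $S_{k-1}+s\zeta_k=0$, where $\|\cdot\|$ is not smooth. I would first try smoothing the norm as $\sqrt{\|x\|^2+\epsilon}$ and letting $\epsilon\to0$.

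\textbf{Step 3 (Chernoff bound and inversion).} By Step 1, for $0<\lambda<1/L'$,
\begin{equation*}
\E e^{\lambda\|\zeta\|}-\lambda\E\|\zeta\|\le 1+\sum_{m\ge2}\frac{\lambda^m\E\|\zeta\|^m}{m!}\le\exp\Bigl(\frac{\lambda^2\sigma'^2}{2(1-\lambda L')}\Bigr).
\end{equation*}
Iterating Step 2 gives $\E\cosh(\lambda\|S_n\|)\le\exp\bigl(n\lambda^2\sigma'^2/(2(1-\lambda L'))\bigr)$. Markov's inequality together with $e^x\le2\cosh x$ then gives
\begin{equation*}
\P(\|S_n\|\ge u)\le 2\exp\Bigl(-\lambda u+\frac{n\lambda^2\sigma'^2}{2(1-\lambda L')}\Bigr).
\end{equation*}
The usual choice $\lambda=u/(n\sigma'^2+uL')$ turns this into
\begin{equation*}
\P(\|S_n\|\ge u)\le2\exp\Bigl(-\frac{u^2}{2(n\sigma'^2+uL')}\Bigr).
\end{equation*}
Setting $\tau=\log(2/\delta)$ and solving the quadratic in $u$ shows that, with probability at least $1-\delta$,
\begin{equation*}
\Bigl\|\frac1n S_n\Bigr\|\le\frac{2L'\tau}{n}+\sigma'\sqrt{\frac{2\tau}{n}}.
\end{equation*}
Finally I substitute $L'=2L$ and $\sigma'=2\sigma$, and use $\tau\ge\log2$, which gives $\sqrt\tau\le\tau/\sqrt{\log2}$. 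The right-hand side is then at most $4\sqrt2\,(L/n+\sigma/\sqrt n)\log(2/\delta)$, since the constant $4\sqrt2$ has room to absorb the factors above.
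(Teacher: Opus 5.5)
The paper gives no proof of this lemma. It imports the bound from the cited reference, where it goes back to the classical Hilbert-space Bernstein inequality of Yurinsky and Pinelis--Sakhanenko. Your proposal reconstructs that classical argument, so it is a correct, self-contained route rather than a departure:
\begin{itemize}
\item centering at the price $\sigma'=2\sigma$, $L'=2L$;
\item Pinelis' $\cosh$-recursion, the $D=1$ case of his inequality for $(2,D)$-smooth spaces;
\item a Chernoff bound, then inversion.
\end{itemize}
The computations check out. The choice $\lambda=u/(n\sigma'^2+uL')$ gives the exponent $-u^2/(2(n\sigma'^2+uL'))$. Inversion then gives $\|S_n/n\|_H\le 4L\tau/n+2\sqrt{2}\,\sigma\sqrt{\tau/n}$, which is at most $4\sqrt{2}(L/n+\sigma/\sqrt{n})\tau$ whenever $\tau\ge 1/4$. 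Since $\delta<1$ gives $\tau>\log 2$, every $\delta\in(0,1)$ in the statement is covered. What your route buys over the citation is an explicit, self-contained derivation with visible constants.

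Two corrections.

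First, of your two remedies for $m=2$ only the second is valid. The inequality $\E\|\zeta_i\|_H^2\le\E\|\xi_i\|_H^2$ is useless without $\E\|\xi_i\|_H^2\le\sigma^2$. In fact the lemma is false if the moment condition is imposed only for $m\ge 3$. Take $H=\R$, $\xi=\pm a$ with probability $p/2$ each and $0$ otherwise, and set $L=Ka$, $\sigma^2=pa^2/(3K)$ with $K\ge 1$. The condition holds for all $m\ge3$, yet $\E\xi^2=3K\sigma^2$, and for large $K$ the central limit theorem contradicts the bound. So the hypothesis must be read with $m\ge 2$, as in the source. In the paper's application, Lemma~\ref{operator diff norm bound}, the summands are a.s.\ bounded by $M_\alpha^2$, so this holds with $\sigma=L=M_\alpha^2$.

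Second, the non-smoothness you call the main obstacle is not one. Write $x=S_{k-1}$ and $v=\zeta_k$. The function $\cosh(\lambda\|y\|_H)=\sum_{k\ge0}\lambda^{2k}\|y\|_H^{2k}/(2k)!$ is a power series in $\|y\|_H^2$, so $g(s)=\cosh(\lambda\|x+sv\|_H)$ is real-analytic in $s$ even where $x+sv=0$. Away from that point, $u=\|x+sv\|_H$ satisfies $u''=(\|v\|_H^2-u'^2)/u\ge 0$. Together with $\sinh y\le y\cosh y$ this gives $g''(s)\le\lambda^2\|v\|_H^2\cosh(\lambda\|x\|_H)e^{\lambda s\|v\|_H}$. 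The bound extends to the exceptional point by continuity, so no smoothing of the norm is needed.
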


\begin{lemma}\label{Bernstein ineq operater ver}
    \textnormal{(Theorem 27 of \citealt{concentration_2} and Lemma 26 of \citealt{concentration_3})} Let $H$ be a Hilbert space. Suppose that $A_1,A_2,\dots$ are i.i.d. random variables with values in the self-adjoint Hilbert-Schmit operator space of $H$. If $\E A_i=0$, $\|A_i\|\leq L$ a.e. and there exists a self-adjoint positive semi-definite trace-class operator $V$ such that $\E A_i^2\preceq V$, then for any $\delta\in(0,1)$, with probability at least $1-\delta$, we have 
    \begin{equation}\left|\frac{1}{n}\sum_{i=1}^nA_i\right|_H\leq\frac{2L\beta}{3n}+\sqrt{\frac{2\|V\|\beta}{n}},\quad\beta=\log\frac{4\tr V}{\delta\|V\|}.\end{equation} 
\end{lemma}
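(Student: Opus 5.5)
The plan is the standard matrix Laplace-transform argument with dimension replaced by the intrinsic dimension $\tr V/\|V\|$, as in Minsker's refinement of Tropp's matrix Bernstein inequality. It is carried out first in finite dimension and then passed to the Hilbert space $H$ by approximation. Write $S_n=\sum_{i=1}^n A_i$. By symmetry (apply everything to $-A_i$ as well) it suffices to bound $\P(\lambda_{\max}(S_n)\geq nu)$ and then double the constant. This doubling is the source of the factor $4$ in $\beta=\log\frac{4\tr V}{\delta\|V\|}$.

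\textbf{Step 1 (finite-rank reduction).} Since each $A_i$ is Hilbert--Schmidt and $V$ is trace class, I would take finite-rank orthogonal projections $P_m\uparrow I$, for instance onto spans of eigenvectors of $V$ together with a countable dense family. I would then prove the bound for $P_mA_iP_m$, for which $\E(P_mA_iP_m)^2\preceq P_m\E A_i^2P_m\preceq P_mVP_m$. One has $\tr(P_mVP_m)\leq\tr V$ and $\|P_mVP_m\|\to\|V\|$. Since $\|P_mS_nP_m\|\to\|S_n\|$ almost surely, Fatou's lemma on the tail probabilities gives the claim in $H$. The monotonicity of the right-hand side in $\|V\|$ needs a small check here: one replaces $\|P_mVP_m\|$ by $\|V\|$ using $\|V\|\geq\|P_mVP_m\|$.

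\textbf{Step 2 (dimension-free Laplace bound).} In finite dimension, let $\phi(x)=e^{x}-x-1\geq0$, which is convex and increasing on $[0,\infty)$, so that $\phi(\theta\lambda_{\max}(S_n))\leq\tr\phi(\theta S_n)$. Markov's inequality then gives
\[
\P(\lambda_{\max}(S_n)\geq nu)\leq\frac{\E\tr(e^{\theta S_n}-\theta S_n-I)}{\phi(\theta nu)}.
\]
The key is to bound the numerator by something proportional to $\tr V$ rather than to the ambient dimension. To do this I would iterate Lieb's concavity theorem, using the scalar bound $e^{\theta a}\preceq I+\theta a+g(\theta)a^2$ for $\|a\|\leq L$, where $g(\theta)=\frac{\theta^2/2}{1-\theta L/3}$. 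This yields $\E\tr e^{\theta S_n}\leq\tr\exp(ng(\theta)V)$. Subtracting $\tr I$ and using $e^{x}-1\leq xe^{x}$ together with $\E S_n=0$ gives
\[
\E\tr\phi(\theta S_n)\leq ng(\theta)\tr V\,e^{ng(\theta)\|V\|}.
\]
This step is where the intrinsic dimension enters, and I expect it to be the main obstacle. It requires care that the $-\theta S_n-I$ correction genuinely removes the dimension term, and that Lieb's theorem, applied with $\tr$ over the finite-rank range, is used correctly.

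\textbf{Step 3 (optimization and inversion).} I would choose $\theta=\frac{u}{\|V\|+Lu/3}$, the usual Bernstein choice. Then bound $\phi(\theta nu)\geq e^{\theta nu}(1-(1+\theta nu)e^{-\theta nu})$, which is comparable to $e^{\theta nu}$ once $\theta nu\gtrsim1$. This gives
\[
\P(\|S_n\|\geq nu)\leq\frac{4\tr V}{\|V\|}\exp\left(-\frac{nu^2/2}{\|V\|+Lu/3}\right)
\]
for $u\geq\sqrt{\|V\|/n}+L/(3n)$. Setting the right-hand side equal to $\delta$ and solving the resulting quadratic in $u$ gives $u\leq\frac{2L\beta}{3n}+\sqrt{\frac{2\|V\|\beta}{n}}$. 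Finally, since $\delta<1$ and $\tr V\geq\|V\|$ imply $\beta\geq\log4>1$, this value of $u$ automatically satisfies the validity condition, which completes the argument.
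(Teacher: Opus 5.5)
Your overall route (Markov's inequality with $\phi(x)=e^x-x-1$, Lieb's theorem, intrinsic dimension, finite-rank approximation) is the Tropp--Minsker argument behind the results the paper cites; the paper itself gives no proof. However, the key estimate in your Step 2 is too lossy to yield the stated inequality.

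Write $M=ng(\theta)V$. The bound $e^x-1\le xe^x$ gives $\tr(e^{M}-I)\le\tr(M)\,e^{\|M\|}=\frac{\tr V}{\|V\|}\,\|M\|\,e^{\|M\|}$. At your choice $\theta=u/(\|V\|+Lu/3)$ one computes $\|M\|=ng(\theta)\|V\|=\theta nu/2=E$, where $E:=\frac{nu^2/2}{\|V\|+Lu/3}$ is exactly the exponent. So Step 2 only gives $\P(\lambda_{\max}(S_n)\ge nu)\le\frac{\tr V}{\|V\|}\,E\,\frac{e^{2E}}{\phi(2E)}\,e^{-E}$, with the unbounded prefactor $E$. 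Step 3 therefore does not follow. At the final $u$, where $\beta\le E\le\frac43\beta$, your bound on the failure probability is of order $\beta\delta$ rather than $\delta$.

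The missing ingredient is the intrinsic-dimension lemma. If $\varphi$ is convex with $\varphi(0)=0$ and $M\succeq0$, $M\neq0$, then every eigenvalue $\mu\in[0,\|M\|]$ satisfies $\varphi(\mu)\le\frac{\mu}{\|M\|}\varphi(\|M\|)$. Hence $\tr\varphi(M)\le\frac{\tr M}{\|M\|}\varphi(\|M\|)$. With $\varphi(x)=e^x-1$ this gives $\E\tr\phi(\theta S_n)\le\frac{\tr V}{\|V\|}(e^{E}-1)$. Using $e^x/\phi(x)\le1+3/x^2$, you then get $\P(\lambda_{\max}(S_n)\ge nu)\le\frac{\tr V}{\|V\|}\bigl(1+\frac{3}{(\theta nu)^2}\bigr)e^{-E}$.

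Because $\varphi(x)/x$ is nondecreasing, the same lemma also settles your Step 1 worry. For $W=P_mVP_m$ one has $\frac{\tr W}{\|W\|}\varphi(c\|W\|)\le\frac{\tr V}{\|V\|}\varphi(c\|V\|)$, so you can pass from $W$ to $V$ at the level of the Laplace bound, before optimizing in $\theta$.

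The constant bookkeeping in Step 3 also needs fixing. On the whole range $u\ge\sqrt{\|V\|/n}+L/(3n)$ you only know $\theta nu\ge1$. The one-sided factor is then $1+3=4$, and symmetrization gives $8\tr V/\|V\|$, not $4\tr V/\|V\|$; the $4$ in $\beta$ is not simply the doubling.

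To get the lemma as stated, work directly at the chosen $u=a+b$ with $a=\sqrt{2\|V\|\beta/n}$ and $b=2L\beta/(3n)$. A direct computation gives $E=\beta\frac{(a+b)^2}{a^2+ab+b^2}\ge\beta\ge\log4$. Hence $\theta nu=2E\ge2\log4$, so $1+3/(\theta nu)^2<1.4$. The two-sided failure probability is then at most $2.8\,\frac{\tr V}{\|V\|}e^{-\beta}<\delta$. With these two repairs your argument proves the lemma.
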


\begin{lemma}\label{concentration inequality bounded cases} 
    \textnormal{\citep{concentration_4}} Let $\xi_i$, $i=1,\dots,n$ be i.i.d. random variables. Suppose that $|\xi_i|\leq M$ almost surely, $\E\xi_i=\mu$, $\mathrm{Var}(\xi_i)\leq\sigma^2$. Then, for any $\varepsilon_0>0$ and $\delta\in(0,1)$, with probability at least $1-\delta$, we have 
    \begin{equation}\left|\frac{1}{n}\sum_{i=1}^n\xi_i-\mu\right|\leq\varepsilon_0\sigma^2+\frac{3+4\varepsilon_0 M}{6\varepsilon_0 n}\log\frac{2}{\delta}.\end{equation} 
\end{lemma}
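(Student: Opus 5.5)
We will derive this from the classical Bernstein inequality for bounded i.i.d. variables, followed by an elementary AM--GM step that splits the square-root variance term. Throughout we write $L:=\log\frac{2}{\delta}$.

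\textbf{Step 1: Bernstein for the centered variables.} Set $\zeta_i=\xi_i-\mu$. These are i.i.d. with $\E\zeta_i=0$ and $\E\zeta_i^2\leq\sigma^2$. Since $|\xi_i|\leq M$ and $|\mu|\leq M$, we have $|\zeta_i|\leq 2M$. The moment condition of Lemma \ref{Bernstein ineq} then holds for $\zeta_i/n$ with variance proxy $v=\sigma^2/n$ and scale $c\asymp M/n$. The point to check is the constant: the usual bounded-case bound is $\E|\zeta_i|^m\leq\sigma^2(2M)^{m-2}$, which has to be matched to the form $\frac{m!}{2}vc^{m-2}$. The cleanest route is to invoke the standard two-sided bounded Bernstein inequality in its classical form. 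With $|\zeta_i|\leq 2M$ (equivalently, the bound $\frac{2M}{3}$ in the Bernstein exponent), it gives, with probability at least $1-\delta$,
\begin{equation*}
\left|\frac{1}{n}\sum_{i=1}^n\xi_i-\mu\right|\leq\sqrt{\frac{2\sigma^2L}{n}}+\frac{2ML}{3n}.
\end{equation*}
The two-sidedness, which accounts for the factor $2$ inside $\log\frac{2}{\delta}$, comes from applying the one-sided bound to $\zeta_i$ and to $-\zeta_i$ and taking a union bound.

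\textbf{Step 2: split the square root.} By AM--GM, for any $\varepsilon_0>0$,
\begin{equation*}
\sqrt{\frac{2\sigma^2L}{n}}=2\sqrt{\varepsilon_0\sigma^2\cdot\frac{L}{2\varepsilon_0 n}}\leq\varepsilon_0\sigma^2+\frac{L}{2\varepsilon_0 n}.
\end{equation*}
Adding the linear term $\frac{2ML}{3n}$ and putting everything over the common denominator $6\varepsilon_0 n$ gives
\begin{equation*}
\varepsilon_0\sigma^2+\frac{3L}{6\varepsilon_0 n}+\frac{4\varepsilon_0 ML}{6\varepsilon_0 n}=\varepsilon_0\sigma^2+\frac{3+4\varepsilon_0M}{6\varepsilon_0 n}\log\frac{2}{\delta},
\end{equation*}
which is exactly the claimed bound.

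The only delicate point is the constant in the linear term of Step 1. We need the coefficient $\frac{2M}{3}$, which corresponds to $\frac{b}{3}$ with $b=2M$ the almost-sure bound on $|\zeta_i|$. We should not use a coarser moment-based constant from Lemma \ref{Bernstein ineq}, since that would inflate the $4\varepsilon_0M$ in the final bound. If needed, we would rederive this coefficient directly from the Chernoff bound, using the moment generating function estimate $\E e^{\lambda\zeta_i}\leq\exp\!\big(\frac{\lambda^2\sigma^2}{2(1-\lambda b/3)}\big)$ for $0<\lambda<3/b$.
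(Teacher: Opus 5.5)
Your argument is correct and reproduces the constant exactly. The paper gives no proof of this lemma; it is imported from \citet{concentration_4}. Your route is the standard derivation: Bernstein for $\zeta_i=\xi_i-\mu$ with $|\zeta_i|\le 2M$, a union bound over $\pm\zeta_i$, then $2\sqrt{ab}\le a+b$ with $a=\varepsilon_0\sigma^2$ and $b=\frac{1}{2\varepsilon_0 n}\log\frac{2}{\delta}$.

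Two remarks on Step 1. First, your caution about Lemma \ref{Bernstein ineq} is misplaced. Since $\E|\zeta_i|^m\le\sigma^2(2M)^{m-2}$ and $\frac{m!}{2}\ge 3^{m-2}$ for every $m\ge 2$, the variables $\pm\zeta_i/n$ satisfy its hypotheses with $v=\sigma^2/n$ and $c=\frac{2M}{3n}$. The paper's own lemma with $\tau=\log\frac{2}{\delta}$ then gives $\sqrt{2\sigma^2\tau/n}+\frac{2M\tau}{3n}$ on each side with no loss, so no separate appeal to a ``classical form'' is needed.

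Second, what matters is that you use the quantile form $\sqrt{2v\tau}+c\tau$, which is equivalent to exactly inverting the sub-gamma MGF bound you quote at the end. Suppose instead you invert the tail form $2\exp\bigl(-\frac{nu^2}{2(\sigma^2+2Mu/3)}\bigr)$ and split the square root via $\sqrt{x+y}\le\sqrt{x}+\sqrt{y}$. Then the linear term becomes $\frac{4M}{3n}\log\frac{2}{\delta}$, and you would end with $8\varepsilon_0M$ in place of $4\varepsilon_0M$.
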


\subsection{Analytic functional calculus}\label{analytic section} 

In this section, we recall some fundamental facts of analytic functional calculus. Let $A$ be a linear operator on a Hilbert space $H$. The resolvent set $\rho(A)$ of $A$ is given by 
\begin{equation}\rho(A)=\{\lambda\in\C:\,A-\lambda\mbox{ is invertible}\}.\end{equation} 
For any $\lambda\in\rho(A)$, we denote 
\begin{equation}R_A(\lambda)=(A-\lambda)^{-1}.\end{equation} 
The spectrum of $A$ is $\sigma(A)=\C-\rho(A)$. 

As an analogue of the famous Cauchy integral formula for analytic functions, we have the following version of integral formula for analytic operators: 

\begin{theorem}\label{analytic functional estimation}
    Let $A$ be a linear operator on a Hilbert space $H$. Let $f$ be an analytic function defined on $D_f\subset\C$. Let $\Gamma\subset D_f$ be a contour surrounding $\sigma(A)$. Then we have 
    \begin{equation}\label{analytic integral forumla} 
        f(A)=-\frac{1}{2\pi i}\oint f(z)R_A(z)dz.
    \end{equation}
\end{theorem}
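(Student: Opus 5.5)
The statement to prove is Theorem \ref{analytic functional estimation}, the Riesz--Dunford integral formula $f(A)=-\frac{1}{2\pi i}\oint_\Gamma f(z)R_A(z)\,dz$. In this paper $A$ is always a bounded self-adjoint (indeed positive, compact or finite-rank) operator such as $T$ or $T_X$, acting on $\H$ or on $[\H]^\gamma$. I would prove the formula in that setting, where $f(A)$ is defined by the spectral theorem, and then note that the general bounded case follows from the same argument. The plan is to verify the identity first on the spectral decomposition and then extend by linearity and continuity.

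\textbf{Step 1 (eigenvector check).} Let $A=\sum_j \mu_j P_j$ be the spectral decomposition, with the $P_j$ orthogonal projections onto eigenspaces, possibly together with a projection onto $\ker A$ corresponding to $\mu=0$. Then $R_A(z)=\sum_j (\mu_j-z)^{-1}P_j$ for every $z\in\rho(A)$, and this series converges in operator norm uniformly for $z\in\Gamma$, since $\Gamma$ is compact and has positive distance from $\sigma(A)$. Hence for every $u,v\in H$,
\begin{equation*}
-\frac{1}{2\pi i}\oint_\Gamma f(z)\langle R_A(z)u,v\rangle dz=\sum_j\langle P_ju,v\rangle\cdot\frac{1}{2\pi i}\oint_\Gamma\frac{f(z)}{z-\mu_j}dz=\sum_j f(\mu_j)\langle P_ju,v\rangle,
\end{equation*}
which is exactly $\langle f(A)u,v\rangle$. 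The last step is the scalar Cauchy integral formula: $\Gamma\subset D_f$ winds once around each $\mu_j\in\sigma(A)$, and $f$ is analytic on a neighbourhood of the region it bounds. The sign convention (the minus sign with $R_A(z)=(A-z)^{-1}$) is what turns $(\mu_j-z)^{-1}$ into the Cauchy kernel $(z-\mu_j)^{-1}$.

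\textbf{Step 2 (justifying the interchange).} The operator-valued integral is defined as a Riemann or Bochner integral in norm, because $z\mapsto R_A(z)$ is norm-continuous on $\rho(A)$. This follows from the resolvent identity $R_A(z)-R_A(w)=(z-w)R_A(z)R_A(w)$. Consequently the weak form computed in Step 1 determines the strong operator, and exchanging the sum with the integral is justified by the uniform convergence noted above.

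\textbf{Step 3 (non-self-adjoint case).} For a general bounded $A$, I would define $f(A)$ by the integral formula itself and show that this definition agrees with power series and polynomial calculus. For $f(z)=z^k$, one expands $R_A(z)=-\sum_m A^m z^{-m-1}$ on a large circle and deforms $\Gamma$ to that circle by Cauchy's theorem, which gives $A^k$. Runge approximation then extends the agreement to general analytic $f$.

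\textbf{Main obstacle.} The key difficulty is ensuring that $\Gamma$ genuinely surrounds $\sigma(A)$ while remaining inside $D_f$. For $\varphi_t^{dis}$ with $T_X$, this is precisely where Assumption \ref{learning rate assumption} is needed. It guarantees $\sigma(T_X)\subset[0,\kappa^2]$ and that the potential singularity at $1/\eta>2\kappa^2$ lies outside $\Gamma$. Note also that $\varphi_t$ has only a removable singularity at $0$, so $0\in\sigma(T)$ causes no problem. In the infinite-dimensional case, the accumulation of eigenvalues at $0$ is handled by the uniform-distance argument in Step 1.
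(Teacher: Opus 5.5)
Your argument is correct in substance, but it takes a genuinely different route from the paper. The paper gives no proof of its own; it defers to Proposition 2.3.1 of its operator-theory reference, i.e.\ the standard Riesz--Dunford holomorphic calculus. You instead verify the identity by hand. For self-adjoint $A$ you reduce it, via the spectral decomposition, to the scalar Cauchy formula. For general bounded $A$ you take the integral as the definition and check consistency with the polynomial (and, via Runge, rational) calculus.

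The citation buys generality at no cost. Your Step 1 proves exactly what the paper later uses: the contour integral coincides with the spectrally defined operators $\varphi_t(T)$ and $\psi_t(T)$. It also makes explicit two hypotheses the paper's statement omits. First, $f$ must be analytic on the whole region enclosed by $\Gamma$, not just on $\Gamma$. Second, $A$ must be bounded, and this is genuinely necessary: $A=d/dx$ on $L^2(0,1)$ with domain $\{u\in H^1(0,1):\,u(0)=0\}$ has empty spectrum and an entire resolvent. So the right-hand side vanishes for $f\equiv 1$, while $f(A)=I$.

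One justification needs repair. In Step 1 you assert that $\sum_j(\mu_j-z)^{-1}P_j$ converges in operator norm uniformly on $\Gamma$, and Step 2 relies on this to exchange sum and integral. This is false whenever $A$ has infinitely many distinct eigenvalues, which is precisely the case $A=T$. Since the $P_j$ are mutually orthogonal, $\|\sum_{j>N}(\mu_j-z)^{-1}P_j\|=\sup_{j>N}|\mu_j-z|^{-1}\to|z|^{-1}\neq 0$, so the series converges only strongly.

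The fix is immediate: do the interchange in the scalar form you already wrote. Use $|(\mu_j-z)^{-1}\langle P_ju,v\rangle|\le d^{-1}\|P_ju\|\,\|P_jv\|$ with $d=\mathrm{dist}(\Gamma,\sigma(A))>0$, together with $\sum_j\|P_ju\|\,\|P_jv\|\le\|u\|\,\|v\|$. Dominated convergence then applies on the compact rectifiable $\Gamma$. Alternatively, split off $-z^{-1}I$ first; the remaining coefficients $\mu_j/(z(\mu_j-z))$ do tend to $0$ uniformly on $\Gamma$.

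There are two smaller inaccuracies.
\begin{enumerate}
\item $T_X$ is self-adjoint on $\H$ but not on $[\H]^\gamma$ for $\gamma<1$, because $\langle T_Xf,g\rangle_{[\H]^\gamma}=\frac{1}{n}\sum_l f(x_l)\,(T^{1-\gamma}g)(x_l)$ is not symmetric in $f,g$. Step 1 still covers this case, since $T_X$ is finite-rank and diagonalizable with non-orthogonal spectral idempotents (a finite sum, so no convergence issue). Otherwise you can fall back on Step 3.
\item Assumption~\ref{learning rate assumption} is not what gives $\sigma(T_X)\subset[0,\kappa^2]$; that follows from $T_X\ge 0$ and $\|T_X\|_\H\le\kappa^2$. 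Moreover, for $t=m\eta$ the function $\varphi_t^{dis}(z)=\eta\sum_{k=0}^{m-1}(1-\eta z)^k$ is a polynomial, so there is no singularity at $1/\eta$. The assumption actually enters by keeping $|1-\eta z|$, and hence $|\varphi_t^{dis}(z)|$, under control along $\Gamma_t$.
\end{enumerate}
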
 

\proof 
See Proposition 2.3.1 of \citep{operator_theory}. 
\qed 

In order to apply Theorem \ref{analytic functional estimation} to the analysis of the gradient flow algorithm, we need to carefully select a contour $\Gamma_t$ with index $t\in[0,\infty)$ that surrounds the spectra of the operators $T$ and $T_X$. 

We follow \citet{analytic} and select the following contour which surrounds the spectra of $T:[\H]^\gamma\to[\H]^\gamma$ and $T_X:[\H]^\gamma\to[\H]^\gamma$ for $\gamma\in[\alpha,1]$, with probability $1-\O(n^{-10})$: 

\begin{definition}\label{contour definition} 
    Define the contour $\Gamma_t$ with index $t\in(0,\infty)$ by 
    \begin{equation}\Gamma_t=\Gamma_{t,1}\cup\Gamma_{t,2}\subset\C,\end{equation} 
    where 
    \begin{equation}\Gamma_{t,1}=\{x\pm(x+\eta)i\in\C:\,x\in[-\eta,\kappa^2]\},\end{equation} 
    \begin{equation}\Gamma_{t,2}=\{z\in\C:\,|z-\kappa^2|=\kappa^2+\eta,\,\mathrm{Re}(z)\geq \kappa^2\},\end{equation} 
    and $\eta=\frac{\lambda}{2}=\frac{1}{2t}$. 
\end{definition} 

\begin{remark} 
    Note that $[0,\kappa^2]$ is surrounded by $\Gamma_t$. Since $T$ and $T_X$ are both bounded, positive semi-definite, and self-adjoint operators on $\H$, and $\|T\|_\H$, $\|T_X\|_\H\leq\kappa^2$, then $\Gamma_t$ is indeed a contour that surrounds the spectra of $T$ and $T_X$ for any $t\in(0,\infty)$. We also note that both $\varphi_t^{con}(z)$ and $\varphi_t^{dis}(z)$ (defined in Definition \ref{filter function def}) can be extended to analytic functions in the domain surrounded by $\Gamma_\lambda$. 
\end{remark}

\begin{lemma}\label{norm estimation along contour} 
    For any $\gamma\in[\alpha,1]$ and $z\in\Gamma_t$, we have 
    \begin{equation}\|T_\lambda(T-z)^{-1}\|_{[\H]^\gamma}=\|T_\lambda^{\frac{1}{2}}(T-z)^{-1}T_\lambda^{\frac{1}{2}}\|_{[\H]^\gamma}\leq D\end{equation} 

    Moreover, with probability $1-\O(n^{-10})$, if $n$ is sufficiently great, we have 
    \begin{equation}\|T_\lambda(T_X-z)^{-1}\|_{[\H]^\gamma}\leq 3D,\quad\|T_\lambda^{\frac{1}{2}}(T_X-z)^{-1}T_\lambda^{\frac{1}{2}}\|_{[\H]^\gamma}\leq 3D.\end{equation} 
\end{lemma}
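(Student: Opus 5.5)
The approach is to diagonalize everything in the eigenbasis of $T$ and then reduce the operator inequalities to scalar inequalities along the contour. Since $\{\lambda_i^{\gamma/2}e_i\}$ is an orthonormal basis of $[\H]^\gamma$ and $T$ acts diagonally with eigenvalues $\lambda_i$, the operators $T_\lambda(T-z)^{-1}$ and $T_\lambda^{1/2}(T-z)^{-1}T_\lambda^{1/2}$ are both multiplication by $(\lambda_i+\lambda)/(\lambda_i-z)$. The two norms therefore coincide, and
\begin{equation*}
\|T_\lambda(T-z)^{-1}\|_{[\H]^\gamma}\leq\sup_{r\in[0,\kappa^2]}\frac{r+\lambda}{|r-z|}.
\end{equation*}
Here we use Lemma \ref{norm bound of T} to place the spectrum inside $[0,\kappa^2]$.

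The first claim then reduces to a geometric estimate on $\Gamma_t$, with $\eta=\lambda/2$.

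On $\Gamma_{t,1}$, write $z=x\pm(x+\eta)i$. Then $|r-z|\geq\max\{|r-x|,\,x+\eta\}\geq\frac12(|r-x|+x+\eta)$. Since $r+\lambda\leq|r-x|+x+\eta+\lambda/2$, and $x+\eta\geq0$ on this arc, we get
\begin{equation*}
|r-z|\geq\tfrac12\bigl(r+\eta\bigr)\geq\tfrac14(r+\lambda),
\end{equation*}
so the ratio is at most $4$.

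On $\Gamma_{t,2}$, the distance from $z$ to $[0,\kappa^2]$ is at least of order $\kappa^2+\eta$. Meanwhile $r+\lambda\leq\kappa^2+2\eta$, so the ratio is bounded by an absolute constant. This yields a universal $D$.

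For the empirical statements, I factor through the population operator:
\begin{equation*}
T_\lambda(T_X-z)^{-1}=T_\lambda T_{X\lambda}^{-1}\cdot T_{X\lambda}(T_X-z)^{-1}.
\end{equation*}
The second factor is a function of $T_X$ alone. It is handled by the same scalar argument, but applied to the spectrum of $T_X$ as an operator on $[\H]^\gamma$. That spectrum also lies in $[0,\kappa^2]$, since $T_X$ is self-adjoint and positive on $\H$ with norm at most $\kappa^2$; on $[\H]^\gamma$ I would invoke the spectral containment that the paper takes for granted in the remark after Definition \ref{contour definition}. The first factor is bounded by $3$ using Lemma \ref{operator quotient norm bound}; more precisely, one bounds $\|T_{X\lambda}^{-1}T_\lambda\|$ and relates it to $T_\lambda T_{X\lambda}^{-1}$ via the identity $I-T_\lambda^{-1}(T_X-T)$ and Neumann series, as in that lemma. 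This gives $3D$ with probability $1-\O(n^{-10})$.

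For the symmetric version, I write
\begin{equation*}
T_\lambda^{1/2}(T_X-z)^{-1}T_\lambda^{1/2}=\bigl(T_\lambda^{1/2}T_{X\lambda}^{-1/2}\bigr)\bigl(T_{X\lambda}^{1/2}(T_X-z)^{-1}T_{X\lambda}^{1/2}\bigr)\bigl(T_{X\lambda}^{-1/2}T_\lambda^{1/2}\bigr).
\end{equation*}
The middle factor is at most $D$ by the scalar bound. The outer factors have product at most $3$ by Lemma \ref{operator quotient norm bound ver2}.

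The main obstacle is this transfer step. Operator norms on $[\H]^\gamma$ for $\gamma<1$ are not the same as those on $\H$, and $T_X$ need not be self-adjoint on $[\H]^\gamma$, so the spectral-theorem reduction for $T_X$ is delicate. If it fails, I would fall back to proving the bounds on $\H$, where $T_X$ is self-adjoint, which is exactly what the applications via Lemma \ref{operator quotient norm bound ver2} use. For the $[\H]^\gamma$ case I would write $(T_X-z)^{-1}=(T-z)^{-1}\bigl(I-(T_X-T)(T-z)^{-1}\bigr)^{-1}$ and control $\|(T_X-T)(T-z)^{-1}\|\lesssim\|T_\lambda^{-1}(T_X-T)\|\cdot D\to0$ by Lemma \ref{operator quotient diff norm bound}.
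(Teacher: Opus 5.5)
Your population bound is the paper's argument. Both operators are the multiplier $(\lambda_i+\lambda)/(\lambda_i-z)$. The paper maximizes explicitly to get $D=2\sqrt2$, while your estimate $|r-z|\ge\max\{|r-x|,x+\eta\}$ gives $D=4$ more quickly, which is fine. Your symmetric empirical bound is the paper's factorization, and it is rigorous on $\H$ via Lemma \ref{operator quotient norm bound ver2}.

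The gap is the empirical part on $[\H]^\gamma$ for $\gamma<1$, and both of your patches fail because of operator ordering. Lemma \ref{operator quotient norm bound} controls $T_{X\lambda}^{-1}T_\lambda=\bigl(I+T_\lambda^{-1}(T_X-T)\bigr)^{-1}$, and the Neumann identity you cite inverts only to that product. Your first factor is $T_\lambda T_{X\lambda}^{-1}=\bigl(I+(T_X-T)T_\lambda^{-1}\bigr)^{-1}$, which needs $\|(T_X-T)T_\lambda^{-1}\|_{[\H]^\gamma}$ to be small. The two products are adjoints of each other only in $\H$.

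Your fallback repeats the slip: $(T_X-T)(T-z)^{-1}=(T_X-T)T_\lambda^{-1}\cdot T_\lambda(T-z)^{-1}$ is not bounded by $D\|T_\lambda^{-1}(T_X-T)\|$. As you suspected, the scalar bound for $T_{X\lambda}(T_X-z)^{-1}$ also needs normality of $T_X$ on $[\H]^\gamma$, not just spectral containment. The paper's proof has the same hole: its ``likewise'' for $T_X$, and its use of the full-power Lemma \ref{operator quotient norm bound} for half-power quotients on $[\H]^\gamma$, are unjustified.

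The ordering as stated cannot be rescued. Take $z=-\lambda/2\in\Gamma_t$ and any $f\in[\H]^\gamma$ vanishing at every $x_i$. Then $T_\lambda(T_X-z)^{-1}f=2t\,T_\lambda f$, so $\|T_\lambda(T_X-z)^{-1}\|_{[\H]^\gamma}\ge 2t\|Tf\|_{[\H]^\gamma}/\|f\|_{[\H]^\gamma}$. For a Sobolev kernel on the torus, a bump of width $\asymp n^{-1/d}$ placed in a sample-free cell makes this $\gtrsim t\,n^{-(1+\gamma\beta)/2}$. At $\gamma=\alpha$ this diverges once $\theta>(1+\alpha\beta)/2$, a threshold just above $1$. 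The same $f$ shows $\|T_\lambda T_{X\lambda}^{-1}\|_{[\H]^\alpha}$ is unbounded there.

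What your perturbation idea does prove, once the resolvent is put on the other side, is the reversed ordering. Write $T_X-z=(T-z)\bigl(I+(T-z)^{-1}(T_X-T)\bigr)$ and use $\|(T-z)^{-1}(T_X-T)\|_{[\H]^\gamma}\le D\,\|T_\lambda^{-1}(T_X-T)\|_{[\H]^\gamma}\to0$ (Lemma \ref{operator quotient diff norm bound}). This gives $\|(T_X-z)^{-1}T_\lambda\|_{[\H]^\gamma}\le 2D$ for large $n$, uniformly in $z\in\Gamma_t$, with no spectral calculus for $T_X$ at all. That is exactly the product $(T_X-z)^{-1}T_{X\lambda}\cdot T_{X\lambda}^{-1}T_\lambda$ used in Lemmas \ref{norm bound of zeta} and \ref{1st term of bias 2}. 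Together with your $\H$-version of the symmetric bound (used in Lemma \ref{empirical norm bound of zeta}), it is the correct replacement for the second half of this lemma.
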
 

\proof 
Firstly, we note that 
\begin{equation}\|T_\lambda^{\frac{1}{2}}(T-z)^{-1}T_\lambda^{\frac{1}{2}}\|_{[\H]^\gamma}=\sup_{r\in\sigma(T)}\left|\frac{t+\lambda}{t-z}\right|\leq\sup_{t\in[0,\kappa^2]}\left|\frac{t+\lambda}{t-z}\right|.\end{equation} 
When $z=x\pm(x+\frac{\lambda}{2})i\in\Gamma_{t,1}$ where $x\in[-\frac{\lambda}{2},\kappa^2]$, we have 
\begin{equation}\sup_{t\in[0,\kappa^2]}\left|\frac{t+\lambda}{t-z}\right|^2\leq\sup_{t\in[0,\infty)}\left|\frac{t+\lambda}{t-z}\right|^2=\left\{ 
    \begin{aligned} 
        &\frac{4\lambda^2}{8x^2+4\lambda x+\lambda^2},\quad&&x\in[-\frac{1}{2}\lambda,-\frac{1}{2(2+\sqrt{2})}\lambda];\\
        &\frac{8x^2+12\lambda x+5\lambda^2}{(2x+\lambda)^2},\quad&&x>-\frac{1}{2(2+\sqrt{2})}\lambda, 
    \end{aligned} 
\right.\end{equation} 
The right-hand side reaches its maximum $8$ at $x=-\frac{\lambda}{4}$. Thus, 
\begin{equation}\sup_{t\in[0,\kappa^2]}\left|\frac{t+\lambda}{t-z}\right|\leq 2\sqrt{2},\quad\forall z\in\Gamma_{t,1}.\end{equation} 
When $z\in\Gamma_{t,2}$, we have 
\begin{equation}\sup_{t\in[0,\kappa^2]}\left|\frac{t+\lambda}{t-z}\right|\leq\sup_{t\in[0,\kappa^2]}\frac{|t+\lambda|}{\kappa^2+\frac{\lambda}{2}}\leq\sup_{t\in[0,\kappa^2]}\frac{\kappa^2+\lambda}{\kappa^2+\frac{\lambda}{2}}\leq 2.\end{equation}
In summary, we have 
\begin{equation}\|T_\lambda^{\frac{1}{2}}(T-z)^{-1}T_\lambda^{\frac{1}{2}}\|_{[\H]^\gamma}\leq D=2\sqrt{2}.\end{equation} 
Likewise, we also have 
\begin{equation}\|T_{X\lambda}^{\frac{1}{2}}(T_X-z)^{-1}T_{X\lambda}^{\frac{1}{2}}\|_{[\H]^\gamma}\leq D.\end{equation} 
Then, by Lemma \ref{operator quotient norm bound}, if $n$ is sufficiently large, then with probability $1-\O(n^{-10})$, we have 
\begin{equation}\|T_{\lambda}^{\frac{1}{2}}(T_X-z)^{-1}T_{\lambda}^{\frac{1}{2}}\|_{[\H]^\gamma}\leq\|T_\lambda^{\frac{1}{2}}T_{X\lambda}^{-\frac{1}{2}}\|_{[\H]^\gamma}\cdot\|T_{X\lambda}^{\frac{1}{2}}(T_X-z)^{-1}T_{X\lambda}^{\frac{1}{2}}\|_{[\H]^\gamma}\cdot\|T_{X\lambda}^{-\frac{1}{2}}T_\lambda^{\frac{1}{2}}\|_{[\H]^\gamma}\leq 3D.\end{equation}
\qed

\begin{lemma}\label{integral estimation along contour} 
    We have the following estimation for the filter function $\varphi_t(z)$ defined in Definition \ref{filter function def}: 
    \begin{equation}\oint_{\Gamma_t}|\varphi_t(z)dz|\leq C\log t\end{equation} 
    for some constant $C>0$ (depending only on $\kappa$ for continuous kernel gradient flow, and on $\eta$ additionally for discrete kernel gradient flow). 
\end{lemma}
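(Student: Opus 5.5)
We bound the contour integral by splitting $\Gamma_t=\Gamma_{t,1}\cup\Gamma_{t,2}$ and bounding $|\varphi_t(z)|$ pointwise on each piece. Throughout write $\lambda=1/t$ and $\eta=\lambda/2$; here $\eta$ denotes the contour offset, which for the discrete flow must be kept separate from the learning rate. On $\Gamma_{t,2}$ we have $|z|\ge \eta$ and, more to the point, $|z|\gtrsim \kappa^2$ away from the origin. For the continuous flow, $\mathrm{Re}(z)\ge\kappa^2>0$ there, so $|1-e^{-tz}|\le 2$ and $|\varphi_t(z)|\le 2/|z|\lesssim 1/\kappa^2$. Since $\Gamma_{t,2}$ has length $O(\kappa^2)$, its contribution is $O(1)$. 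For the discrete flow the same bound needs $|1-\eta' z|\le 1$ on this arc, writing $\eta'$ for the learning rate. This is where Assumption \ref{learning rate assumption} ($\eta'<1/(2\kappa^2)$) enters: it gives $|1-\eta' z|\le 1$ up to an $O(\lambda)$ correction. That correction raised to the power $t/\eta'$ is bounded by a constant depending on $\eta',\kappa$.

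On $\Gamma_{t,1}$ parametrize $z=x\pm(x+\eta)i$ with $x\in[-\eta,\kappa^2]$, so that $|dz|=\sqrt2\,dx$ and $|z|\asymp x+\eta$, since $|z|\ge (x+\eta)$ and $|z|\le \sqrt2(|x|+x+\eta)\lesssim x+\eta$ for $x\ge-\eta$. We then split at $x\asymp\lambda$.

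\textbf{(i) Near part, $|z|\lesssim\lambda$.} Use the power series $\varphi_t(z)=\sum_{k\ge1}(-1)^{k-1}t^kz^{k-1}/k!$, which gives $|\varphi_t(z)|\le t\,e^{t|z|}\lesssim t$ because $t|z|=O(1)$. This piece has length $O(\lambda)$, so its contribution is $O(1)$. For the discrete flow, write $\varphi_t(z)=\eta'\sum_{j<t/\eta'}(1-\eta' z)^j$. Since $|1-\eta' z|\le 1+\eta'|z|$, this gives $|\varphi_t|\le t(1+\eta' C\lambda)^{t/\eta'}\lesssim t$.

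\textbf{(ii) Far part, $x\gtrsim\lambda$.} Here $\mathrm{Re}(tz)=tx\ge -1/2$, so $|e^{-tz}|\le e^{1/2}$ and $|\varphi_t(z)|\le (1+e^{1/2})/|z|\lesssim 1/(x+\eta)$. The analogous discrete bound $|1-\eta' z|^{t/\eta'}\le C$ holds because $|1-\eta' z|^2=(1-\eta' x)^2+\eta'^2(x+\eta)^2\le 1+O(\eta'\lambda)$. This uses $x\ge-\eta$ together with $\eta'\kappa^2<1/2$ to control the quadratic terms, and $(1+O(\lambda))^{t}=O(1)$. Integrating gives
\[
\int_{c\lambda}^{\kappa^2}\frac{dx}{x+\eta}\lesssim \log\frac{\kappa^2+\eta}{\lambda}\lesssim \log t .
\]

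Summing the two pieces of $\Gamma_{t,1}$ (both signs) and the arc $\Gamma_{t,2}$ yields $\oint_{\Gamma_t}|\varphi_t(z)dz|\le C\log t$ for $t>1$, with $C$ depending on $\kappa$, and on $\eta'$ for the discrete flow. The main obstacle is the discrete case in step (ii): the uniform bound on $|1-\eta' z|^{t/\eta'}$ along the slanted segment. The key inequality is $(1-\eta'x)^2+\eta'^2(x+\eta)^2\le 1+C\eta'\lambda$. Expanding, this reduces to $-2\eta'x+2\eta'^2x^2+O(\eta'^2 x\lambda+\eta'^2\lambda^2)$, which is nonpositive up to $O(\lambda)$ once $\eta' x\le \eta'\kappa^2<1/2$. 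For negative $x\in[-\eta,0]$ the term is $O(\eta'\lambda)$ directly.
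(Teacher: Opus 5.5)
Your proof is correct and follows the same route as the paper: split $\Gamma_t=\Gamma_{t,1}\cup\Gamma_{t,2}$, bound $|\varphi_t(z)|\lesssim 1/(|z|+\lambda)$ pointwise, and integrate to get $O(\log t)$ on the slanted segments and $O(1)$ on the arc. The only real difference is that the paper obtains $|\varphi_t(z)|\le C/|z+\lambda|$ by citing Lemma~\ref{estimation of filter function 2}, which is proved only for real $r>0$; your near/far split actually justifies this bound on the complex contour, using the power series when $t|z|=O(1)$ and otherwise $|e^{-tz}|\le e^{1/2}$, resp.\ $|1-\eta' z|^{t/\eta'}=O(1)$ via Assumption~\ref{learning rate assumption}. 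There is one harmless slip: $|z|\asymp x+\eta$ fails as $x\to-\eta$ (the correct comparison is $|z|\asymp x+\lambda$), but you only use $|z|\ge x+\eta$ in the far part and $|z|\lesssim\lambda$ in the near part, so nothing breaks.
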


\proof 
Note that by Lemma \ref{estimation of filter function 2}, we have 
\begin{equation}|\varphi_t(z)|\leq\frac{C}{|z+\lambda|},\quad \lambda=\frac{1}{t}\end{equation} 
for some constant $C>0$ (universal for continuous kernel gradient flow, and depending on $\kappa$ and $\eta$ additionally for discrete kernel gradient flow), then 
\begin{equation}\label{analytic phi integral estimation}
    \oint_{\Gamma_t}|\varphi_t(z)dz|\leq C\oint_{\Gamma_t}\frac{1}{|z+\lambda|}|dz|=C\oint_{\Gamma_{t,1}}\frac{1}{|z+\lambda|}|dz|+C\oint_{\Gamma_{t,2}}\frac{1}{|z+\lambda|}|dz|.
\end{equation} 
When $z\in\Gamma_{t,1}$, we have 
\begin{equation}\label{contour integration 1} 
    \begin{aligned} 
        \oint_{\Gamma_{t,1}}\frac{1}{|z+\lambda|}|dz|&=2\int_{-\frac{\lambda}{2}}^{\kappa^2}\frac{1}{|x+(x+\frac{\lambda}{2})i+\lambda|}\sqrt{2}dx\\
        &\leq C\int_{-\frac{\lambda}{2}}^{\kappa^2}\frac{1}{(x+\frac{\lambda}{2})+\lambda}dx\leq C\log\frac{1}{\lambda}=C\log t;
    \end{aligned}
\end{equation} 
When $z\in\Gamma_{t,2}$, we have $|z+\lambda|\geq\kappa^2$, hence 
\begin{equation}\label{contour integration 2} 
    \oint_{\Gamma_{t,2}}\frac{1}{|z+\lambda|}|dz|\leq\frac{1}{\kappa^2}|\Gamma_{t,2}|\leq C.
\end{equation} 
Combining (\ref{analytic phi integral estimation}), (\ref{contour integration 1}) and (\ref{contour integration 2}), we complete the proof of this lemma. 
\qed 

\begin{lemma}\label{integral estimation along contour 2} 
    We have the following estimation: 
    \begin{equation}\oint_{\Gamma_{t}}\left|e^{-tz}dz\right|\leq\frac{C}{t}\end{equation}
    for some constant $C>0$ depending only on $\kappa$, and 
    \begin{equation}\oint_{\Gamma_{t}}\left|(1-\eta z)^t dz\right|\leq\frac{C}{t}\end{equation}
    for some constant $C>0$ depending only on $\eta$ and $\kappa$. 
\end{lemma}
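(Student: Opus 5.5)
We bound the length element and the modulus of the integrand separately on the two pieces $\Gamma_{t,1}$ and $\Gamma_{t,2}$ of the contour in Definition \ref{contour definition}, with $\eta=\frac{\lambda}{2}=\frac{1}{2t}$. We write $\eta'$ for the learning rate, to avoid confusion with the contour parameter. The point is that $e^{-tz}$ decays exponentially in $t\,\mathrm{Re}(z)$, and along $\Gamma_{t,1}$ the real part ranges over $[-\frac{1}{2t},\kappa^2]$. Hence the integral is dominated by the region $\mathrm{Re}(z)\lesssim 1/t$, which has length $O(1/t)$.

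\textbf{Continuous case.} On $\Gamma_{t,1}$ we parametrize $z=x\pm(x+\eta)i$, so $|dz|=\sqrt2\,dx$ and $|e^{-tz}|=e^{-tx}$. Therefore
\[\oint_{\Gamma_{t,1}}|e^{-tz}dz|\le 2\sqrt2\int_{-1/(2t)}^{\kappa^2}e^{-tx}dx\le 2\sqrt2\,\frac{e^{1/2}}{t}.\]
On $\Gamma_{t,2}$ we have $\mathrm{Re}(z)\ge\kappa^2$, so $|e^{-tz}|\le e^{-t\kappa^2}$, and the arc length is at most $\pi(\kappa^2+\eta)\le C$. This gives a contribution $Ce^{-t\kappa^2}\le C'/t$, where $C'$ depends only on $\kappa$ via $\sup_t te^{-t\kappa^2}$. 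Summing the two pieces gives the first bound.

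\textbf{Discrete case.} We need $|(1-\eta' z)^{t/\eta'}|\le e^{-c\,t\,\mathrm{Re}(z)}$ together with a uniform bound on the region near the origin. On $\Gamma_{t,1}$ with $x\ge0$ we compute
\[|1-\eta'z|^2=(1-\eta'x)^2+\eta'^2(x+\eta)^2=1-2\eta'x+\eta'^2\bigl(x^2+(x+\eta)^2\bigr).\]
Assumption \ref{learning rate assumption} gives $\eta'x\le\eta'\kappa^2<\frac12$, so $\eta'^2(2x^2)\le\eta'x$, which leaves $|1-\eta' z|^2\le 1-\eta'x+O(\eta'^2\eta^2+\eta'^2 x\eta)$. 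Raising to the power $t/(2\eta')$, the error term contributes $\exp(O(t\eta^2+tx\eta))=\exp(O(1/t+x))$. After absorbing it, this yields $|(1-\eta'z)^{t/\eta'}|\le C e^{-ctx}$ for $x\ge0$. For $x\in[-\eta,0]$, we have $|1-\eta'z|\le 1+O(\eta'/t)$, so the integrand is $O(1)$ on a piece of length $O(1/t)$. Integrating then gives $C/t$ exactly as in the continuous case. On $\Gamma_{t,2}$ the factor $|1-\eta'z|$ needs care, since $|z|$ reaches about $2\kappa^2$ there. Here the condition $\eta'<\frac{1}{2\kappa^2}$ gives $|1-\eta'z|\le\max\{1-\eta'\kappa^2+O(1/t),\ \cdot\}$ strictly less than $1$ uniformly on the arc. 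Checking the endpoints and using convexity of $|1-\eta'z|$ along the circle, we expect the arc's contribution to be exponentially small in $t$, hence $O(1/t)$.

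\textbf{Main obstacle.} The main obstacle is this last uniform contraction $|1-\eta'z|\le q<1$ on $\Gamma_{t,2}$. Points near $\mathrm{Re}(z)=\kappa^2$ with $|\mathrm{Im}(z)|\approx\kappa^2$ give $|1-\eta'z|^2=(1-\eta'\kappa^2)^2+\eta'^2\kappa^4$, which is less than $1$ exactly when $\eta'\kappa^2<1$. The rightmost point $z=2\kappa^2+\eta$ gives $|1-\eta'z|=|1-2\eta'\kappa^2-\eta'\eta|<1$, which holds by Assumption \ref{learning rate assumption}. We would verify the maximum over the arc by parametrizing $z=\kappa^2+(\kappa^2+\eta)e^{i\phi}$, $|\phi|\le\pi/2$, and maximizing the resulting quadratic in $\cos\phi$.
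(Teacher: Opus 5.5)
Your proof is correct and follows the paper's route. You split $\Gamma_t$ into the two slanted segments and the arc, integrate the decay $|e^{-tz}|=e^{-t\,\mathrm{Re}(z)}$ along $\Gamma_{t,1}$ (with $x\in[-\frac{1}{2t},\kappa^2]$ and $|dz|=\sqrt2\,dx$) to get $O(1/t)$, and bound the arc by its length times $e^{-t\kappa^2}$. For the discrete case, which the paper only calls ``similar,'' your bounds on $|1-\eta'z|$ are right, and the step you flag as the main obstacle closes immediately: on the arc, $|1-\eta'z|^2=(1-\eta'\kappa^2)^2+\eta'^2(\kappa^2+\eta)^2-2\eta'(\kappa^2+\eta)(1-\eta'\kappa^2)\cos\phi$ is linear and decreasing in $\cos\phi\in[0,1]$, so its maximum is the value at $\phi=\pm\pi/2$, which you already showed stays below some $q<1$ once $t$ is large.
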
 

\proof 
For the first estimation, we first note that 
\begin{equation}\oint_{\Gamma_\lambda}|e^{-tz}dz|\leq\oint_{\Gamma_{t,1}}|e^{-tz}|\cdot|dz|+\oint_{\Gamma_{t,2}}|e^{-tz}|\cdot|dz|.\end{equation} 
When $z\in\Gamma_{t,1}$, we have 
\begin{equation}\label{oint 2} 
    \oint_{\Gamma_{t,1}}|e^{-tz}|\cdot|dz|=C\int_{-\frac{1}{2t}}^{\kappa^2}e^{-ts}ds\leq\frac{C}{t}(e^{\frac{1}{2}}-e^{-\kappa^2 t})\leq\frac{C}{t}.
\end{equation} 
When $z\in\Gamma_{t,2}$, we have $|z|>\kappa^2$, hence 
\begin{equation}\label{oint 1}
    \oint_{\Gamma_{t,2}}|e^{-tz}|\cdot|dz|\leq e^{-\kappa^2 t}\cdot|\Gamma_{t,2}|\leq Ce^{-\kappa^2 t}.
\end{equation}
Combining (\ref{oint 1}) and (\ref{oint 2}) together, we obtain that 
\begin{equation}\oint_{\Gamma_\lambda}|e^{-tz}dz|\leq\frac{C}{t}.\end{equation} 
The proof for the second estimation is similar. 
\qed

\subsection{Gaussian approximation} 

\begin{definition}\label{vc definition}
    Let $\mathcal{F}$ be a class of measurable functions on a measurable space $(S,\mathcal{S})$. Let $F$ be an envelop of $\mathcal{F}$. In other words, $F$ is a measurable function such that $|f(x)|\leq|F(x)|$ for any $f\in\mathcal{F}$ and $x\in S$. 

    $\mathcal{F}$ is called a VC-type class with envelop $F$ if there exist constants $A,v>0$ such that 
    \begin{equation}\sup_{Q}\mathcal{N}(\mathcal{F},e_Q,\varepsilon\|F\|_{L^2(Q)})\leq\left(\frac{A}{\varepsilon}\right)^v,\quad\forall \varepsilon\in(0,1],\end{equation}
    where the supremum is taken over all finite probability measures $Q$ on $S$, $e_Q(f,g)=\|f-g\|_{L^2(Q)}$, and $mathcal{N}(\mathcal{F},e_Q,\varepsilon\|F\|_{L^2(Q)})$ denotes the $\varepsilon\|F\|_{L^2(Q)}$-covering number of $\mathcal{F}$ under the distance $e_Q$. 
\end{definition}

The following theorem and its proof can be found in \citet{anti-concentration_a, anti-concentration_b, vc-type, anti-concentration_d}; See Lemma A.1 of \citet{anti-concentration_b} for example. 

\begin{theorem}\label{anti-concentration inequality}
    \textnormal{\textbf{(Anti-concentration inequality)}} Let $(S,\mathcal{S},P)$ be a probability space, and let $\mathcal{F}\subset L^2(S, \mathcal{S}, P)$ be a $P$-pre-Gaussian class. Suppose that $G_P$ is a tight Gaussian random variable in $l^\infty(\mathcal{F})$ with zero mean and covariance function $\mathrm{Cov}_P(f,g)=\E(G_P(f)G_P(g))$, $f,g\in\mathcal{F}$. Assume that there exists some constants $\bar\sigma$, $\underline\sigma$ such that $\underline\sigma^2\leq\mathrm{Var}_P(f)\leq\bar\sigma^2$, where $\mathrm{Var}_P(f)=\mathrm{Cov}_P(f,f)$. Then, for any $\varepsilon>0$, we have 
    \begin{equation}\sup_{x\in\R}\P\left(\left|\sup_{f\in\F}G_P f-x\right|\leq\varepsilon\right)\leq C_\sigma\varepsilon\left(\E\left(\sup_{f\in\F}G_P f\right)+\sqrt{1\vee\log(\underline\sigma/\varepsilon)}\right),\end{equation}
    where $C_\sigma\leq C(\bar\sigma/\underline\sigma)^3$ and $C>0$ is a universal constant. 
\end{theorem}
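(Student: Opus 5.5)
The statement immediately above is the anti-concentration inequality (Theorem \ref{anti-concentration inequality}), which the paper cites from the literature. The plan is to reproduce the standard argument of Chernozhukov--Chetverikov--Kato: reduce to finitely many coordinates, prove a dimension-free bound for the maximum of a Gaussian vector, and pass to the limit using tightness.

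\textbf{Step 1: finite-dimensional reduction.} Since $G_P$ is tight in $l^\infty(\mathcal{F})$, the class $\mathcal{F}$ is totally bounded in the intrinsic semi-metric $\rho(f,g)=\sqrt{\E(G_Pf-G_Pg)^2}$. Moreover $G_P$ has a version with almost surely uniformly $\rho$-continuous paths. Take an increasing sequence of finite sets $\mathcal{F}_m\uparrow$ dense in $\mathcal{F}$. Then $\sup_{\mathcal{F}_m}G_Pf\uparrow\sup_{\mathcal{F}}G_Pf$ almost surely, and $\E\sup_{\mathcal{F}_m}G_Pf\le\E\sup_{\mathcal{F}}G_Pf$. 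Because the event $\{|M-x|\le\varepsilon\}$ is closed, a limiting argument with $\varepsilon$ replaced by $\varepsilon+\delta$ followed by $\delta\downarrow0$ shows it suffices to prove the bound for a centered Gaussian vector $(X_1,\dots,X_p)$ with variances in $[\underline\sigma^2,\bar\sigma^2]$. The constant must not depend on $p$; the dependence enters only through $\E\max_jX_j$.

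\textbf{Step 2: the finite-dimensional bound.} Let $M=\max_jX_j$. I would first treat equal variances, normalized to $1$, and show that the density of $M$ is bounded by $C(\E M+\sqrt{1\vee\log(1/\varepsilon)})$ on the relevant range. The method, following Lemma~A.1 of \citet{anti-concentration_b}, is as follows.
\begin{itemize}
\item Write $M$'s distribution function through the conditional representation $X_j=V_j+\rho_jX_1$-type decompositions. Alternatively, use that the density of $M$ at $x$ is at most $\phi(x)\,\cdot$ a bound on $\E[\#\{\text{near-maximizers}\}]$.
\item The cleaner route is that of Chernozhukov et al.: show the density of $M$ satisfies $f_M(x)\le C\phi(x)\{G(x)\}$ with $G$ increasing. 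This yields $f_M(x)\lesssim x\vee1$ on $x\ge0$ by a Mills-ratio argument.
\item Then split into two regions. For $x\le \E M+C\sqrt{\log(1/\varepsilon)}$ the density bound gives probability $\lesssim\varepsilon(\E M+\sqrt{\log(1/\varepsilon)})$. For larger $x$, Borell--TIS concentration gives $\P(M>x)\le e^{-(x-\E M)^2/2}\le\varepsilon$.
\end{itemize}

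\textbf{Step 3: unequal variances.} For heterogeneous variances, rescale $Y_j=X_j/\sigma_j$. Replacing $M\in[x-\varepsilon,x+\varepsilon]$ by a corresponding event for an equal-variance process produces the factor $(\bar\sigma/\underline\sigma)^{3}$: one power comes from the rescaling of $\varepsilon$, and two more come from controlling the shift term $x/\sigma_j$ in the density bound.

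\textbf{Main obstacle.} The hardest step is the dimension-free density bound in Step 2. I would take the Mills-ratio/monotonicity argument of \citet{anti-concentration_a} essentially verbatim rather than re-derive it. In this paper the theorem is quoted, so citing Lemma~A.1 of \citet{anti-concentration_b} closes the argument.
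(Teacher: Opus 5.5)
There is a genuine gap, and it sits exactly where the statement departs from the literature: the explicit constant $C_\sigma\le C(\bar\sigma/\underline\sigma)^3$ with $C$ universal.

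Your Steps 1--2 follow the standard Chernozhukov--Chetverikov--Kato route, and that is all the paper itself offers: it cites Lemma~A.1 of \citet{anti-concentration_b} and points to \citet{anti-concentration_d} for the form of $C_\sigma$. But Step~3 cannot produce that constant, because with it the inequality is false.

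\textbf{Scaling obstruction.} Replace $(G_P,\varepsilon)$ by $(cG_P,c\varepsilon)$. This leaves the left-hand side, $\bar\sigma/\underline\sigma$ and $\log(\underline\sigma/\varepsilon)$ unchanged. It multiplies the two terms on the right by $c^2$ and $c$, so letting $c\to0$ breaks the inequality. Concretely, take $\mathcal{F}=\{f\}$ with $\mathrm{Var}_P(f)=\underline\sigma^2=\bar\sigma^2=\sigma^2$. Then $\E\sup_fG_Pf=0$. At $x=0$, $\varepsilon=\sigma$, the left side is $2\Phi(1)-1\approx0.68$, while the right side is at most $C\sigma$.

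\textbf{Where your sketch loses the scale.} Undoing the unit-variance normalization in Step~2 gives
\[
\P\bigl(|M-x|\le\varepsilon\bigr)\le\frac{C\varepsilon}{\sigma}\Bigl(\frac{\E M}{\sigma}+\sqrt{1\vee\log(\sigma/\varepsilon)}\Bigr),
\]
so the power ``coming from the rescaling of $\varepsilon$'' is $1/\underline\sigma$, not $\bar\sigma/\underline\sigma$. The same happens in a step you omit. After $Y_j=X_j/\sigma_j$, the quantity that appears is $\E\max_j(X_j/\sigma_j)$. Converting it back costs $1/\underline\sigma$ again, e.g.\ $\E\max_j(X_j/\sigma_j)\le\E\max_jX_j/\underline\sigma+\bar\sigma/\underline\sigma$.

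\textbf{The shift argument is missing.} ``Two more powers from the shift $x/\sigma_j$'' is not an argument. The vector $((X_j-x)/\sigma_j)_j$ has unit variances but unequal means. The monotonicity of $G$ in $f_M=\phi\,G$, which drives the Mills-ratio step, needs nonnegative means. So a further shift is required before Step~2 applies, and you do not carry this out.

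\textbf{The citation does not close it either.} Lemma~A.1 only asserts a constant depending on $\underline\sigma$ and $\bar\sigma$. The example above shows that constant must blow up as $\underline\sigma\to0$ with $\bar\sigma/\underline\sigma$ fixed. The paper's appeal to \citet{anti-concentration_d} for a ratio-only constant has the same problem.

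\textbf{What is actually provable.} Your Steps 1--2, plus the mean shift, can deliver a scale-consistent bound of the form
\[
c(\bar\sigma/\underline\sigma)\,\frac{\varepsilon}{\underline\sigma}\Bigl(\frac{\E\sup_fG_Pf}{\underline\sigma}+\sqrt{1\vee\log(\underline\sigma/\varepsilon)}\Bigr).
\]
In the normalized case $\underline\sigma=\bar\sigma=1$ they give $4\varepsilon(\E\sup_fG_Pf+1)$, which does have the stated form with a universal constant. The normalized case is the only one the paper uses, since the process $W_t$ has unit variance, so the downstream Kolmogorov-distance arguments are unaffected. The statement as written, and your Step~3, are not provable.
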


We note that the VC-type class satisfies the requirements of the anti-concentration inequality above (see Lemma 4.1 of \citealt{vc-type} and Lemma 2.1 of \citealt{anti-concentration_b}). We also note that there is a slight difference in the statement of this theorem, where we explicitly determine the dependence of the constant $C_\sigma$ on $\bar\sigma$ and $\underline\sigma$; See the proof in \citet{anti-concentration_d}.

\section{Visualizations of the confidence bands}\label{visualization section}

Figure \ref{vis con} and \ref{vis dis} provide examples of the visualizations of the confidence bands for continuous and discrete kernel gradient flows, respectively, under different settings of training time $t$ and sample size $n$ (given in Section \ref{confidence band exp section}).

\begin{figure}[tbp]
    \centering
    \subfigure[$n=500$, $t=0.5t_{opt}$]{
        \includegraphics[width=4.5cm]{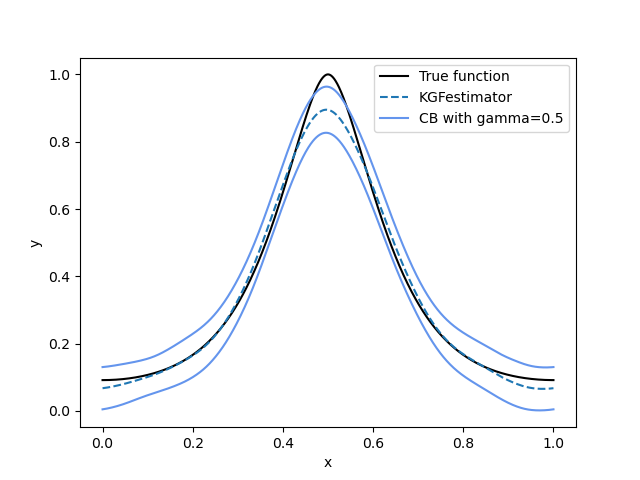} 
    } 
    \subfigure[$n=500$, $t=t_{opt}$]{
        \includegraphics[width=4.5cm]{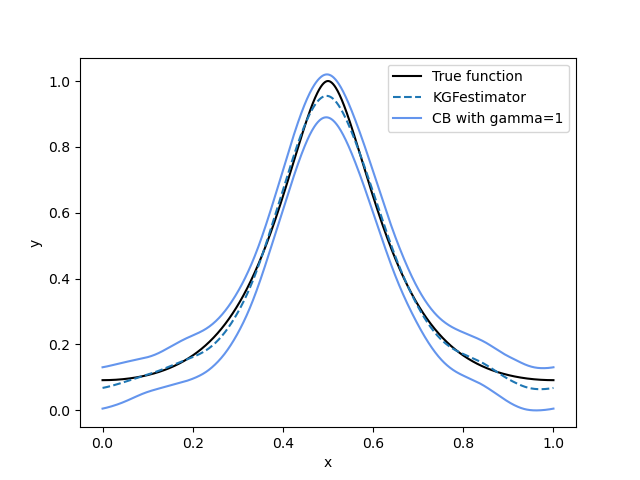} 
    } 
    \subfigure[$n=500$, $t=2t_{opt}$]{
        \includegraphics[width=4.5cm]{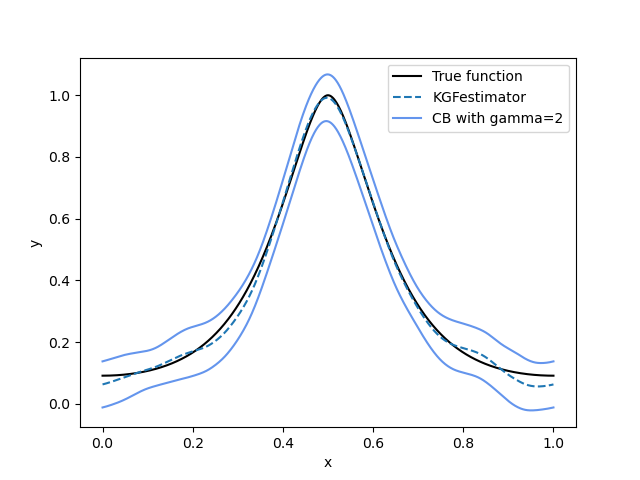} 
    } 
    \subfigure[$n=1000$, $t=0.5t_{opt}$]{
        \includegraphics[width=4.5cm]{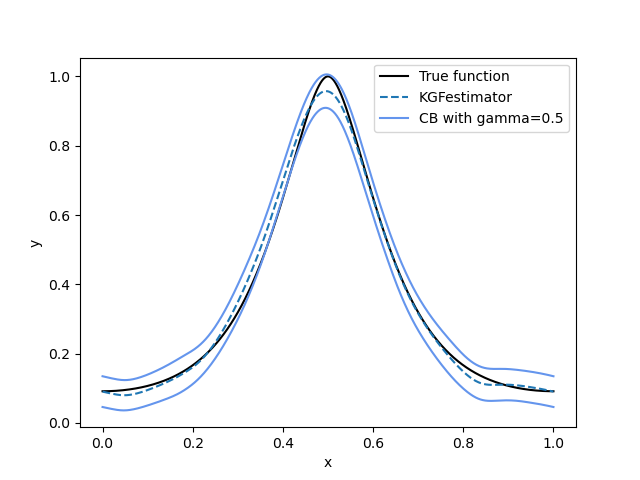} 
    } 
    \subfigure[$n=1000$, $t=t_{opt}$]{
        \includegraphics[width=4.5cm]{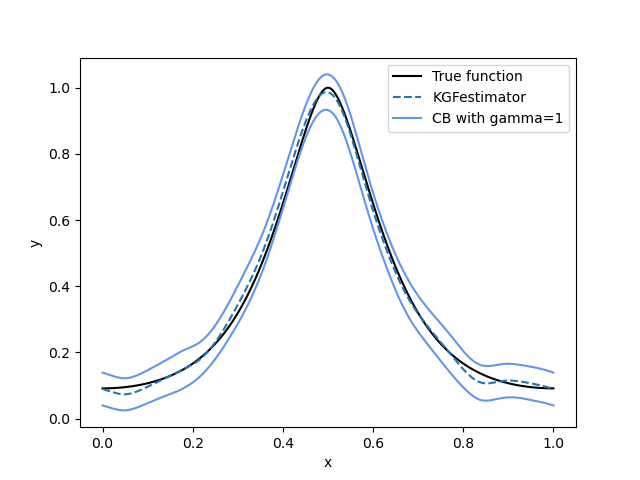} 
    } 
    \subfigure[$n=1000$, $t=2t_{opt}$]{
        \includegraphics[width=4.5cm]{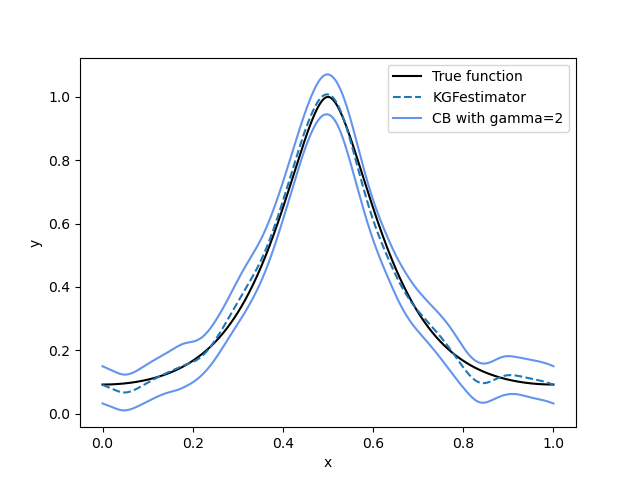} 
    } 
    \subfigure[$n=2000$, $t=0.5t_{opt}$]{
        \includegraphics[width=4.5cm]{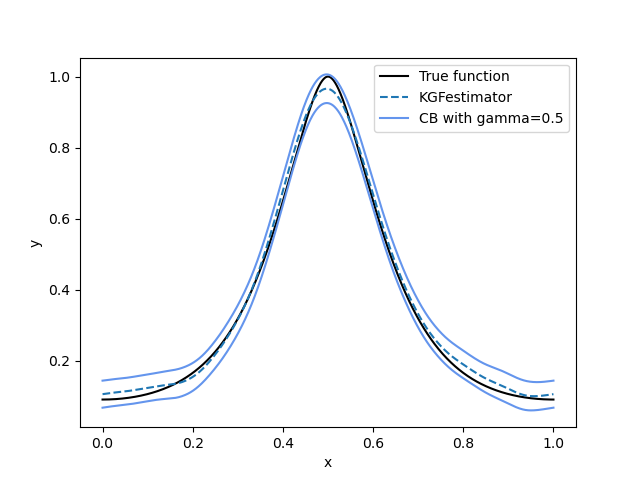} 
    } 
    \subfigure[$n=2000$, $t=t_{opt}$]{
        \includegraphics[width=4.5cm]{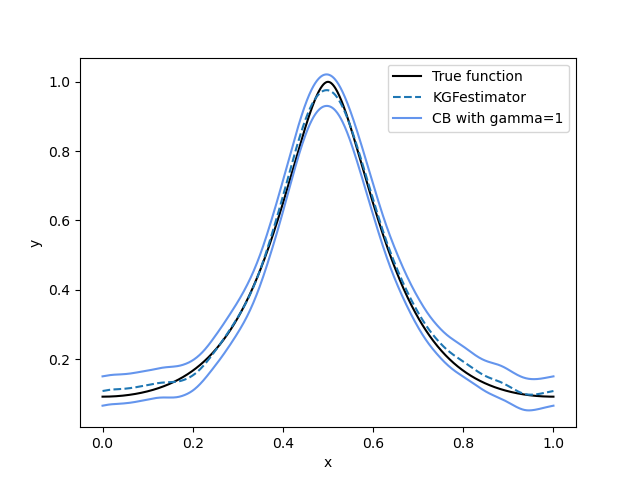} 
    } 
    \subfigure[$n=2000$, $t=2t_{opt}$]{
        \includegraphics[width=4.5cm]{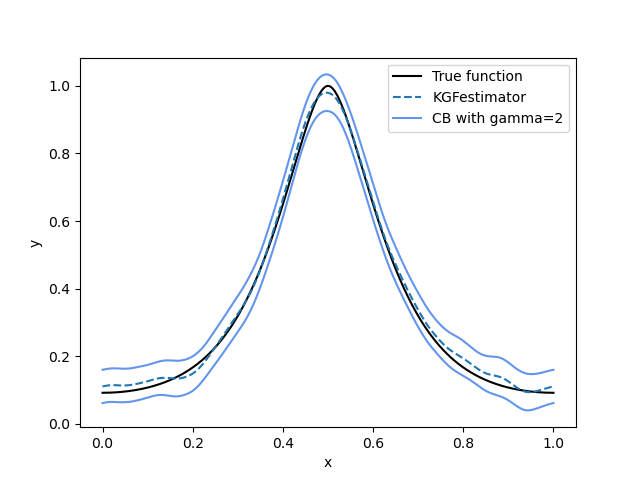} 
    } 
    \subfigure[$n=3000$, $t=0.5t_{opt}$]{
        \includegraphics[width=4.5cm]{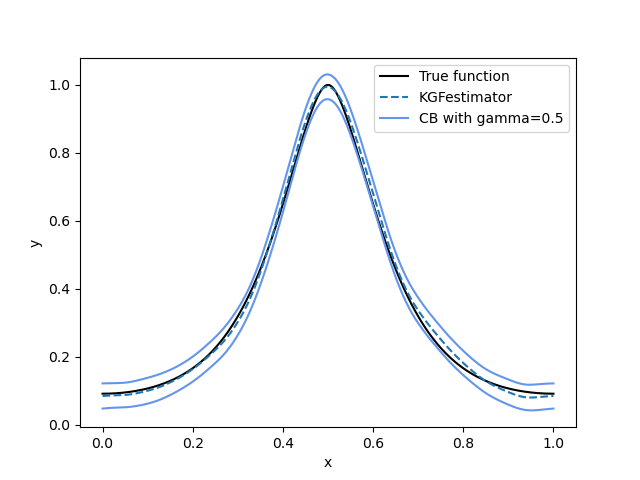} 
    } 
    \subfigure[$n=3000$, $t=t_{opt}$]{
        \includegraphics[width=4.5cm]{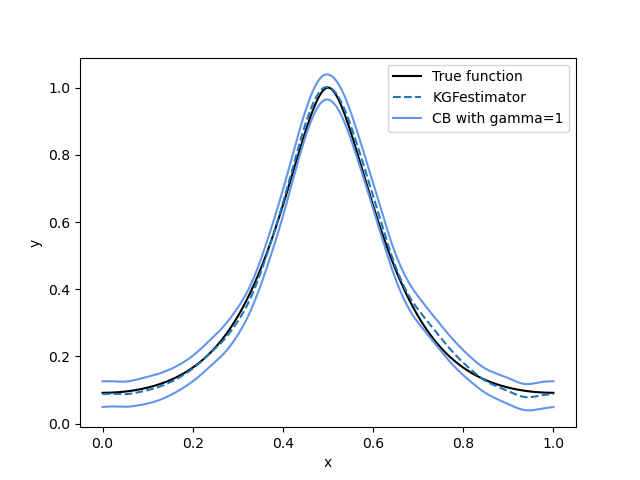} 
    } 
    \subfigure[$n=3000$, $t=2t_{opt}$]{
        \includegraphics[width=4.5cm]{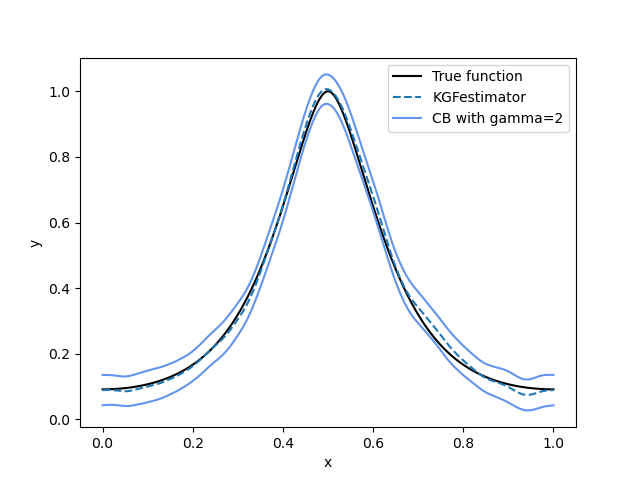} 
    } 
    \caption{Visualizations of confidence bands for continuous kernel gradient flow estimators}\label{vis con} 
\end{figure}

\begin{figure}[tbp]
    \centering
    \subfigure[$n=500$, $t=0.5t_{opt}$]{
        \includegraphics[width=4.5cm]{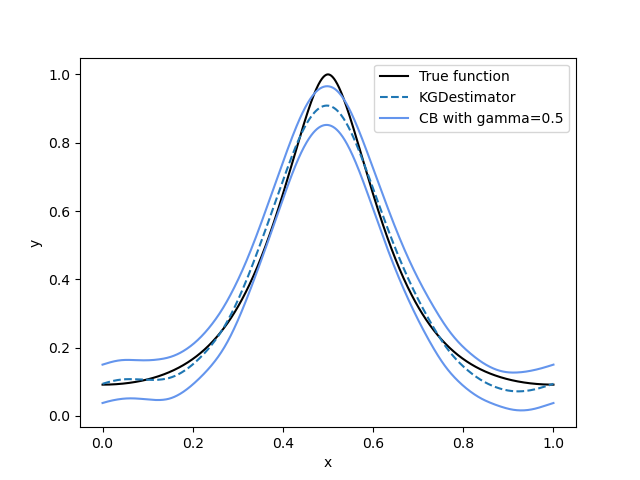} 
    } 
    \subfigure[$n=500$, $t=t_{opt}$]{
        \includegraphics[width=4.5cm]{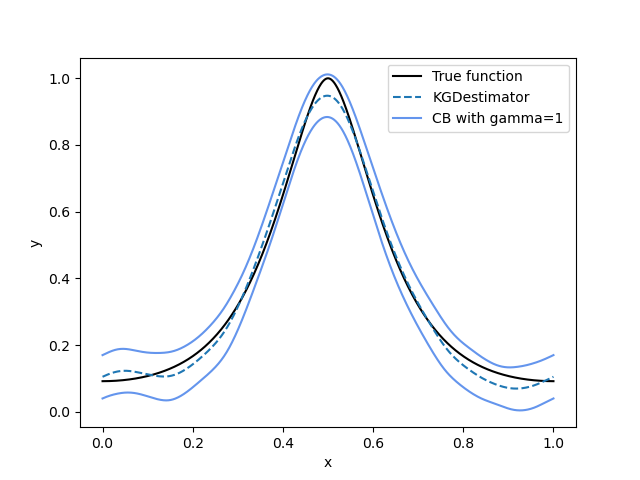} 
    } 
    \subfigure[$n=500$, $t=2t_{opt}$]{
        \includegraphics[width=4.5cm]{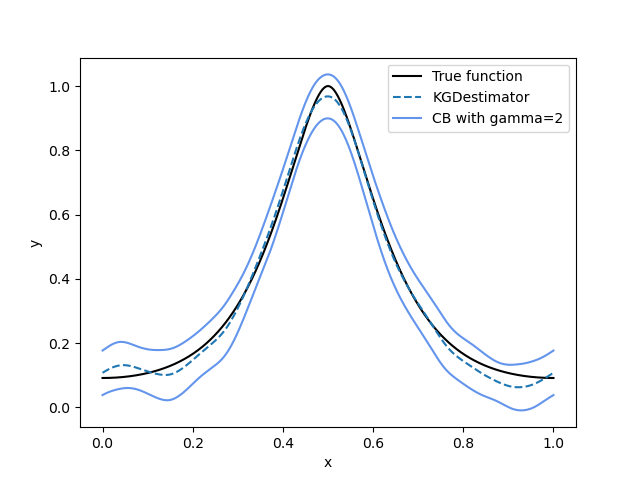} 
    } 
    \subfigure[$n=1000$, $t=0.5t_{opt}$]{
        \includegraphics[width=4.5cm]{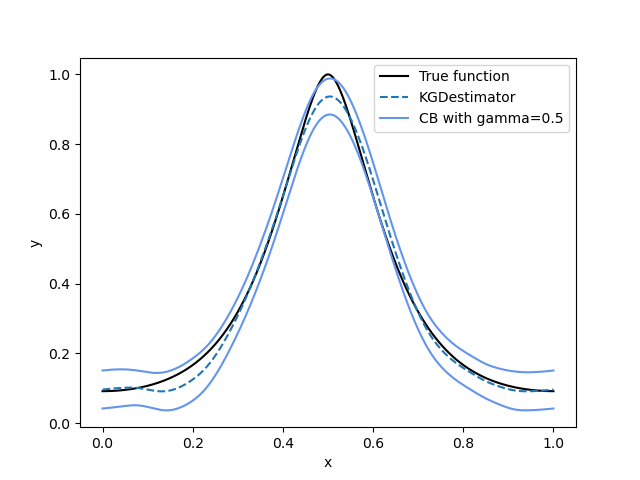} 
    } 
    \subfigure[$n=1000$, $t=t_{opt}$]{
        \includegraphics[width=4.5cm]{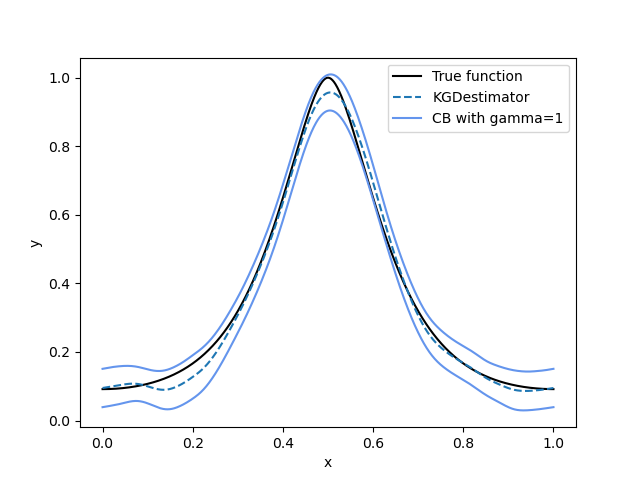} 
    } 
    \subfigure[$n=1000$, $t=2t_{opt}$]{
        \includegraphics[width=4.5cm]{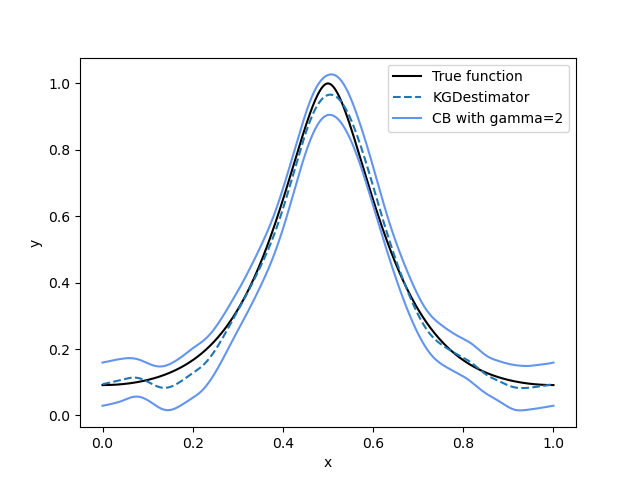} 
    } 
    \subfigure[$n=2000$, $t=0.5t_{opt}$]{
        \includegraphics[width=4.5cm]{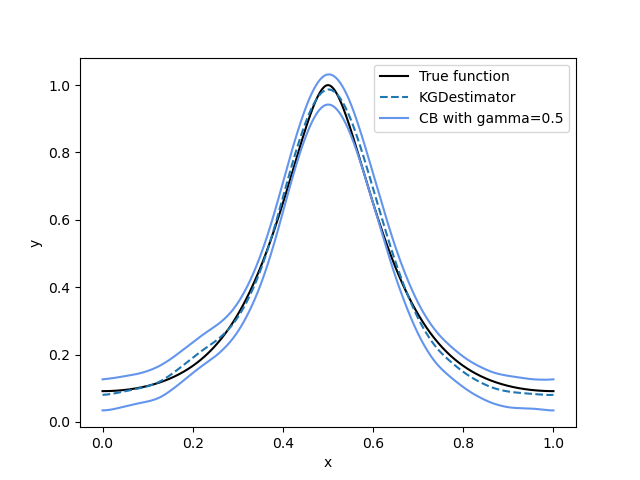} 
    } 
    \subfigure[$n=2000$, $t=t_{opt}$]{
        \includegraphics[width=4.5cm]{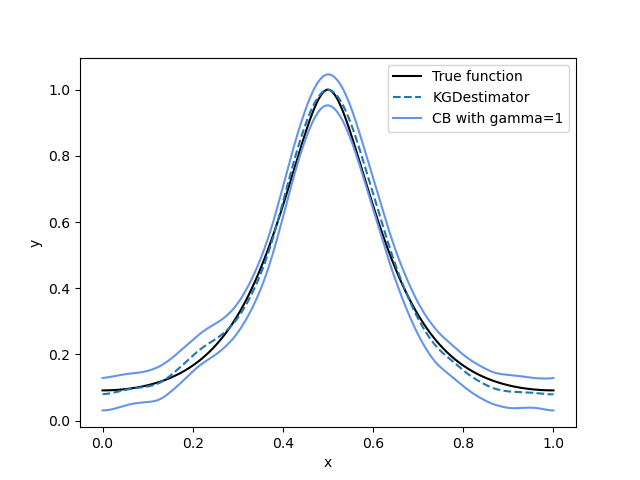} 
    } 
    \subfigure[$n=2000$, $t=2t_{opt}$]{
        \includegraphics[width=4.5cm]{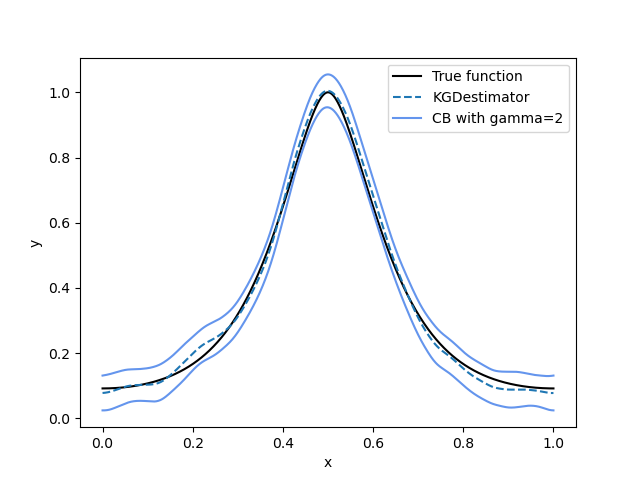} 
    } 
    \subfigure[$n=3000$, $t=0.5t_{opt}$]{
        \includegraphics[width=4.5cm]{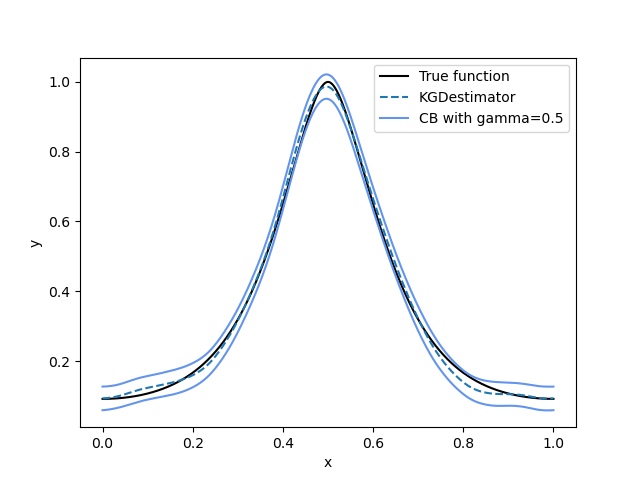} 
    } 
    \subfigure[$n=3000$, $t=t_{opt}$]{
        \includegraphics[width=4.5cm]{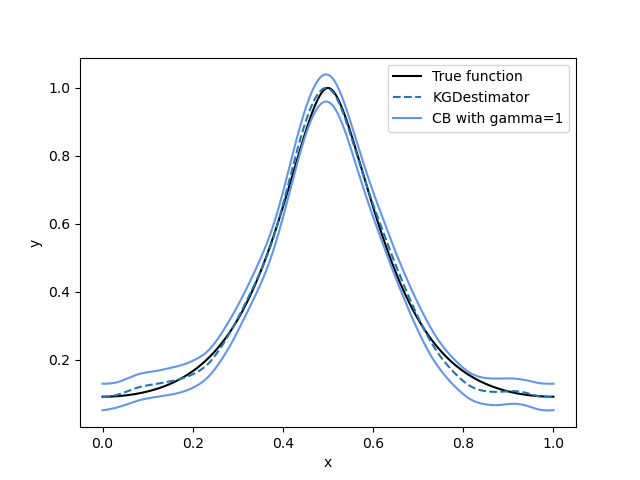} 
    } 
    \subfigure[$n=3000$, $t=2t_{opt}$]{
        \includegraphics[width=4.5cm]{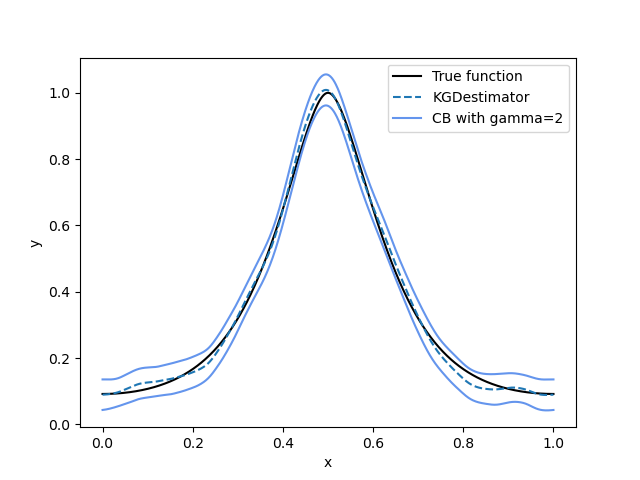} 
    } 
    \caption{Visualizations of confidence bands for discrete kernel gradient flow estimators}\label{vis dis} 
\end{figure}

\newpage

\vskip 0.2in
\bibliography{references.bib}

\end{document}